\newcommand{\dom}{\mathrm{dom}}
\theoremstyle{plain}
\newtheorem{theorem1}{Theorem}
\newtheorem{corollary1}[theorem1]{Corollary}
\theoremstyle{definition}
\theoremstyle{remark}
\begin{document}
\pagestyle{headings}
\title{Invariant means and the structure of inner amenable groups}
\author{Robin D. Tucker-Drob}
\address{Rutgers University, Hill Center for the Mathematical Sciences, 110 Frelinghuysen Rd, Piscataway, NJ 08854-8019}
\email{rtuckerd@math.rutgers.edu}
\subjclass[2010]{Primary 37A20, 43A07; Secondary 20H20}
\begin{abstract}
We study actions of countable discrete groups which are amenable in the sense that there exists a mean on $X$ which is invariant under the action of $G$. Assuming that $G$ is nonamenable, we obtain structural results for the stabilizer subgroups of amenable actions which allow us to relate the first $\ell ^2$-Betti number of $G$ with that of the stabilizer subgroups. An analogous relationship is also shown to hold for cost. This relationship becomes even more pronounced for transitive amenable actions, leading to a simple criterion for vanishing of the first $\ell ^2$-Betti number and triviality of cost. Moreover, for any marked finitely generated nonamenable group $G$ we establish a uniform isoperimetric threshold for Schreier graphs $G/H$ of $G$, beyond which the group $H$ is necessarily weakly normal in $G$.

Even more can be said in the particular case of an atomless mean for the conjugation action -- that is, when $G$ is inner amenable. We show that inner amenable groups have cost 1 and moreover they have fixed price. We establish $\mathscr{U}_{\mathrm{fin}}$-cocycle superrigidity for the Bernoulli shift of any nonamenable inner amenable group. In addition, we provide a concrete structure theorem for inner amenable linear groups over an arbitrary field. We also completely characterize linear groups which are stable in the sense of Jones and Schmidt. Our analysis of stability leads to many new examples of stable groups; notably, all nontrivial countable subgroups of the group $H(\R )$, recently studied by Monod, are stable. This includes nonamenable groups constructed by Monod and by Lodha and Moore, as well as Thompson's group $F$.
\end{abstract}
\maketitle
\section*{Introduction}\label{sec:intro}

\subsection{Amenable actions}\label{subsec:intro1} An action of a discrete group $G$ on a set $X$ is said to be {\bf amenable} if there exists a finitely additive probability measure $\bm{m} : \mathscr{P}(X)\ra [0,1]$, henceforth called a {\bf mean}, defined on the powerset of $X$, which is invariant under the action of $G$. This definition goes back to von Neumann's 1929 memoir on paradoxicality \cite{vN29}, where the notion of amenability of a group \emph{simpliciter} was also introduced: by definition, $G$ is amenable if the left translation action of $G$ on itself is amenable in the above sense.

Every action of an amenable group is amenable, and for a long time this simple observation could account for most of the known examples of amenability in actions.\footnote{There are early examples of actions of nonamenable groups with asymptotic fixed points (e.g., in \cite{MvN43}), although the amenability of such actions was not adduced until much later (e.g., in \cite{Ef75, Pr83, BH86, JS87})}  A more systematic study of actions whose amenability could not be traced back to that of some acting group began with van Douwen's constructions of amenable actions of the free group \cite{vD90}, and has continued in recent years with \cite{MP03}, \cite{Pe03}, \cite{GM07}, \cite{GN07}, \cite{Mo10}, \cite{AE11c}, \cite{Mo11}, \cite{Mo11b}, \cite{Fi12}, \cite{JM13}, \cite{JNS13}, \cite{JS13}. A stunning recent application of amenable actions is in the article \cite{JM13} of Juschenko and Monod in which the authors turn the classical implication on its head, deducing amenability of a group from that of an action, thereby providing the first examples of infinite finitely generated simple amenable groups.

If a nonamenable group $G$ acts amenably on $X$ then it is well known that this action is far from being free: any $G$-invariant mean $\bm{m}$ on $X$ must concentrate on the set of points $x\in X$ whose associated stabilizer subgroup $G_x$ is nonamenable. Our first result strengthens this considerably by showing that, on a $\bm{m}$-conull set, the subgroups $G_x$ are in fact so large in $G$ as to be "visible from above." The precise statement uses the following variation of Popa's notions of $q$-normality and $wq$-normality \cite{Pop06b}. A subgroup $H$ of $G$ is said to be {\bf $q^*$-normal} in $G$ if the set $\{ g\in G\csuchthat gHg^{-1}\cap H \text{ is nonamenable}\}$ generates $G$. The subgroup $H$ is {\bf $wq^*$-normal} in $G$ if there exists an ordinal $\lambda$ and an increasing sequence $(H_\alpha )_{\alpha \leq \lambda}$ of subgroups of $G$, with $H_0=H$ and $H_\lambda = G$, such that $\bigcup _{\beta <\alpha} H_\beta$ is $q^*$-normal in $H_\alpha$ for all $\alpha \leq \lambda$. The notions of {\bf $q$-normal} and {\bf $wq$-normal} subgroups are defined in the same way, except with "nonamenable" replaced by "infinite." It is immediate that a $wq$-normal subgroup is necessarily infinite, and a $wq^*$-normal subgroup is necessarily nonamenable.

\begin{theorem1}\label{thm:wqstar}
Let $G$ be a finitely generated nonamenable group. Assume that $G$ acts amenably on $X$ and fix a $G$-invariant mean $\bm{m}$ on $X$. Then $G_x$ is $wq^*$-normal in $G$ for $\bm{m}$-almost every $x\in X$.
\end{theorem1}

\begin{example}\label{ex:fg}
The assumption of finite generation is necessary in the statement of Theorem \ref{thm:wqstar}. Let $G$ be a free group with free generating set $S = \{ s_i \} _{0\leq i<\infty}$, and let $G_n\leq G$ be the subgroup generated by $\{ s_i \} _{0\leq i<n}$. The action $G\cc X=\bigsqcup _{n\geq 0}G/G_n$ is amenable, although the stabilizer of any $x\in X$ is malnormal in $G$.
\end{example}

\begin{remark} Even when $G$ is not finitely generated Theorem \ref{thm:wqstar} can be applied to finitely generated subgroups, as in Corollary \ref{cor:obstr} below, to obtain a statement which holds for all countable groups. Corollary \ref{cor:obstr} also shows that Example \ref{ex:fg} is in fact the prototypical obstruction to $wq^*$-normality of stabilizer subgroups when $G$ is not finitely generated.
\end{remark}

\begin{remark}
Theorem \ref{thm:wqstar} can be strengthened. In \S\ref{sec:normality} we introduce a natural hierarchy of incremental strengthenings of $wq^*$-normality (see Definition \ref{def:nth}). The conclusion of Theorem \ref{thm:wqstar} then remains true when $wq^*$-normality is replaced by any of these strengthenings. In addition, a relativized version of Theorem \ref{thm:wqstar} holds; see Theorem \ref{thm:isoperim}.
\end{remark}

Theorem \ref{thm:wqstar} provides a means of studying measured group theoretical properties of $G$ via its amenable actions, since many such properties are known to be reflected in the structure of $wq$-normal subgroups. For example, Popa has shown that if $G$ contains a $wq$-normal subgroup whose Bernoulli shift is $\mathscr{U}_{\mathrm{fin}}$-cocycle superrigid, then the same holds for the Bernoulli shift of $G$ \cite{Pop07}. In \cite{PT11}, Peterson and Thom show that the first $\ell ^2$-Betti number, $\beta ^{(2)}_1(G)$, of $G$ is bounded above by that of its $wq$-normal subgroups. An analogous statement also holds for the pseudocost, $\mathscr{PC}(G)$, of $G$, see Proposition \ref{prop:Furman2}. We therefore obtain the following corollary, which strengthens a theorem of Promislow \cite{Pr83} concerning actions of free groups.

\begin{corollary1}\label{cor:l2cost}
Let $G$ be a countable nonamenable group. Assume that $G$ acts amenably on $X$ and fix a $G$-invariant mean $\bm{m}$ on $X$.
\begin{enumerate}
\item Suppose that $G$ is finitely generated. Then $\beta ^{(2)}_1(G_x)\geq \beta ^{(2)}_1(G)$ and $\mathscr{PC}(G_x)\geq \mathscr{PC}(G)$ for $\bm{m}$-almost every $x\in X$.
\item In general, if $\beta ^{(2)}_1(G)>r$, then $\beta ^{(2)}_1(G_x)>r$ for $\bm{m}$-almost every $x\in X$. Likewise, if $\mathscr{PC}(G)>r$ then $\mathscr{PC}(G_x)>r$ for $\bm{m}$-almost every $x\in X$.
\end{enumerate}
\end{corollary1}

\begin{proof}[Proof of Corollary \ref{cor:l2cost}]
Part (1) follows from Theorem \ref{thm:wqstar} using Theorem 5.6 of \cite{PT11} in the case of first $\ell ^2$-Betti number, and using Proposition \ref{prop:Furman2} in the case of pseudocost. For part (2), if $\mathscr{PC}(G) >r$ then by Proposition \ref{prop:PC} there exists a finitely generated nonamenable subgroup $H_0\leq G$ such that $\mathscr{PC}(H) >r$ for all $H_0\leq H \leq G$. By Corollary \ref{cor:obstr} there exists a $G$-map $\varphi : X\ra Y$ to a $G$-set $Y$ such that $G_x$ is $wq^*$-normal in $G_{\varphi (x)}$ and $H_0\leq G_{\varphi (x)}$ for $\bm{m}$-almost every $x\in X$. Therefore, by Proposition \ref{prop:Furman2}, $\mathscr{PC}(G_x)\geq \mathscr{PC}(G_{\varphi (x)})>r$ for $\bm{m}$-almost every $x\in X$. An analogous argument goes through for the first $\ell ^2$-Betti number using Corollary 5.13 of \cite{Ga02} in place of Proposition \ref{prop:PC}.
\end{proof}

\subsection{An isoperimetric threshold}\label{subsec:isop} Let $X$ be a $G$-set and for a finite subset $S$ of $G$ denote by $\phi _S(X)$ the {\bf isoperimetric constant} of the Schreier graph with respect to $S$, associated with the action
\begin{equation}\label{eqn:isop}
\phi _{S}(X) = \inf \Big\{ \sum _{s\in S} \frac{|sP\setminus P|}{|P|} \csuchthat P\subseteq X \text{ is finite and nonempty} \Big\} .
\end{equation}
When $S$ generates $G$, the value $\phi _{S}(G)$ is then the isoperimetric constant of the Cayley graph of $G$ with respect to $S$. We always have $\phi _{S}(X)\leq \phi _{S}(G)$, and if $S$ generates $G$ then $X$ is an amenable $G$-set if and only if $\phi _{S}(X) = 0$.

\begin{remark}
There are several variations of the definition \eqref{eqn:isop}. For example, it can often be convenient to work with the {\bf conductance constant} $h_S(X) = \frac{1}{|S|}\phi_S(X)$. See \cite{JN12} for a discussion in the case $X=G$. For our purpose, any fixed multiplicative renormalization of $\phi _S(X)$ would be suitable since our main interest will be in the ratio $\phi _S(X)/\phi _S (G)$.
\end{remark}

Assume now that $S$ generates $G$. Theorem \ref{thm:wqstar} then has a surprising consequence when combined with Kazhdan's trick. Namely, there exists a constant $\epsilon = \epsilon _{G,S} >0$ such that any subgroup $H$ of $G$ satisfying $\phi _{S}(G/H ) <\epsilon$ must be $wq^*$-normal in $G$. Indeed, otherwise, for each $n\geq 0$ there is a subgroup $H_n\leq G$ such that $\phi _{S}(G/H_n ) < 2^{-n}$ but with $H_n$ not $wq^*$-normal in $G$, so the amenable action $G\cc \bigsqcup _n G/H_n$ contradicts Theorem \ref{thm:wqstar}. While this argument does not give any indication about the actual value of $\epsilon _{G,S}$, we obtain a sharp estimate in \S\ref{subsec:isoperim}. (See \S\ref{sec:normality} for the definition of $n$-degree $\mathscr{N}^X$-$wq$-normality.)

\begin{theorem1}\label{thm:isoperim}
Let $G$ be a nonamenable group with finite generating set $S$ and let $H$ be a subgroup of $G$. If $\phi _{S}(G/H ) <\frac{1}{2}\phi _{S}(G)$, then $H$ is $wq^*$-normal in $G$. More generally, for each $G$-set $X$, and for each nonnegative integer $n$ we have the following implication
\[
\phi _S(G/H ) < \tfrac{1}{2^n}\phi _S (X) \ \Ra \ H\text{ is }n\text{-degree }\mathscr{N}^X\text{-}wq\text{-normal in }G .
\]
\end{theorem1}

\begin{example}
Theorem \ref{thm:isoperim} is sharp. Consider the free group $\F _2$ of rank $2$ with free generating set $S= \{ a, b \}$. We have $\phi _{S}(\F _2) = 1$ \cite[Example 47]{C-SGH99}. The subgroup $H=\langle a,bab^{-1}\rangle$ is \emph{not} $wq^*$-normal in $\F _2$ (although it is $q$-normal). An inspection of the Schreier graph of $\F _2 / H$ verifies that $\phi _{S}(\F _2 /H ) =\tfrac{1}{2}$.
\end{example}

\subsection{Transitive actions and a vanishing criterion} In the case of an amenable transitive action we obtain the following strengthening of Corollary \ref{cor:l2cost}.
\begin{theorem1}\label{thm:coamen}
Let $G$ be a countable group. Assume that $G$ acts amenably and transitively on an infinite set $X$ and fix some $x\in X$. If $\beta ^{(2)}_1(G_x)<\infty$ then $\beta ^{(2)}_1(G)=0$. Likewise, if $\mathscr{PC}(G_x )<\infty$ then $\mathscr{PC}(G)=1$.
\end{theorem1}

It follows that if $G$ is a group with $\beta ^{(2)}_1(G)>0$ or with $\mathscr{PC}(G)>1$, then for any finitely generated infinite index subgroup $H\leq G$, the action $G\cc G/H$ is \emph{not} amenable.

Theorem \ref{thm:coamen} is closely related to a vanishing criterion due to Peterson and Thom \cite{PT11}. They define a subgroup $H$ of $G$ to be {\bf $s$-normal} in $G$ if $gHg^{-1}\cap H$ is infinite for every $g\in G$; the notion of {\bf $ws$-normality} is then obtained by iterating $s$-normality transfinitely. Theorem 5.12 of \cite{PT11} states that if $G$ contains a $ws$-normal infinite index subgroup $H$ with $\beta ^{(2)}_1(H)<\infty$ then $\beta ^{(2)}_1(G) =0$. Ioana (unpublished) has shown that the analogous statement also holds for cost (Ioana's argument works for pseudocost as well). Theorem \ref{thm:coamen} would therefore follow from these results if the subgroup $G_x$ were always $ws$-normal in $G$. This turns out not to be the case however, as the following example shows.

\begin{example}\label{ex:ws}
Let $K$ be a group which is isomorphic with one of its proper malnormal subgroups $K_0$ (e.g., any nonabelian free group has this property, see \cite[Example 1]{BMR99}). Fix an isomorphism $\varphi :K\ra K_0$ and let $G=\langle t,K \, | \, tkt^{-1}=\varphi (k) \rangle$ be the associated HNN-extension. By Proposition 2 of \cite{MP03}, the action of $G$ on $G/K$ is amenable. However, $K$ is not $ws$-normal in $G$. To see this note that, from the semidirect product decomposition $G= \big( \bigcup _{n\geq 0}t^{-n}Kt^n \big) \rtimes \langle t \rangle$, it follows that every intermediate subgroup $K\lneq L \leq G$ contains an element of the form $g=t^{-n}kt^m$, where $n, m>0$ and $k\in K-K_0$, and clearly $gKg^{-1}\cap K = 1$.
\end{example}

\subsection{Inner amenability} In their 1943 study of $\mathrm{II}_1$ factors \cite{MvN43}, Murray and von Neumann distinguished the hyperfinite $\mathrm{II}_1$ factor from the free group factor $\mathrm{L}\F _2$ by means of {\bf property Gamma}, that is, the existence of nontrivial asymptotically central sequences. In demonstrating that $\mathrm{L}\F _2$ lacks this property, Murray and von Neumann hinted at a connection with amenability \cite[footnote 71]{MvN43}, remarking that their argument, which makes ancillary use of approximately invariant measures, closely mirrors Hausdorff's famous paradoxical division of the sphere. This connection was not made explicit however until 1975 when Effros \cite{Ef75} introduced the following group theoretic notion:

\begin{definition} A group $G$ is {\bf inner amenable} if the action of $G$ on itself by conjugation admits an atomless invariant mean.
\end{definition}

Effros showed that if a group factor $\mathrm{L}G$ has property Gamma, then $G$ is necessarily inner amenable. An ICC counterexample to the converse statement was found only very recently by Vaes \cite{Va12}.

The proof of Theorem \ref{thm:wqstar} naturally involves exploiting the tension between the nonamenability of $G$ and the amenability of the action. In the case of the conjugation action, this tension leads to remarkably strong consequences for the group theoretic and measured group theoretic structure of $G$. Many of these consequences will in fact be shown to hold in the more general setting of inner amenable pairs: If $H$ is a subgroup of $G$ then we say that {\bf the pair $(G,H)$ is inner amenable} if the conjugation action of $H$ on $G$ admits an atomless invariant mean.\footnote{This is a bit different from the notion, defined by Jolissaint in \cite{Jo13}, of {\bf $H$ being inner amenable relative to $G$}, which amounts to amenability of the conjugation action of $H$ on $G-H$. While Jolissaint's notion does not appear anywhere else in this article, to avoid conflicting terminology we will make sure to refer to inner amenability of the pair $(G,H)$ when referring to the notion defined in the main text.}

\subsection{The cost of inner amenable groups}
We let $\mathscr{C}(G)$ denote the cost of $G$, that is, $\mathscr{C}(G)$ is the infimum of the costs of free probability measure preserving actions of $G$. We let $\mathscr{C}^*(G)$ denote the supremum of the costs of free probability measure preserving actions of $G$. Then $G$ has {\bf fixed price} if $\mathscr{C}(G)=\mathscr{C}^*(G)$.

\begin{theorem1}\label{thm:FP1}
Let $G$ be a countable group.
\begin{enumerate}
\item[(1)] Suppose that $G$ contains a $wq$-normal subgroup $H$ such that $(G,H)$ is inner amenable. Then $\mathscr{C}(G)=1$.
\item[(2)] Suppose that $G$ is inner amenable. Then $\mathscr{C}(G)=1$ and $G$ has fixed price.
\end{enumerate}
\end{theorem1}

\begin{remark}\label{rem:prod}
In part (1) of Theorem \ref{thm:FP1} it would be desirable to additionally obtain that $G$ has fixed price. The proof of part (1) shows that this holds if and only if direct products of infinite groups have fixed price, which is a well known open problem.
\end{remark}

As a consequence of Theorem \ref{thm:FP1} we recover the result of Chifan, Sinclair, and Udrea \cite[Corollary D]{CSU13}, that inner amenable groups have vanishing first $\ell ^2$-Betti number. Moreover, we obtain a strengthening which holds for inner amenable pairs.

\begin{corollary1}\label{cor:l2Betti}
Let $G$ be a countable group and suppose that $G$ contains a wq-normal subgroup $H$ such that the pair $(G,H)$ is inner amenable. Then $\beta ^{(2)}_1(G)=0$. In particular, if $G$ is inner amenable then $\beta ^{(2)}_1(G)=0$.
\end{corollary1}

\begin{proof}
This follows from Theorem \ref{thm:FP1} and the inequality $\beta ^{(2)}_1(G)\leq \mathscr{C}(G)-1$ 
due to Gaboriau \cite{Ga02}. Alternatively, a direct proof may be obtained by observing that each step of the proof of Theorem \ref{thm:FP1} in \S\ref{sec:FP1} has an analogue for the first $\ell ^2$-Betti number.
\end{proof}

In \cite{AN08}, Ab\'{e}rt and Nikolov show that for a finitely generated, residually finite group $G$, the rank gradient of any Farber chain in $G$ is equal to one less than the cost of the associated boundary action of $G$. We therefore obtain the following corollary.

\begin{corollary1}\label{cor:RG}
Let $G$ be a finitely generated, residually finite group which is inner amenable. Then the rank gradient of any Farber chain in $G$ vanishes. In particular, the absolute rank gradient of $G$ vanishes.
\end{corollary1}

The two main ingredients in the proof of Theorem \ref{thm:FP1}.(2) concern the subgroup structure of nonamenable inner amenable groups.

\begin{theorem1}\label{thm:relmain1}
Let $G$ be a nonamenable inner amenable group. Then every nonamenable subgroup of $G$ is $wq$-normal in $G$.
\end{theorem1}

The next result roughly states that very large portions of $G$ commute. To make the statement somewhat less cumbersome we define $\mathscr{N}$ to be the collection of all nonamenable subgroups of $G$.

\begin{theorem1}\label{thm:subgroup}
Let $G$ be a nonamenable inner amenable group. Then at least one of the following holds:
\begin{enumerate}
\item[(1)] For every finite $\mathscr{F}\subseteq \mathscr{N}$ there exists an infinite amenable subgroup $K$ of $G$ such that $L\cap C_G(K)$ is nonamenable for all $L\in \mathscr{F}$.
\item[(2)] For every finite $\mathscr{F}\subseteq \mathscr{N}$ there exists an increasing sequence $M_0\leq M_1\leq \cdots$ of finite subgroups of $G$, with $\lim _{n\ra\infty}|M_n|=\infty$, such that $L\cap C_G(M_n)$ is nonamenable for all $L\in \mathscr{F}$, $n\in \N$.
\item[(3)] For every finite $\mathscr{F}\subseteq \mathscr{N}$ and every $n\in \N$ there exist pairwise commuting nonamenable subgroups $K_0,K_1,\dots , K_{n-1} \leq G$ such that $L\cap C_G(K_i)$ is nonamenable for all $L\in \mathscr{F}$, $i<n$. Moreover, there exists a sequence $(M_n)_{n\in \N}$ of finite subgroups of $G$ with $\lim _{n\ra\infty}|M_n|=\infty$ such that $L\cap C_G(M_n)$ is nonamenable for all $L\in \mathscr{F}$, $n\in \N$.
\end{enumerate}
\end{theorem1}

\begin{corollary1}\label{cor:subgroup}
Let $G$ be a nonamenable inner amenable group. Then $G$ either contains an infinite amenable subgroup or $G$ contains finite subgroups of arbitrarily large order. In addition, $G$ contains an infinite subgroup $K$ such that $C_G(K)$ is infinite.
\end{corollary1}

\begin{proof}[Proof of Corollary \ref{cor:subgroup}]
The first statement follows easily from Theorem \ref{thm:subgroup}. The second statement is clear if either (1) or (3) of Theorem \ref{thm:subgroup} holds. If (2) holds then $G$ contains an infinite locally finite subgroup, hence by \cite{HK64} $G$ contains an infinite abelian subgroup $A$, so $A\leq C_G(A)$.
\end{proof}

\subsection{Cocycle superrigidity} If $H$ is a subgroup of $G$, then a cocycle $w$ of a probability measure preserving action of $G$ is said to {\bf untwist on $H$} if $w$ is cohomologous to a cocycle $w'$ whose restriction to $H$ is a homomorphism. Following \cite{Pop07, Pop08}, let $\mathscr{U}_{\mathrm{fin}}$ denote the class of all Polish groups which embed as a closed subgroup of the unitary group of a finite von Neumann algebra. A free, probability measure preserving action of $G$ is said to be {\bf $\mathscr{U}_{\mathrm{fin}}$-cocycle superrigid} if every cocycle for the action which takes values in some group in $\mathscr{U}_{\mathrm{fin}}$ untwists on the entire group $G$.

Popa's Second Cocycle Superrigidity Theorem (Theorem 1.1 of \cite{Pop08}) provides general conditions for a cocycle which takes values in some group $L\in \mathscr{U}_{\mathrm{fin}}$, to untwist on the centralizer $C_G(H)$ of a nonamenable subgroup $H$ of $G$. The following theorem, which is joint with Adrian Ioana, strengthens Popa's theorem by showing that, under suitable conditions, the untwisting in fact occurs on the centralizer $C_{\mathscr{M}(G)}(H)$ of $H$ in the semigroup $\mathscr{M}(G)$ of all means on $G$.

\begin{theorem1}[with A. Ioana] \label{thm:superrigid1}
Let $H$ be a nonamenable subgroup of $G$ such that the pair $(G, H)$ is inner amenable. Let $G\cc ^{\sigma} (X,\mu )$ be a p.m.p.\ action of $G$ and assume that
\begin{itemize}
\item $\sigma _{|H}$ has stable spectral gap;
\item $\sigma _{|C_{\mathscr{M}(G)}(H)}$ is weakly mixing (see Definition \ref{def:WM});
\item $\sigma$ is $s$-malleable.
\end{itemize}
Then there exists a subgroup $G_0$ of $G$ with $H\leq G_0\leq G$ such that
\begin{enumerate}
\item Every $H$-conjugation-invariant mean on $G$ concentrates on $G_0$;
\item Every cocycle $w : G\times X \ra L$ which takes values in a group $L \in \mathscr{U}_{\mathrm{fin}}$ untwists on $G_0$.
\end{enumerate}
\end{theorem1}

Theorem \ref{thm:superrigid1} applies to the Bernoulli shift of $G$ whenever $H\leq G$ is nonamenable and the pair $(G,H )$ is inner amenable. Applying Lemma 3.5 of \cite{Fu07}, we therefore obtain:

\begin{corollary1}\label{cor:superrigid}
Let $G$ be a countable group containing a $wq$-normal nonamenable subgroup $H$ such that the pair $(G,H)$ is inner amenable. Then the Bernoulli shift of $G$ is $\mathscr{U}_{\mathrm{fin}}$-cocycle superrigid. In particular, the Bernoulli shift of any nonamenable inner amenable group is $\mathscr{U}_{\mathrm{fin}}$-cocycle superrigid.
\end{corollary1}

Corollary \ref{cor:superrigid} strengthens a result of Peterson and Sinclair \cite{PS11}, stating that the Bernoulli shift of $G$ is $\mathscr{U}_{\mathrm{fin}}$-cocycle superrigid provided $G$ is nonamenable and $\mathrm{L}G$ has property Gamma.

The case $H=G$ of Corollary \ref{cor:superrigid} would follow from Popa's theorem combined with Lemma 3.5 of \cite{Fu07} and Theorems \ref{thm:relmain1} and \ref{thm:subgroup} above, provided that alternative (2) could be dropped from the statement of Theorem \ref{thm:subgroup}. However, the following example exhibits an inner amenable group with the property that the centralizer of every nonamenable subgroup is finite; in particular, such a group does not satisfy either of the alternatives (1) or (3) of Theorem \ref{thm:subgroup}.

\begin{example}\label{ex:faction}
Let $\F _2 \cc X$ be a transitive amenable action of the free group $\F _2$ on an infinite set $X$ with the following property: for all $u\in \F _2 - 1$ the set $\{ P\in \mathscr{P}_{\text{f}}(X) \csuchthat u\cdot P = P \}$ is finite, where $\mathscr{P}_{\text{f}}(X)$ denotes the collection of all finite subsets of $X$. Such an action is constructed in Theorem \ref{thm:safree}. The group $G= \mathscr{P}_{\text{f}}(X)\rtimes \F _2 \cong (\bigoplus _{x\in X}\Z _2 ) \rtimes \F _2$ is then inner amenable so Corollary \ref{cor:superrigid} applies to $G$. The group $G$ is also finitely generated and ICC. In addition, the centralizer of any nonamenable subgroup of $G$ is finite, or equivalently, the centralizer of every infinite subgroup $H$ of $G$ is amenable. Indeed, if $(P,u) \in H$ then we have $C_G(H)\leq C_G((P,u)) \leq \mathscr{P}_{\text{f}}(X)\rtimes C_{\F _2}(u)$, which is amenable unless $u = 1$. We may therefore assume that $H\leq \mathscr{P}_{\text{f}}(X)$, in which case, since $H$ is infinite we have
\[
C_G(H) = \mathscr{P}_{\text{f}}(X)\rtimes \{ u\in \F _2 \csuchthat u\cdot P = P \text{ for all }P\in H \} = \mathscr{P}_{\text{f}}(X)\rtimes 1 ,
\]
which is amenable.
\end{example}

\subsection{The structure of inner amenable linear groups}\label{sec:strlinear}
In \S\ref{sec:radicals} we characterize inner amenability for linear groups in terms of a certain amenable characteristic subgroup of $G$. The {\bf AC-center} of a countable group $G$ is the subgroup
\[
\mathscr{AC}(G) = \langle \{ N\leq G\csuchthat N\text{ is normal in }G\text{ and }G/C_G(N)\text{ is amenable}\}\rangle .
\]
The {\bf inner radical} of $G$ is the subgroup
\[
\mathscr{I}(G) = \langle \{ N\leq G\csuchthat N\text{ is normal in }G\text{ and the action }N\rtimes G \cc N\text{ is amenable} \}\rangle .
\]
Here, $N\rtimes G \cc N$ is the action where $N$ acts by left translation and $G$ acts by conjugation. The relevant properties of these subgroups are summarized in the following theorem.

\begin{theorem1}\label{thm:I(G)} Let $G$ be a countable group.
\begin{itemize}
\item[i.] $\mathscr{AC}(G)$ and $\mathscr{I}(G)$ are amenable characteristic subgroups of $G$;
\item[ii.] $\mathscr{AC}(G)\leq \mathscr{I}(G)$;
\item[iii.] The actions $\mathscr{AC}(G)\rtimes G \cc \mathscr{AC}(G)$ and $\mathscr{I}(G)\rtimes G\cc \mathscr{I}(G)$ are amenable;
\item[iv.] $G/C_G(\mathscr{AC}(G))$ is residually amenable;
\item[v.] If $\mathscr{I}(G)$ is infinite then $G$ is inner amenable;
\item[vi.] Let $N$ be a normal subgroup of $G$ with $N\leq \mathscr{I}(G)$. Then $\mathscr{I}(G/N) = \mathscr{I}(G)/N$;
\item[vii.] $\mathscr{I}(G/\mathscr{I}(G))=1$ and $G/\mathscr{I}(G)$ is $\mathrm{ICC}$;
\item[viii.] Every conjugation invariant mean on $G/\mathscr{I}(G)$ is the projection of a conjugation invariant mean on $G$.
\end{itemize}
Moreover, if $G$ is linear then
\begin{itemize}
\item[ix.] $\mathscr{AC}(G)=\mathscr{I}(G)$;
\item[x.] $G/C_G(\mathscr{I}(G))$ is amenable;
\item[xi.] $\mathscr{I}(G)=C_G(C_G(\mathscr{I}(G)))$;
\item[xii.]$G/\mathscr{I}(G)$ is not inner amenable;
\item[xiii.] Every conjugation invariant mean on $G$ concentrates on $\mathscr{I}(G)$;
\item[xiv.] Let $N$ be a normal subgroup of $G$ with $N\leq \mathscr{I}(G)$. Then \emph{ix.}\ through \emph{xiii.}\ all hold with $G/N$ in place of $G$.
\end{itemize}
\end{theorem1}

\begin{remark}
Theorem \ref{thm:I(G)}.xi.\ implies that if $G$ is linear then so are the groups $G/\mathscr{I}(G)$ and $G/C_G(\mathscr{I}(G))$ (see 
Theorem 6.2 of \cite{We69}). It then follows from item x.\ and the Tits alternative that if $G$ is additionally finitely generated, then $\mathscr{I}(G)$ is virtually solvable.
\end{remark}

Using Theorem \ref{thm:I(G)} we are able to show that within the class of linear groups, inner amenability occurs only for the most obvious reasons: every linear inner amenable group is an amenable extension either of a group with infinite center or of a near product group in which one of the factors is infinite and amenable. More precisely, we obtain the following structure theorem for inner amenable linear groups.

\begin{theorem1}\label{thm:linear} Let $G$ be a countable linear group. Then the following are equivalent:
\begin{itemize}
\item[(1)] $G$ is inner amenable.
\item[(2)] $\mathscr{I}(G)$ is infinite.
\item[(3)] There exists a short exact sequence $1\ra N \ra G \ra K \ra 1$, where $K$ is amenable 
and either
\begin{itemize}
\item[$\bullet$] $Z(N)$ is infinite, or
\item[$\bullet$] $N = LM$, where $L$ and $M$ are commuting normal subgroups of $G$ such that $M$ is infinite and amenable, and $L\cap M$ is finite.
\end{itemize}
\end{itemize}
\end{theorem1}

In \cite{Sch86}, Schmidt raises the question of whether every inner amenable group $G$ possesses a free ergodic p.m.p.\ action $G\cc (X,\mu )$ which generates an orbit equivalence relation $\mc{R}^G_X$ for which the outer automorphism group of the full group $[\mc{R}^G_X]$ is not Polish, or equivalently, for which the full group $[\mc{R}^G_X]$ contains an asymptotically central sequence $(T_n)_{n\in \N}$ with $\liminf _n \mu ( \{ x \in X \csuchthat T_n x \neq x \} ) >0$. See also Problem 9.3 of \cite{Ke10}. Using Theorems \ref{thm:I(G)}, \ref{thm:linear}, and Theorem \ref{thm:linearstable} below, we obtain a positive answer to Schmidt's question when $G$ is linear.

\begin{theorem1}\label{thm:Schmidt}
A countable linear group $G$ is inner amenable if and only if there exists a free ergodic p.m.p.\ action $G\cc (X,\mu )$ such that the outer automorphism group of $[\mc{R}^G_X]$ is not Polish.
\end{theorem1}

\subsection{Stability} A discrete probability measure preserving equivalence relation $\mc{R}$ is said to be {\bf stable} if it is isomorphic to its direct product $\mc{R}\times \mc{R}_0$ with the equivalence relation $\mc{R}_0$, of eventual equality on $2^\N$ equipped with the uniform product measure. A countable group $G$ is said to be {\bf stable} if it possesses a free ergodic probability measure preserving action which generates a stable equivalence relation. Stability was introduced by Jones and Schmidt in \cite{JS87}, where it was also shown that stable groups are necessarily inner amenable. The first examples of ICC inner amenable groups which are not stable were recently constructed by Kida \cite{Ki12}; these groups are obtained as HNN extensions of property (T) groups with infinite center. Further results of Kida from \cite{Ki13a} show that if the center $Z(G)$ of a group $G$ is infinite, then the question of whether $G$ is stable is intimately related to the question of whether the pair $(G,Z(G))$ lacks relative property (T). Using Theorem \ref{thm:I(G)}, in \S\ref{sec:stability} we are able to completely characterize stability for linear groups in terms of relative property (T).

\begin{theorem1}\label{thm:linearstable}
Let $G$ be a countable linear group. Then the following are equivalent:
\begin{enumerate}
\item $G$ is stable.
\item The pair $(G, \mathscr{I}(G))$ does not have relative property \emph{(T)}.
\end{enumerate}
\end{theorem1}

\begin{remark}\label{rem:minC}
The hypothesis that $G$ is linear in Theorems \ref{thm:I(G)}.ix.-xiv., \ref{thm:linear}, \ref{thm:Schmidt}, and \ref{thm:linearstable} can be weakened: we only need to assume that $G$ satisfies the {\bf minimal condition on centralizers}, that is, every decreasing sequence $C_G(A_0)\geq C_G(A_1)\geq \cdots$ of centralizers of subsets of $G$ eventually stabilizes. Every linear group satisfies the minimal condition on centralizers, since centralizers of arbitrary subsets of $\mathrm{GL}_n(F)$ are closed in the Zariski topology. \end{remark}

In addition to Theorem \ref{thm:I(G)}, an essential component in the proof of Theorem \ref{thm:linearstable} is the following extension theorem for stability (see \S\ref{sec:Kida} for the definition of stability sequence).

\begin{theorem1}\label{thm:extension}
Let $1\ra N\ra G\ra K \ra 1$ be a short exact sequence of groups in which $K$ is amenable. Assume that there exists a probability measure preserving action $G\cc (X,\mu )$ such that the translation groupoid $N\ltimes (X,\mu )$ admits a stability sequence. Then $G$ is stable.
\end{theorem1}

Theorem \ref{thm:extension} has a variety of applications outside the context of linear groups. Under each of the following hypotheses {\bf (H1)}-{\bf (H6)}, the stability of $G$ will be established in \S\ref{sec:stability} by applying Theorem \ref{thm:extension} to an appropriate input action of $G$. The application of Theorem \ref{thm:extension} to groups satisfying {\bf (H4)} and the ensuing Corollary \ref{cor:gBS} were kindly suggested by Yoshikata Kida (remarking on an earlier draft of this paper), who had obtained stability of $G$ from {\bf (H4)} by different means.

\begin{theorem1}\label{thm:extension2}
Let $1\ra N\ra G\ra K \ra 1$ be a short exact sequence of groups in which $K$ is amenable. Then $G$ is stable provided at least one of the following hypotheses holds:
\begin{enumerate}
\item[{\bf (H1)}] $N = LM$, where $L$ and $M$ are commuting subgroups of $N$ which are normal in $G$, with $M$ amenable and $[N:L]=\infty$.

\item[{\bf (H2)}] There exists a central subgroup $C$ of $N$ such that the pair $(N,C)$ does not have relative property \emph{(T)}.

\item[{\bf (H3)}] There exists a sequence $L_0\leq L_1\leq \cdots$, of subgroups of $N$ with $N=\bigcup _{m\in \N}L_m$, and for each $m\in \N$ there exists a central subgroup $D_m$ of $L_m$ such that the pair $(L_m, D_m )$ does not have relative property \emph{(T)}.

\item[{\bf (H4)}] There exists a commensurated abelian subgroup $A$ of $G$ such that $N$ is the kernel of the modular homomorphism from $G$ into the abstract commensurator of $A$, and the pair $(N,A)$ does not have relative property \emph{(T)}.

\item[{\bf (H5)}] $N$ has the Haagerup property and is asymptotically commutative, i.e., there exists an injective sequence $(c_n)_{n\in \N}$ in $N$ such that each $h\in N$ commutes with $c_n$ for cofinitely many $n\in \N$.

\item[{\bf (H6)}] $N$ is doubly asymptotically commutative, i.e., there exist sequences $(c_n)_{n\in\N}$ and $(d_n)_{n\in \N}$ in $N$ such that $c_nd_n\neq d_nc_n$ for all $n\in \N$, and each $h\in N$ commutes with both $c_n$ and $d_n$ for cofinitely many $n\in \N$.
\end{enumerate}
\end{theorem1}

\begin{corollary1}[Y. Kida] \label{cor:gBS}
Let $G$ be a generalized Baumslag-Solitar group (i.e., the Bass-Serre fundamental group of a finite graph of infinite cyclic groups), or an HNN-extension of $\Z ^n$ relative to an isomorphism between two finite index subgroups. Then $G$ is stable.
\end{corollary1}

\begin{proof}[Proof of Corollary \ref{cor:gBS}]
Suppose first that $G$ is a generalized Baumslag-Solitar group. Then any vertex group $A\leq G$ is commensurated by $G$, and if $N$ denotes the kernel of the modular homomorphism from $G$ into the abstract commensurator $\mathrm{comm}(A)$ of $A$, then $G/N$ is abelian, since $\mathrm{comm}(A)$ is isomorphic to $\Q ^*$. By Corollary 1.7 of \cite{CV12}, $G$ has the Haagerup property, so the pair $(N,A)$ does not have property (T). The hypothesis {\bf (H4)} is therefore satisfied, so $G$ is stable by Theorem \ref{thm:extension2}.

The case where $G$ is an HNN-extension of $\Z ^n$ relative to an isomorphism between two finite index subgroups is similar. The image of $G$ under the modular homomorphism into the abstract commensurator of $\Z ^n$ is cyclic and, letting $N$ denote the corresponding kernel, the pair $(N,\Z ^n )$ does not have property (T) since by Corollary 1.7 of \cite{CV12}, $G$ has the Haagerup property. Hypothesis {\bf (H4)} once again holds, so $G$ is stable by Theorem \ref{thm:extension2}.
\end{proof}

\begin{example} (i) Let $K$ be an infinite amenable group acting on a countable set $X$, and let $H$ be any countable group. Then the restricted wreath product $H\wr _X K$ is stable. This is clear if $H$ is amenable, and it follows from Theorem \ref{thm:extension2} via {\bf (H1)} if $X$ is finite. In the remaining case, the group $\bigoplus _X H$ is doubly asymptotically commutative, so Theorem \ref{thm:extension2} applies to $G$ via {\bf (H6)}, using the short exact sequence $1\ra \bigoplus _X H \ra H\wr _X K \ra K \ra 1$.

(ii) Let $H$ be a group which is doubly asymptotically commutative. Let $\varphi : H\ra H$ be an injective homomorphism and let $G = \langle t, H \, | \, tht^{-1} = \varphi (h)\rangle$ be the associated ascending HNN-extension. Theorem \ref{thm:extension2} then shows that $G$ is stable, since we have a short exact sequence $1\ra N \ra G \ra \Z \ra 1$ in which the group $N = \bigcup _{i\in \N} t^{-i}Ht^i$ is doubly asymptotically commutative, and hence the hypothesis {\bf (H6)} holds. Similarly, if we instead assume that $H$ is an increasing union $H=\bigcup _m H_m$, where for each $m\in \N$ the pair $(H_m, Z(H_m))$ does not have property (T), then the any ascending HNN extension of $H$ will be stable via {\bf (H3)}.
\end{example}

Notably, Theorem \ref{thm:extension2} also applies to the group $H(\R )$, recently studied by Monod \cite{Mo13}, consisting of all homeomorphisms of the projective line $\bm{\mathrm{P}}^1$ which fix $\infty$ and are piecewise in $\mathrm{PSL}_2(\R )$ with respect to a finite subdivision of $\bm{\mathrm{P}}^1$. It is shown by Monod in \cite{Mo13} that $H(\R )$ does not contain any nonabelian free subgroups, and Theorem 1 of \cite{Mo13} exhibits a family of countable nonamenable subgroups of $H(\R )$. An explicit finitely presented nonamenable subgroup of $H(\R )$ is constructed by Lodha and Moore in \cite{LM13}. We now have the following.

\begin{theorem1}\label{thm:HR}
Every nontrivial countable subgroup of $H(\R )$ is stable.
\end{theorem1}

Since Thompson's group $F$ is a subgroup of $H(\R )$, Theorem \ref{thm:HR} implies:

\begin{corollary1}\label{cor:F}
Thompson's group $F$ is stable. In particular, $F$ and $F\times A$ are measure equivalent, where $A$ is any amenable group.
\end{corollary1}

Corollary \ref{cor:F} yields a new proof of the fact, due to L\"{u}ck \cite{Lu02} and also proved by Bader, Furman, and Sauer in \cite{BFS12}, that all $\ell ^2$-Betti numbers of $F$ vanish. Indeed, Gaboriau has shown that vanishing of $\ell ^2$-Betti numbers is an invariant of measure equivalence \cite{Ga02}, and by \cite{CG86} all $\ell ^2$-Betti numbers of $F\times \Z$ vanish.

\newpage

\setcounter{tocdepth}{3}
\tableofcontents

\section{Weak forms of normality}\label{sec:normality}
In this section we gather some facts about $wq^*$-normality (defined in \S\ref{subsec:intro1}), and we discuss several related normality conditions. We work in the following general setting. Fix an ambient group $G$ along with a nonempty collection $\mathscr{L}$ of subgroups of $G$ which is upward closed in $G$. Let $H\leq M$ be subgroups of $G$. We say that $H$ is {\bf $\mathscr{L}$-$q$-normal} in $M$, denoted $H\leq ^{\mathscr{L}}_{q} M$, if the set $\{ g\in M\csuchthat gHg^{-1}\cap H \in \mathscr{L}\}$ generates $M$. We say that $H$ is {\bf $\mathscr{L}$-$wq$-normal} in $M$, denoted $H\leq ^{\mathscr{L}}_{wq}M$, if there exists an ordinal $\lambda$ and an increasing sequence $(H_\alpha )_{\alpha \leq \lambda}$ of subgroups of $M$, with $H_0=H$ and $H_\lambda = M$, such that $\bigcup _{\beta <\alpha} H_\beta \, \leq ^{\mathscr{L}}_q H_\alpha$ for all $\alpha \leq \lambda$. The notions of $wq$-normality and $wq^*$-normality then correspond to taking $\mathscr{L}$ to be, respectively, the collection $\mathscr{I}$, of infinite subgroups of $G$, and the collection $\mathscr{N}$, of nonamenable subgroups of $G$. Given a $G$-set $X$, we will be interested in the collection
\[
\mathscr{N}^X = \{ H\leq G \csuchthat H\cc X \text{ is nonamenable}\} .
\]
For $S\subseteq G$ finite and $r>0$, the collection $\mathscr{L}^{S,r} = \{ H\leq G\csuchthat \phi _S (G/H ) < r \}$ (where $\phi _S (G/H )$ is defined by \ref{eqn:isop}) will also be of interest, albeit less directly than $\mathscr{N}^X$. Both of these collections are upward closed in $G$ (for $\mathscr{L}^{S,r}$ this follows from Lemma \ref{lem:push} below) and both are invariant under conjugation by $G$.

The following characterization of $\mathscr{L}$-$wq$-normality, along with its proof, is a straightforward extension of \cite[Lemma 5.2]{PT11}.

\begin{lemma}\label{lem:PT} Let $H\leq M$ be subgroups of $G$. Then $H\leq _{wq}^{\mathscr{L}} M$ if and only if for any intermediate proper subgroup $H\leq K\lneq M$ there exists $g\in M \setminus K$ such that $gKg^{-1}\cap K\in \mathscr{L}$.
\end{lemma}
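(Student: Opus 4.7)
The plan is to argue both directions by straightforward transfinite manipulations of the witnessing sequence, with the upward closure of $\mathscr{L}$ doing most of the work. A preliminary observation does most of the heavy lifting: applying the hypothesis once (with $K = H$ in the case $H \lneq M$; the case $H = M$ is trivial) produces some $g \in M \setminus H$ with $gHg^{-1} \cap H \in \mathscr{L}$, and since this intersection is a subgroup of $H$, the upward closure of $\mathscr{L}$ forces $H \in \mathscr{L}$ and hence puts every subgroup of $M$ containing $H$ into $\mathscr{L}$.

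Forward direction: suppose $H \leq^{\mathscr{L}}_{wq} M$ with witnessing sequence $(H_\alpha)_{\alpha \leq \lambda}$, and fix an intermediate proper subgroup $H \leq K \lneq M$. Let $\alpha_0$ be the least ordinal with $H_{\alpha_0} \not\subseteq K$; this exists since $H_\lambda = M$, and $\alpha_0 > 0$ since $H_0 = H \subseteq K$. Setting $K' := \bigcup_{\beta < \alpha_0} H_\beta$, minimality gives $K' \subseteq K$. The $\mathscr{L}$-$q$-normality of $K'$ in $H_{\alpha_0}$ produces a generating set for $H_{\alpha_0}$ consisting of elements $g$ with $gK'g^{-1} \cap K' \in \mathscr{L}$; since $H_{\alpha_0}$ is not contained in the subgroup $K$, this generating set cannot be either, yielding some such $g \in H_{\alpha_0} \setminus K \subseteq M \setminus K$. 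Then $gK'g^{-1} \cap K' \leq gKg^{-1} \cap K$, so upward closure gives $gKg^{-1} \cap K \in \mathscr{L}$, as required.

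Backward direction: construct the witnessing sequence by transfinite recursion. Put $H_0 := H$; at a successor stage define $H_{\alpha+1} := \langle \{g \in M : gH_\alpha g^{-1} \cap H_\alpha \in \mathscr{L}\} \rangle$; at a limit define $H_\alpha := \bigcup_{\beta < \alpha} H_\beta$. Whenever $H_\alpha \lneq M$, the hypothesis applied with $K = H_\alpha$ provides $g \in M \setminus H_\alpha$ in the generating set, ensuring strict growth at successor stages; by cardinality the construction must therefore stabilize at some $\lambda$ with $H_\lambda = M$. The $\mathscr{L}$-$q$-normality condition at successor stages is built into the construction. At a limit stage $\alpha$, the preliminary observation yields $H_\alpha \in \mathscr{L}$, so every $h \in H_\alpha$ satisfies $hH_\alpha h^{-1} \cap H_\alpha = H_\alpha \in \mathscr{L}$, which witnesses $\bigcup_{\beta < \alpha} H_\beta \leq^{\mathscr{L}}_{q} H_\alpha$ trivially.

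The main obstacle is exactly the verification at limit stages in the backward direction: without the upward-closure observation, the condition $\bigcup_{\beta < \alpha} H_\beta \leq^{\mathscr{L}}_{q} H_\alpha$ would in general be vacuous at limits (the only candidates for the generating elements $g$ lie in $H_\alpha$ itself, and conjugation by such $g$ stabilizes $H_\alpha$). Pushing $H$ into $\mathscr{L}$ at the outset, which the hypothesis together with upward closure achieves essentially for free, is precisely what rescues the limit steps and allows the parametrized argument to proceed in the same style as \cite[Lemma 5.2]{PT11}.
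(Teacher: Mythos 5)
Your proof is correct and takes essentially the same route as the paper's: the paper gives no written argument, deferring to \cite[Lemma 5.2]{PT11} as a ``straightforward extension,'' and your proof (taking the least $\alpha_0$ with $H_{\alpha_0}\not\subseteq K$ in the forward direction, and transfinite recursion in the backward direction) is precisely that extension. Your observation that upward closure of $\mathscr{L}$ forces $H\in\mathscr{L}$, which is what makes the limit stages of the recursion satisfy $\bigcup_{\beta<\alpha}H_\beta\leq^{\mathscr{L}}_q H_\alpha$, is exactly where the standing hypothesis on $\mathscr{L}$ is used, so nothing is missing.
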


Let $H\leq M$ be subgroups of $G$ and let $\wh{H}$ denote the union of all subgroups $L\leq M$ with $H\leq _{wq}^{\mathscr{L}}L$. We call $\wh{H}$ the {\bf $\mathscr{L}$-$wq$-closure of $H$ in $M$}.

\begin{proposition}\label{prop:wqclosure}
$\wh{H}$ is a subgroup of $M$. Moreover, $\wh{H}$ is the unique subgroup of $M$ satisfying $(i)$ $H\leq _{wq}^{\mathscr{L}} \wh{H}$ and $(ii)$ $g\wh{H}g^{-1} \cap \wh{H} \not\in \mathscr{L}$ for every $g\in M\setminus \wh{H}$.
\end{proposition}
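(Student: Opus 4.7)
The plan is to first show $\wh{H}$ is a subgroup and verify property (i), then property (ii), and finally derive uniqueness. Let $\wh{H}'$ denote the subgroup of $M$ generated by the union $\bigcup\{L\leq M\csuchthat H\leq_{wq}^{\mathscr{L}}L\}$; I would show $H\leq_{wq}^{\mathscr{L}}\wh{H}'$ via Lemma \ref{lem:PT}, which by definition of $\wh{H}$ forces $\wh{H}'\subseteq\wh{H}$ and hence $\wh{H}=\wh{H}'$, giving subgroup-ness together with property (i) in one stroke. To apply Lemma \ref{lem:PT}, I would fix any intermediate proper subgroup $H\leq K\lneq\wh{H}'$. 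Since $\wh{H}'$ is generated by the family, some $L$ in the family fails to lie in $K$, so $L\cap K$ is a proper subgroup of $L$ containing $H$. Applying Lemma \ref{lem:PT} inside $L$ produces $g\in L\setminus K$ with $g(L\cap K)g^{-1}\cap(L\cap K)\in\mathscr{L}$, and upward closure of $\mathscr{L}$ promotes this to $gKg^{-1}\cap K\in\mathscr{L}$, as the latter contains the former.

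For property (ii), I would argue by contradiction: if some $g\in M\setminus\wh{H}$ satisfied $g\wh{H}g^{-1}\cap\wh{H}\in\mathscr{L}$, then upward closure immediately forces $\wh{H}\in\mathscr{L}$, since $\wh{H}$ contains that intersection. Consequently every $h\in\wh{H}$ gives $h\wh{H}h^{-1}\cap\wh{H}=\wh{H}\in\mathscr{L}$, so the set $\{x\in\langle\wh{H},g\rangle\csuchthat x\wh{H}x^{-1}\cap\wh{H}\in\mathscr{L}\}$ contains $\wh{H}\cup\{g\}$ and therefore generates $\langle\wh{H},g\rangle$. This says $\wh{H}\leq_{q}^{\mathscr{L}}\langle\wh{H},g\rangle$, and concatenating a transfinite $\mathscr{L}$-$wq$-normal chain from $H$ to $\wh{H}$ with this one additional step yields $H\leq_{wq}^{\mathscr{L}}\langle\wh{H},g\rangle$, forcing $\langle\wh{H},g\rangle\subseteq\wh{H}$ by the definition of $\wh{H}$ and contradicting $g\notin\wh{H}$.

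For uniqueness, any $K\leq M$ satisfying (i) is automatically contained in $\wh{H}$ by definition. If the containment were strict, then Lemma \ref{lem:PT} applied to $H\leq K\lneq\wh{H}$, using property (i) for $\wh{H}$ already established, would produce $g\in\wh{H}\setminus K\subseteq M\setminus K$ with $gKg^{-1}\cap K\in\mathscr{L}$, directly contradicting property (ii) for $K$. The linchpin of the entire argument is the upward-closure hypothesis on $\mathscr{L}$, invoked twice: once to lift intersections from the smaller subgroup $L$ up to $K$ in the subgroup step, and once to promote $g\wh{H}g^{-1}\cap\wh{H}\in\mathscr{L}$ to $\wh{H}\in\mathscr{L}$ in step (ii). Beyond this the reasoning is essentially formal, so I expect no serious difficulty; the main point to verify carefully is that the one-step extension in (ii) genuinely qualifies as an $\mathscr{L}$-$q$-normal embedding under the paper's definition and that gluing it to the preceding chain still satisfies the $q$-normality condition at every stage, including at the terminal successor ordinal.
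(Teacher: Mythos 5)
Your proof is correct, and its second and third parts (property (ii) via the one-step extension, and uniqueness via Lemma \ref{lem:PT} against property (ii)) coincide with what the paper does. Where you genuinely diverge is in establishing that $\wh{H}$ is a subgroup satisfying (i): the paper first invokes Zorn's Lemma to produce a maximal element $L$ of the family $\{ K\leq M \csuchthat H\leq _{wq}^{\mathscr{L}}K \}$, then quotes (without proof) the binary join closure -- that $H\leq _{wq}^{\mathscr{L}}L_0$ and $H\leq _{wq}^{\mathscr{L}}L_1$ imply $H\leq _{wq}^{\mathscr{L}}\langle L_0,L_1\rangle$ -- to conclude $L=\wh{H}$. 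You instead verify directly, via Lemma \ref{lem:PT}, that the subgroup generated by the \emph{entire} family is an $\mathscr{L}$-$wq$-extension of $H$: given $H\leq K\lneq \wh{H}'$, pick $L$ in the family with $L\not\subseteq K$, apply Lemma \ref{lem:PT} inside $L$ to the proper intermediate subgroup $L\cap K$, and promote $g(L\cap K)g^{-1}\cap (L\cap K)\in\mathscr{L}$ to $gKg^{-1}\cap K\in\mathscr{L}$ by upward closure. This computation is exactly the (implicit) proof of the paper's binary join claim, but run for an arbitrary join; the payoff is that Zorn's Lemma becomes unnecessary and the argument is fully self-contained, at the cost of no real extra work. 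Your closing concern about the concatenation in (ii) is resolved correctly as you suspect: appending $H_{\lambda +1}=\langle \wh{H},g\rangle$ to a chain witnessing $H\leq _{wq}^{\mathscr{L}}\wh{H}$ only requires checking $\bigcup _{\beta <\lambda +1}H_\beta = \wh{H}\leq _q^{\mathscr{L}}\langle \wh{H},g\rangle$ at the new successor ordinal, and your observation that upward closure forces $\wh{H}\in\mathscr{L}$ (so every element of $\wh{H}$, not just $g$, lies in the generating set) is precisely what makes that single $q$-normal step legitimate.
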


\begin{proof}
By Zorn's Lemma the set $\{ K\leq M \csuchthat H\leq _{wq}^{\mathscr{L}}K \}$ contains a maximal element $L$. By definition, $L\subseteq \wh{H}$. A consequence of Lemma \ref{lem:PT} is that if $L_0,L_1\leq M$ are two subgroups of $M$ with $H\leq _{wq}^{\mathscr{L}} L_0$ and $H\leq _{wq}^{\mathscr{L}}L_1$, then $H\leq _{wq}^{\mathscr{L}} \langle L_0,L_1 \rangle$. It follows that $L=\wh{H}$. Properties $(i)$ and $(ii)$ are immediate. If $K$ is a subgroup of $M$ satisfying properties $(i)$ and $(ii)$ in place of $\wh{H}$, then $H\leq K\leq \wh{H}$ by property $(i)$ for $K$ and the definition of $\wh{H}$, and since $H\leq _{wq}^{\mathscr{L}}\wh{H}$, Lemma \ref{lem:PT} and property $(ii)$ for $K$ imply that $K=\wh{H}$.
\end{proof}

\begin{definition}\label{def:nth}
Let $\mathscr{L}_0=\mathscr{L}$ and for each $n\geq 0$ define $\mathscr{L}_{n+1} = \{ H \leq G \csuchthat H\leq _{wq}^{\mathscr{L}_n}G \}$, which is upward closed by Lemma \ref{lem:PT} and induction. A subgroup $H$ of $G$ is said to be {\bf $n$-degree $\mathscr{L}$-$wq$-normal} in $G$ if $H\in \mathscr{\mathscr{L}}_n$.
\end{definition}

Note that if the collection $\mathscr{L}$ is invariant under conjugation by $G$, then so are each of the collections $\mathscr{L}_n$, $n\in \N$. By applying this definition to the collection $\mathscr{N}^X$, for a $G$-set $X$, we obtain the sequence $\mathscr{N}^X_n$, $n\in \N$. If $H\in \mathscr{N}_n=\mathscr{N}^G_n$, then we say that $H$ is {\bf $n$-degree $wq^*$-normal in $G$}. Thus, $H$ is $0$-degree $wq^*$-normal in $G$ if and only if $H$ is nonamenable, and $H$ is $1$-degree $wq^*$-normal in $G$ if and only if $H$ is $wq^*$-normal in $G$ in the previously defined sense.

For the next proposition, we equip the space of subgroups of $G$ with the subspace topology inherited from the product topology on $2^G$. Note that for any $G$-set $X$, the collection $\mathscr{N}^X$ is an open set, since for a subgroup $H\leq G$, nonamenability of the action $H\cc X$ is witnessed by a finite subset of $H$. The same holds for the collection $\mathscr{L}^{S,r}$, as well as for each $\mathscr{N}^X_n$, $n\in \N$, when $G$ is finitely generated. Taking $\mathscr{L}=\mathscr{N}^X_n$ in the following proposition then shows that, when $G$ is finitely generated, the transfinite sequence in the definition of $\mathscr{L}$-$wq$-normality can be replaced by a finite sequence.

\begin{proposition}\label{prop:Chaubaty}
Let $\mathscr{L}$ be an upward closed collection of subgroups of the countable group $G$. Suppose in addition that $\mathscr{L}$ is open in the space of subgroups of $G$.
\begin{enumerate}
\item[(i)] Let $H$ and $M$ be subgroups of $G$. Assume that $M$ is finitely generated and $H \leq _{wq}^{\mathscr{L}}M$. Then there exist subgroups $H_0,\dots , H_n$ such that
\begin{equation}\label{eqn:nseq}
H=H_0\leq _q^{\mathscr{L}}H_1\leq _q^{\mathscr{L}}\cdots \leq _q^{\mathscr{L}} H_n= M .
\end{equation}
Moreover, for any such sequence $(H_i)_{i=0}^n$ there exists a sequence $(H_i')_{i=0}^n$ with $H_0'\leq _q^{\mathscr{L}}H_1'\leq _q^{\mathscr{L}}\cdots \leq _q^{\mathscr{L}} H_n'= M$, where $H_i'$ is finitely generated and $H_i'\leq H_i$ for all $i$.

\item[(ii)] If $G$ is finitely generated then $\mathscr{L}_n$ is open for all $n\geq 0$.
\end{enumerate}
\end{proposition}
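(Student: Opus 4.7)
The plan is to prove (i) by transfinite induction, then to deduce (ii) by a short induction on $n$ using (i). A key preliminary observation is that if $H \neq M$, then $H$ itself must lie in $\mathscr{L}$: applying Lemma \ref{lem:PT} with $K = H$ produces some $g \in M \setminus H$ with $gHg^{-1} \cap H \in \mathscr{L}$, and upward closedness of $\mathscr{L}$ then forces $H \in \mathscr{L}$. This ensures that every element of $H$ lies in the good set defining $\leq^{\mathscr{L}}_{q}$, which will let me anchor the eventual chain at $H_0 = H$ exactly.

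For the existence part of (i), I would prove the following claim by transfinite induction on $\alpha \leq \lambda$ along a fixed witnessing chain $(H_\alpha)_{\alpha \leq \lambda}$ for $H \leq^{\mathscr{L}}_{wq} M$: for every finite $F \subseteq H_\alpha$ there is a finite chain $K_0 \leq^{\mathscr{L}}_{q} \cdots \leq^{\mathscr{L}}_{q} K_n$ of finitely generated subgroups of $H_\alpha$ with $K_0 \leq H$, $F \subseteq K_n$, and each $K_i \in \mathscr{L}$ for $i \geq 1$. The openness of $\mathscr{L}$ is essential here: whenever some $g$ witnesses a $\leq^{\mathscr{L}}_{q}$-step (i.e., $gLg^{-1} \cap L \in \mathscr{L}$ for the relevant subgroup $L$), it has a finite subwitness $F_g \subseteq gLg^{-1} \cap L$ with $\langle F_g \rangle \in \mathscr{L}$, and $F_g \cup g^{-1}F_g g$ is then a finite subset of $L$ that can be fed into the induction hypothesis. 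At a limit stage $\alpha$, only finitely many good elements are needed to cover a given finite $F \subseteq H_\alpha$, each has only finitely many witnesses, and all witnesses collectively fit into a single earlier stage $H_{\beta^*}$, where the induction hypothesis applies. Once this claim is established, taking $\alpha = \lambda$ and $F$ a finite generating set of $M$ yields a chain $K_0 \leq^{\mathscr{L}}_{q} \cdots \leq^{\mathscr{L}}_{q} K_n = M$ with $K_0 \leq H$; setting $H_0 := H$ and $H_i := \langle H, K_i \rangle$ for $i \geq 1$, each $H_i$ belongs to $\mathscr{L}$ (via $H \in \mathscr{L}$ or $K_i \in \mathscr{L}$, plus upward closedness), so $H_i$ itself is contained in the good set defining $H_i \leq^{\mathscr{L}}_{q} H_{i+1}$, which combined with the good-w.r.t.-$K_i$ elements of $K_{i+1}$ generates $H_{i+1}$.

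For the ``moreover'' part of (i) I would construct the finitely generated refinement by backward induction from $H_n' := M$. Given $H_{i+1}'$ already chosen as a finitely generated subgroup of $H_{i+1}$, write each of its generators as a product of good elements $u_l \in H_{i+1}$ (which exist because $H_i \leq^{\mathscr{L}}_{q} H_{i+1}$), extract finite witnesses $F_{u_l} \subseteq u_l H_i u_l^{-1} \cap H_i$ by openness, and enlarge every previously constructed $H_j'$ for $j \geq i+1$ to absorb these $u_l$'s and $F_{u_l}$'s. The enlargements stay inside $H_j$ and remain finitely generated, and the chain relations $H_j' \leq^{\mathscr{L}}_{q} H_{j+1}'$ are preserved because upward closedness of $\mathscr{L}$ keeps the old good sets good. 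Then define $H_i'$ to be the finitely generated subgroup of $H_i$ generated by the $F_{u_l}$'s and the $u_l^{-1} F_{u_l} u_l$'s; since $F_{u_l} \subseteq u_l H_i' u_l^{-1} \cap H_i'$ and $\langle F_{u_l}\rangle \in \mathscr{L}$, each $u_l$ is good with respect to $H_i'$, and $H_i' \in \mathscr{L}$, so the good set for $H_i' \leq^{\mathscr{L}}_{q} H_{i+1}'$ contains a generating set of $H_{i+1}'$.

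For (ii), I would induct on $n$. Upward closedness of $\mathscr{L}_{n+1}$ is straightforward from Lemma \ref{lem:PT}. For openness, since $\mathscr{L}_{n+1}$ is upward closed it suffices to find, for each $H \in \mathscr{L}_{n+1}$, a finite $F \subseteq H$ with $\langle F \rangle \in \mathscr{L}_{n+1}$. Applying (i) to $H \leq^{\mathscr{L}_n}_{wq} G$ (valid because $G$ is finitely generated and $\mathscr{L}_n$ is open by the inductive hypothesis) yields a finitely generated refinement $H_0' \leq^{\mathscr{L}_n}_{q} \cdots \leq^{\mathscr{L}_n}_{q} H_r' = G$ with $H_0' \leq H$; taking $F$ to be a finite generating set of $H_0'$ gives $\langle F \rangle = H_0' \in \mathscr{L}_{n+1}$, witnessed by the refinement chain itself. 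The hard part throughout is bookkeeping: at each inductive step one must simultaneously keep the intermediate subgroups finitely generated, preserve all $\leq^{\mathscr{L}}_{q}$-relations (which forces one to absorb openness-provided finite witnesses into the finitely generated pieces), and, in the backward refinement, enlarge earlier-constructed subgroups whenever new generators appear.
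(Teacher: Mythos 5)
Your proof is correct, but it takes a genuinely different route from the paper's. The paper introduces the monotone operator $f_M(K)=\langle\{g\in M : gKg^{-1}\cap K\in\mathscr{L}\}\rangle$ and observes that openness plus upward closedness of $\mathscr{L}$ make $f_M$ lower semicontinuous on the space of subgroups. Existence in (i) then requires no induction along the witnessing chain at all: by semicontinuity, $H_\omega=\bigcup_n f_M^n(H)$ is a fixed point of $f_M$, hence is the $\mathscr{L}$-$wq$-closure of $H$ in $M$ (Proposition \ref{prop:wqclosure}), hence equals $M$, and finite generation of $M$ forces $f_M^n(H)=M$ for some finite $n$, so the canonical chain $f_M^i(H)$ is the desired sequence. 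The ``moreover'' part is a forward induction using semicontinuity of the compositions $f_{H_n}\circ\cdots\circ f_{H_{k+1}}$, taking $H_{k+1}'=\langle H_k',Q_i\rangle$ for a suitable finite set $Q_i$ of good elements, and (ii) follows from (i) together with semicontinuity of $f_G^n$. You instead run a direct transfinite induction along the given chain for existence, and a backward induction with simultaneous enlargements for the refinement. Both proofs run on the same fuel: openness supplies finite witnesses $F_g$ with $\langle F_g\rangle\in\mathscr{L}$ exactly where the paper invokes semicontinuity, and upward closedness both keeps good elements good when the lower group is enlarged and makes every element of a group in $\mathscr{L}$ good for that group (your preliminary observation that $H\in\mathscr{L}$ whenever $H\neq M$). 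The paper's formulation buys economy: the witness extraction is packaged once into the semicontinuity lemma, the $wq$-closure characterization absorbs the transfinite chain, and the bookkeeping your backward induction needs (re-checking, after each enlargement, that the enlarged upper group is still generated by good elements, via self-goodness of the absorbed elements) disappears. Your route buys independence from Proposition \ref{prop:wqclosure} and a quantitatively explicit intermediate claim --- finite $\leq_q^{\mathscr{L}}$-chains of finitely generated subgroups in $\mathscr{L}$ tracking any prescribed finite subset --- which already contains most of the ``moreover'' statement and feeds directly into (ii).
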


\begin{proof}
For $M,K\leq G$ define $f_M(K) = \langle \{ g\in M\csuchthat gKg^{-1}\cap K \in \mathscr{L} \}\rangle$. Then $f_M$ is monotone and, since $\mathscr{L}$ is open and upward closed, the function $f_M$ is lower semicontinuous, that is, $f_M(\liminf _i K_i ) \leq \liminf _i f_M(K_i)$ for any sequence $(K_i)_{i\in \N}$ of subgroups of $G$, where $\liminf _i K_i$ denotes the subgroup of elements of $G$ which are in cofinitely many $K_i$. It follows that for any finite sequence $M_0, M_1, \cdots , M_n \leq G$, the function $f_{M_n}\circ\cdots \circ f_{M_1}\circ f_{M_0}$ is lower semicontinuous.

(i): Let $H_{\omega} = \bigcup _{n\in \N}f^n_M (H)$. Then semicontinuity of $f_M$ implies $f_M(H_\omega )=H_\omega$. This shows that $H_\omega$ is the $\mathscr{L}$-$wq$-closure of $H$ in $M$, hence $H_\omega = M$. Since $M$ is finitely generated and the sequence $f^n_M(H)$ is nondecreasing there exists an $n$ with $f^n_M(H) = M$. This shows the first part of (i). Fix now any sequence $(H_i)_{i=0}^{n}$ as in \eqref{eqn:nseq}. Then $f_{H_n}\circ \cdots \circ f_{H_1}(H_0) = M$, so there exists a finitely generated $H_0'\leq H_0$ with $f_{H_n}\circ \cdots \circ f_{H_1}(H_0')=M$. Assume now that $k< n-1$ and $f_{H_n}\circ\cdots \circ f_{H_{k+1}}(H_k') = M$. Let $Q_0\subseteq Q_1\subseteq \cdots$ be a sequence of finite sets which exhaust $\{ g\in H_{k+1}\csuchthat gH_k'g^{-1} \cap H_k'\in \mathscr{L}\}$. Then $\bigcup _i \langle H_k', Q_i \rangle = f_{H_{k+1}}(H_k' )$, so there exists some $i$ such that $f_{H_n}\circ \cdots \circ f_{H_{k+2}}(\langle H_k', Q_i \rangle ) = M$. Take $H_{k+1}'=\langle H_k', Q_i\rangle$. The resulting groups $H_0',H_1',\dots ,H_{n-1}',H_n'=M$ satisfy the conclusion of (ii) by construction.

(ii): It suffices to show $\mathscr{L}_1$ is open. This follows from (i) and semicontinuity of $f_G^n$.
\end{proof}

\begin{remark}
In \cite{BFS12}, Bader, Furman, and Sauer define higher order notions of $s$-normality and establish a connection with higher $\ell ^2$-Betti numbers. It seems reasonable to expect a similar connection to hold between higher degree $wq$-normality (or some variant) and higher $\ell ^2$-Betti numbers, although this is largely speculative.
\end{remark}

\section{Amenable actions}\label{sec:isoperim}

Let $X$ be a $G$-set. Let $S\subseteq G$ be finite and let $\epsilon >0$. A nonempty finite subset $P$ of $X$ is said to be {\bf $(S,\epsilon )$-invariant} if $\sum _{s\in S}|sP\setminus P | < \epsilon |P|$. Equivalently, $P$ is $(S,\epsilon )$-invariant if $\sum _{s\in S}|sP\cap P| >(|S|-\epsilon )|P|$.

\begin{remark}\label{rem:bound}
Assume that $S$ generates $G$ and that every $G$-orbit has cardinality greater than $1/\epsilon$. Then any $(S,\epsilon )$-invariant set $P$ has cardinality $|P|>1/\epsilon$. Otherwise we would have $\sum _{s\in S}|sP\setminus P| < 1$, so $P$ would be a $G$-invariant set of cardinality at most $1/\epsilon$, a contradiction.
\end{remark}

\begin{remark}\label{rem:GM07}
We will make use of the observation \cite[Remark 2.12]{GM07} that if $P\subseteq X$ is $(S,\epsilon )$-invariant, then there exists a single $G$-orbit $X_0\subseteq X$ such that $P\cap X_0$ is $(S,\epsilon )$-invariant.
\end{remark}

\subsection{An estimate with F{\o}lner sets}\label{sec:Folner}
For each $n\geq 1$ we let $X^{\circledast n}$ denote the set of all $n$-tuples of distinct points in $X$ which lie in the same $G$-orbit
\[
X^{\circledast n} = \{ (x_0,\dots , x_{n-1}) \in X^n \csuchthat i\neq j \, \Ra \, G x_i = Gx_j \text{ and } x_i\neq x_j \} .
\]
Then we have a natural action $G\cc X^{\circledast n}$ under which the inclusion map $X^{\circledast n} \hookrightarrow X^n$ is a $G$-map to the diagonal product action. For a subset $P\subseteq X$ let $P^{\circledast n} = P^n\cap X^{\circledast n}$.

\begin{lemma}\label{lem:ndiag}
Let $S$ be a finite subset of $G$. Let $n\geq 1$ and let $\epsilon >0$. Let $P\subseteq X$ be an $(S,\epsilon )$-invariant set which is contained in a single $G$-orbit, and assume $|P|\geq n$. Then $P^{\circledast n}$ is $(S, n\epsilon )$-invariant in $X^{\circledast n}$.
\end{lemma}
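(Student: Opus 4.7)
The plan is to reduce the claim to a clean combinatorial union-bound by first identifying the intersection $sP^{\circledast n}\cap P^{\circledast n}$ with a falling-factorial expression, and then comparing it coordinate-by-coordinate with $|P^{\circledast n}|$.

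First, I will set $N=|P|$ and record that, because $P$ lies in a single $G$-orbit, $P^{\circledast n}$ is literally the set of ordered $n$-tuples of distinct elements of $P$, so $|P^{\circledast n}|=N(N-1)\cdots(N-n+1)$, which is strictly positive by the hypothesis $N\geq n$. For $s\in S$, write $A_s:=P\cap sP$ and $a_s:=|P\setminus A_s|=|P\setminus sP|=|sP\setminus P|$. Since each $s$ acts as a bijection of $X$ preserving the $G$-orbit partition, a tuple $\vec x=(x_0,\dots,x_{n-1})$ lies in $sP^{\circledast n}\cap P^{\circledast n}$ precisely when every $x_i$ is in $P\cap sP=A_s$, so
\[
sP^{\circledast n}\cap P^{\circledast n}=A_s^{\circledast n},
\]
which has cardinality $(N-a_s)(N-a_s-1)\cdots(N-a_s-n+1)$ (interpreted as $0$ when $a_s>N-n$).

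Next, I will estimate $|P^{\circledast n}\setminus A_s^{\circledast n}|$ by a union bound across coordinates. A tuple in $P^{\circledast n}\setminus A_s^{\circledast n}$ is one in $P^{\circledast n}$ having at least one coordinate in $P\setminus A_s$, so
\[
|P^{\circledast n}\setminus A_s^{\circledast n}|\leq\sum_{i=0}^{n-1}\bigl|\{\vec x\in P^{\circledast n}\,:\,x_i\in P\setminus A_s\}\bigr|.
\]
For each fixed $i$, the right side is $a_s\cdot(N-1)(N-2)\cdots(N-n+1)$ (choose $x_i$ in $a_s$ ways, then fill the remaining $n-1$ slots with distinct elements of $P\setminus\{x_i\}$). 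Hence
\[
\frac{|sP^{\circledast n}\setminus P^{\circledast n}|}{|P^{\circledast n}|}=\frac{|P^{\circledast n}|-|A_s^{\circledast n}|}{|P^{\circledast n}|}\leq\frac{n\,a_s\,(N-1)\cdots(N-n+1)}{N(N-1)\cdots(N-n+1)}=\frac{n\,a_s}{N}.
\]

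Summing over $s\in S$ and applying $(S,\epsilon)$-invariance of $P$ yields
\[
\sum_{s\in S}\frac{|sP^{\circledast n}\setminus P^{\circledast n}|}{|P^{\circledast n}|}\leq n\sum_{s\in S}\frac{|sP\setminus P|}{|P|}<n\epsilon,
\]
which is exactly the assertion that $P^{\circledast n}$ is $(S,n\epsilon)$-invariant. There is no real obstacle here; the only subtlety is making sure the identification $sP^{\circledast n}\cap P^{\circledast n}=A_s^{\circledast n}$ is justified (which uses that $s$ preserves the orbit structure of $X$, so no distinctness is lost) and that the union-bound counting is done over the right index set.
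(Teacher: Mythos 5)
Your proof is correct, and it reaches the paper's conclusion by a genuinely more elementary counting argument. Both arguments hinge on the same two observations: that $sP^{\circledast n}\cap P^{\circledast n}=(sP\cap P)^{\circledast n}$ (so everything reduces to a per-generator estimate), and that the single-orbit hypothesis turns the relevant cardinalities into falling factorials; and both arrive at exactly the same per-generator bound $|sP^{\circledast n}\setminus P^{\circledast n}|\leq n\epsilon_s\,|P^{\circledast n}|$, where $\epsilon_s=|sP\setminus P|/|P|$. The difference is in how that bound is proved. The paper runs an induction on $k\leq n$, showing $|sP^{\circledast k}\cap P^{\circledast k}|\geq (1-k\epsilon_s)|P^{\circledast k}|$ by means of the recursion $|(sP\cap P)^{\circledast (k+1)}|=(|sP\cap P|-k)\,|(sP\cap P)^{\circledast k}|$ together with the algebraic inequality $\bigl(1-\tfrac{|P|\epsilon_s}{|P|-k}\bigr)(1-k\epsilon_s)\geq 1-(k+1)\epsilon_s$, which is not a formal consequence of $\epsilon_s\geq 0$: it requires the integrality observation that $\epsilon_s>0$ forces $\epsilon_s\geq 1/|P|$. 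Your coordinate-wise union bound eliminates both the induction and that integrality trick: a tuple of $P^{\circledast n}$ outside $(sP\cap P)^{\circledast n}$ must have some coordinate in $P\setminus sP$, and for each of the $n$ positions there are at most $a_s(N-1)\cdots(N-n+1)$ such tuples, giving the ratio bound $na_s/N=n\epsilon_s$ directly. What the paper's route buys is the family of intermediate inequalities for every $k\leq n$, but these are internal to the induction and are not used elsewhere (the application in the proof of Theorem \ref{thm:isoperim} only invokes the case $n=2$), so your shorter argument loses nothing.
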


\begin{proof}
For each $s\in S$ let $\epsilon _s = \tfrac{|sP\setminus P|}{|P|}$. 
Then $\sum _{s\in S}\epsilon _s < \epsilon$, so it suffices to show that for all $k\leq n$ we have
\begin{equation}\label{eqn:inter}
|sP^{\circledast k}\cap P^{\circledast k}|\geq |P^{\circledast k}|(1-k\epsilon _s ) .
\end{equation}
If $k=1$ then we have equality, so assume inductively that \eqref{eqn:inter} holds, where $k<n$, and we will show that it holds with $k+1$ in place of $k$. Note that $\epsilon _s >0$ implies $\epsilon _s \geq 1/|P|$. It follows that $(1- \tfrac{|P|\epsilon _s}{|P|-k} )(1-k\epsilon _s )\geq (1-(k+1)\epsilon _s )$, and hence
\begin{align*}
|s P^{\circledast (k+1)}\cap P^{\circledast (k+1)} | &=|(s P\cap P )^{\circledast (k+1)} | \\
&= (|s P \cap P | - k)|(s P\cap P )^{\circledast k}| \\
&\geq (|P|(1- \epsilon _s ) - k)|P^{\circledast k}|(1-k\epsilon _s ) \\
&= (1- \tfrac{|P|\epsilon _s}{|P|-k} )(|P|-k)|P^{\circledast k}|(1-k\epsilon _s ) \\
&\geq |P^{\circledast (k+1)}|(1-(k+1)\epsilon _s ). \qedhere
\end{align*}
\end{proof}

\subsection{Proof of Theorems \ref{thm:wqstar} and \ref{thm:isoperim}}\label{subsec:isoperim} Assume now that $G$ is a finitely generated by $S$. Let $\phi _S(X)$ denote the isoperimetric constant of $X$ with respect to $S$, defined in \eqref{eqn:isop}.

\begin{lemma}\label{lem:push}
Let $X$ and $Y$ be $G$-sets and assume that there exists a $G$-map $\varphi : X\ra Y$ from $X$ to $Y$. Then, given any $P\subseteq X$ which is $(S,\epsilon )$-invariant in $X$, we may find some $Q\subseteq \varphi (P)$ which is $(S,\epsilon )$-invariant in $Y$. In particular $\phi _S (Y)\leq \phi _S(X)$.
\end{lemma}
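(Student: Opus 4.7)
The plan is to find $Q$ as a superlevel set of the multiplicity function of $\varphi$ restricted to $P$, using a standard layer-cake / averaging argument to transfer invariance from $X$ to $Y$.

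First I would define the multiplicity function $f : Y \to \N$ by $f(y) = |\varphi^{-1}(y) \cap P|$ and, for each integer $t \geq 1$, set $Q_t = \{y \in Y : f(y) \geq t\}$. By the layer-cake identity $\sum_{t \geq 1} |Q_t| = \sum_{y \in Y} f(y) = |P|$, and each nonempty $Q_t$ is automatically contained in $\varphi(P)$. The candidates for $Q$ will be the $Q_t$'s, and the goal is to show that at least one of them is $(S,\epsilon)$-invariant in $Y$.

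The key computation uses that $\varphi$ is a $G$-map, so $\varphi^{-1}(sy) = s\,\varphi^{-1}(y)$ for each $s \in S$ and $y \in Y$. This gives $f(s^{-1}y) = |\varphi^{-1}(y) \cap sP|$ and $f(y) = |\varphi^{-1}(y) \cap P|$, whence
\[
\max\bigl(0,\, f(s^{-1}y) - f(y)\bigr) \leq |\varphi^{-1}(y) \cap (sP \setminus P)|.
\]
Now observe that $y \in sQ_t \setminus Q_t$ exactly when $f(s^{-1}y) \geq t > f(y)$, so
\[
\sum_{t \geq 1} |sQ_t \setminus Q_t| = \sum_{y \in Y} \max\bigl(0,\, f(s^{-1}y) - f(y)\bigr) \leq \sum_{y \in Y} |\varphi^{-1}(y) \cap (sP \setminus P)| = |sP \setminus P|.
\]

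Summing over $s \in S$ and using that $P$ is $(S,\epsilon)$-invariant in $X$ yields
\[
\sum_{t \geq 1} \sum_{s \in S} |sQ_t \setminus Q_t| \leq \sum_{s \in S} |sP \setminus P| < \epsilon |P| = \epsilon \sum_{t \geq 1} |Q_t|.
\]
By a pigeonhole (averaging) argument, some $t_0 \geq 1$ must satisfy $\sum_{s \in S} |sQ_{t_0} \setminus Q_{t_0}| < \epsilon |Q_{t_0}|$, and then $Q := Q_{t_0} \subseteq \varphi(P)$ is the desired $(S,\epsilon)$-invariant set in $Y$. The inequality $\phi_S(Y) \leq \phi_S(X)$ then follows by taking infima over $(S,\epsilon)$-invariant witnesses. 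The only mild subtlety is keeping track of strict versus non-strict inequality in the pigeonhole step, which is immediate since the total sum is strictly less than $\epsilon$ times the total mass.
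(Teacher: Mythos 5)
Your proof is correct, and it is essentially the argument the paper itself invokes: the paper gives no details for this lemma, deferring to the first proof in \S 1.2 of \cite{Gr08}, which is exactly this pushforward-multiplicity (layer-cake) argument — push $1_P$ forward to the function $f(y)=|\varphi^{-1}(y)\cap P|$, bound $\sum_{t}\sum_{s}|sQ_t\setminus Q_t|$ by $\sum_s|sP\setminus P|$ via the co-area identity, and select a good superlevel set by averaging. Your handling of the strict inequality and of the nonemptiness of the chosen $Q_{t_0}$ is also sound, so this writeup correctly fills in the argument the paper cites.
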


\begin{proof}
See the first proof in {\S}1.2 of \cite{Gr08}.
\end{proof}

\begin{lemma}\label{lem:peramen}
Let $X$ be a $G$-set and let $H$ be a subgroup of $G$. Assume that the action $H\cc X$ is amenable. Then $\phi _S(G/H ) \geq \phi _S (X)$.
\end{lemma}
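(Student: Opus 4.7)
The plan is to factor the desired comparison through an auxiliary $G$-set: the induced $G$-set $\widetilde{X}=G\times_H X$, defined as $(G\times X)/H$ modulo the action $h\cdot(g,x)=(gh^{-1},hx)$, with $G$ acting on the quotient by $g\cdot[(g',x)]=[(gg',x)]$. The map $\pi\colon\widetilde{X}\to X$, $[(g,x)]\mapsto gx$, is a well defined, surjective $G$-map, so Lemma \ref{lem:push} gives $\phi_S(X)\le\phi_S(\widetilde{X})$. It therefore suffices to establish $\phi_S(\widetilde{X})\le\phi_S(G/H)$. The virtue of passing to $\widetilde{X}$ is that its fibers over $G/H$ are pairwise disjoint copies of $X$; this sidesteps the main obstacle to a direct argument in $X$, namely that the natural ``lifts'' $g_iQ_0\subseteq X$ of cosets $g_iH\in G/H$ may overlap heavily (e.g.\ when $H\acts X$ has small orbits), ruining the required lower bound on their union.

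To show $\phi_S(\widetilde{X})\le\phi_S(G/H)$, fix $\epsilon>0$ and pick a finite nonempty $P=\{g_1H,\ldots,g_nH\}\subseteq G/H$ with $\sum_{s\in S}|sP\setminus P|/|P|<\phi_S(G/H)+\epsilon$. For each pair $(s,i)\in S\times\{1,\ldots,n\}$ with $sg_iH\in P$, let $\sigma(s,i)\in\{1,\ldots,n\}$ and $h_{s,i}\in H$ be determined by $sg_i=g_{\sigma(s,i)}h_{s,i}$, and let $T\subseteq H$ be the finite collection of these $h_{s,i}$. Since $H\acts X$ is amenable, for any $\delta>0$ we may choose a nonempty finite $Q_0\subseteq X$ with $|hQ_0\setminus Q_0|<\delta|Q_0|$ for every $h\in T$. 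Define
\[
\widetilde{Q}=\{[(g_i,x)]\csuchthat 1\le i\le n,\ x\in Q_0\}\subseteq\widetilde{X}.
\]
Classes with different $i$ lie in distinct fibers of $\widetilde{X}\to G/H$, and within a common fiber the classes $[(g_i,x)]$ are distinct for distinct $x$; consequently $|\widetilde{Q}|=|P||Q_0|$.

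The key computation is that $s[(g_i,x)]=[(sg_i,x)]$ equals $[(g_{\sigma(s,i)},h_{s,i}x)]$ when $sg_iH\in P$, and otherwise lies in a fiber disjoint from $\widetilde{Q}$. Hence $s[(g_i,x)]\in\widetilde{Q}$ iff $sg_iH\in P$ and $h_{s,i}x\in Q_0$, and so
\[
|s\widetilde{Q}\setminus\widetilde{Q}|\;=\;\sum_{i\,:\,sg_iH\in P}|h_{s,i}Q_0\setminus Q_0|\;+\;|sP\setminus P|\cdot|Q_0|.
\]
Summing over $s\in S$, bounding each of the at most $|S||P|$ terms in the double sum by $\delta|Q_0|$, and dividing by $|\widetilde{Q}|=|P||Q_0|$, yields
\[
\sum_{s\in S}\frac{|s\widetilde{Q}\setminus\widetilde{Q}|}{|\widetilde{Q}|}\;\le\;|S|\delta+\sum_{s\in S}\frac{|sP\setminus P|}{|P|}\;<\;|S|\delta+\phi_S(G/H)+\epsilon.
\]
Letting $\delta,\epsilon\to 0$ gives $\phi_S(\widetilde{X})\le\phi_S(G/H)$, and the lemma follows.
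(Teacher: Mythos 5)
Your proof is correct and follows essentially the same route as the paper: both arguments factor the estimate through the $G$-set induced from the $H$-set $X$ (which the paper realizes as the cocycle-twisted product $Y_1$ on $G/H\times X$, isomorphic to the diagonal product $Y_0$, and you realize as the quotient $G\times_H X$), use Lemma \ref{lem:push} to get $\phi_S(X)\le\phi_S(\text{induced set})$, and then bound the induced set's isoperimetric constant by $\phi_S(G/H)$ via the same F{\o}lner computation pairing an almost-invariant $P\subseteq G/H$ with an $H$-F{\o}lner set in $X$ for the finitely many Schreier cocycle values $h_{s,i}=g_{\sigma(s,i)}^{-1}sg_i$. The difference is purely presentational, not mathematical.
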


\begin{proof}
Let $Y_0$ denote the $G$-set $G/H \times X$ equipped with the diagonal product action of $G$. Then $\phi _S(Y_0)\geq \phi _S (X)$ by Lemma \ref{lem:push}, so it suffices to show that $\phi _S (G/H ) \geq \phi _S (Y_0)$ (so in fact $\phi _S (G/H ) = \phi _S (Y_0)$ by Lemma \ref{lem:push}). Fix a section $\sigma :G/H \ra G$ for the map $G\ra G/H$, and let $\rho :G\times G/H \ra H$ be the corresponding Schreier cocycle given by $\rho (g,kH)= \sigma (gkH)^{-1}g\sigma (kH)$. Let $Y_1$ denote the $G$-set $G/H \times X$ equipped with the action $g\cdot (kH , x) = (gkH, \rho (g,kH ) \cdot x )$, for $kH\in G/H$, $x\in X$ (the $G$-set $Y_1$ is isomorphic to the $G$-set obtained by inducing from the $H$-set $X$, described in $\S$2.C of \cite{GM07}). The map $\varphi : Y_0 \ra Y_1$ given by $\varphi (kH ,x ) = (kH ,\sigma (kH)^{-1}\cdot x )$ provides an isomorphism between the $G$-sets $Y_0$ and $Y_1$, so it suffices to show that $\phi _S(G/H ) \geq \phi _S (Y_1)$. Let $\emptyset\neq P\subseteq G/H$ be finite. Then, for each finite $\emptyset\neq Q\subseteq X$, we have $P\times Q \subseteq Y_1$, so $\phi _S(Y_1)$ is bounded above by
\begin{align*}
\sum _{s\in S}\frac{|s\cdot (P\times Q) \setminus (P\times Q) |}{|P\times Q |} = \frac{1}{|P|}\sum _{s\in S}\sum _{kH \in sP\cap P}\frac{|\rho (s,s^{-1}kH)\cdot Q\setminus Q |}{|Q|} + \sum _{s\in S}\frac{|sP\setminus P|}{|P|} .
\end{align*}
Since $H\cc X$ is amenable, taking the infimum over all such $Q\subseteq X$ shows that $\phi _S(Y_1) \leq \sum _{s\in S} \frac{|sP\setminus P|}{|P|}$, so taking the infimum over $P$ shows that $\phi _S(Y_1)\leq \phi _S(G/H )$.
\end{proof}

\begin{proof}[Proof of Theorem \ref{thm:isoperim}]
The base case $n=0$ is immediate from Lemma \ref{lem:peramen}. Assume now that $\phi _S (G/H )<\frac{1}{2^n}\phi _S (X)$, where $n>0$, and we will show that $H$ is $n$-degree $\mathscr{N}^X$-$wq$-normal in $G$. By Lemma \ref{lem:PT}, given $H\leq L \lneq G$, it suffices to find some $g\in G\setminus L$ such that $gLg^{-1}\cap L$ is $(n-1)$-degree $\mathscr{N}^X$-$wq$-normal in $G$. The quotient map $G/H \ra G/L$ is a $G$-map, so by Lemma \ref{lem:push} we have $\phi _S(G/L)\leq \phi _S(G/H)< \frac{1}{2^n}\phi _S(X)$. Then $\phi _S((G/L )^{ \circledast 2} ) < \frac{1}{2^{n-1}}\phi _S(X)$ by Lemma \ref{lem:ndiag}, so by Remark \ref{rem:GM07} there is some point $x=(g_0L,g_1L)\in (G/L)^{\circledast 2}$ with $\phi _S (G/G_x ) < \frac{1}{2^{n-1}}\phi _S(X)$. By the induction hypothesis, the group $G_x=g_0Lg_0^{-1}\cap g_1Lg_1^{-1}$ is $(n-1)$-degree $\mathscr{N}^X$-$wq$-normal in $G$. Then $g=g_1^{-1}g_0\in G \setminus L$ and $gLg^{-1}\cap L$ is $(n-1)$-degree $\mathscr{N}^X$-$wq$-normal in $G$.
\end{proof}

\begin{proof}[Proof of Theorem \ref{thm:wqstar}]
This follows immediately from Theorem \ref{thm:isoperim} and the observation that for any $\epsilon >0$, the set $\{ x\in X \csuchthat \phi _S(G/G_x)<\epsilon \phi _S(G) \}$ is $\bm{m}$-conull.
\end{proof}

\subsection{An extension to infinitely generated groups} Example \ref{ex:fg} shows that a direct translation of Theorem \ref{thm:wqstar} does not hold in the general infinitely generated setting. However, a refined version of Theorem \ref{thm:wqstar} still holds in general. In what follows, for each $G$-set $X$ let $X_0 = \{ x\in X\csuchthat G_x \text{ is nonamenable}\}$.

\begin{corollary}\label{cor:obstr}
Let $G$ be a nonamenable group. For each $G$-set $X$ there is a $G$-map $\varphi _X : X\ra \wh{X}$ to a $G$-set $\wh{X}$ with the following properties:
\begin{enumerate}
\item[(i)] $G_x$ is $wq^*$-normal in $G_{\varphi _X(x)}$ for all $x\in X_0$;
\item[(ii)] If $\bm{m}$ is any $G$-invariant mean on $X$, then for any finitely generated subgroup $H$ of $G$ we have $H\leq G_{\varphi _X(x)}$ for $\bm{m}$-almost every $x\in X$.
\end{enumerate}
Moreover, this assignment can be made functorial: if $\psi : X\ra Z$ is a $G$-map then there exists a unique $G$-map $\wh{\psi}: \wh{X} \ra \wh{Z}$ with $\varphi _Z\circ \psi = \wh{\psi}\circ \varphi _X$.
\end{corollary}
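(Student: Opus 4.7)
The plan is to construct $\wh X$ as a quotient of $X$ by a $G$-equivariant equivalence relation built from the $\mathscr{N}$-$wq$-closure of Proposition \ref{prop:wqclosure}, where $\mathscr{N}$ is the (conjugation-invariant) collection of nonamenable subgroups of $G$. For each $x\in X$, let $\wh{G_x}$ denote the $\mathscr{N}$-$wq$-closure of $G_x$ in $G$; conjugation-invariance of $\mathscr{N}$ gives $\wh{G_{gx}}=g\wh{G_x}g^{-1}$, so the relation $x\sim y\Leftrightarrow y\in\wh{G_x}\cdot x$ is a $G$-equivariant equivalence relation on $X$. (Symmetry uses that $k^{-1}\in k\wh{G_x}k^{-1}=\wh{G_{kx}}$ whenever $k\in\wh{G_x}$, and transitivity is a short chase.) Setting $\wh X=X/{\sim}$ with $\varphi_X$ the quotient map, a direct stabilizer computation yields $G_{\varphi_X(x)}=\wh{G_x}$, so property $(i)$ is immediate from the very definition of the $\mathscr{N}$-$wq$-closure.

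For $(ii)$, I first invoke the well-known fact that every $G$-invariant mean $\bm{m}$ on $X$ concentrates on $X_0$ when $G$ is nonamenable. (This itself falls out of Theorem \ref{thm:wqstar}: if $\bm{m}(X\setminus X_0)>0$, pick any f.g.\ nonamenable $F\leq G$, restrict $\bm{m}$, and apply Theorem \ref{thm:wqstar} to $F\acts X\setminus X_0$; then $F\cap G_x$ would be $wq^*$-normal in $F$, hence nonamenable, for $\bm{m}$-a.e.\ $x\in X\setminus X_0$, contradicting the amenability of $G_x$.) Now fix a f.g.\ subgroup $H\leq G$; since $G$ is countable and nonamenable, we may choose a f.g.\ nonamenable subgroup $F\leq G$ with $H\leq F$. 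Applying Theorem \ref{thm:wqstar} to $F\acts X$ gives $F\cap G_x\leq^{\mathscr{N}}_{wq}F$ for $\bm{m}$-a.e.\ $x$. For such $x\in X_0$, fix a witnessing chain $F\cap G_x=F_0\leq F_1\leq\cdots\leq F_\lambda=F$ and push it up to $K_\alpha:=\langle G_x,F_\alpha\rangle$, so that $K_0=G_x$ and $K_\lambda=\langle G_x,F\rangle$. At each stage the $\mathscr{N}$-$q$ generating set for $K_{<\alpha}\leq^{\mathscr{N}}_{q}K_\alpha$ contains all of $K_{<\alpha}$ (because $K_{<\alpha}\supseteq G_x$ is nonamenable, hence is in $\mathscr{N}$) together with the $\mathscr{N}$-$q$ generators of $F_\alpha$ over $F_{<\alpha}$, and these jointly generate $\langle K_{<\alpha},F_\alpha\rangle=K_\alpha$. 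Thus $G_x\leq^{\mathscr{N}}_{wq}\langle G_x,F\rangle$, so by maximality of the $wq^*$-closure $\langle G_x,F\rangle\leq\wh{G_x}$, and therefore $H\leq F\leq\wh{G_x}=G_{\varphi_X(x)}$.

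For the functoriality clause, the definition $\wh\psi([x]_X):=[\psi(x)]_Z$ is forced by the commutation requirement, so uniqueness is automatic; well-definedness of $\wh\psi$ reduces to the containment $\wh{G_x}\leq\wh{G_{\psi(x)}}$ whenever $G_x\leq G_{\psi(x)}$. I would prove this by transfinite induction along any chain $G_x=L_0\leq L_1\leq\cdots\leq L_\lambda=L$ witnessing $G_x\leq^{\mathscr{N}}_{wq}L$: were an $\mathscr{N}$-$q$ generator $g$ of the $\alpha$-th step to lie outside $\wh{G_{\psi(x)}}$, Proposition \ref{prop:wqclosure}$(ii)$ would force $g\wh{G_{\psi(x)}}g^{-1}\cap\wh{G_{\psi(x)}}$ to be amenable, hence also its subgroup $gL_{<\alpha}g^{-1}\cap L_{<\alpha}$, contradicting the defining property of $g$. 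The chief obstacle in the whole proof is the careful bookkeeping in $(ii)$ when constructing the pushed-up chain and verifying generation at each (possibly limit) ordinal; once this is handled, the remainder is formal manipulation of the $wq^*$-closure plus a direct invocation of Theorem \ref{thm:wqstar}.
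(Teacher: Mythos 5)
Your proof is correct and follows essentially the same route as the paper: your quotient $X/\!\sim$ is canonically $G$-isomorphic to the paper's model $\wh{X}=\{(\wh{G}_x,O(x)) : x\in X_0\}\sqcup (X\setminus X_0)$, and both arguments rest on Proposition \ref{prop:wqclosure} (self-normalization of the $wq^*$-closure) together with Theorem \ref{thm:wqstar} applied to a finitely generated nonamenable enlargement $F\supseteq H$. The only difference is one of detail: you spell out the pushed-up chain $K_\alpha=\langle G_x,F_\alpha\rangle$ and the monotonicity $\wh{G}_x\leq\wh{G}_{\psi(x)}$, steps the paper asserts without proof.
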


\begin{proof}
For each nonamenable subgroup $H\leq G$ let $\wh{H}$ denote the $wq^*$-closure of $H$ in $G$ (see \S\ref{sec:normality}). For $x\in X$ let $O(x)$ denote the $G$-orbit of $X$. Let $G$ act on the set $\wh{X}_0 = \{ (\wh{G}_x , O(x)) \csuchthat x\in X_0 \}$ by conjugating the first coordinate, and let $\wh{X} = \wh{X}_0\sqcup X\setminus X_0$. Define $\varphi _X : X\ra \wh{X}$ by $\varphi _X(x) = (\wh{G}_x , O(x))$ for $x\in X_0$, and $\varphi _X(x)=x$ for $x\in X\setminus X_0$. Then $\varphi _X$ is a $G$-map, and for each $x\in X_0$ we have $G_{\varphi _X(x)} = \wh{G}_x$ since $\wh{G}_x$ is self-normalizing. This verifies (i). For (ii), let $\bm{m}$ be a $G$-invariant mean on $X$ and let $H\leq G$ be finitely generated. After making $H$ larger we may assume that $H$ is nonamenable. By Theorem \ref{thm:wqstar}, $H_x$ is $wq^*$-normal in $H$ for $\bm{m}$-almost every $x\in X$. For each such $x$, since $H_x\leq G_x$, we have $H\leq \wh{G}_x = G_{\varphi _X(x)}$.

If $\psi : X \ra Z$ is a $G$-map, then we must show that $\varphi _Z(\psi (x))$ only depends on $\varphi _X(x)$. This is clear for $x\in X\setminus X_0$. Suppose now that $(\wh{G}_{x_0} , O(x_0) ) = (\wh{G}_{x_1},O(x_1))$ where $x_0 , x_1\in X_0$. Find $g\in G$ with $gx_0 = x_1$. Then $\wh{G}_{x_0}=\wh{G}_{x_1} = g\wh{G}_{x_0}g^{-1}$, so $g\in \wh{G}_{x_0} \leq \wh{G}_{\psi (x_0)}$. It follows that $\wh{G}_{\psi (x_1)} = \wh{G}_{\psi (gx_0)} = g\wh{G}_{\psi (x_0)}g^{-1} = \wh{G}_{\psi (x_0)}$, hence $\varphi _Z(\psi (x_0))=\varphi _Z(\psi (x_1))$.
\end{proof}

\section{Transitive amenable actions}

\subsection{Weak normality for groupoids}\label{sec:groupoid}
To prove Theorem \ref{thm:coamen} we need an extension of the results of \cite{PT11} on weakly normal inclusions of discrete probability measure preserving (p.m.p.) groupoids. We adopt the notation and conventions for discrete p.m.p.\ groupoids from \cite[\S 6]{PT11}, and we will need a few additional definitions.

Let $(\mc{G},\mu )$ be a discrete p.m.p.\ groupoid. We do not distinguish two subgroupoids $\mc{H}$ and $\mc{K}$ of $\mc{G}$ if they agree off of a $\mu$-null set. Recall that a {\bf local section} of $\mc{G}$ is a measurable map $\phi : \dom (\phi ) \ra \mc{G}$, with $\dom (\phi ) \subseteq \mc{G}^0$ and $s(\phi x) = x$ for all $x\in \dom (\phi )$, such that the assignment $\phi ^0 : x\mapsto r(\phi x )$ is injective. We do not distinguish two local sections whose domains and values agree off of a $\mu$-null set. Let $[[\mc{G}]]$ denote the collection of all local sections of $\mc{G}$. The {\bf inverse} of $\phi \in [[\mc{G}]]$ is the local section $\phi ^{-1} :\ran (\phi ^0 ) \ra \mc{G}$ given by $\phi ^{-1}(y) = \phi ((\phi ^0)^{-1}y)^{-1}$. The {\bf composition} of two local sections $\phi , \psi \in [[\mc{G}]]$ is the local section $\phi\circ \psi : (\psi ^0)^{-1}(\ran (\psi ^0 ) \cap \dom (\phi ^0 ))  \ra \mc{G}$, $x\mapsto \phi (\psi ^0(x))\psi (x)$.

We equip $[[\mc{G}]]$ with the separable complete metric $d(\phi , \psi  ) = \mu (\dom (\phi )\triangle \dom (\psi ) ) + \mu (\{ x\in \dom (\phi ) \cap \dom (\psi ) \csuchthat \phi (x) \neq \psi (y) \} )$. A consequence of separability of the metric $d$ is that if $\Phi$ is any subset of $[[\mc{G}]]$ then up to a $\mu$-null set there is a unique smallest subgroupoid $\mc{K}$ of $\mc{G}$ with $\Phi \subseteq [[\mc{K}]]$; we call $\mc{K}$ the {\bf subgroupoid generated by $\Phi$} and denote it by $\langle \Phi \rangle$.

For measurable subsets $R\subseteq \mc{G}$ and $A\subseteq \mc{G}^0$ we let $R_A = \{ \gamma \in R \csuchthat s(\gamma ),r(\gamma )\in A \}$. For $\phi \in [[\mc{G}]]$ and $\gamma \in \mc{G}_{\ran (\phi ^0)}$ let $\gamma ^{\phi }=\phi ^{-1}(r(\gamma ))\gamma \phi ^{-1}(s(\gamma ))^{-1} \in \mc{G}_{\dom (\phi ^0)}$. The {\bf $q$-normalizer} of $R$ in $\mc{G}$ is the set
\[
Q_{\mc{G}}(R) = \{ \phi \in [[\mc{G}]] \csuchthat (R_A)^{\phi } \cap R_{(\phi ^0)^{-1}A} \text{ has infinite measure for all non-null }A\subseteq\ran (\phi ^0) \} .
\]
A subgroupoid $\mc{H}$ of $\mc{G}$ is said to be {\bf $q$-normal} in $\mc{G}$ if $Q_{\mc{G}}(\mc{H})$ generates $\mc{G}$. As usual, we obtain the corresponding notion of $wq$-normality by iterating $q$-normality transfinitely. Then the analogue of Lemma \ref{lem:PT} holds: $\mc{H}$ is $wq$-normal in $\mc{G}$ if and only if for every intermediate proper subgroupoid $\mc{H}\subseteq \mc{K}\subsetneq \mc{G}$ there exists a local section $\phi \in [[\mc{G}]] \setminus [[\mc{K}]]$ with $\phi \in Q_{\mc{G}}(\mc{K})$. While Theorem 6.9 of \cite{PT11} is stated for $ws$-normal subgroupoids, we note that the proof holds more generally for $wq$-normal subgroupoids.

\begin{theorem}[\emph{cf.} {\cite[Theorem 6.9]{PT11}}]\label{thm:restr}
Let $\mc{H}$ be a subgroupoid of the discrete p.m.p.\ groupoid $(\mc{G},\mu )$. If $\mc{H}$ is $wq$-normal in $\mc{G}$ then the restriction map $H^1(\mc{G},\mc{U}(\mc{G},\mu )) \ra H^1(\mc{H},\mc{U}(\mc{G},\mu ))$ is injective.
\end{theorem}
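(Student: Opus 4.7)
The plan is to follow the proof of \cite[Theorem 6.9]{PT11} verbatim, observing at each step that the arguments depend only on the $q$-normalizer $Q_{\mc{G}}(\mc{K})$ and not on the full $s$-normalizer. The groupoid analogue of Lemma \ref{lem:PT}, stated just before the theorem, reduces the proof to the following one-step extension: \emph{if $\mc{K}$ is a subgroupoid of $\mc{G}$, $c : \mc{G} \to \mc{U}(\mc{G},\mu)$ is a cocycle with $c|_\mc{K}$ a coboundary, and $\phi \in Q_{\mc{G}}(\mc{K}) \setminus [[\mc{K}]]$, then $c|_{\langle \mc{K}, \phi \rangle}$ is also a coboundary}. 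Granted this claim, applying Zorn's lemma to the poset of pairs $(\mc{K}, v)$, where $\mc{H} \subseteq \mc{K} \subseteq \mc{G}$ and $v : \mc{K}^0 \to \mc{U}(\mc{G},\mu)$ trivializes $c|_\mc{K}$ and extends the given trivialization of $c|_\mc{H}$, yields a maximal pair $(\mc{K}, v)$; if $\mc{K} \neq \mc{G}$, then the $wq$-normality of $\mc{H}$ in $\mc{G}$ together with the groupoid version of Lemma \ref{lem:PT} produces some $\phi \in Q_{\mc{G}}(\mc{K}) \setminus [[\mc{K}]]$, and the extension claim contradicts maximality.

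For the one-step extension, after replacing $c$ by $c \cdot \partial v^{-1}$ I may assume $c|_\mc{K} \equiv 1$ and $v \equiv e$. Define $d : \dom(\phi^0) \to \mc{U}(\mc{G},\mu)$ by $d(x) = c(\phi(x))$. For $\gamma \in \mc{K}_{\ran(\phi^0)}$ with $\gamma^\phi \in \mc{K}$, applying the cocycle identity to the relation $\phi(x_1) \gamma^\phi = \gamma \phi(x_0)$, where $x_0 = (\phi^0)^{-1}(s(\gamma))$ and $x_1 = (\phi^0)^{-1}(r(\gamma))$, together with $c(\gamma) = c(\gamma^\phi) = 1$, yields $d(x_1) = d(x_0)$. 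Hence $d$ is constant along orbits of the equivalence subrelation of $\mc{K}$ restricted to $\dom(\phi^0)$ which is generated by pull-backs through $\phi^0$ of the intersections $(\mc{K}_A)^\phi \cap \mc{K}_{(\phi^0)^{-1}A}$ as $A$ ranges over non-null subsets of $\ran(\phi^0)$.

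The main obstacle is converting this orbit invariance into a genuine measurable extension $v' : \mc{K}^0 \cup \ran(\phi^0) \to \mc{U}(\mc{G},\mu)$ of $v$ that trivializes $c$ on $\langle \mc{K}, \phi \rangle$, with the requisite consistency on any non-null overlap $\ran(\phi^0) \cap \mc{K}^0$. Here the $q$-normalizer hypothesis on $\phi$ --- that $(\mc{K}_A)^\phi \cap \mc{K}_{(\phi^0)^{-1}A}$ has infinite measure on \emph{every} non-null window $A \subseteq \ran(\phi^0)$ --- plays exactly the role that the infinite intersection $|g\mc{H}g^{-1}\cap\mc{H}|=\infty$ plays in the $s$-normal setting of \cite{PT11}. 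The infinite orbit behavior, combined with the tracial $\ell^2$-bounded structure of $\mc{U}(\mc{G},\mu)$, is used in \cite{PT11} both to descend $d$ through the orbit equivalence relation to yield a measurable choice of $v'$ on $\ran(\phi^0)$ via $v'(\phi^0(x)) = d(x)$, and to force $v' = e = v$ on any non-null portion of $\ran(\phi^0) \cap \mc{K}^0$ through constancy of $d$ on infinite orbits. Because the $q$-normalizer condition coincides window-by-window with the hypothesis used in \cite{PT11}, that argument carries over without modification, producing the desired extension and completing the proof.
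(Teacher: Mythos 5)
Your guiding observation --- that the proof of \cite[Theorem 6.9]{PT11} only ever uses the window-by-window condition that $(\mc{K}_A)^{\phi}\cap \mc{K}_{(\phi ^0)^{-1}A}$ has infinite measure for every non-null $A\subseteq \ran (\phi ^0)$ --- is exactly the correction recorded in the paper, and your intertwining identity $\phi (x_1)\gamma ^{\phi} = \gamma \phi (x_0)$ is correct. Nevertheless there is a genuine gap, and it lies in the invariant you choose to propagate. Your Zorn argument does not close: in this setting a subgroupoid $\mc{K}$ of $(\mc{G},\mu )$ has the same unit space as $\mc{G}$ (up to null sets), so a trivializing function $v$ is defined on all of $\mc{G}^0$ and ``$v'$ extends $v$'' can only mean $v'=v$. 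Your poset is therefore the family of subgroupoids on which one \emph{fixed} $v$ trivializes $c$, whereas your one-step extension claim only produces \emph{some} trivialization of $c|_{\langle \mc{K},\phi \rangle}$, not one equal to $v$; maximality of $(\mc{K},v)$ is then not contradicted. If instead you allow the trivializations to vary from stage to stage, then chains in the poset (and likewise the limit stages of the transfinite chain defining $wq$-normality) have no evident upper bound: an increasing union of subgroupoids on which $c$ is a coboundary need not admit a single trivializing function, and nothing in your argument supplies one --- weak limits of unitaries will not do.

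Both problems disappear once you propagate identical vanishing rather than coboundary-ness, which is what the paper does and what the argument of \cite{PT11} actually delivers. Subtract the given coboundary once, so that $c$ vanishes on $[[\mc{H}]]$. For $\phi \in Q_{\mc{G}}(\mc{K})$ there is then no extension problem to solve and no ``overlap $\ran (\phi ^0)\cap \mc{K}^0$'' to reconcile, since $\ran (\phi ^0)$ already lies in $\mc{K}^0=\mc{G}^0$: the infinite measure of $(\mc{K}_A)^{\phi}\cap \mc{K}_{(\phi ^0)^{-1}A}$ on every non-null window $A$, together with the tracial $\ell ^2$-structure of the coefficients $\mc{U}(\mc{G},\mu )$, forces your function $d = c(\phi )$ to be not merely invariant along orbits but identically trivial. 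Vanishing is a pointwise condition whose locus is closed under composition, inverses, and countable decompositions, so it passes to the generated subgroupoid $\langle Q_{\mc{G}}(\mc{K})\rangle$ and, trivially, through unions at limit stages of the $wq$-normality chain. This yields the theorem with no Zorn argument, no re-trivialization, and no descent of $d$ through an orbit equivalence relation; the ``main obstacle'' you identify is an artifact of propagating the wrong property.
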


\begin{proof}
The proof of Theorem 6.9 of \cite{PT11} shows that if $c$ is a $\mc{G}$-cocycle with values in $\mc{U}(\mc{G},\mu )$ which vanishes on $[[\mc{H}]]$, then $c$ vanishes on $Q_{\mc{G}}(\mc{H})$, and therefore on $\langle Q_{\mc{G}}(\mc{H})\rangle$ since the set where $c$ vanishes is closed under compositions and inverses, and $c$ respects countable decompositions. The theorem follows. (We note the following minor correction to the proof of Theorem 6.9 of \cite{PT11}: using the notation from that proof, the fact that $\mc{H}_A$ is $s$-normal in $\mc{G}_A$ is irrelevant to the proof; what is being used is that $(\chi _A \psi )^{-1}\mc{H}_A(\chi _A \psi ) \cap \mc{H}_{(\psi ^0)^{-1}A}$ has infinite measure, which holds since $\mc{H}$ is $s$-normal in $\mc{G}$ and hence $\psi \in Q_{\mc{G}}(\mc{H})$. The rest of the proof remains unchanged after replacing $A$ by $(\psi ^0)^{-1}A$ in the appropriate places.)
\end{proof}

\subsection{Recurrence and normality} Let $G\cc (Y,\nu )$ be a free probability measure preserving action of $G$. We let $\mc{R}^G$ denote the orbit equivalence relation generated by this action. Then $(\mc{R}^G , \nu )$ is a discrete p.m.p.\ groupoid so that the notation and terminology of \S\ref{sec:groupoid} applies. In this case, we will identify each local section $\phi \in [[ \mc{R}^G]]$ with the corresponding partial isomorphism $\phi ^0$ of $(Y,\nu )$, and we identify elements of $G$ with their image in $[[\mc{R}^G]]$.

For each subset $P\subseteq G$ let $R^P\subseteq \mc{R}^G$ denote the graph $R^P = \{ (y,k y ) \csuchthat y\in Y, \ k\in P \}$. Define the sets
\begin{align*}
Q(P) &= \{ g\in G\csuchthat (\forall n )(\exists \text{distinct }k_0,\dots , k_{n-1}\in G )\ ( k_i^{-1}k_j \in P\cap P^g \text{ for all }i<j<n ) \} \\
L(P) &= \{ g\in G\csuchthat (\forall n )(\exists \text{distinct }k_0,\dots , k_{n-1}\in G )\ ( k_i^{-1}k_j \in P\cap g^{-1}P \text{ for all }i<j<n ) \} .
\end{align*}
For a subgroup $H\leq G$ we then have $R^H = \mc{R}^H$ and $Q(H) = \{ g\in G\csuchthat H\cap H^g\text{ is infinite} \}$.

\begin{lemma}\label{lem:thick1}
Let $G\cc (Y,\nu )$ be a free probability measure preserving action of $G$. Let $A\subseteq Y$ be measurable and let $P=P^{-1}$ be a subset of $G$.
\begin{enumerate}
\item[(i)] If $g\in Q(P)$ then $g_{|(A\cap g^{-1}A)} \in Q_{\mc{R}^G}(R^P_A)$. In particular, if $H$ is an infinite subgroup of $G$ then $\mc{R}^H_A$ is $q$-normal in the equivalence relation generated by $R^{Q(H)}_A$.
\item[(ii)] Let $d^P_A,\, d^{L(P)}_A:Y\times Y\ra \N \cup \{ \infty\}$ denote the extended graph metrics on $R^P_A$ and $R^{L(P)}_A$ respectively. Then $d^P_A(x,y) \leq 2d^{L(P)}_A(x,y)$ for almost every $(x,y)\in \mc{R}^G$. In particular, if $L(P) =G$ then $R^P_A$ generates $\mc{R}^G_A$.
\end{enumerate}
\end{lemma}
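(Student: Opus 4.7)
The plan is to prove parts (i) and (ii) separately; both arguments hinge on combinatorial estimates using the clique structures in $Q(P)$ and $L(P)$ together with p.m.p.\ invariance of $\nu$.

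For (i), fix $g \in Q(P)$ and set $\phi = g_{|(A \cap g^{-1}A)}$, so $\ran(\phi^0) = gA \cap A$. A direct computation identifies, for any nonnull $B \subseteq gA \cap A$, the intersection $(R^P_A)^{\phi} \cap R^P_{(\phi^0)^{-1}B}$ with $R^{P \cap P^g}_{g^{-1}B}$, whose measure in $\mc{R}^G$ is $\sum_{s \in P \cap P^g}\nu(g^{-1}B \cap s^{-1}g^{-1}B)$. To show this sum is infinite I use $g \in Q(P)$: since $P \cap P^g$ is symmetric, the substitution $l_i := k_i^{-1}$ lets me rephrase the clique condition as the existence, for each $n$, of distinct $l_0, \dots, l_{n-1}$ with $l_j l_i^{-1} \in P \cap P^g$ for all $i \neq j$. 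Define $J_n(y) := |\{i < n : l_i y \in g^{-1}B\}|$; p.m.p.\ invariance yields $\int J_n\, d\nu = n \nu(g^{-1}B)$, and Cauchy--Schwarz gives $\int J_n^2\, d\nu \geq n^2\nu(g^{-1}B)^2$. Expanding $J_n^2$ and using that each $s \in G$ arises as $l_j l_i^{-1}$ for at most $n$ ordered pairs $(i,j)$, I deduce
\[
\sum_{s \in P \cap P^g} \nu(g^{-1}B \cap s^{-1}g^{-1}B) \;\geq\; n\nu(g^{-1}B)^2 - \nu(g^{-1}B),
\]
which tends to $\infty$ as $n \to \infty$. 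The in-particular assertion follows since, for infinite $H \leq G$, the set $Q(H)$ reduces to $\{g \in G : H \cap H^g \text{ is infinite}\}$, and the local sections $g_{|(A \cap g^{-1}A)}$ for $g \in Q(H)$ generate $\langle R^{Q(H)}_A \rangle$ as a subgroupoid.

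For (ii), I induct on $d^{L(P)}_A(x,y)$ and reduce to the base case $d^{L(P)}_A(x,y) = 1$. Given $g \in L(P)$ and $(x, gx) \in R^{\{g\}}_A$, the clique $k_0, \dots, k_{n-1}$ with $k_i^{-1}k_j \in P \cap g^{-1}P$ for $i < j$ supplies, for each such pair, the candidate midpoint $z_{ij} := k_j^{-1}k_i x$: the edge $(x, z_{ij})$ lies in $R^P$ because $k_j^{-1}k_i \in P$, and $(z_{ij}, gx) \in R^P$ because $gk_i^{-1}k_j \in P$ (using $k_i^{-1}k_j \in g^{-1}P$). Hence $d^P_A(x, gx) \leq 2$ whenever some $z_{ij} \in A$. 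The central claim is that this happens for a.e.\ $x \in A \cap g^{-1}A$, which I verify by a second-moment argument parallel to (i): integrating $G_n(y) := |\{i < n : k_i^{-1}y \in A\}|$ and applying Cauchy--Schwarz shows that the total measure of ``good'' triples $(y, i, j)$ (those with $k_i^{-1}y, k_j^{-1}y, gk_i^{-1}y \in A$) grows like $n\nu(A)^2$, while each pair $(x, gx) \in R^{\{g\}}_A$ is lifted by $n$ triples; thus the bad set $\{x : \forall i < j < n,\ z_{ij} \notin A\}$ has measure tending to $0$. The in-particular statement is immediate: when $L(P) = G$, the relation $R^{L(P)}_A = \mc{R}^G_A$ has diameter $1$ on each orbit, so $d^P_A \leq 2$ a.e.

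The main technical obstacle is in the a.e.\ guarantee of (ii): since the cliques for different $n$ need not be nested, the bad sets are not obviously monotone decreasing. I resolve this by first upgrading the finite-clique family to an infinite coherent chain (obtained via a compactness/diagonalization argument, exploiting the left-translation invariance of the clique relation $k^{-1}k' \in P \cap g^{-1}P$ on $G$), and then running the second-moment estimate along this chain so that the bad sets decrease monotonically to a null set.
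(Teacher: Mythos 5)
Your part (i) is correct, and it takes a genuinely different route from the paper. The paper argues by contradiction via the Poincar\'{e} recurrence theorem: if the fibers $\{ z : (z,y),(gz,gy)\in R^P_A \}$ had size $<m$ on a positive measure set $C$, then translates of $C$ by clique elements would be forced to have an $(m+1)$-fold intersection, producing $m$ distinct midpoints at a single point. You instead identify the relevant intersection exactly as $R^{P\cap P^g}_{g^{-1}B}$, compute its measure as $\sum_{s\in P\cap P^g}\nu (g^{-1}B\cap s^{-1}g^{-1}B)$, and bound this below by $n\nu (g^{-1}B)^2-\nu (g^{-1}B)$ for every $n$ by a second-moment (Cauchy--Schwarz) estimate. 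Both the identification and the ``at most $n$ ordered pairs per $s$'' count are right, and your version has the virtue of carrying the quantifier ``for all non-null $B\subseteq \ran (\phi ^0)$'' through explicitly, which the paper's reduction treats somewhat loosely. The point is that (i) asks for an \emph{averaged} conclusion (infinite total measure), which a second-moment bound delivers.

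Part (ii), however, has a genuine gap, precisely because there an \emph{almost-everywhere} conclusion is required and the averaged estimate cannot produce it. Your count of good triples is of order $n^2\nu (A\cap g^{-1}A)^2$ (not $n\nu (A)^2$ as written), and each point $x$ is hit by up to $\binom{n}{2}$ triples (not $n$); dividing, you learn only that the set of $x\in A\cap g^{-1}A$ admitting a midpoint $z_{ij}\in A$ has measure at least roughly $\nu (A\cap g^{-1}A)^2$. That is a fixed positive proportion, not all of $\nu (A\cap g^{-1}A)$ minus $o(1)$; the claim that the bad set $B_n$ has measure tending to $0$ does not follow (indeed even your own stated numbers give the constant ratio $\nu (A)^2$). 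Moreover, the repair you propose is unsound: membership in $L(P)$ provides arbitrarily large \emph{finite} cliques but in general no infinite one, and no compactness/diagonalization argument can manufacture one --- after translating so that $k_0^{(n)}=e$, the cliques may escape to infinity and their pointwise limit in $2^G$ can collapse to $\{ e\}$ (the relevant tree has infinite branching, so K\"{o}nig's lemma does not apply). Ironically, the obstacle you were repairing is not an obstacle at all: the set $D=\{ x\in A\cap g^{-1}A : d^P_A(x,gx)>2 \}$ is contained in $B_n$ for \emph{every} $n$ and every choice of $n$-clique, so no coherence between cliques is needed; what is needed, and what your estimate does not give, is $\inf _n \nu (B_n)=0$.

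The fix --- and this is the paper's proof --- is to aim the translate-intersection argument at $D$ itself rather than at $A\cap g^{-1}A$. If $x\in D$ then every midpoint $z_{ij}=k_j^{-1}k_ix$ avoids $A$, hence avoids $D$; equivalently $\nu (k_iD\cap k_jD)=0$ for all $i<j<n$, since a point $y$ of this intersection would give $x=k_i^{-1}y\in D$ with midpoint $k_j^{-1}y\in D\subseteq A$, making $d^P_A(x,gx)\leq 2$. Thus the $n$ translates $k_iD$ are pairwise essentially disjoint and each has measure $\nu (D)$, forcing $\nu (D)\leq 1/n$ for every $n$, i.e.\ $\nu (D)=0$. (The paper phrases this as a pigeonhole: choose $n>1/\nu (D)$ and two translates must meet in positive measure.) Your midpoint computation itself --- that $k_i^{-1}k_j\in P\cap g^{-1}P$ makes $x\ra k_j^{-1}k_ix\ra gx$ a path of length two in $R^P_A$ --- is correct and is all that is needed; only the measure-theoretic deployment must be redirected to $D$.
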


\begin{proof}[Proof of Lemma \ref{lem:thick1}]
(i) Fix $g\in Q(P)$. It suffices to show that for almost every $y\in A\cap g^{-1}A$, the set $\{ z\in A \csuchthat (z,y), (gz,gy)\in R^{P}_A \}$ is infinite. Suppose toward a contradiction that there exists an $m>0$ such that the set
\[
C = \{ y\in A\cap g^{-1}A \csuchthat |\{ z\in A\csuchthat (z,y), (gz,gy)\in R^P_A \} | < m \}
\]
has positive measure, say $\nu (C)=\epsilon >0$. By the Poincar\'{e} recurrence theorem there exists some $n\in \N$, depending only on $\epsilon$ and $m$, such that if $(C_i)_{i<n}$ is any sequence of measurable sets in $Y$, each with $\nu (C_i)\geq \epsilon$, then there exists $i_0<i_1<\cdots <i_{m}<n$ with $\nu (\bigcap _{j<m}C_{i_j}) >0$. Using this $n$, let $(k_i)_{i<n}$ be a sequence as in the definition of $g\in Q(P)$. By our choice of $n$ there exists $i_0<i_1<\cdots <i_{m}<n$ with $\nu (\bigcap _{j\leq m}k_{i_j}C ) > 0$. For each $0\leq j <m$ let $h_j = k_{i_m}^{-1}k_{i_j}$ so that $h_j^{-1} \in P\cap P^g$ and $\nu (C \cap \bigcap _{j<m} h_j C ) >0$, and the elements $h_0,\dots , h_{m-1}$ are pairwise distinct. Fix $y\in C\cap \bigcap _{j<m} h_jC$ and fix any $j<m$ and put $h=h_j$. Then $y, h^{-1}y\in C \subseteq A\cap g^{-1}A$, so $y, h^{-1}y, gy, gh^{-1}y\in A$. Moreover, $h^{-1}\in P$ and $gh^{-1}g^{-1}\in P$, so it follows that $(h^{-1}y, y ) \in R^P_A$ and $(gh^{-1}y, gy ) = ((gh^{-1}g^{-1})gy, gy )\in R^P_A$. This shows that $\{ h_j ^{-1}y \} _{j=0}^{m-1}\subseteq \{  z\in A \csuchthat (z,y), (gz,gy ) \in R^P_A \}$, which contradicts that $y\in C$.

(ii) It suffices to show that $d^P_A(gy, y) \leq 2$ for all $g\in L(P)$ and almost every $y\in A\cap g^{-1}A$. Suppose toward a contradiction that there exists some $g\in L(P)$ such that the set
\[
D = \{ y\in A\cap g^{-1}A \csuchthat d^P_A(gy, y) > 2 \}
\]
has positive measure. Let $n\in \N$ be so large that $\tfrac{1}{n} < \nu (D)$. Let $(k_i)_{i<n}$ be a sequence as in the definition of $g\in L(P)$. Then $\nu (k_i D ) = \nu (D) >\tfrac{1}{n}$ for all $0\leq i<n$, so there exists $i<j<n$ with $\nu (k_i D  \cap k_j D) > 0$. Let $k=k_i^{-1}k_j$, so that $k\in P\cap g^{-1}P$, and the set $D_0 := D\cap kD$ is non-null. Fix $y\in D_0$. Then we have $y, gy ,k^{-1}y \in A$ and $k, gk \in P$, so $(k^{-1}y ,gy ) = (k^{-1}y, (gk)k^{-1}y) \in R^P_A$ and $(k^{-1}y,y)= (k^{-1}y,k(k^{-1}y))\in R^P_A$. This shows that $d^P_A(gy , y ) \leq 2$, which contradicts that $y\in D$.
\end{proof}

Example \ref{ex:ws} shows that there are nonamenable groups having a transitive amenable action $G\cc X$ such that $G_x$ is not $ws$-normal in $G$. The next Lemma shows that $G_x$ is still very close to being $s$-normal in $G$. Recall that a subset $B$ of $G$ is said to be {\bf thick} in $G$ if for every finite subset $F\subseteq G$ the intersection $\bigcap _{g\in F}gB$ is nonempty (equivalently: infinite). Observe that if $B\subseteq G$ is thick then $L(B) = G$ since given $g\in G$ we can define $k_0=1$ and inductively let $k_{n+1}$ be any element of $\big( \bigcap _{i\leq n}k_i(B\cap g^{-1}B)\big)\setminus \{k_0,\dots , k_n \}$, so that $(k_n)_{n\geq 0}$ witnesses that $g\in L(B)$.

\begin{lemma}\label{lem:thick}
Let $G\cc X$ be a transitive amenable action of a nonamenable group $G$. Fix any element $x\in X$ and let $H=G_x$. Then $Q(H)$ is thick in $G$. In particular, $L(Q(H)) = G$.
\end{lemma}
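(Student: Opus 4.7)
The plan is to translate the thickness of $Q(H)$ into an assertion about the action $G \acts X$, and then to invoke the classical fact that an amenable action of a nonamenable group has stabilizers concentrated on nonamenable subgroups.

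First I would observe that $k \in g_i Q(H)$ if and only if $g_i^{-1}k \in Q(H)$, i.e.\ $H \cap (g_i^{-1}k)H(g_i^{-1}k)^{-1}$ is infinite, which after conjugating by $g_i$ is equivalent to $g_i H g_i^{-1} \cap kHk^{-1} = G_{g_i x} \cap G_{kx}$ being infinite. Writing $y = kx$ and using transitivity, thickness of $Q(H)$ would amount to the following: for every finite $F = \{g_1,\dots,g_n\} \subseteq G$ there exists $y \in X$ with
\[
G_{g_i x} \cap G_y \text{ infinite for every } i=1,\dots,n.
\]

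To produce such a $y$ I would use the $G$-invariant mean $\bm{m}$ on $X$. Since amenability of groups is closed under extensions, the nonamenability of $G$ together with amenability of the transitive action $G \acts X \cong G/H$ forces $H = G_x$ to be nonamenable, and hence each conjugate $H_i := g_i H g_i^{-1} = G_{g_i x}$ is nonamenable. For every $i$, the restriction $H_i \acts X$ of the ambient action is amenable (with the same mean $\bm{m}$), so I would apply the classical fact recalled in the introduction --- that any invariant mean for an amenable action of a nonamenable group concentrates on points with nonamenable stabilizer --- to conclude
\[
\bm{m}\big(\{y \in X : (H_i)_y \text{ is amenable}\}\big) = 0.
\]
Setting $A_i := \{y \in X : G_{g_i x}\cap G_y \text{ is finite}\}$ (a subset of $\{y : (H_i)_y \text{ is amenable}\}$), it would follow that $\bm{m}(A_i) = 0$ for each $i$, and then finite additivity gives $\bm{m}(A_1 \cup \cdots \cup A_n) = 0$. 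Therefore $X \setminus \bigcup_i A_i$ is nonempty, and any $y$ in this complement (together with any $k$ satisfying $kx = y$) furnishes the required element of $\bigcap_i g_i Q(H)$.

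The ``in particular'' clause then follows at once from the observation recorded just before the statement, namely that $L(B) = G$ whenever $B \subseteq G$ is thick. I do not anticipate any real obstacle here: the only nontrivial input is the concentration fact for invariant means, which the paper itself flags as well known in the introduction.
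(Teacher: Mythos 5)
Your proof is correct and is essentially the paper's own argument: both rest on the classical concentration fact (Lemma \ref{lem:null}) applied with the $G$-invariant mean on $X\cong G/H$, followed by a finite intersection of conull sets. The only cosmetic difference is that the paper applies the concentration lemma once to $H\cc G/H$ and then translates the resulting conull set using $G$-invariance of the mean, whereas you apply it to each conjugate $g_iHg_i^{-1}\cc X$ directly—these are the same step in different coordinates.
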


\begin{proof} Since $H$ is a subgroup of $G$ we have $Q(H) = \{ g\in G \csuchthat H\cap H^g \text{ is infinite} \}$. Note that $H$ is nonamenable since $G$ is nonamenable and the action $G\cc G/H$ is amenable. Let $\bm{m}$ be a $G$-invariant mean on $G/H$. Then $\bm{m}$ is also $H$-invariant, so we obtain
\begin{equation}\label{eqn:K0}
\bm{m}(\{ gH\in G/H \csuchthat H\cap gHg^{-1} \text{ is nonamenable} \} )=1 .
\end{equation}
Let $\pi : G\ra G/H$ be the projection map $\pi (g)= gH$. Then \eqref{eqn:K0} implies that $\bm{m}(\pi (Q(H)))= 1$. If $g\in G$ then by $G$-invariance of $\bm{m}$ we have $\bm{m} (\pi (gQ(H))) = \bm{m}(g\pi (Q(H))) = \bm{m}(\pi (Q(H))) = 1$. Therefore, for any finite subset $F$ of $G$ we have $\bm{m}(\pi(\bigcap _{g\in F}gQ(H))) = \bm{m}(\bigcap _{g\in F}\pi (gQ(H))) = 1$, where the equality $\pi (\bigcap _{g\in F}gQ(H)) = \bigcap _{g\in F}\pi (gQ(H))$ follows from $Q(H)$ being a union of left cosets of $H$. In particular, $\bigcap _{g\in F}gQ(H)\neq \emptyset$.
\end{proof}

\subsection{Proof of Theorem \ref{thm:coamen}}

Using Theorem \ref{thm:restr} and Lemmas \ref{lem:thick1} and \ref{lem:thick}, we can now argue as in Theorem 5.12 of \cite{PT11}.

\begin{proof}[Proof of Theorem \ref{thm:coamen}]
We can of course assume that $G$ is nonamenable. Let $H= G_x$ and assume that $\beta ^{(2)}_1(H)<\infty$. Since $H$ is infinite index in $G$ there exists a free ergodic p.m.p.\ action $G\cc (Y,\nu )$ of $G$ whose restriction to $H$ has a continuum of ergodic components. Such an action may be obtained, e.g., by coinducing from any free p.m.p.\ action of $H$ with a continuum of ergodic components. Fix $n\geq 1$ and let $A_0,\dots , A_{n-1}$ be a partition of $Y$ into $H$-invariant sets of equal measure. Let $\mc{R}_n = \bigsqcup _{i<n}\mc{R}^G_{A_i}$. Observe that $\mc{R}^H\subseteq \mc{R}_n \subseteq \mc{R}^G$. For each $i<n$ let $\mc{S}_i$ be the equivalence relation generated by $R^{Q(H)}_{A_i}$. Then by Lemma \ref{lem:thick1}.(i), $\mc{R}^H$ is $q$-normal in $\bigsqcup _{i<n}\mc{S}_i$. By Lemma \ref{lem:thick} we have $L(Q(H))=G$, so Lemma \ref{lem:thick1}.(ii) implies that $\mc{S}_i=\mc{R}^G_{A_i}$ for each $i<n$, and hence $\bigsqcup _{i<n}\mc{S}_i = \mc{R}_n$. This shows that $\mc{R}^H$ is $q$-normal in $\mc{R}_n$ and hence
\[
\beta ^{(2)}_1 (\mc{R}_n) \leq \beta ^{(2)}_1 (\mc{R}^H )
\]
by Theorem \ref{thm:restr}. Since $G\cc (Y,\nu )$ is ergodic, each of the sets $A_i$ is a complete section for $\mc{R}^G$, so $[\mc{R}^G:\mc{R}_n] = n$. By Corollary 3.16 and Proposition 5.11 of \cite{Ga02} we therefore have
\[
\beta ^{(2)}_1(G) = \beta ^{(2)}_1(\mc{R}^G) = \frac{1}{n}\beta ^{(2)}_1(\mc{R}_n) \leq \frac{1}{n}\beta ^{(2)}_1 (\mc{R}^H) = \frac{1}{n}\beta ^{(2)}_1(H).
\]
Since $n\geq 1$ was arbitrary and $\beta ^{(2)}_1(H)<\infty$, we conclude that $\beta ^{(2)}_1(G)=0$.

Assume now that $\mathscr{PC}(H)<\infty$ and let $H\cc (Y_0,\nu _0)$ be a free p.m.p.\ action of $H$ with $\mathscr{PC}(\mc{R}^H_{Y_0}) < \infty$. After taking the product of this action with an identity action of $H$ on an atomless probability space, we may assume that $H\cc (Y_0,\nu _0)$ has continuous ergodic decomposition. Let $G\cc (Y,\nu )$ be the coinduced action, which is free and ergodic. Since $H\cc (Y,\nu )$ factors onto $H\cc (Y_0, \nu _0)$, we have $\mathscr{PC}(\mc{R}^H_Y)\leq \mathscr{PC}(\mc{R}^H_{Y_0}) <\infty$. The proof now proceeds as above, using Proposition \ref{prop:Furman1} in place of Theorem \ref{thm:restr}, and Proposition 25.7 of \cite{KM04} in place of Proposition 5.11 of \cite{Ga02} (the proof of Proposition 25.7 of \cite{KM04} works just as well for pseudocost as for cost).
\end{proof}

\section{The cost of inner amenable groups}

\subsection{Proof of Theorem \ref{thm:relmain1}}
We will often use the following well-known classical fact, which is a weakening of Theorem \ref{thm:wqstar}.

\begin{lemma}\label{lem:null}
Let $G\cc X$ be an amenable action of a nonamenable group $G$ and let $\bm{m}$ be a $G$-invariant mean on $X$. Then $G_x$ is nonamenable for $\bm{m}$-almost every $x\in X$.
\end{lemma}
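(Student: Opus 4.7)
The plan is to prove the contrapositive. Assume the set
\[
A=\{ x\in X \csuchthat G_x \text{ is amenable}\}
\]
has positive $\bm{m}$-measure; I will construct a left-invariant mean on $G$, contradicting the nonamenability of $G$. Because $G_{gx}=gG_xg^{-1}$ is amenable iff $G_x$ is, the set $A$ is $G$-invariant, so after restricting $\bm{m}$ to $A$ and renormalizing I may assume that $G_x$ is amenable for every $x\in X$.

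The heart of the argument is to patch the various stabilizer means together into a single mean on $G$, in a way compatible with the $G$-action on $X$. Fix a representative $x_0$ for each $G$-orbit and, using amenability of $G_{x_0}$, choose a left-$G_{x_0}$-invariant mean $\nu _{x_0}$ on $G$ supported on $G_{x_0}$ (for instance, extend any left-invariant mean on $G_{x_0}$ by zero). For any $x=hx_0$ in the same orbit, define
\[
\nu _x(f) \;=\; \nu _{x_0}\bigl( k\mapsto f(hk) \bigr)\qquad (f\in \ell ^\infty (G)) .
\]
Two admissible choices of $h$ differ by an element of $G_{x_0}$, so left-$G_{x_0}$-invariance of $\nu _{x_0}$ makes $\nu _x$ well defined.

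Now define $\mathbf{n}:\ell ^\infty (G)\ra \R$ by $\mathbf{n}(f)=\bm{m}\bigl( x\mapsto \nu _x(f) \bigr)$; this is a mean on $G$ since $|\nu _x(f)|\leq \| f\| _\infty$ uniformly in $x$. Writing $g^{-1}x=(g^{-1}h)x_0$, the definition of $\nu _x$ yields immediately the transport identity
\[
\nu _{g^{-1}x}(f) \;=\; \nu _{x_0}\bigl( k\mapsto f(g^{-1}hk)\bigr) \;=\; \nu _x(g\cdot f) .
\]
Combining this with the $G$-invariance of $\bm{m}$ gives $\mathbf{n}(g\cdot f) = \bm{m}(x\mapsto \nu _{g^{-1}x}(f)) = \bm{m}(x\mapsto \nu _x(f)) = \mathbf{n}(f)$ for every $g\in G$, so $\mathbf{n}$ is a left-invariant mean on $G$ -- the desired contradiction.

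The only step requiring any care is the coherent choice of the $\nu _x$ along each orbit: one must use \emph{left}-$G_{x_0}$-invariance of $\nu _{x_0}$, and keep in mind that $\nu _x$ is supported on the coset $hG_{x_0}$ rather than on $G_x=hG_{x_0}h^{-1}$ itself. Once this transport rule is set up, left-invariance of $\mathbf{n}$ is formal, using nothing beyond $G$-invariance of $\bm{m}$.
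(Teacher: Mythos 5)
Your proof is correct, but it takes a genuinely different route from the paper, which in fact offers no argument at all for this lemma: it is quoted as a ``well-known classical fact'' and observed to be a weakening of Theorem \ref{thm:wqstar}, whose proof runs through the isoperimetric machinery of \S\ref{sec:isoperim} (F{\o}lner sets in Schreier graphs, Lemma \ref{lem:peramen}, Theorem \ref{thm:isoperim}), plus a passage to a finitely generated nonamenable subgroup $H\leq G$ when $G$ is not finitely generated (so that $H_x\leq G_x$ forces $G_x$ nonamenable). Your argument is instead the classical direct one: it is, fibered over $(X,\bm{m})$, exactly the proof that a group is amenable whenever it has an amenable subgroup $H$ with $G\cc G/H$ amenable. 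The two delicate points are handled correctly: the transport rule $\nu_{hx_0}(f)=\nu_{x_0}\bigl(k\mapsto f(hk)\bigr)$ is well defined precisely because of \emph{left}-$G_{x_0}$-invariance of $\nu_{x_0}$ (two choices of $h$ differ on the right by an element of $G_{x_0}$), and the resulting $\nu_x$ is supported on the coset $hG_{x_0}$, not on $G_x$, which is harmless since only the transport identity $\nu_{g^{-1}x}(f)=\nu_x(g\cdot f)$ is needed; combined with invariance of $\bm{m}$ (extended from sets to $\ell^\infty(X)$, as is standard for finitely additive means) this makes $\mathbf{n}$ left-invariant. What your approach buys: it is elementary and self-contained, uses no finite generation hypothesis and no isoperimetric estimates, and is exactly the proof one would give of the classical fact the paper is invoking. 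What the paper's route buys: Theorem \ref{thm:wqstar} yields the far stronger conclusion that the stabilizers are $wq^*$-normal, of which nonamenability is only the weakest consequence, and that strengthening is what drives the rest of the paper's results.
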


The following simple consequence will be very useful.

\begin{lemma}\label{lem:conull}
Assume that the pair $(G,H)$ is inner amenable and let $\bm{m}$ be an $H$-conjugation invariant mean on $G$. Let $\mathscr{F}$ be a finite collection of nonamenable subgroups of $H$. Then
\[
\bm{m} ( \{ g\in G\csuchthat L\cap C_G(g) \text{ is nonamenable for all }L\in \mathscr{F} \} ) = 1 .
\]
\end{lemma}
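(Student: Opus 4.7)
The plan is to prove the lemma one subgroup at a time: for each fixed $L\in \mathscr{F}$, I will show that the set
\[
X_L = \{ g\in G \csuchthat L\cap C_G(g) \text{ is amenable}\}
\]
is $\bm{m}$-null. Since $\mathscr{F}$ is finite, this immediately implies that the complement of $\bigcup _{L\in \mathscr{F}}X_L$ is $\bm{m}$-conull, which is exactly what the lemma asserts.

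First I would check that $X_L$ is invariant under the conjugation action of $L$ on $G$. Indeed, for $h\in L$ and $g\in G$ we have $C_G(hgh^{-1}) = hC_G(g)h^{-1}$, and since $h^{-1}Lh = L$ this gives
\[
L\cap C_G(hgh^{-1}) = h(L\cap C_G(g))h^{-1},
\]
so amenability of $L\cap C_G(g)$ is preserved under conjugation by $h\in L$. Thus $L$ acts on $X_L$ by conjugation, and the stabilizer of a point $g\in X_L$ under this action is precisely $L\cap C_G(g)$, which is amenable by the very definition of $X_L$.

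Now suppose toward a contradiction that $\bm{m}(X_L)>0$. Then the restriction and normalization of $\bm{m}$ to $X_L$ is an $L$-conjugation-invariant mean on $X_L$, witnessing that the action $L\acts X_L$ is amenable. Since $L\leq H$ is nonamenable by hypothesis, Lemma \ref{lem:null} applied to this action forces the stabilizer $L\cap C_G(g)$ to be nonamenable for almost every $g\in X_L$. This contradicts the definition of $X_L$, so $\bm{m}(X_L)=0$.

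There is no real obstacle here beyond packaging the classical fact (Lemma \ref{lem:null}) that an amenable action of a nonamenable group has almost-everywhere nonamenable stabilizers; the only subtlety is identifying the correct $L$-invariant subset $X_L$ and noting that stabilizers for the $L$-conjugation action on $G$ coincide with the centralizer intersections $L\cap C_G(g)$. Note that the hypothesis that $\bm{m}$ be atomless is not needed for this lemma — it suffices that some $H$-conjugation-invariant mean on $G$ exists, and Lemma \ref{lem:null} handles each nonamenable $L\in \mathscr{F}$ separately.
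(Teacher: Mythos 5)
Your proof is correct and follows essentially the same route as the paper: the paper likewise treats each $L\in\mathscr{F}$ separately and applies Lemma \ref{lem:null} to the conjugation action $L\acts G$, for which $\bm{m}$ is an invariant mean (being $H$-invariant and $L\leq H$) and the stabilizers are exactly the groups $L\cap C_G(g)$. The only difference is one of packaging: the paper applies Lemma \ref{lem:null} directly with $X=G$, which yields $\bm{m}(X_L)=0$ in a single step, so your detour through restricting and normalizing $\bm{m}$ on $X_L$ and arguing by contradiction is sound but unnecessary.
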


\begin{proof}
It suffices to prove the lemma in the case where $\Es{F}= \{ L \}$ is a singleton. This follows from Lemma \ref{lem:null} by taking $X=G$ along with the conjugation action $L\cc X$.
\end{proof}

Theorem \ref{thm:relmain1} is an immediate consequence of the following more detailed analysis.

\begin{theorem}\label{thm:relmain}
Assume that the pair $(G,H)$ is inner amenable and that $H$ is nonamenable. For each nonamenable subgroup $L\leq H$ let $K_L =\langle \{ g\in G\csuchthat L\cap C_G(g)\text{ is nonamenable} \}\rangle$.
\begin{enumerate}
\item[(i)] Let $L$ be a nonamenable subgroup of $H$. Then $L\leq _{q^*}LK_L\leq _q \langle H,K_L\rangle \leq _{q^*}HK_H$. In particular, $L$ is $wq$-normal in $HK_{H}$, and $H$ is $q^*$-normal in $HK_H$.
\item[(ii)] Every $H$-conjugation invariant mean $\bm{m}$ on $G$ concentrates on $HK_H$. In particular, $\bm{m}$ concentrates on the $wq^*$-closure of $H$ in $G$.
\end{enumerate}
\end{theorem}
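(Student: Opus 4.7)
The proof rests on two ingredients: the hypothesis of inner amenability (applied via Lemma \ref{lem:conull}) together with the elementary observation that whenever $g\in G$ satisfies $L\cap C_G(g)$ nonamenable, the identity $g(L\cap C_G(g))g^{-1}=L\cap C_G(g)$ forces $L\cap gLg^{-1}\supseteq L\cap C_G(g)$ to be nonamenable as well. Part (ii) then follows essentially in one line: Lemma \ref{lem:conull} applied with $\mathscr{F}=\{H\}$ shows that any $H$-conjugation invariant mean $\bm{m}$ concentrates on the set $\{g\csuchthat H\cap C_G(g)\text{ is nonamenable}\}$, which is exactly a generating set of $K_H$; hence $\bm{m}$ concentrates on $K_H\subseteq HK_H$. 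The ``in particular'' clause of (ii) follows from (i) applied with $L=H$, which exhibits $H\leq_{q^*}HK_H$ and therefore places $HK_H$ inside the $wq^*$-closure $\wh{H}$ of $H$.

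The first link $L\leq_{q^*}LK_L$ of (i) is immediate from the observation: the natural generating set $L\cup\{g\csuchthat L\cap C_G(g)\text{ is nonamenable}\}$ of $LK_L$ is contained in the $q^*$-normalizer of $L$ inside $LK_L$. For the middle link $LK_L\leq_q\langle H,K_L\rangle$, I verify the $q$-normalizer condition on the generating set $H\cup K_L$. Elements of $K_L$ already lie in $LK_L$, so they are automatically in the $q$-normalizer. For $g\in H$, the key move is to apply Lemma \ref{lem:conull} to $\mathscr{F}=\{L,gLg^{-1}\}$, which are nonamenable subgroups of $H$ because $g\in H$: the resulting $\bm{m}$-conull, hence infinite, set of $k$ with both $L\cap C_G(k)$ and $gLg^{-1}\cap C_G(k)$ nonamenable lies simultaneously in $K_L$ and in $gK_Lg^{-1}$, the latter because $gLg^{-1}\cap C_G(k)$ is nonamenable iff $L\cap C_G(g^{-1}kg)$ is nonamenable. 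Thus $gK_Lg^{-1}\cap K_L\subseteq g(LK_L)g^{-1}\cap LK_L$ is infinite, as required.

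For the last link $\langle H,K_L\rangle\leq_{q^*}HK_H$, the group inclusion holds because $L\leq H$ forces $K_L\leq K_H$, so $\langle H,K_L\rangle \leq HK_H$. The generating set $H\cup\{k\csuchthat H\cap C_G(k)\text{ is nonamenable}\}$ of $HK_H$ sits in the $q^*$-normalizer of $\langle H,K_L\rangle$: elements of $H$ already lie in $\langle H,K_L\rangle$, and for any generator $k$ of $K_H$, the observation gives the nonamenable chain $H\cap C_G(k)\subseteq H\cap kHk^{-1}\subseteq\langle H,K_L\rangle\cap k\langle H,K_L\rangle k^{-1}$. The main conceptual step in the whole argument is the use of Lemma \ref{lem:conull} with a two-element family in the middle link, where the inner amenability hypothesis is actually exploited; the surrounding verifications amount to careful bookkeeping through generating sets and the one-line centralizer inclusion.
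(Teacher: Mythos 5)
Your proposal is correct and takes essentially the same route as the paper's proof: the same three-link chain $L\leq_{q^*}LK_L\leq_{q}\langle H,K_L\rangle\leq_{q^*}HK_H$, the same centralizer observation $L\cap C_G(g)\subseteq gLg^{-1}\cap L$, and the same use of Lemma \ref{lem:conull} both for part (ii) and for the middle link (where the paper deduces $\bm{m}(hLK_Lh^{-1}\cap LK_L)=1$ from conjugation-invariance of the mean, while you produce the equivalent conull set $\{ k : L\cap C_G(k)\text{ and }gLg^{-1}\cap C_G(k)\text{ nonamenable}\}$ by applying Lemma \ref{lem:conull} to the family $\{L,gLg^{-1}\}$ --- a cosmetic difference). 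The one point you must state explicitly is that the mean invoked in the middle link is \emph{atomless} (such a mean exists precisely because the pair $(G,H)$ is inner amenable): for a general $H$-conjugation-invariant mean the inference ``conull, hence infinite'' fails, which is why the paper opens its proof by fixing an atomless $H$-conjugation-invariant mean before anything else.
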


\begin{proof} Fix an atomless $H$-conjugation invariant mean $\bm{m}$ on $G$. Let $L\leq H$ be nonamenable. Let $S_L = \{ g\in G\csuchthat L\cap C_G(g)\text{ is nonamenable} \}$. Then $\bm{m}(S_L)=1$ by Lemma \ref{lem:conull}, and $S_L\subseteq \{ g\in G\csuchthat gLg^{-1}\cap L\text{ is nonamenable}\}$ implies that $L\leq _{q^*} LK_L$. Since $S_L\subseteq K_L$ we have $\bm{m}(LK_L) =1$, so $\bm{m}(hLK_Lh^{-1}\cap LK_L ) =1$ for all $h\in H$, and since $\bm{m}$ is atomless this shows that $LK_L\leq _q \langle H, K_L\rangle$. Then $\langle H,K_L\rangle \leq _{q^*}HK_H$ follows from $H\leq _{q^*}HK_H$.
\end{proof}

\subsection{Proof of Theorem \ref{thm:subgroup}}
We begin with the version of Theorem \ref{thm:subgroup} for inner amenable pairs. Let $\mathscr{N}(H)$ denote the collection of all nonamenable subgroups of $H$.

\begin{theorem}\label{thm:pairsubgroup}
Assume that the pair $(G,H)$ is inner amenable and that $H$ is nonamenable. Then at least one of the following holds:
\begin{enumerate}
\item[(1)] For every finite $\mathscr{F}\subseteq \mathscr{N}(H)$ there exists an infinite amenable subgroup $K$ of $G$ such that $L\cap C_G(K)$ is nonamenable for all $L\in \mathscr{F}$.
\item[(2)] For every finite $\mathscr{F}\subseteq \mathscr{N}(H)$ there exists an increasing sequence $M_0\leq M_1\leq \cdots$ of finite subgroups of $G$, with $\lim _{n\ra\infty}|M_n|=\infty$, such that $L\cap C_G(M_n)$ is nonamenable for all $L\in \mathscr{F}$, $n\in \N$.
\item[(3)] For every finite $\mathscr{F}\subseteq \mathscr{N}(H)$ there exists a nonamenable subgroup $K$ of $G$ such that $L\cap C_G(K)$ is nonamenable for all $L\in \mathscr{F}$.
\end{enumerate}
\end{theorem}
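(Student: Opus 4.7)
My plan is to combine Zorn's Lemma with iterated applications of Lemma \ref{lem:conull} to produce a canonical subgroup $K^* \le G$ on which the chosen atomless mean concentrates, and then to read off one of the three alternatives from the internal structure of $K^*$. To set up, I fix an atomless $H$-conjugation invariant mean $\bm{m}$ on $G$ and a finite $\mathscr{F} \subseteq \mathscr{N}(H)$, and I let $\mathcal{K}_{\mathscr{F}}$ denote the collection of all subgroups $K\le G$ for which $L\cap C_G(K_0)$ is nonamenable for every finitely generated $K_0\le K$ and every $L\in \mathscr{F}$. Since $\mathcal{K}_{\mathscr{F}}$ contains the trivial subgroup and is closed under unions of chains (a finitely generated subgroup of $\bigcup_\alpha K_\alpha$ lies inside some single $K_\alpha$), Zorn's Lemma produces a maximal element $K^* \in \mathcal{K}_{\mathscr{F}}$.

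The crux will be to show $\bm{m}(K^*)=1$. For each finitely generated $K_0 \le K^*$ and each $L\in \mathscr{F}$, the group $L\cap C_G(K_0)$ lies in $\mathscr{N}(H)$, so Lemma \ref{lem:conull} yields an $\bm{m}$-conull set of $g\in G$ with $L\cap C_G(K_0)\cap C_G(g)$ nonamenable. Since $K^*$ is countable it has only countably many finitely generated subgroups, and $\mathscr{F}$ is finite, so intersecting these conull sets gives a conull $B\subseteq G$. Were $\bm{m}(K^*)<1$, I could pick $g\in B\setminus K^*$; every finitely generated subgroup of $\langle K^*, g\rangle$ sits inside some $\langle K_0, g\rangle$, and $L\cap C_G(\langle K_0, g\rangle) = L\cap C_G(K_0)\cap C_G(g)$ is nonamenable by the choice of $g$, so $\langle K^*, g\rangle$ would belong to $\mathcal{K}_{\mathscr{F}}$ and strictly extend $K^*$, contradicting maximality. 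Hence $\bm{m}(K^*)=1$, and atomlessness of $\bm{m}$ forces $K^*$ to be infinite.

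With $K^*$ in hand, the three alternatives follow from a case split on its local structure. If $K^*$ contains a finitely generated nonamenable subgroup $K_0$, then $K_0$ itself witnesses alternative (3) for $\mathscr{F}$. Otherwise $K^*$ is locally amenable; if it contains any infinite finitely generated subgroup $K_0$, then $K_0$ is infinite and amenable, witnessing alternative (1) for $\mathscr{F}$. In the remaining case $K^*$ is locally finite, and enumerating $K^*$ and letting $M_n$ be the subgroup generated by the first $n$ elements produces an increasing chain of finite subgroups with $|M_n|\to\infty$ (since the union $K^*$ is infinite), witnessing alternative (2) for $\mathscr{F}$.

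To upgrade this pointwise trichotomy to the uniform one stated in the theorem, I argue by contraposition: if neither (1) nor (2) holds, fix finite witnesses $\mathscr{F}_1$ and $\mathscr{F}_2$, and for any given finite $\mathscr{F}$ apply the construction above to $\mathscr{F}\cup \mathscr{F}_1 \cup \mathscr{F}_2$; alternatives (1) and (2) cannot hold for this enlarged family by choice of $\mathscr{F}_1, \mathscr{F}_2$, so (3) must, and the same nonamenable $K$ works a fortiori for $\mathscr{F}$. The main obstacle is the concentration step $\bm{m}(K^*)=1$, where the atomlessness of $\bm{m}$ and the chain-closure of $\mathcal{K}_{\mathscr{F}}$ must be played off against each other via Lemma \ref{lem:conull}; once that is established, both the structural case split and the pointwise-to-uniform upgrade are routine.
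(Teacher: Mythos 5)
Your overall architecture (Zorn's Lemma to get a maximal $K^*$, concentration of the mean on $K^*$, a case split on the local structure of $K^*$, then the contrapositive upgrade) is appealing, and both the case split and the final upgrade are fine as stated. The gap is exactly at the step you identify as the crux: the claim that $\bm{m}(K^*)=1$. There you intersect the countably many conull sets $B_{K_0}$, one for each finitely generated $K_0\leq K^*$, and assert that the intersection is conull. But $\bm{m}$ is only \emph{finitely} additive: countable intersections of $\bm{m}$-conull sets need not be conull, and can even be empty. (For any atomless mean on $G$, enumerate $G=\{g_1,g_2,\dots\}$ and set $B_n=G\setminus\{g_1,\dots ,g_n\}$; each $B_n$ is conull while $\bigcap _n B_n=\emptyset$.) Since your sets $B_{K_0}$ form an essentially decreasing family as $K_0$ grows, nothing prevents $\bigcap _{K_0}B_{K_0}$ from being empty, in which case no $g\in B\setminus K^*$ exists and the maximality contradiction never gets started. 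Note that this is not a removable convenience: defining $\mathcal{K}_{\mathscr{F}}$ through finitely generated subgroups is precisely what buys chain-closure for Zorn, but it is also what forces you to satisfy infinitely many centralizer constraints \emph{simultaneously} when adjoining a single new element $g$.

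The paper's proof is structured precisely to avoid this issue: it never needs more than finitely many constraints at any moment. It runs an element-by-element induction, maintaining for each $L\in\mathscr{F}$ a single nonamenable ``reservoir'' $\varphi _n(L)=L\cap C_G(g_1)\cap \cdots \cap C_G(g_n)$ commuting with $M_n=\langle g_1,\dots ,g_n\rangle$, and at each stage applies Lemma \ref{lem:conull} only to the finite collection $\{ \varphi _n(L)\csuchthat L\in \mathscr{F}\}$ to find one new element $g\notin M_n$ (atomlessness is used only to see $\bm{m}(M_n)=0$ while $M_n$ is finite). If the process never terminates, the strictly increasing finite groups $M_n$ witness alternative (2) against $\mathscr{F}_2$, a contradiction; if it terminates, $M_n$ is infinite and the reservoirs show $L\cap C_G(M_n)$ is nonamenable for all $L\in\mathscr{F}$, so failure of (1) (via $\mathscr{F}_1$) forces $M_n$ to be nonamenable, yielding (3). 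If you want to keep your Zorn framing, you would need to enrich the objects being ordered so that each element of the chain carries its own finite list of nonamenable reservoirs — say, pairs $(M,(\psi _L)_{L\in \mathscr{F}})$ with each $\psi _L\leq L\cap C_G(M)$ nonamenable — but after unwinding, that is the paper's induction in transfinite clothing.
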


\begin{proof}
Assume that neither (1) nor (2) holds, as witnessed by the collections $\mathscr{F}_1$ and $\mathscr{F}_2$ respectively. We will show that (3) holds. Toward this end, fix $\mathscr{F} \subseteq \mathscr{N}(H)$ finite. We may assume that $\mathscr{F}_1\cup \mathscr{F}_2\subseteq \mathscr{F}$. Fix an atomless $H$-conjugation invariant mean $\bm{m}$ on $G$. Let $M_0 = \{ e \}$ and for each $L\in \mathscr{F}$ let $\varphi _0(L) = L$. Assume for induction that $\varphi _0(L)\geq \cdots \geq \varphi _n(L)$, $L\in \mathscr{F}$, and $M_0\lneq \cdots \lneq M_n$ have been defined with $\varphi _n (L)$ nonamenable and commuting with $M_n$ for all $L\in \mathscr{F}$. If $M_n$ is infinite then we stop; otherwise, since $\bm{m}$ is atomless we have $\bm{m}(M_n)=0$, so by Lemma \ref{lem:conull} there exists $g\in G\setminus M_n$ such that $\varphi _{n+1}(L):= \varphi _n(L)\cap C_G(g)$ is nonamenable for all $L\in \mathscr{F}$. The induction continues with $M_{n+1}:= \langle M_n,g\rangle$.

We claim that this process stops at some stage, i.e., there is some $n>0$ such that $M_n$ is infinite. Otherwise, if the process never stops, we would obtain an infinite sequence $M_0\lneq M_1\lneq \cdots$ of finite groups such that $\varphi _n(L)\leq L\cap C_G(M_n)$ for all $L\in \mathscr{F}_2$, contradicting our choice of $\mathscr{F}_2$. Let $n$ be the stage at which the process stops and take $K:=M_n$. Then for each $L\in \mathscr{F}$ we have $\varphi _n (L)\leq L\cap C_G(K)$, so $L\cap C_G(K)$ is nonamenable. Then $K$ must be nonamenable since $\mathscr{F}_1\subseteq \mathscr{F}$.
\end{proof}

We can now prove Theorem \ref{thm:subgroup}.

\begin{proof}[Proof of Theorem \ref{thm:subgroup}]
Assume that neither (1) nor (2) of Theorem \ref{thm:subgroup} holds and fix $\mathscr{F}\subseteq \mathscr{N}$ finite and $n\in \N$ toward the goal of verifying (3). We already know that the pair $(G,G)$ satisfies alternative (3) of Theorem \ref{thm:pairsubgroup}. We may therefore find a nonamenable subgroup $K_0^0\leq G$ such that the group $\psi _0(L):= L\cap C_G(K^0_0)$ is nonamenable for all $L\in \mathscr{F}$. Let $k\geq 0$ and assume for induction that we have defined the nonamenable subgroups $\psi _k(L)$, $L\in \mathscr{F}$, and nonamenable $K^k_0,K^k_1,\dots , K^k_k$, such that
\begin{itemize}
\item $K_i^k$ and $K_j^k$ commute for all $0\leq i<j\leq k$,
\item $\psi _k(L)\leq L$ and $\psi _k(L)$ and $K_i$ commute for all $L\in \mathscr{F}$ and $0\leq i\leq k$.
\end{itemize}
Now apply alternative (3) of Theorem \ref{thm:pairsubgroup} to $(G,G)$, using the finite collection $\{ \psi _k (L)\csuchthat L\in \mathscr{F}\}\cup \{ K^k_i\csuchthat 0\leq i\leq k \}$, to obtain a nonamenable subgroup $K^{k+1}_{k+1}$ of $G$ such that $\psi _{k+1}(L):=\psi _k(L)\cap C_G(K^{k+1}_{k+1})$ is nonamenable for all $L\in \mathscr{F}$, and $K^{k+1}_i := K^k_i\cap C_G(K^{k+1}_{k+1})$ is nonamenable for all $i\leq k$. This continues the induction. For each $i<n$ let $K_i = K^{n-1}_i$. Then $K_0,K_1, \dots , K_{n-1}$ are the desired subgroups for the first part of (3). For the second statement, we may assume that $\mathscr{F}$ contains a subset $\mathscr{F}_1$ witnessing that alternative (1) fails. For each $0\leq i<n$ inductively let $g_i$ be any element of $K_i\setminus \langle g_0,\dots , g_{i-1}\rangle$; this set is nonempty since $K_i$ is nonamenable and $\langle g_0,\dots , g_{i-1}\rangle$ is abelian. The group $M_n := \langle g_0,g_1,\dots ,g_{n-1}\rangle$ is an abelian group which commutes with $\psi (L)$ for all $L\in \mathscr{F}$, hence $L\cap C_G(M_n)$ is nonamenable for all $L\in \mathscr{F}$. Then $M_n$ is finite since $\mathscr{F}_1\subseteq \mathscr{F}$. By construction, $M_n$ has order $|M_n|\geq 2^n$.
\end{proof}

\subsection{Proof of Theorem \ref{thm:FP1}}\label{sec:FP1}

\begin{lemma}\label{lem:fin}
Let $M$ be a finite normal subgroup of $G$. Then $|M|(\mathscr{C}^*(G) -1)\leq \mathscr{C}^*(G/M) - 1$.
\end{lemma}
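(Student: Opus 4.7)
The plan is to show that any free p.m.p.\ action of $G$ induces a free $G/M$-action whose cost satisfies the desired inequality, then pass to the supremum. Fix a free p.m.p.\ action $G\cc (Y,\nu)$ and let $(X,\mu) := (Y/M, \pi_*\nu)$, where $\pi:Y\to Y/M$ is the quotient. The induced $G/M$-action on $X$ is free: if $gM$ fixes $Mx$, then $gx=mx$ for some $m\in M$, and freeness of $G\cc Y$ forces $m^{-1}g=1$, so $g\in M$. It suffices to prove the cost inequality
\[
\mathscr{C}(\mc{R}^G_Y)\;\leq\; 1 + \frac{\mathscr{C}(\mc{R}^{G/M}_X) - 1}{|M|},
\]
since this rearranges to $|M|(\mathscr{C}(\mc{R}^G_Y)-1)\leq \mathscr{C}(\mc{R}^{G/M}_X)-1\leq \mathscr{C}^*(G/M)-1$; taking the supremum over $Y$ then gives the lemma.

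To prove the cost inequality, I would first use a Borel fundamental domain $F\subseteq Y$ for the $M$-action to measurably identify $(Y,\nu)$ with $(X\times M,\, \mu\otimes u_M)$, where $u_M$ is the uniform probability measure on $M$; the measure identification follows from $\nu(F)=1/|M|$ and $\mu=|M|\cdot (\pi\res F)_*(\nu\res F)$. Under this identification, a pair $(y_1,y_2)$ lies in $\mc{R}^G_Y$ iff $(\pi(y_1),\pi(y_2))\in \mc{R}^{G/M}_X$, independently of the $M$-coordinates, so $\mc{R}^G_Y$ is isomorphic as a p.m.p.\ equivalence relation to the product $\mc{R}^{G/M}_X \otimes I_M$, where $I_M$ is the complete equivalence relation on $M$.

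I would then graph $\mc{R}^{G/M}_X\otimes I_M$ explicitly: fix $m_0\in M$ and, given any graphing $\Phi$ of $\mc{R}^{G/M}_X$ of cost $c$, take a corresponding graphing of $\mc{R}^{G/M}_X\otimes I_M$ supported on the single slice $X\times \{m_0\}$, together with the $|M|-1$ partial isomorphisms $(x,m_0)\mapsto (x,m)$ for $m\in M\setminus\{m_0\}$. The slice has mass $1/|M|$, so its graphing costs $c/|M|$, and the connecting isomorphisms contribute $(|M|-1)/|M|$ in total. This produces a graphing of $\mc{R}^{G/M}_X \otimes I_M$ of cost $(c+|M|-1)/|M|$; taking the infimum over $\Phi$ yields $\mathscr{C}(\mc{R}^G_Y)\leq (\mathscr{C}(\mc{R}^{G/M}_X)+|M|-1)/|M|$, which is precisely the stated cost inequality.

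The main points to verify carefully are the measure-theoretic identification of $Y$ with $X\times M$ via the fundamental domain, and that the explicit graphing really generates the full product relation (its restriction to the slice $X\times \{m_0\}$ is a copy of $\mc{R}^{G/M}_X$, and the connecting isomorphisms then propagate this across all slices while supplying the $I_M$-factor).
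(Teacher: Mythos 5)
Your proof is correct and is essentially the paper's own argument in different packaging: your identification $\mc{R}^G_Y\cong\mc{R}^{G/M}_X\otimes I_M$ via a fundamental domain is the paper's transversal $Y\subseteq X$ for $\mc{R}^M_X$ with its induced free $G/M$-action, your connecting isomorphisms $(x,m_0)\mapsto(x,m)$ are exactly the paper's treeing of $\mc{R}^M_X$ of cost $1-\tfrac{1}{|M|}$, and your slice graphing is the paper's graphing $R$ of $\mc{R}^{G/M}_Y$, yielding the same cost bound and the same supremum argument.
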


\begin{proof}
Let $G \cc (X,\mu )$ be a free p.m.p.\ action of $G$. Let $Y\subseteq X$ be a measurable transversal for $\mc{R}^M_X$, so $\mu (Y) =\tfrac{1}{|M|}$. Let $\mu _Y$ be the normalized restriction of $\mu$ to $Y$. Then $G/M$ acts on $(Y,\mu _Y)$ by the rule $gM\cdot y_0 = y_1$ if and only if $gMy_0 = My_1$. This is an action since $M$ is normal in $G$, and it is free and measure preserving. Fix $\epsilon >0$ and let $R$ be a graphing of $\mc{R}^{G/M}_Y$ with $\mathscr{C}_{\mu _Y} (R)<\mathscr{C}^*(G/M)+\epsilon$. Let $T$ be a treeing of $\mc{R}^M_X$. Then $\mathscr{C}_\mu (T)= 1-\tfrac{1}{|M|}$ and $T\cup R$ is a graphing of $\mc{R}^G_X$, hence
\begin{align*}
\mathscr{C}_\mu (\mc{R}^G_X)&\leq \mathscr{C}_\mu (T\cup R) = \mathscr{C}_\mu (T) + \mu (Y)\mathscr{C}_{\mu _Y} (R)\\
 &< 1-\frac{1}{|M|} + \frac{1}{|M|}(\mathscr{C}^*(G/M) + \epsilon ) = 1 + \frac{\mathscr{C}^*(G/M) -1}{|M|} + \epsilon /|M| .
\end{align*}
Since $\epsilon >0$ was arbitrary this shows that $\mathscr{C}_\mu (\mc{R}^G_X)\leq 1 + \frac{\mathscr{C}^*(G/M) -1}{|M|}$. Since this holds for all free p.m.p.\ actions $G\cc (X,\mu )$ the proof is complete.
\end{proof}

\begin{lemma}\label{lem:cost2}
Let $G$ be a nonamenable group.
\begin{enumerate}
\item There exists a finitely generated nonamenable subgroup $H\leq G$ with $\mathscr{C}^*(H) \leq 2$.
\item Let $M$ be a finite normal subgroup of $G$. Then there exists a finitely generated nonamenable subgroup $H\leq G$ containing $M$ with $\mathscr{C}^*(H)\leq 1+1/|M|$.
\end{enumerate}
\end{lemma}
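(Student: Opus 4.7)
The plan is to prove (1) directly by producing a two-generated nonamenable subgroup of $G$, and then to deduce (2) by applying (1) to the quotient $G/M$ and lifting via Lemma \ref{lem:fin}.

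For part (1), the starting point is the general bound $\mathscr{C}^*(H) \leq d(H)$ for any finitely generated group $H$ with minimum number of generators $d(H)$. Indeed, fixing generators $s_1,\ldots,s_n$ of $H$ and any free p.m.p.\ action $H\curvearrowright (X,\mu)$, the $n$ partial isomorphisms $x\mapsto s_i x$ provide a graphing of $\mathcal{R}^H_X$ of total cost $n\mu(X)=n$, whence $\mathscr{C}^*(H)\leq n$. So it suffices to exhibit a two-generated nonamenable subgroup $H\leq G$. Since amenability is preserved under directed unions, nonamenability of $G$ supplies a finitely generated nonamenable subgroup $H_0\leq G$, and inside $H_0$ one then extracts two elements $a,b$ with $\langle a,b\rangle$ still nonamenable; this $H:=\langle a,b\rangle$ satisfies $\mathscr{C}^*(H)\leq 2$.

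For part (2), first note that $G/M$ is nonamenable: an extension of an amenable group by the amenable group $M$ would be amenable, contradicting nonamenability of $G$. Apply part (1) to $G/M$ to obtain a finitely generated nonamenable subgroup $\bar H\leq G/M$ with $\mathscr{C}^*(\bar H)\leq 2$, and let $H:=\pi^{-1}(\bar H)$, where $\pi:G\to G/M$ is the quotient map. Then $M\leq H\leq G$, the group $H$ is finitely generated (combine lifts of generators of $\bar H$ with the finitely many elements of $M$), $M$ is normal in $H$ (being normal in $G$), and $H/M\cong \bar H$ is nonamenable, which forces $H$ to be nonamenable. Applying Lemma \ref{lem:fin} to $M\trianglelefteq H$ gives
\[
|M|(\mathscr{C}^*(H)-1)\;\leq\;\mathscr{C}^*(H/M)-1\;=\;\mathscr{C}^*(\bar H)-1\;\leq\;1,
\]
and hence $\mathscr{C}^*(H)\leq 1+1/|M|$, as required.

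The main obstacle lies in extracting the two-generated nonamenable subgroup in part (1). This is immediate when $G$ contains a nonabelian free subgroup (take any ping-pong pair) and also when $G$ itself is two-generated (as with Burnside groups or Tarski monsters, where the ambient finitely generated nonamenable group works directly). In the general case one argues by iteratively shrinking a generating set of a finitely generated nonamenable subgroup while maintaining nonamenability, guided by the paradoxical decomposition supplied by nonamenability of $G$; an alternative is to invoke any known structural embedding result producing such a pair within a nonamenable finitely generated group.
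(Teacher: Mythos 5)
Your part (2) is exactly the paper's argument (apply part (1) to $G/M$, pull back along the quotient map, and finish with Lemma \ref{lem:fin}), and your general bound $\mathscr{C}^*(H)\leq d(H)$ is correct. The problem is part (1): your entire approach hinges on the claim that every nonamenable group contains a \emph{two-generated} nonamenable subgroup, and this claim is not established anywhere in your argument --- indeed it appears to be an open problem (equivalently: must a group all of whose $2$-generated subgroups are amenable be amenable?). Neither of your suggested repairs closes the gap. ``Iteratively shrinking a generating set while maintaining nonamenability'' terminates at a generating set $F$ that is \emph{minimal under inclusion}, meaning that removing any single element of $F$ yields an amenable subgroup; nothing forces $|F|\leq 2$ at that point, and no paradoxical-decomposition argument is known that reduces the number of generators further. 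Likewise, there is no ``known structural embedding result'' producing a nonamenable $2$-generated subgroup of an arbitrary finitely generated nonamenable group --- that is precisely the open question. So part (1) as written has a genuine gap, and it is exactly at the step you flagged as the main obstacle.

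The paper's proof shows why the two-generator claim is never needed: the conclusion is a bound on \emph{cost}, not on the number of generators, and these are decoupled by the minimality trick. Take $F\subseteq G$ finite and minimal under inclusion with $H=\langle F\rangle$ nonamenable. For any $g\in F$, minimality forces $K=\langle F\setminus \{ g \} \rangle$ to be amenable, and free p.m.p.\ actions of amenable groups generate equivalence relations of cost at most $1$; hence for every free p.m.p.\ action $H\cc (X,\mu )$,
\[
\mathscr{C}_\mu (\mc{R}^H_X)\;\leq\;\mathscr{C}_\mu (\mc{R}^K_X)+\mathscr{C}_\mu (\mc{R}^{\langle g\rangle}_X)\;\leq\;1+1\;=\;2 ,
\]
by subadditivity of cost over generating subrelations, since $H=\langle K,g\rangle$. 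Thus $H$ may genuinely require many generators, but all of them except one generate a subgroup whose orbit relation costs at most $1$, which is all the lemma requires. If you replace your part (1) by this argument, your part (2) goes through verbatim.
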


\begin{proof}
(1): Let $F$ be a finite subset of $G$ which is minimal (under inclusion) with respect to the property that $\langle F\rangle$ is nonamenable. Take $H= \langle F\rangle$. Let $H\cc (X,\mu )$ be a free p.m.p.\ action of $H$. By minimality of $F$, for any $g\in F$ the group $K= \langle F\setminus \{ g \} \rangle$ is amenable, hence $\mathscr{C}_\mu (\mc{R}^H_X)\leq \mathscr{C}_\mu (\mc{R}^K_X)+\mathscr{C}_\mu (\mc{R}^{\langle g\rangle}_X)\leq 2$.

(2): By applying part (1) to $G/M$ we may find a finitely generated nonamenable subgroup $H\leq G$ containing $M$ such that $\mathscr{C}^*(H/M ) \leq 2$. Then by Lemma \ref{lem:fin} we have
\[
\mathscr{C}^*(H) \leq 1+ \frac{\mathscr{C}^*(H /M) - 1}{|M|} \leq 1+ \frac{1}{|M|} . \qedhere
\]
\end{proof}

\begin{proof}[Proof of Theorem \ref{thm:FP1}]
(1): If $H$ is amenable then $\mathscr{C}^*(H)=1$, so $\mathscr{PC}^*(H) =1$, hence $\mathscr{PC}^*(G)=1$ by Proposition \ref{prop:Furman2} and thus $\mathscr{C}^*(G)=1$, so we are done. Assume now that $H$ is nonamenable.
Fix an $H$-conjugation invariant mean $\bm{m}$ on $G$. If $L$ is any subgroup of $G$ such that $L\cap H$ is nonamenable, then Theorem \ref{thm:relmain} implies that $(L\cap H )\leq _{wq} HK_H\leq _{wq}G$, so $L\leq _{wq}G$, and hence $\mathscr{PC}^*(G)\leq \mathscr{PC}^*(L)$ by Proposition \ref{prop:Furman2}. This shows that
\begin{equation}\label{eqn:inf}
\mathscr{PC}^*(G) = \inf \{ \mathscr{PC}^*(L)\csuchthat L\leq G\text{ and }L\cap H\text{ is nonamenable}\} .
\end{equation}
Apply Theorem \ref{thm:pairsubgroup} and take $\mathscr{F}=\{ H\}$. If alternative (1) holds then the subgroup $L= (H\cap C_G(K))K$ has fixed price $1$, and $L\cap H$ is nonamenable, so $\mathscr{PC}^*(G)=1$ by \eqref{eqn:inf}, and hence $\mathscr{C}^*(G)=1$. If alternative (2) holds then by Lemma \ref{lem:cost2} we may find a sequence $(L_n)_{n\in \N}$ of nonamenable subgroups $L_n\leq (H\cap C_G(M_n))M_n$ with $\mathscr{PC}^*(L_n) \leq 1+1/|M_n| \longrightarrow 1$ as $n\ra \infty$. Since $L_n\cap H$ is nonamenable for all $n\in \N$ we conlude once again that $\mathscr{PC}^*(G)=1$ and hence $\mathscr{C}^*(G)=1$. Finally, suppose that alternative (3) holds and let $L=(H\cap C_G(K))K$. Then $\mathscr{C}(L)=1$ since $H\cap C_G(K)$ commutes with $K$ and both groups are infinite \cite{Ga00}. Since $L\leq _{wq}G$ it follows from Proposition \ref{prop:Furman2} that $\mathscr{PC}(G)=1$ and hence $\mathscr{C}(G)=1$.

(2): We may assume that $G$ is nonamenable, and by the proof of part (1) we may assume that alternative (3) of Theorem \ref{thm:subgroup} holds. Then using the sequence $(M_n)_{n\in \N}$ we obtain that $\mathscr{C}^*(G)=1$ as in the proof for alternative (2) above.
\end{proof}

\section{Cocycle superrigidity}\label{sec:superrigid}

\subsection{Amenable actions and stable spectral gap}

Fix a finite subset $S$ of $G$. Given a unitary representation $\pi$ of $G$ on the Hilbert space $\Es{H}$, we define
\begin{align*}
\phi _S (\pi ) = \inf \Big\{ \sum _{s\in S}\frac{\| \pi _s \xi - \xi \|}{\| \xi \| } \csuchthat 0\neq \xi \in \Es{H} \Big\} .
\end{align*}
The representation $\pi$ is said to have {\bf stable spectral gap} if $1\not\prec \pi\otimes \ol{\pi}$, where $\prec$ denotes weak containment of unitary representations. If $S$ generates $G$, then the representation $\pi$ has stable spectral gap if and only if $\phi _S (\pi \otimes \ol{\pi} ) >0$.

\begin{lemma}\label{lem:square}
Let $\kappa$ and $\pi$ be unitary representation of $G$.
\begin{enumerate}
\item If $\kappa \prec \pi$ then $\phi _S (\pi ) \leq \phi _S (\kappa )$.
\item $\phi _S (\pi \otimes \kappa )\geq \tfrac{1}{2}\phi _S (\pi \otimes \ol{\pi} )^2$.
\end{enumerate}
\end{lemma}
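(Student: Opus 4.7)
The plan is to handle (1) by unpacking weak containment and comparing matrix coefficients, and (2) by making the Hilbert–Schmidt identification of $\pi\otimes\kappa$ and applying the Powers–Størmer inequality to extract a unit vector of $\pi\otimes\bar\pi$ from one in $\pi\otimes\kappa$.

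For (1), $\kappa\prec\pi$ means that each diagonal matrix coefficient $s\mapsto\langle\kappa_s\eta,\eta\rangle$ of a unit vector $\eta\in\mathcal{H}_\kappa$ is a pointwise limit of coefficients $s\mapsto\langle\pi_s\xi,\xi\rangle$ of unit vectors $\xi\in\mathcal{H}_\pi$. Given such $\eta$ and $\delta>0$, pick $\xi$ so that these coefficients agree on $S$ to within $\delta$. The identity $\|\pi_s\xi-\xi\|^2=2-2\mathrm{Re}\langle\pi_s\xi,\xi\rangle$, and its $\kappa$-analogue, yield $\|\pi_s\xi-\xi\|\leq\|\kappa_s\eta-\eta\|+\sqrt{2\delta}$. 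Summing over $s\in S$, letting $\delta\downarrow 0$, and then taking the infimum over $\eta$ gives $\phi_S(\pi)\leq\phi_S(\kappa)$.

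For (2), the key construction is the Hilbert–Schmidt identification $\pi\otimes\kappa\cong\mathrm{HS}(\mathcal{H}_{\bar\kappa},\mathcal{H}_\pi)$, under which a vector $\xi$ corresponds to an operator $T=T_\xi$ with $\|\xi\|=\|T\|_{\mathrm{HS}}$ and the diagonal $G$-action becomes $T\mapsto\pi_g T V_g$ for a unitary $V_g$ on $\mathcal{H}_{\bar\kappa}$ implementing the action of $\bar\kappa_g$. Likewise, $\pi\otimes\bar\pi\cong\mathrm{HS}(\mathcal{H}_\pi)$ with the conjugation action $X\mapsto\pi_g X\pi_g^*$. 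Given a unit vector $\xi\in\pi\otimes\kappa$ with associated $T$, I set $\eta:=(TT^*)^{1/2}=|T^*|$, and observe that $\|\eta\|_{\mathrm{HS}}^2=\mathrm{tr}(TT^*)=\|T\|_{\mathrm{HS}}^2=1$, so $\eta$ is a unit vector in $\pi\otimes\bar\pi$.

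To compare defects, write $T_s:=\pi_s TV_s$ and $\epsilon_s:=\|T_s-T\|_{\mathrm{HS}}=\|(\pi\otimes\kappa)_s\xi-\xi\|$. Since $T_sT_s^*=\pi_s TT^*\pi_s^*$, the identity $T_sT_s^*-TT^*=(T_s-T)T_s^*+T(T_s^*-T^*)$ combined with $\|AB\|_1\leq\|A\|_{\mathrm{HS}}\|B\|_{\mathrm{HS}}$ yields $\|\pi_s TT^*\pi_s^*-TT^*\|_1\leq 2\epsilon_s$. The Powers–Størmer inequality $\|\sqrt{A}-\sqrt{B}\|_{\mathrm{HS}}^2\leq\|A-B\|_1$ for positive trace-class operators, applied to $A=T_sT_s^*$ and $B=TT^*$, then gives
\[
\|(\pi\otimes\bar\pi)_s\eta-\eta\|_{\mathrm{HS}}^2\;\leq\;\|T_sT_s^*-TT^*\|_1\;\leq\;2\epsilon_s.
\]
Summing over $s\in S$ and comparing the resulting $\ell^2$-defect against the $\ell^1$-sum that defines $\phi_S$ yields $\phi_S(\pi\otimes\bar\pi)^2\leq 2\sum_s\epsilon_s$; taking the infimum over $\xi$ gives (2).

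The main obstacle is arranging the Hilbert–Schmidt identification so that both the $\pi$-side and the $\kappa$-side of the action interact correctly, which requires careful bookkeeping with the conjugate representation $\bar\kappa$ to ensure that $TT^*\mapsto\pi_g TT^*\pi_g^*$. Once this identification is in place, Powers–Størmer does all the analytic work, converting the trace-norm estimate on the positive operator $TT^*$ into the Hilbert–Schmidt estimate on its square root $|T^*|$ that is needed to witness almost-invariance in $\pi\otimes\bar\pi$.
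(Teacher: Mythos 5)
Your proof of (2) follows exactly the route the paper intends (the paper's proof is a one-line citation of the Powers--St{\o}rmer inequality, i.e.\ Popa's Lemma 3.2 in \cite{Pop08}), and everything through the termwise bound $\| (\pi\otimes\ol{\pi})_s\eta - \eta \| _{\mathrm{HS}}^2 \leq \| T_sT_s^* - TT^*\| _1 \leq 2\epsilon _s$ is correct. The gap is your final sentence. Those bounds control the sum of \emph{squares} $\sum _{s\in S}\| (\pi\otimes\ol{\pi})_s\eta - \eta \| ^2 \leq 2\sum _s \epsilon _s$, whereas $\phi _S(\pi\otimes\ol{\pi})$ is defined via the $\ell ^1$-sum $\sum _{s\in S}\| (\pi\otimes\ol{\pi})_s\eta - \eta \|$; passing from the first quantity to the square of the second costs a factor of $|S|$ by Cauchy--Schwarz, so what your argument actually proves is $\phi _S(\pi\otimes\kappa )\geq \tfrac{1}{2|S|}\phi _S(\pi\otimes\ol{\pi})^2$. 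This loss cannot be argued away, because with the paper's summed definition of $\phi _S$ the inequality with the bare constant $\tfrac12$ is false: taking $\kappa = \ol{\pi}$, statement (2) asserts $c\geq \tfrac12 c^2$, i.e.\ $c\leq 2$, for $c=\phi _S(\pi\otimes\ol{\pi})$ and \emph{every} $\pi$ and finite $S$. But for $G=\F _n$ with free generating set $S$ and $\pi = \lambda$ the regular representation, Fell absorption gives $\lambda\otimes\ol{\lambda}\cong \bigoplus _{\N}\lambda$, a multiple of $\lambda$, whose $\phi _S$ equals $\phi _S(\lambda )$ (by the Minkowski estimate below), while for any unit vector $\xi$, using $\| \sum _{s\in S}\lambda _s \| = 2\sqrt{n-1}$ (Akemann--Ostrand) and $\| \lambda _s\xi - \xi \| \leq 2$, one gets $\sum _{s\in S}\| \lambda _s \xi - \xi \| \geq \tfrac12 \sum _{s\in S}\| \lambda _s\xi - \xi \| ^2 \geq n - 2\sqrt{n-1} > 2$ once $n\geq 7$. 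So the step would fail; the honest conclusions are either the inequality with constant $\tfrac{1}{2|S|}$ (which is all that Lemma \ref{lem:ssg} and Theorem \ref{thm:superrigid1} require, since there only positivity of the threshold matters), or the clean constant $\tfrac12$ for the sup-variant $\max _{s\in S}\| \pi _s\xi - \xi \| / \| \xi \|$ (equivalently for the renormalization $\phi _S/|S|$), for which your argument works verbatim.

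There is also a gap in (1): weak containment $\kappa \prec \pi$ does \emph{not} say that diagonal coefficients of unit vectors of $\kappa$ are pointwise limits of diagonal coefficients of single unit vectors of $\pi$; it says they are limits, uniformly on finite sets, of finite \emph{sums} of coefficients of $\pi$, i.e.\ of diagonal coefficients of unit vectors of the finite multiples $n\cdot \pi$. (Single coefficients do not suffice in general: if $\pi$ is a finite-dimensional irreducible representation of dimension at least $2$ and $\kappa = \pi\oplus\pi$, then $\tfrac12\langle \pi _g\xi _1,\xi _1\rangle + \tfrac12\langle \pi _g \xi _2,\xi _2\rangle$ with $\xi _1\perp\xi _2$ is not a limit of coefficients of unit vectors of $\pi$; single coefficients do suffice when $\kappa$ is irreducible, but in the paper's application of (1) the weakly contained representation is $\lambda _{G/H}$, which is not irreducible.) The repair is routine but needs one additional observation, namely $\phi _S(n\cdot\pi ) = \phi _S(\pi )$: for $\xi = (\xi _i)_i$, Minkowski's inequality gives $\sum _{s\in S}\big( \sum _i \| \pi _s\xi _i - \xi _i \| ^2\big) ^{1/2} \geq \big( \sum _i \big( \sum _{s\in S}\| \pi _s \xi _i - \xi _i \| \big) ^2 \big) ^{1/2} \geq \phi _S(\pi )\| \xi \|$. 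With that in hand, run your $2-2\,\mathrm{Re}$ comparison inside $n\cdot\pi$ and let the error tend to $0$ to conclude $\phi _S(\pi ) = \phi _S(n\cdot\pi )\leq \phi _S(\kappa )$; this same Minkowski fact is what makes the counterexample in the previous paragraph rigorous.
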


\begin{proof}
(1) is clear, and (2) follows from the Powers-St{\o}rmer inequality (see \cite[Lemma 3.2]{Pop08}).
\end{proof}

\begin{lemma}\label{lem:ssg}
Assume that $S$ generates $G$. Let $\pi$ be a unitary representation of $G$ with stable spectral gap. Let $H$ be a subgroup of $G$ and let $\lambda _{G/H}$ denotes the quasi-regular representation of $G$ on $\ell ^2(G/H )$. If $\phi _S (\lambda _{G/H})< \tfrac{1}{2}\phi _S (\pi \otimes \ol{\pi} )^2$ then $\pi _{|H}$ has stable spectral gap.
%

It follows that if $G\cc X$ is an amenable action of a countable group $G$ on a set $X$ with $G$-invariant mean $\bm{m}$, and if $\pi$ is a representation of $G$ with stable spectral gap, then $\pi _{|G_x}$ has stable spectral gap for $\bm{m}$-almost every $x\in X$.
\end{lemma}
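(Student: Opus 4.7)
For part (1), the strategy is a contrapositive argument via Frobenius reciprocity combined with Lemma~\ref{lem:square}. Suppose $\pi|_H$ lacks stable spectral gap, so $1_H\prec(\pi\otimes\bar{\pi})|_H$. Inducing from $H$ to $G$ and applying the projection formula $\mathrm{Ind}_H^G(\sigma\otimes\tau|_H)\cong\mathrm{Ind}_H^G(\sigma)\otimes\tau$ with $\sigma=1_H$ and $\tau=\pi\otimes\bar{\pi}$, together with the fact that induction preserves weak containment, yields
\[
\lambda_{G/H}\prec\lambda_{G/H}\otimes\pi\otimes\bar{\pi}.
\]
Lemma~\ref{lem:square}(1) then gives $\phi_S(\lambda_{G/H}\otimes\pi\otimes\bar{\pi})\leq\phi_S(\lambda_{G/H})$, while Lemma~\ref{lem:square}(2), applied after regrouping the tensor product as $\pi\otimes(\bar{\pi}\otimes\lambda_{G/H})$, gives the lower bound $\phi_S(\lambda_{G/H}\otimes\pi\otimes\bar{\pi})\geq\tfrac{1}{2}\phi_S(\pi\otimes\bar{\pi})^2$. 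These two estimates together contradict the hypothesis $\phi_S(\lambda_{G/H})<\tfrac{1}{2}\phi_S(\pi\otimes\bar{\pi})^2$.

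For part (2), I first reduce to the case where $G$ is finitely generated. Stable spectral gap of $\pi$ is witnessed by some finite $F\subseteq G$, so taking $H_0=\langle F\rangle$, the restriction $\pi|_{H_0}$ has stable spectral gap while $H_0\acts X$ remains amenable with the same mean $\bm{m}$. Since stable spectral gap passes from subgroups to supergroups (an $L$-almost invariant vector in $\pi|_L\otimes\overline{\pi|_L}$ is automatically $K$-almost invariant for any $K\leq L$), it suffices to prove the conclusion for $H_0$ in place of $G$. Assume therefore that $G$ is generated by the finite set $S$, and put $c=\tfrac{1}{2}\phi_S(\pi\otimes\bar{\pi})^2>0$. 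Part~(1) shows that the ``bad set'' $Y=\{x\in X\csuchthat \pi|_{G_x}\text{ lacks stable spectral gap}\}$ is contained in $\{x\csuchthat \phi_S(\lambda_{G/G_x})\geq c\}$, and $Y$ is $G$-invariant since $G_{gx}=gG_xg^{-1}$ and the corresponding quasi-regular representations are unitarily equivalent.

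To finish, I argue by contradiction: if $\bm{m}(Y)>0$, then the renormalized restriction of $\bm{m}$ makes $G\acts Y$ an amenable action, so for any $\epsilon>0$ there is an $(S,\epsilon)$-invariant finite set $P\subseteq Y$, and by Remark~\ref{rem:GM07} we may take $P$ inside a single orbit $G\cdot x_P$ with $x_P\in Y$. The normalized indicator $\mathbf{1}_P/\sqrt{|P|}$, viewed in $\ell^2(G\cdot x_P)\cong\ell^2(G/G_{x_P})$, is then an explicit almost invariant test vector: from $\|s\mathbf{1}_P-\mathbf{1}_P\|^2=2|sP\setminus P|$ and Cauchy--Schwarz one obtains $\phi_S(\lambda_{G/G_{x_P}})\leq\sqrt{2|S|\epsilon}$, so choosing $\epsilon<c^2/(2|S|)$ contradicts $x_P\in Y$. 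The only step requiring genuine care is invoking the projection formula and the preservation of weak containment under induction; the rest of the argument is essentially bookkeeping.
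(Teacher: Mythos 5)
Your proof is correct and follows essentially the same route as the paper: part (1) is the identical contrapositive argument via $\lambda_{G/H}\prec \mathrm{Ind}_H^G\big((\pi\otimes\ol{\pi})_{|H}\big)\cong \pi\otimes\ol{\pi}\otimes\lambda_{G/H}$ combined with the two parts of Lemma \ref{lem:square}, and part (2) is the same reduction to finitely generated $G$ together with the fact that $\{x\in X \csuchthat \phi_S(\lambda_{G/G_x})<\epsilon\}$ is $\bm{m}$-conull. The F{\o}lner-set and Cauchy--Schwarz computation you supply is precisely the justification the paper leaves implicit for that conull observation, so your write-up only differs in filling in that detail.
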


\begin{proof}
We prove the contrapositive. Suppose that $\pi _{|H}$ does not have stable spectral gap, i.e., $1_{|H}\prec (\pi \otimes \ol{\pi} )_{|H}$. Then $\lambda _{G/H}\prec \mathrm{Ind}_H^G ((\pi \otimes \ol{\pi}) _{|H}) \cong \pi \otimes \ol{\pi} \otimes \lambda _{G/H}$. Applying (1) and then (2) of Lemma \ref{lem:square}, we obtain $\phi _S(\lambda _{G/H}) \geq \phi _S(\pi \otimes (\ol{\pi} \otimes \lambda _{G/H} ) ) \geq \tfrac{1}{2}\phi _S (\pi \otimes \ol{\pi} )^2$.

For the last statement, note that it is enough to prove this in the case where $G$ is finitely generated, say by $S$. Then for any $\epsilon >0$ we have $\bm{m} (\{ x\in X \csuchthat \phi _S (\lambda _{G/G_x}) <\epsilon \} ) =1$, so we can take $\epsilon = \tfrac{1}{2}\phi _S (\pi \otimes \ol{\pi} )^2$ and apply the first part of the lemma.
\end{proof}

\subsection{The space of means}\label{subsec:means}
Let $\mathscr{M}=\mathscr{M}(G)$ denote the space of means on $G$. Then $\mathscr{M}$ is a weak${}^*$-closed subset of the unit ball of $\ell ^\infty (G)^*$, hence by the Banach-Alaoglu Theorem $\mathscr{M}$ is compact in the weak${}^*$-topology. Let $\Es{P} = \mathscr{M}\cap \ell ^1(G)$ denote the collection of all probability vectors on $G$. Then $\Es{P}$ is a weak${}^*$-dense subset of $\mathscr{M}$.

For $g\in G$ and $\bm{m}\in \mathscr{M}$ we define the means $g\bm{m}$ and $\bm{m}g$ respectively by $(g\bm{m})(A) = \bm{m}(g^{-1}A)$ and $(\bm{m}g)(A) = \bm{m}(Ag^{-1})$ for $A\subseteq G$. The assignments $(g,\bm{m})\mapsto g\bm{m}$ and $(\bm{m},g)\mapsto \bm{m}g$ define left and right actions respectively of $G$ on $\mathscr{M}$ which commute. The {\bf convolution} of two means $\bm{m}$ and $\bm{n}$ on $G$ is defined to be the mean $\bm{m}\ast \bm{n} = \int _{g\in G} g\bm{n}\, d\bm{m}(g)$. This gives $\mathscr{M}$ the structure of a semigroup in which multiplication is weak${}^*$-continuous in the left variable. We identify $G$ with the collection of point masses $\{ \delta _g \} _{g\in G}\subseteq \Es{P}$. Then we have $g\bm{m}= \delta _g\ast \bm{m}$ and $\bm{m}g=\bm{m} \ast \delta _g$.

For a subgroup $H\leq G$, let $C_{\mathscr{M}}(H)= C_{\mathscr{M}(G)}(H)$ denote the subset of $\mathscr{M}$ consisting of all means on $G$ which are invariant under conjugation by $H$. Observe that $C_{\mathscr{M}}(H)$ is a closed, convex, subsemigroup of $\mathscr{M}$ with $C_G(H) \subseteq C_{\mathscr{M}}(H)$.

\begin{lemma}\label{lem:Mazur}
Let $H$ be a subgroup of $G$, let $\bm{m}\in C_{\mathscr{M}}(H)$, and let $\mathscr{A}\subseteq \ell ^\infty (G)$ be a separable subalgebra. Then there exists a sequence $(p_n)_{n\in \N}$ in $\Es{P}$ such that $\lim _n \| hp_nh^{-1} - p_n \| _1 = 0$ for all $h\in H$, and $\lim _n p_n (\phi ) = \bm{m}(\phi )$ for all $\phi \in \mathscr{A}$.
\end{lemma}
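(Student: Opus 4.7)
The plan is a standard Day--Namioka style weak-to-strong convexity argument. The key observation is that even though $\bm{m}$ is only a weak${}^*$-limit of probability vectors, the passage to convex combinations via Mazur's theorem will upgrade weak convergence of commutators to norm convergence in $\ell^1(G)$.

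First I would reduce to a finitary statement: enumerating $H = \{h_1, h_2, \ldots\}$ (which is countable as $G$ is) and choosing a countable dense $\{\phi_i\}_{i\geq 1} \subseteq \mathscr{A}$, it suffices, for each $n$, to produce $p_n \in \mathscr{P}$ with $\|h_j p_n h_j^{-1} - p_n\|_1 < 1/n$ for $j \leq n$ and $|p_n(\phi_i) - \bm{m}(\phi_i)| < 1/n$ for $i \leq n$. Uniform boundedness $\|p_n\|_1 = 1$ then gives convergence against all of $\mathscr{A}$. So fix a finite $F \subseteq H$, functions $\phi_1,\ldots,\phi_k \in \mathscr{A}$, and $\varepsilon > 0$; the task is to find such a $p \in \mathscr{P}$.

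Next I would package the data into the Banach space $E = \ell^1(G)^F \oplus \mathbb{R}^k$ with the natural product norm, and consider the affine map $T:\ell^1(G) \to E$ given by $T(p) = \bigl((h p h^{-1} - p)_{h \in F},\, (p(\phi_i))_{i\leq k}\bigr)$. Its image $T(\mathscr{P})$ is convex, and I claim the point $v = (0,\ldots,0,\bm{m}(\phi_1),\ldots,\bm{m}(\phi_k))$ lies in the weak closure of $T(\mathscr{P})$ in $E$. A basic weak neighborhood of $v$ is determined by finitely many $\psi_j \in \ell^\infty(G)$ (dual to the $\ell^1(G)$-factors) and coordinate functionals. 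Since each $\langle h p h^{-1} - p, \psi \rangle = p(\psi \circ c_h) - p(\psi)$ and $\bm{m}$ is $H$-conjugation invariant, $\bm{m}(\psi \circ c_h) - \bm{m}(\psi) = 0$. So weak${}^*$-density of $\mathscr{P}$ in $\mathscr{M}$ applied to the finite list $\{\phi_i\} \cup \{\psi_j,\ \psi_j \circ c_{h_j}\}$ yields a $p \in \mathscr{P}$ with $T(p)$ in the prescribed weak neighborhood of $v$.

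Now Mazur's theorem (the weak and norm closures of a convex subset of a Banach space coincide) applied to $T(\mathscr{P}) \subseteq E$ gives $v$ in the norm closure of $T(\mathscr{P})$, producing the desired $p$. Assembling these $p$'s along the exhaustion above yields the required sequence $(p_n)$. The only real work is verifying weak closure membership in $E$, which is essentially a translation of the conjugation invariance of $\bm{m}$ into the commutator coordinates; once that is in place the Mazur step is immediate. No genuine obstacle arises — this lemma is a technical tool whose role is to produce near-invariant probability vectors from a conjugation-invariant mean, with prescribed moments against a separable subalgebra.
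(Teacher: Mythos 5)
Your proposal is correct and is essentially the paper's own argument: both reduce to finite data, use weak${}^*$-density of $\Es{P}$ in $\mathscr{M}$ together with the $H$-conjugation invariance of $\bm{m}$ to place the target point in the weak closure of a convex set, and then invoke Mazur's theorem to pass to the norm closure. The only (immaterial) difference is that you bundle the moment coordinates $(p(\phi_i))_i$ into the Banach space where Mazur is applied, whereas the paper imposes them first as a side constraint defining the convex set $K_0$ and applies Mazur only in $\ell^1(G)^S$.
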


\begin{proof}
Fix finite sets $S\subseteq H$ and $\mathscr{A}_0\subseteq \mathscr{A}$, along with $\epsilon >0$. It suffices to show that there exists some $p\in \Es{P}$ with $\sup _{s\in S} \| sps^{-1} - p \| _1 <\epsilon$ and $\sup _{\phi \in \mathscr{A}_0} |p(\phi ) - \bm{m}(\phi ) | <\epsilon$. Since $\Es{P}$ is weak${}^*$-dense in $\mathscr{M}$, the convex set $K_0 = \{ p \in \Es{P} \csuchthat \sup _{\phi \in \mathscr{A}_0} | p (\phi ) - \bm{m}(\phi ) | <\epsilon \}$ contains $\bm{m}$ in its weak${}^*$-closure. Since $\bm{m}\in C_{\mathscr{M}}(H)$, the convex subset $\{ (sps^{-1} - p)_{s\in S}  \csuchthat p \in K_0 \}$ of $\ell ^1 (G)^S$ contains $0\in \ell ^1(G)^S$ in its weak closure, hence in its norm closure by Mazur's Theorem. This implies that there exists $p\in K_0$ such that $\sup _{s\in S} \| sps^{-1} - p \| _1 <\epsilon$.
\end{proof}

\subsection{Weak mixing for subsemigroups of $\mathscr{M}$}\label{subsec:WM} Let $\Es{H}$ be a Hilbert space and let $\varphi : G \ra \mathscr{B}(\Es{H})$, $g\mapsto \varphi _g$, be a map from $G$ into the bounded linear operators on $\Es{H}$ whose image is contained in the unit ball of $\mathscr{B}(\Es{H})$. We extend $\varphi$ to a map $\mathscr{M}\ra \mathscr{B}(\Es{H})$, by taking $\varphi _{\bm{m}} = \int _G \varphi _g \, d\bm{m}$, i.e., $\varphi _{\bm{m}}$ is the unique bounded linear operator satisfying $\langle \varphi _{\bm{m}}\xi ,\eta \rangle = \int _{g\in G}\langle \varphi _g\xi , \eta \rangle \, d\bm{m}(g)$ for all $\xi ,\eta \in \Es{H}$. In particular, each unitary representation $\pi : G\ra \mathscr{U}(\Es{H})$ of $G$ extends to a map $\mathscr{M}\ra \mathscr{B}(\Es{H})$ by taking $\pi _{\bm{m}} = \int _G \pi _g \, d\bm{m}$ for each $\bm{m}\in \mathscr{M}$.

\begin{proposition}\label{prop:contract} Let $\pi : G\ra \mathscr{U}(\Es{H})$ be a unitary representation of $G$. Then the extended map $\pi : \mathscr{M}\ra \mathscr{B}(\Es{H})$ has the following properties:
\begin{enumerate}
\item[i.] $\pi$ is an affine semigroup homomorphism.
\item[ii.] For each $\bm{m}\in \mathscr{M}$ we have $\pi _{\bm{m}}^* = \pi _{\check{\bm{m}}}$, where $\check{\bm{m}} (A) = \bm{m}(A^{-1})$ for $A\subseteq G$.
\item[iii.] For each $\bm{m}\in \mathscr{M}$ the operator $\pi _{\bm{m}}$ is a contraction, i.e., $\| \pi _{\bm{m}}\| _{\infty}\leq 1$.
\item[iv.] $\pi$ is continuous when $\mathscr{M}$ is given the weak${}^*$-topology and when $\mathscr{B}(\Es{H})$ is given the weak operator topology.
\item[v.] $\{ \pi _{\bm{m}} \} _{\bm{m}\in \mathscr{M}} \subseteq W^*(\pi (G))$.
\end{enumerate}
\end{proposition}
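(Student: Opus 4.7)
\smallskip
\noindent\textbf{Proof plan.} The unifying principle is to work at the level of matrix coefficients: the extended map is characterized by $\langle \pi_{\bm{m}}\xi,\eta\rangle = \int_G \langle \pi_g \xi,\eta\rangle \, d\bm{m}(g)$, and each of the five items reduces to a routine manipulation of such integrals against the finitely additive probability $\bm{m}$. The plan is to verify the items in the order they appear, since (iii) and (v) use earlier items and (iv) is needed to justify taking weak$^{*}$-limits in (v).

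For (i), affinity is immediate from linearity of the integral in $\bm{m}$. For the semigroup homomorphism property, unravel the definition of convolution: for $f \in \ell^\infty(G)$ one has $(\bm{m}\ast\bm{n})(f) = \int_G \int_G f(gh)\,d\bm{n}(h)\,d\bm{m}(g)$. Applying this to the bounded function $f(k) = \langle \pi_k\xi,\eta\rangle$ and using $\pi_{gh}=\pi_g\pi_h$ yields
\[
\langle \pi_{\bm{m}\ast\bm{n}}\xi,\eta\rangle
= \int_G \langle \pi_g(\pi_{\bm{n}}\xi),\eta\rangle \, d\bm{m}(g)
= \langle \pi_{\bm{m}}\pi_{\bm{n}}\xi,\eta\rangle ,
\]
so $\pi_{\bm{m}\ast\bm{n}} = \pi_{\bm{m}}\pi_{\bm{n}}$. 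For (ii), compute $\langle \pi_{\bm{m}}^*\xi,\eta\rangle = \overline{\langle \pi_{\bm{m}}\eta,\xi\rangle}$, bring the conjugation inside the integral, and use $\overline{\langle \pi_g\eta,\xi\rangle} = \langle \pi_{g^{-1}}\xi,\eta\rangle$; a change of variables $g \mapsto g^{-1}$ (which is exactly the definition of $\check{\bm{m}}$) gives the formula. For (iii), apply Cauchy--Schwarz pointwise to get $|\langle \pi_g\xi,\eta\rangle|\leq \|\xi\|\|\eta\|$, then integrate against the positive mean $\bm{m}$ of total mass $1$ to conclude $|\langle \pi_{\bm{m}}\xi,\eta\rangle| \leq \|\xi\|\|\eta\|$.

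Item (iv) is the easiest: for fixed $\xi,\eta$, the function $g\mapsto \langle \pi_g\xi,\eta\rangle$ lies in $\ell^\infty(G)$, so $\bm{m}\mapsto \langle \pi_{\bm{m}}\xi,\eta\rangle$ is by definition weak$^*$-continuous, which is exactly continuity from the weak$^*$-topology on $\mathscr{M}$ into the weak operator topology on $\mathscr{B}(\mathcal{H})$. For (v), first observe that if $p\in \Es{P}\subseteq \ell^1(G)$, then $\pi_p = \sum_{g\in G} p(g)\pi_g$ converges in norm and hence lies in $W^*(\pi(G))$. Since $\Es{P}$ is weak$^*$-dense in $\mathscr{M}$ (noted in \S\ref{subsec:means}) and $\pi$ is weak$^*$-WOT continuous by (iv), $\pi_{\bm{m}}$ is a WOT-limit of elements of $W^*(\pi(G))$; as the latter is WOT-closed, $\pi_{\bm{m}}\in W^*(\pi(G))$.

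The only place where one must be slightly careful is the semigroup homomorphism identity in (i), since $\bm{m}$ and $\bm{n}$ are only finitely additive; however, the Fubini-type identity needed is literally the definition of $\bm{m}\ast\bm{n}$ applied to the bounded function $g\mapsto \langle \pi_g(\pi_{\bm{n}}\xi),\eta\rangle$, so no measure-theoretic Fubini is actually invoked. I anticipate no substantive obstacle.
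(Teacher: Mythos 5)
Your proof is correct and follows the same route as the paper, which simply notes that items i.\ through iv.\ follow from the definitions and that v.\ follows from iv.; your verification via matrix coefficients, the definition of convolution (avoiding any Fubini issue), and the weak${}^*$-density of $\Es{P}$ combined with WOT-closedness of $W^*(\pi(G))$ is exactly the intended argument, just written out in full.
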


\begin{proof}
Properties i.\ through iv.\ follow from the definitions, and v.\ follows from iv.
\end{proof}

If $\pi$ and $\kappa$ are unitary representations of $G$ on $\Es{H}$ and $\Es{K}$ respectively, then for $\bm{m}\in \mathscr{M}$, the operators $(\pi \otimes \kappa )_{\bm{m}} = \int _G \pi _g\otimes \kappa _g \, d\bm{m}$ and $\pi _{\bm{m}}\otimes \kappa _{\bm{m}}$ are generally distinct. We will only make use of the operator $(\pi \otimes \kappa )_{\bm{m}}$.

\begin{definition}
Let $\pi$ be a unitary representation of $G$ and let $\mathscr{M}_0$ be a subsemigroup of $\mathscr{M}$. We say that $\pi _{| \mathscr{M}_0}$ is {\bf weakly mixing} if $(\pi \otimes \bar{\pi})_{|\mathscr{M}_0}$ has no nonzero invariant vectors.
\end{definition}

The next proposition extends several well-known characterizations of weak mixing from the group setting to the setting of subsemigroups of $\mathscr{M}$.

\begin{proposition}\label{prop:WM}
Let $\pi :G\ra \mathscr{U}(\Es{H})$ be a unitary representation of $G$ and let $\mathscr{M}_0$ be a subsemigroup of $\mathscr{M}$. Then the following are equivalent:
\begin{enumerate}
\item[i.] $\pi _{|\mathscr{M}_0}$ is weakly mixing.
\item[ii.] $(\pi \otimes \kappa )_{|\mathscr{M}_0}$ has no nonzero invariant vectors for every unitary representation $\kappa$ of $G$.
\item[iii.] For every finite $F\subseteq \Es{H}$ and $\epsilon >0$ there exists $\bm{m}\in \mathscr{M}_0$ such that
\[
\int _{g\in G}|\langle \pi _g\xi , \eta \rangle | ^2 d\bm{m}(g) <\epsilon
\]
for all $\xi ,\eta \in F$.
\item[iv.] There is no nonzero finite dimensional subspace $\Es{L}$ of $\Es{H}$ such that $P_{\Es{L}} = \int _G \pi _g P_{\Es{L}}\pi _g^* \, d\bm{m}$ for all $\bm{m}\in \mathscr{M}_0$. Here, $P_{\Es{L}}$ denotes the orthogonal projection onto $\Es{L}$.
\end{enumerate}
\end{proposition}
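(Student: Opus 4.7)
The plan is to prove $(\mathrm{ii})\Rightarrow(\mathrm{i})$ and $(\mathrm{i})\Rightarrow(\mathrm{iv})$ as immediate observations, and to establish $(\mathrm{i})\Rightarrow(\mathrm{iii})\Rightarrow(\mathrm{ii})$ along with $(\mathrm{iv})\Rightarrow(\mathrm{i})$. The first is trivial by taking $\kappa=\bar\pi$, and the second follows because any nonzero projection $P_{\Es{L}}$ onto a finite-dimensional $\Es{L}\subseteq\Es{H}$ is a nonzero Hilbert--Schmidt operator, hence a nonzero vector of $\Es{H}\otimes\bar{\Es{H}}$ invariant under $(\pi\otimes\bar\pi)_{|\mathscr{M}_0}$.

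For $(\mathrm{i})\Rightarrow(\mathrm{iii})$ I would argue by contrapositive. Negating (iii) yields finite $F\subseteq\Es{H}$ and $\epsilon>0$ with $\sum_{\xi,\eta\in F}\int_G|\langle\pi_g\xi,\eta\rangle|^{2}\,d\bm{m}(g)\ge\epsilon$ for every $\bm{m}\in\mathscr{M}_0$, which for the finite-rank Hilbert--Schmidt operator $A=\sum_{\xi\in F}\xi\otimes\bar\xi$ translates into $\langle(\pi\otimes\bar\pi)_{\bm{m}}(A),A\rangle_{\mathrm{HS}}\ge\epsilon$. Take $K_A$ to be the weak closure of the convex hull of $\{(\pi\otimes\bar\pi)_{\bm{m}}(A):\bm{m}\in\mathscr{M}_0\}$; this is a weakly compact convex subset of the ball of radius $\|A\|_{\mathrm{HS}}$, and the semigroup property of $\mathscr{M}_0$ combined with weak continuity of each contraction $(\pi\otimes\bar\pi)_{\bm{n}}$ makes it $\mathscr{M}_0$-invariant. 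Every $B\in K_A$ satisfies $\langle B,A\rangle\ge\epsilon$ by weak continuity, so the unique minimum-norm element $B^{*}\in K_A$ is nonzero; as $(\pi\otimes\bar\pi)_{\bm{n}}(B^{*})\in K_A$ has norm at most $\|B^{*}\|$, uniqueness of the minimum forces $(\pi\otimes\bar\pi)_{\bm{n}}(B^{*})=B^{*}$ for every $\bm{n}\in\mathscr{M}_0$, contradicting (i). For $(\mathrm{iii})\Rightarrow(\mathrm{ii})$, given a nonzero $(\pi\otimes\kappa)_{|\mathscr{M}_0}$-invariant $v\in\Es{H}\otimes\Es{K}$ with Schmidt expansion $v=\sum\xi_i\otimes\eta_i$, one reduces via truncation to the finite-rank case $v=\sum_{i=1}^{n}\xi_i\otimes\eta_i$. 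Expanding $\|v\|^{2}=\langle(\pi\otimes\kappa)_{\bm{m}}v,v\rangle$ and applying Cauchy--Schwarz in each summand, using $|\langle\kappa_g\eta_i,\eta_j\rangle|\le 1$, yields $\|v\|^{4}\le n^{2}\sum_{i,j\le n}\int|\langle\pi_g\xi_i,\xi_j\rangle|^{2}\,d\bm{m}$; applying (iii) with $F=\{\xi_1,\dots,\xi_n\}$ makes the right side arbitrarily small, a contradiction.

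The main obstacle is $(\mathrm{iv})\Rightarrow(\mathrm{i})$, where the difficulty is that $\mathscr{M}_0$-invariance of an operator $T$ is the averaged condition $\int\pi_g T\pi_g^{*}\,d\bm{m}=T$ rather than the pointwise condition used in the classical group-case argument. The key point is that $\tilde\rho_{\bm{m}}(A):=\int_G\pi_g A\pi_g^{*}\,d\bm{m}$, defined as a weak-$*$ integral on all of $B(\Es{H})$, is a unital completely positive contraction (as a weak-$*$ integral of the $*$-automorphisms $\mathrm{Ad}\,\pi_g$); the Kadison--Schwarz inequality therefore gives $\tilde\rho_{\bm{m}}(T^{*}T)\ge\tilde\rho_{\bm{m}}(T)^{*}\tilde\rho_{\bm{m}}(T)=T^{*}T$ whenever $\tilde\rho_{\bm{m}}(T)=T$. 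On $\mathrm{HS}(\Es{H})$, $\tilde\rho_{\bm{m}}$ restricts to the contraction $(\pi\otimes\bar\pi)_{\bm{m}}$, so writing $S=\tilde\rho_{\bm{m}}(T^{*}T)-T^{*}T\ge 0$, the Hilbert--Schmidt identity $\|T^{*}T+S\|_{\mathrm{HS}}^{2}=\|T^{*}T\|_{\mathrm{HS}}^{2}+2\,\mathrm{tr}(T^{*}T\,S)+\|S\|_{\mathrm{HS}}^{2}\ge\|T^{*}T\|_{\mathrm{HS}}^{2}+\|S\|_{\mathrm{HS}}^{2}$ combined with $\|\tilde\rho_{\bm{m}}(T^{*}T)\|_{\mathrm{HS}}\le\|T^{*}T\|_{\mathrm{HS}}$ forces $S=0$, i.e., $\tilde\rho_{\bm{m}}(T^{*}T)=T^{*}T$ for every $\bm{m}\in\mathscr{M}_0$. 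A variant of this argument applied to Kadison--Schwarz on expressions $T^{*}T+t(T^{*}T)^{k}$ for real $t$ shows inductively that every power $(T^{*}T)^{k}$, and hence every polynomial in $T^{*}T$, is $\mathscr{M}_0$-invariant. Since $T\ne 0$, the compact positive operator $T^{*}T$ has a nonzero isolated eigenvalue $\lambda$ with finite-dimensional eigenspace $\Es{L}$, and $P_{\Es{L}}=f(T^{*}T)$ for a suitable continuous $f$ is a norm limit of polynomials in $T^{*}T$; norm continuity of $\tilde\rho_{\bm{m}}$ transfers invariance to $P_{\Es{L}}$, contradicting (iv).
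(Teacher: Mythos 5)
Your proposal is correct, and three of its four legs --- the minimal-norm element of a weakly compact convex $\mathscr{M}_0$-invariant set for (i)$\Rightarrow$(iii), the truncation/Cauchy--Schwarz estimate for (iii)$\Rightarrow$(ii), and the identification of finite-rank projections with invariant vectors of $\Es{H}\otimes\bar{\Es{H}}\cong \mathrm{HS}(\Es{H})$ for (i)$\Rightarrow$(iv) --- are exactly the "routine extension of the case $\mathscr{M}_0=G$" that the paper invokes, resting on the semigroup-homomorphism and contraction properties of Proposition \ref{prop:contract}. Where you genuinely part ways with the paper's one-line proof is (iv)$\Rightarrow$(i), and rightly so: this is the one step of the classical argument that does \emph{not} extend routinely, since in the group case one passes from $\pi_g T\pi_g^*=T$ to $\pi_g T^*T\pi_g^*=T^*T$ by multiplicativity of $\mathrm{Ad}\,\pi_g$, whereas the averaged map $\tilde\rho_{\bm{m}}$ is only a unital completely positive contraction and mean-invariance of $T$ does not formally pass to $T^*T$, let alone to its spectral projections. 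Your repair is sound at every stage: Kadison--Schwarz gives $S:=\tilde\rho_{\bm{m}}(T^*T)-T^*T\ge 0$; since $\tilde\rho_{\bm{m}}$ restricts on $\mathrm{HS}(\Es{H})$ to the contraction $(\pi\otimes\bar\pi)_{\bm{m}}$ (Proposition \ref{prop:contract}.iii), the identity $\|T^*T+S\|_{\mathrm{HS}}^2\ge\|T^*T\|_{\mathrm{HS}}^2+\|S\|_{\mathrm{HS}}^2$ forces $S=0$; the polarization trick with $T^*T+t(T^*T)^k$ (using that $(T^*T)^2$ and $(T^*T)^{2k}$ are already invariant by squaring) propagates invariance to all powers; and since the top eigenvalue of the compact positive operator $T^*T$ is isolated with finite-dimensional eigenspace $\Es{L}$, approximating $P_{\Es{L}}=f(T^*T)$ in operator norm by polynomials vanishing at $0$ and using norm-boundedness of $\tilde\rho_{\bm{m}}$ contradicts (iv). For what it is worth, the equality $\tilde\rho_{\bm{m}}(T^*T)=T^*T$ also follows in one line from positivity of the mean: each partial sum $\sum_{n\le N}\int\langle\pi_g T^*T\pi_g^*e_n,e_n\rangle\,d\bm{m}$ is at most $\mathrm{tr}(T^*T)$, so $\mathrm{tr}\bigl(\tilde\rho_{\bm{m}}(T^*T)\bigr)\le\mathrm{tr}(T^*T)$ and the positive operator $S$ has nonpositive trace, hence vanishes; but your Hilbert--Schmidt argument has the advantage of using only the contraction property already recorded in Proposition \ref{prop:contract}, so either version completes the proof.
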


\begin{proof}
Using the properties in Proposition \ref{prop:contract}, the proof is a routine extension of the proof for the case $\mathscr{M}_0=G$ (see, e.g., \cite{Pe11}).
\end{proof}

\begin{example}
Let $\lambda :G \ra \mathscr{U}(\ell ^2(G))$ be the left regular representation of $G$. Then $\lambda$ is a mixing representation of $G$, so if $\bm{m}$ is any atomless mean on $G$ then $\lambda _{\bm{m}} = 0$ in $\mathscr{B}(\ell ^2(G))$. It follows that if $\mathscr{M}_0$ is a subsemigroup of $\mathscr{M}$ whose weak${}^*$-closure contains a mean which is atomless, then $\lambda _{|\mathscr{M}_0}$ is weakly mixing.
\end{example}

The next proposition shows that weak mixing for the Koopman representation associated to a p.m.p.\ action of $G$ behaves as expected.

\begin{proposition}\label{prop:ERG} Let $G\cc (X,\mu )$ be a p.m.p.\ action of $G$ and let $\kappa$ denote the associated Koopman representation on $L^2(X,\mu )$. Let $\mathscr{M}_0$ be a subsemigroup of $\mathscr{M}$. Then the collection $\{ A\subseteq X \csuchthat \kappa _{\bm{m}}(1_A)=1_A \text{ for all }\bm{m}\in \mathscr{M}_0 \}$ is a $\| \cdot \| _2$-norm closed sigma subalgebra of the measure algebra of $(X,\mu )$. Furthermore, a function $\xi \in L^2(X,\mu )$ is $\kappa _{|\mathscr{M}_0}$-invariant if and only if $1_A$ is $\kappa _{|\mathscr{M}_0}$-invariant for every $\xi$-measurable set $A\subseteq X$.
\end{proposition}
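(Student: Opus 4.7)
The central observation will be the following criterion: for a measurable $A\subseteq X$ and $\bm{m}\in\mathscr{M}$, one has $\kappa_{\bm{m}}(1_A)=1_A$ if and only if $\int_G \mu(A\triangle g^{-1}A)\,d\bm{m}(g)=0$. Indeed, a direct computation gives $\langle \kappa_{\bm{m}}(1_A),1_A\rangle=\int_G\mu(A\cap g^{-1}A)\,d\bm{m}(g)$, and the equality case of Cauchy--Schwarz combined with $\kappa_{\bm{m}}$ being a contraction (Proposition \ref{prop:contract}.iii) shows that $\kappa_{\bm{m}}(1_A)=1_A$ is equivalent to $\langle \kappa_{\bm{m}}(1_A),1_A\rangle=\mu(A)$, i.e.\ to the integral vanishing.

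For the first statement, let $\mathcal{B}$ denote the given collection of sets. Closure under complements is immediate from $\kappa_{\bm{m}}(1)=1$. For closure under finite intersections, the pointwise bound $\mu((A\cap B)\triangle g^{-1}(A\cap B))\leq \mu(A\triangle g^{-1}A)+\mu(B\triangle g^{-1}B)$ together with the criterion gives $\kappa_{\bm{m}}(1_{A\cap B})=1_{A\cap B}$ for all $\bm{m}\in\mathscr{M}_0$ whenever $A,B\in\mathcal{B}$. Closure of $\mathcal{B}$ under $\|\cdot\|_2$-limits follows from continuity of the contraction $\kappa_{\bm{m}}$. Since $(X,\mu)$ is a probability space, any countable disjoint union $\bigsqcup_n A_n$ of sets in $\mathcal{B}$ is the $\|\cdot\|_2$-limit of its finite partial unions $A_0\cup\cdots\cup A_n\in\mathcal{B}$, upgrading closedness under finite unions to closedness under countable ones.

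For the second statement, the ``if'' direction is standard: any $\xi$-measurable $L^2$ function is an $L^2$-limit of finite linear combinations of indicators of $\xi$-measurable sets, each of which is $\kappa_{|\mathscr{M}_0}$-invariant by hypothesis, so $\xi$ itself is invariant by $L^2$-continuity of $\kappa_{\bm{m}}$. For the ``only if'' direction, one may assume $\xi$ real-valued, since $\kappa_{\bm{m}}(\bar\xi)=\overline{\kappa_{\bm{m}}(\xi)}$ implies invariance passes to real and imaginary parts. The key step is a Jensen inequality argument: for any convex $\phi:\mathbb{R}\to\mathbb{R}$ with $\phi(\xi)\in L^1(X,\mu)$, Jensen applied pointwise in $x$ to the finitely additive probability $\bm{m}$ yields
\begin{equation*}
\kappa_{\bm{m}}(\phi(\xi))(x) \;=\; \int_G \phi(\xi(g^{-1}x))\,d\bm{m}(g) \;\geq\; \phi(\kappa_{\bm{m}}\xi(x)) \;=\; \phi(\xi(x))
\end{equation*}
for $\mu$-a.e.\ $x$; since $\mu$ is $G$-invariant, Fubini (for bounded integrands against the product of $\mu$ and $\bm{m}$) shows both sides have the same $\mu$-integral, forcing $\mu$-a.e.\ equality. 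Hence $\phi(\xi)$ is $\kappa_{\bm{m}}$-invariant.

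The main technical maneuver is extracting invariance of level sets from invariance of convex functions of $\xi$. First, apply the above to $\phi_t(s)=(s-t)^+$ to conclude that $\zeta_t := (\xi-t)^+\in L^2$ is $\kappa_{|\mathscr{M}_0}$-invariant for every $t\in\mathbb{R}$. Then apply the argument a second time to the convex function $\psi_n(s)=(1-ns)^+$ composed with the nonnegative invariant $\zeta_t$, yielding invariance of $\psi_n(\zeta_t)$. As $n\to\infty$ we have $\psi_n(\zeta_t)\to 1_{\{\zeta_t=0\}}=1_{\{\xi\leq t\}}$ pointwise and boundedly by $1$, so dominated convergence together with the $\|\cdot\|_2$-closure from the first statement gives $\{\xi\leq t\}\in\mathcal{B}$ for every $t$. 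Since such sets generate $\sigma(\xi)$ and $\mathcal{B}$ is a $\sigma$-algebra, every $\xi$-measurable set lies in $\mathcal{B}$, completing the proof.
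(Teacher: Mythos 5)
Your treatment of the first statement and of the ``if'' direction of the second is correct and runs essentially parallel to the paper's (the paper states your criterion for arbitrary $\xi\in L^2$, namely $\kappa_{\bm{m}}\xi=\xi$ iff $\int_G\|\kappa_g\xi-\xi\|_2^2\,d\bm{m}=0$, and gets the algebra structure from a multiplicativity estimate for bounded invariant functions). The genuine gap is in your Jensen step for the ``only if'' direction. You write $\kappa_{\bm{m}}(\phi(\xi))(x)=\int_G\phi(\xi(g^{-1}x))\,d\bm{m}(g)$, apply Jensen pointwise in $x$, and then invoke Fubini against the product of $\mu$ and $\bm{m}$. Neither manipulation is available for a merely finitely additive mean: $\kappa_{\bm{m}}$ is defined only \emph{weakly}, by $\langle\kappa_{\bm{m}}\eta,\zeta\rangle=\int_G\langle\kappa_g\eta,\zeta\rangle\,d\bm{m}$, and the pointwise expression $x\mapsto\int_G\eta(g^{-1}x)\,d\bm{m}(g)$ need not even be measurable. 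Concretely, let $G=\Z$ act by the shift on $X=\{0,1\}^{\Z}$ with the uniform product measure, let $\xi(x)=x_0$, and let $\bm{m}$ be the weak${}^*$ limit of $(\delta_n)_{n\in\N}$ along a nonprincipal ultrafilter $\mathcal{U}$. Then the pointwise integral equals $\lim_{n\ra\mathcal{U}}x_n$, the indicator of the set $\{x\csuchthat\{n\csuchthat x_n=1\}\in\mathcal{U}\}$, which is a tail event exchanged with its complement by the measure-preserving coordinate flip, hence nonmeasurable; meanwhile the actual operator satisfies $\kappa_{\bm{m}}\xi=\tfrac12 1_X$ by mixing of the shift. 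Fubini for finitely additive measures fails just as badly (the iterated integrals of $1_{\{m\leq n\}}$ on $\Z\times\Z$ against such a mean are $0$ and $1$), so the equality of $\mu$-integrals cannot be obtained the way you claim -- although that particular conclusion is true, since pairing the weak definition with $1_X$ and using $G$-invariance of $\mu$ gives $\int_X\kappa_{\bm{m}}\eta\,d\mu=\int_X\eta\,d\mu$.

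The good news is that the step you actually need is weaker than Jensen and follows from your own criterion, once you note that the same Cauchy--Schwarz argument proves it for every $\xi\in L^2(X,\mu)$, not just indicators. If $\phi\colon\R\ra\R$ is $L$-Lipschitz and $\xi$ is $\kappa_{|\mathscr{M}_0}$-invariant, then for every $g\in G$,
\[
\|\kappa_g\phi(\xi)-\phi(\xi)\|_2^2=\int_X|\phi(\xi(g^{-1}x))-\phi(\xi(x))|^2\,d\mu\leq L^2\,\|\kappa_g\xi-\xi\|_2^2 ,
\]
an inequality between genuine bounded functions of $g$; integrating it against any $\bm{m}\in\mathscr{M}_0$ gives $\int_G\|\kappa_g\phi(\xi)-\phi(\xi)\|_2^2\,d\bm{m}=0$, so $\phi(\xi)$ is invariant (note $\phi(\xi)\in L^2$ since $\mu$ is finite). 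Your two test functions $\phi_t(s)=(s-t)^+$ and $\psi_n(s)=(1-ns)^+$ are Lipschitz, so both applications go through and the remainder of your argument (the $\|\cdot\|_2$-limit $\psi_n(\zeta_t)\ra 1_{\{\xi\leq t\}}$, closedness of the space of invariant vectors, and generation of $\sigma(\xi)$) is untouched. With this repair your route is a correct alternative to the paper's, which avoids any functional calculus: it shows each super-level set $A_r=\{\xi\geq r\}$ is invariant directly, by assuming $\int_G\mu(A_r\setminus gA_r)\,d\bm{m}>0$, choosing $\delta$ with $\mu(\{r-\delta\leq\xi<r\})$ small, and deducing $\int_G\|\kappa_g\xi-\xi\|_2^2\,d\bm{m}>0$, a contradiction. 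The moral either way is that every estimate must be phrased as an integral over $G$ of an honest function of $g$, never as a pointwise average over $G$.
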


\begin{proof}
A function $\xi \in L^2(X,\mu )$ is $\kappa _{\bm{m}}$-invariant if and only if $\int _G \| \kappa _g (\xi ) - \xi \| _2 ^2 \, d\bm{m} = 0$. Therefore, if $f_0,f_1\in L^\infty (X,\mu )$ are both $\kappa _{\bm{m}}$-invariant, then
\[
\int _G\| \kappa _g(f_0f_1)- f_0f_1\| _2 ^2 \, d\bm{m} \leq \| f_1 \|_\infty ^2 \int _G \| \kappa _g (f_0) - f_0 \| _2^2 \, d\bm{m} + \| f_0 \| _{\infty} ^2 \int _G \| \kappa _g (f_1)- f_1 \| _2^2 \, d\bm{m} = 0,
\]
hence $f_0f_1$ is also $\kappa _{\bm{m}}$-invariant.

Assume that $\xi \in L^2(X,\mu )$ is $\kappa _{|\mathscr{M}_0}$-invariant. It suffices to show that sets of the form $A_r = \{ x\in X\csuchthat \xi (x)\geq r \}$, $r\in \R$, are $\kappa _{|\mathscr{M}_0}$-invariant. Suppose toward a contradiction that $A_r$ is not $\kappa _{\bm{m}}$-invariant for some $r\in \R$ and $\bm{m}\in \mathscr{M}_0$. Then we have $\int _G \mu (A_r \setminus gA_r ) \, d\bm{m} > 0$, so there is some $\epsilon >0$ such that $\bm{m}(D_{\epsilon})>0$ where $D_\epsilon = \{ g\in G\csuchthat \mu (A_r\setminus gA_r ) > \epsilon \}$. Find $\delta >0$ such that $\mu (A_{r-\delta }\setminus A_r ) = \mu ( \{ x\in X\csuchthat r > \xi (x) \geq r-\delta \} ) < \epsilon /2$. Then for $g\in D_\epsilon$ we have $\mu (A_r \setminus gA_{r-\delta} ) > \epsilon /2$ and hence $\| \xi - \kappa _g(\xi ) \| _2^2 \geq \delta \epsilon /2$. Therefore, $0 = \int _{g\in D_{\epsilon}} \| \xi - \kappa _g(\xi ) \| _2 ^2 \, d\bm{m} \geq \bm{m}(D_\epsilon )\delta \epsilon /2 > 0$, a contradiction. For the reverse implication, approximate $\xi$ in $\| \cdot \| _2$-norm by $\xi$-measurable simple functions.
\end{proof}

\begin{definition}\label{def:WM}
Let $G\cc ^\sigma (X,\mu )$ be a p.m.p.\ action of $G$ and let $\kappa$ denote the associated Koopman representation on $L^2(X,\mu )$. Let $\mathscr{M}_0$ be a subsemigroup of $\mathscr{M}$. We say that $\sigma _{|\mathscr{M}_0}$ is {\bf ergodic} if every $\kappa _{|\mathscr{M}_0}$-invariant function in $L^2(X,\mu )$ is essentially constant. We say that $\sigma _{|\mathscr{M}_0}$ is {\bf weakly mixing} if $(\sigma \otimes \sigma )_{|\mathscr{M}_0}$ is ergodic.
\end{definition}

\begin{proposition}\label{prop:WMaction}
Let $G\cc ^\sigma (X,\mu )$ be a p.m.p.\ action of $G$ and let $\kappa$ denote the associated Koopman representation on $L^2(X,\mu )$. Let $\mathscr{M}_0$ be a subsemigroup of $\mathscr{M}$. Then the following are equivalent:
\begin{enumerate}
\item[i.] $\sigma _{|\mathscr{M}_0}$ is weakly mixing;
\item[ii.] $(\sigma \otimes \rho )_{|\mathscr{M}_0}$ is ergodic for every ergodic p.m.p.\ action $\rho$ of $G$;
\item[iii.] The restriction of $\kappa _{|\mathscr{M}_0}$ to $L^2(X,\mu ) \ominus \C 1_X$ is weakly mixing.
\end{enumerate}
\end{proposition}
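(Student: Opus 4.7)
The strategy is to decompose the relevant Koopman spaces into orthogonal $\mathscr{M}_0$-invariant summands and read off ergodicity from the absence of invariants in each summand, invoking Proposition~\ref{prop:WM} at the key step. Two preliminary observations anchor the argument. First, the Koopman representation $\kappa$ of a p.m.p.\ action is self-conjugate via pointwise complex conjugation: writing $\Es{H}_0 := L^2(X,\mu)\ominus\C 1_X$ and $\kappa_0 := \kappa_{|\Es{H}_0}$, one has $\bar{\kappa_0}\cong\kappa_0$. Second, for any unitary representation $\pi$ of $G$ and any $\bm{m}\in\mathscr{M}$, a $\pi_{|G}$-invariant vector $\xi$ satisfies $\pi_{\bm{m}}\xi = \int_G \pi_g\xi \, d\bm{m} = \xi$, so $\pi_{|G}$-invariants are automatically $\pi_{|\mathscr{M}_0}$-invariants; under the tacit identification $G\subseteq\mathscr{M}_0$ via point masses (\S\ref{subsec:means}), the reverse inclusion is automatic, so the two notions of invariants coincide on auxiliary representations.

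For the equivalence (i)$\Leftrightarrow$(iii), I decompose
\[
L^2(X\times X,\mu\otimes\mu) \;=\; \C 1 \,\oplus\, (\C 1\otimes\Es{H}_0) \,\oplus\, (\Es{H}_0\otimes\C 1) \,\oplus\, (\Es{H}_0\otimes\Es{H}_0)
\]
as $\kappa\otimes\kappa$-invariant subspaces. By Proposition~\ref{prop:ERG}, ergodicity of $(\sigma\otimes\sigma)_{|\mathscr{M}_0}$ is equivalent to the absence of nonzero $\mathscr{M}_0$-invariants in each of the three nontrivial summands. The last summand carries $\kappa_0\otimes\kappa_0\cong\kappa_0\otimes\bar{\kappa_0}$ via the self-conjugation, so the vanishing of $\mathscr{M}_0$-invariants there is precisely the weak mixing of $\kappa_0{}_{|\mathscr{M}_0}$, i.e.~(iii). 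Proposition~\ref{prop:WM}(ii) applied with the trivial one-dimensional representation then forces $\kappa_0{}_{|\mathscr{M}_0}$ itself to have no nonzero invariants, which rules out the remaining two summands.

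For (iii)$\Rightarrow$(ii), let $\rho:G\cc(Y,\nu)$ be ergodic with Koopman representation $\pi$, and split
\[
L^2(X\times Y,\mu\otimes\nu) \;=\; (\C 1_X\otimes L^2(Y)) \,\oplus\, (\Es{H}_0\otimes L^2(Y))
\]
as $\kappa\otimes\pi$-invariant summands. Proposition~\ref{prop:WM}(ii) applied to $\kappa_0{}_{|\mathscr{M}_0}$ paired with $\pi$ rules out $\mathscr{M}_0$-invariants in the second summand; on the first summand $\kappa\otimes\pi$ acts as $\pi$, whose $\mathscr{M}_0$-invariants coincide with its $G$-invariants and hence equal $\C 1_Y$ by ergodicity of $\rho$. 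Hence $(\sigma\otimes\rho)_{|\mathscr{M}_0}$ is ergodic, proving (ii). For (ii)$\Rightarrow$(i), apply (ii) with $\rho$ the trivial action on a one-point space to obtain $\sigma_{|\mathscr{M}_0}$ ergodic, hence $\sigma_{|G}$ ergodic; then apply (ii) with $\rho = \sigma$ to obtain $(\sigma\otimes\sigma)_{|\mathscr{M}_0}$ ergodic, i.e.~(i). The only genuine subtlety is the interplay between $G$- and $\mathscr{M}_0$-invariants on the auxiliary $L^2(Y)$, which collapses to the averaging observation and the identification $G\subseteq\mathscr{M}_0$; the remainder is routine tensor-product bookkeeping together with a few direct applications of Proposition~\ref{prop:WM}(ii).
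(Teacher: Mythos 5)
Your overall skeleton --- decompose the product $L^2$-space into summands preserved by the averaged Koopman operators and kill the nontrivial summands with Proposition \ref{prop:WM} --- is exactly the argument the paper's one-line proof (citing Propositions \ref{prop:WM} and \ref{prop:ERG}) has in mind, and your treatment of (i)$\Leftrightarrow$(iii) and of (ii)$\Rightarrow$(i) is sound. But your second ``preliminary observation'' contains a genuine error which then infects (iii)$\Rightarrow$(ii): there is no tacit identification $G\subseteq\mathscr{M}_0$. The paper identifies $G$ with the point masses inside $\mathscr{M}$, but $\mathscr{M}_0$ is an \emph{arbitrary} subsemigroup of $\mathscr{M}$ and need not contain a single point mass: the motivating example $\mathscr{M}_0=C_{\mathscr{M}}(H)$ of Theorem \ref{thm:superrigid1} contains $\delta _g$ only for $g\in C_G(H)$, and the subsemigroup generated by one atomless mean contains no $\delta _g$ whatsoever. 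Consequently only one inclusion survives: $\pi _{|G}$-invariant vectors are $\pi _{|\mathscr{M}_0}$-invariant (by averaging), but a $\pi _{|\mathscr{M}_0}$-invariant vector need not be $G$-invariant. Your proof of (iii)$\Rightarrow$(ii) uses precisely the false direction when you assert that, on the summand $\C 1_X\otimes L^2(Y,\nu )$, the $\mathscr{M}_0$-invariants of $\pi$ ``coincide with its $G$-invariants and hence equal $\C 1_Y$ by ergodicity of $\rho$.''

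This step is not merely unjustified; if ``ergodic'' in (ii) means $G$-ergodicity of $\rho$, as you read it, then the implication (iii)$\Rightarrow$(ii) is outright false. Take $G=\Z$, let $\sigma$ be a Bernoulli shift, and let $\rho$ be the ($G$-ergodic) rotation of $\T$ by an irrational $\alpha$. Choose integers $k_j\to\infty$ with $k_j\alpha \to 0 \pmod 1$, let $\bm{m}$ be a weak${}^*$ cluster point of $(\delta _{k_j})_{j}$, and let $\mathscr{M}_0$ be the subsemigroup generated by $\bm{m}$. Every element of $\mathscr{M}_0$ is atomless, hence annihilates the $c_0(\Z )$ matrix coefficients of the mixing representations $\kappa _0$ and $\kappa _0\otimes \ol{\kappa _0}$, so all the relevant averaged operators vanish and (i) and (iii) hold. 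Yet the character $z(y)=e^{2\pi iy}$ satisfies $\pi _{\bm{m}}z=\bigl( \int _{\Z}e^{-2\pi i g\alpha}\, d\bm{m}(g)\bigr) z=z$, and hence $z$ is fixed by $\pi _{\bm{n}}$ for every $\bm{n}\in \mathscr{M}_0$ (Proposition \ref{prop:contract}.i); thus $1_X\otimes z$ is a nonconstant $(\sigma \otimes \rho )_{|\mathscr{M}_0}$-invariant function and (ii) fails. The proposition is true, and your argument closes, only when ``ergodic p.m.p.\ action $\rho$'' is read in the sense of Definition \ref{def:WM}, i.e.\ $\rho _{|\mathscr{M}_0}$ is ergodic. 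With that reading the offending step becomes a tautology (the $\pi _{|\mathscr{M}_0}$-invariant vectors in $L^2(Y,\nu )$ are the constants by hypothesis), and the rest of your proof --- including (ii)$\Rightarrow$(i), since the trivial action is trivially $\mathscr{M}_0$-ergodic and the first application of (ii) makes $\sigma$ itself an admissible $\rho$ --- goes through unchanged.
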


\begin{proof}
This follows from Propositions \ref{prop:WM} and \ref{prop:ERG}.
\end{proof}

\subsection{Proof of Theorem \ref{thm:superrigid1}}

\begin{proof}[Proof of Theorem \ref{thm:superrigid1}]
For $g\in G$ let $C_H(g) = H\cap C_G(g)$. Define the set
\[
D = \{ g\in G \csuchthat \sigma _{|C_H(g)} \text{ has stable spectral gap} \} ,
\]
and let $G_0 = \langle H , D \rangle$. If $\bm{m}$ is a mean on $G$ which is invariant under conjugation by $H$ then, since $\sigma _{|H}$ has stable spectral gap, Lemma \ref{lem:ssg} implies that $\bm{m}(D) = 1$, so in particular $\bm{m}(G_0)=1$.

Let $w:G\times X \ra L$ be a cocycle with values in a group $L\in \mathscr{U}_{\mathrm{fin}}$. To show that $w$ untwists on $G_0$, we claim that it is enough to show that $w$ untwists on $H$. Indeed, assume that $w$ untwists on $H$. If $g\in D$ then $C_H(g)\leq gHg^{-1}\cap H$ and $\sigma _{|C_H(g)}$ has stable spectral gap, so in particular $\sigma _{|C_H(g)}$ is weakly mixing. We can therefore apply the last statement in Lemma 3.5 of \cite{Fu07} to conclude that $w$ untwists on all of $G_0$.

It remains to show that $w$ untwists on $H$. We may assume that $L$ is a closed subgroup of the unitary group $\mathscr{U}(N)$ of some finite von Neumann algebra $N$. Let $A= L^\infty (X,\mu )$ and view $w$ as a cocycle $w:G\ra \mathscr{U}(A\otimes N )$ for the action $\sigma \otimes \mathrm{id}_N$, i.e., satisfying $w_{gh}=w_g(\sigma _g \otimes \mathrm{id}_N)(w_h )$. We will use Popa's setup from Theorem 4.1 of \cite{Pop08}, with $A$ here taking the place of $P$. Namely, we let $M=(A\otimes N)\rtimes _{\sigma \otimes \mathrm{id}_N}G$ 
and we let $\wt{M} = (A\otimes A \otimes N)\rtimes _{\sigma \otimes \sigma \otimes \mathrm{id}_N}G$, and we view $M$ as a subalgebra of $\wt{M}$ so that the canonical unitaries $\{ u_g \} _{g\in G}\subset M$ implement $\sigma \otimes \mathrm{id}_N$ and $\sigma \otimes \sigma \otimes \mathrm{id}_N$ on $M$ and $\wt{M}$ respectively. We let $\tau$ denote the trance on $\wt{M}$. 
Let $\{ \alpha _t \} _{t\in \R} \cup \{ \beta \} \subseteq \mathrm{Aut}(A\otimes A)$ denote the $s$-malleable deformation, and we extend $\beta$ and $\alpha _t$, $t\in \R$, to automorphisms of $\wt{M}$ by letting $\beta (x) = x = \alpha _t (x)$ if $x\in N\otimes \mathrm{L}G$. Let $\wt{u}_g = w_gu_g$ for $g\in G$, so that $g\mapsto \wt{u}_g$ is a homomorphism. Let $\wt{\pi}$ denote the representation of $G$ on $L^2(\wt{M}) =L^2(M)\otimes L^2(A)$ determined by $\wt{\pi} (g) ((xu_h )\otimes y ) = \mathrm{Ad}(\wt{u}_g)(xu_h)\otimes \sigma _g (y)$, for $x\in A\otimes N$, $y\in A$, $g,h\in G$.

\begin{claim}\label{claim:pv}
$\lim _{t\ra 0}\big( \sup _{\bm{m}\in C_{\mathscr{M}}(H)}\int _{g\in G} \| \alpha _t (\wt{u}_g)- \wt{u}_g \| _2 ^2 \, d\bm{m}(g) \big) = 0$.
\end{claim}

\begin{proof}[Proof of Claim \ref{claim:pv}]
Fix $\epsilon >0$. It suffices to show that there exists $t_\epsilon >0$, along with $S\subseteq H$ finite and $\delta >0$, such that if $p\in \Es{P}$ satisfies $\sup _{s\in S} \| sps^{-1} - p \| _1 < \delta$ then for all $t$ with $0\leq |t|\leq t_{\epsilon}$ we have $\int _G \| \alpha _t (\wt{u}_g)- \wt{u}_g \| _2 ^2 \, dp (g) < \epsilon$, since the claim will then follow using Lemma \ref{lem:Mazur}. Since $H\cc ^{\sigma }(X,\mu )$ has stable spectral gap, there exists a finite set $S\subseteq H$ and $\delta _0 >0$ such that if $\eta \in L^2(\wt{M})$ is a unit vector satisfying $\sup _{s\in S} \| \wt{\pi} (s)\eta - \eta \| _2 < \delta _0$, then $\| \eta - e(\eta ) \| _2 < \epsilon ^{1/2}/2$, where $e: L^2(\wt{M})\ra L^2(M)$ denotes the orthogonal projection. Since $S$ is finite, there exists $t_1>0$ such that for all $0\leq t_0\leq t_1$ we have $\sup _{s\in S} \| \alpha _{t_0} (\wt{u}_s)-\wt{u}_s \| _2 < \delta _0 /4$. Let $t_\epsilon = 2t_1$ and fix $t_0$ with $0\leq t_0\leq t_1$. Let $\delta = \delta _0 ^2/4$ and fix $p\in \Es{P}$ with $\sup _{s\in S} \| sps^{-1}-p \| _1 < \delta$.

Let $Q= \{ \wt{u}_g \} _{g\in G}''$. Then we may identify $\ell ^2(G)$ with $L^2(Q) \subset L^2(M)$ via $\delta _g \mapsto \wt{u}_g$. For each $q\in \Es{P}$, let $\eta _q = \sum _{g\in G} q(g)^{1/2}\wt{u}_g \in L^2(Q)$. We have $\tau (\alpha _{t_0}(\wt{u}_g)\wt{u}_h^*) = 0$ for all $g\neq h$, which implies that $\| \alpha _{t_0} (\wt{u}_g)\xi - \wt{u}_g\xi \| _2 = \| \alpha _{t_0} (\wt{u}_g) -\wt{u}_g \| _2 = \| \xi \alpha _{t_0}(\wt{u}_g) - \xi \wt{u}_g \| _2$ for every unit vector $\xi \in L^2(Q)$, and therefore $\sup _{s\in S} \| \alpha _{t_0} (\wt{u}_s) \eta _p - \eta _p \alpha _{t_0} (\wt{u}_s) \| _2 <  \delta _0/2 + \sup _{s\in S}\| \eta _{sps^{-1}} - \eta _p \| _2 < \delta _0$. By replacing $t_0$ by $-t_0$ and applying $\alpha _{t_0}$ we obtain $\sup _{s\in S} \| \wt{\pi}(s) \alpha _{t_0}(\eta _p) - \alpha _{t_0}(\eta _p) \| _2< \delta _0$. Our choice of $\delta _0$ then implies that $\| \alpha _{t_0}(\eta _p)- e(\alpha _{t_0}(\eta _p )) \| _2 < \epsilon ^{1/2} / 2$, and hence by Popa's Transversality Lemma \cite[Lemma 2.1]{Pop08},
\begin{align*}
\int _{G}\| \alpha _{2t_0}(\wt{u}_g) - \wt{u}_g \| _2 ^2 \, dp  &\leq 4\int _{G} \| \alpha _{t_0} (\wt{u}_g) - e(\alpha _{t_0}(\wt{u}_g)) \| _2 ^2 \, dp = 4 \| \alpha _{t_0}(\eta _p)- e(\alpha _{t_0}(\eta _p)) \| _2^2 < \epsilon .
\end{align*}
\qedhere[Claim \ref{claim:pv}]
\end{proof}

\begin{claim}\label{claim:2}
$\lim _{t\ra 0} \big( \sup _{h\in H}\| \alpha _t (\wt{u}_h)- \wt{u}_h \| _2^2 \big) =0$.
\end{claim}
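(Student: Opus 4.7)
The strategy is to reduce to Claim~\ref{claim:pv} via Popa's Transversality Lemma, then upgrade its integral bound to a pointwise bound on $H$ by combining the stable spectral gap of $\sigma _{|H}$ with the $M$-bimodularity of the conditional expectation $e:L^2(\wt M)\to L^2(M)$. By Popa's Transversality Lemma (\cite[Lemma~2.1]{Pop08}) applied to $\wt u_h\in \wt M$, for every $h\in H$ and $t>0$,
\[
\|\alpha _{2t}(\wt u_h)-\wt u_h\| _2^2 \leq 4\,\|\alpha _t(\wt u_h)-e(\alpha _t(\wt u_h))\| _2^2,
\]
so Claim~\ref{claim:2} is equivalent to showing $\sup _{h\in H}\|\zeta _h^t\| _2\to 0$ as $t\to 0$, where $\zeta _h^t := \alpha _t(\wt u_h) - e(\alpha _t(\wt u_h))\in L^2(\wt M)\ominus L^2(M)$.

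First I would establish a quasi-conjugation-invariance estimate for $h\mapsto \|\zeta _h^t\| _2$. Using the homomorphism property $\wt u_{khk^{-1}}=\wt u_k\wt u_h\wt u_k^{-1}$, the fact that $\alpha _t$ is a $\ast$-automorphism of $\wt M$, and the $M$-bimodularity $e(\wt u_k x\wt u_k^{-1})=\wt u_k e(x)\wt u_k^{-1}$, a direct triangle-inequality computation using unitary invariance of $\|\cdot\|_2$ yields
\[
\bigl|\,\|\zeta _{khk^{-1}}^t\| _2 - \|\zeta _h^t\| _2\,\bigr|\leq 4\,\delta _t(k),\qquad k\in H,
\]
where $\delta _t(g):=\|\alpha _t(\wt u_g)-\wt u_g\| _2$. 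Hence $h\mapsto \|\zeta _h^t\| _2^2$ is almost-conjugation-invariant on the set where $\delta _t$ is small, which will be of full measure with respect to any atomless $\bm m\in C_{\mathscr M}(H)$ for $t$ small.

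Finally I would combine this with the inner amenability of $(G,H)$ — which supplies an atomless $\bm m\in C_{\mathscr M}(H)$ — and the stable spectral gap of $\sigma _{|H}$. By Claim~\ref{claim:pv} together with the obvious bound $\|\zeta _g^t\| _2\leq \delta _t(g)$ (since $\wt u_g\in L^2(M)$), $\int _G\|\zeta _g^t\| _2^2\, d\bm m(g)\to 0$. Let $p_n\in\Es P$ be the sequence of Lemma~\ref{lem:Mazur} approximating $\bm m$, with $\sup _{s\in S}\|sp_ns^{-1}-p_n\| _1\to 0$. For each $h\in H$ I would form the "twisted" vector
\[
\Xi _h^n := \sum _{g\in G}p_n(g)^{1/2}\,\wt u_h\,\zeta _g^t\,\wt u_h^{-1}\ \in\ L^2(\wt M)\ominus L^2(M),
\]
which has $\|\Xi _h^n\| _2^2\to \int \|\zeta _g^t\| _2^2\, d\bm m$ by biorthogonality of $\{\zeta _g^t\}_g$ in the bimodule, and thus is small by Claim~\ref{claim:pv}. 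On the other hand, the stable spectral gap of $\sigma _{|H}$ — equivalently, the absence of almost-$\wt\pi _{|H}$-invariant vectors in $L^2(\wt M)\ominus L^2(M)$ — combined with the approximate $H$-conjugation invariance of $p_n$ and the estimate of the previous paragraph, will force $\|\Xi _h^n\| _2$ to bound $\|\zeta _h^t\| _2$ from below, uniformly in $h$. This yields the uniform convergence $\sup _{h\in H}\|\zeta _h^t\| _2\to 0$. The main obstacle is precisely this upgrade from the $\bm m$-average bound of Claim~\ref{claim:pv} to a pointwise bound on $H$: the point masses $\delta _h$ are not in $C_{\mathscr M}(H)$ unless $h\in C_G(H)$, so Claim~\ref{claim:pv} cannot be applied at individual $h\in H$ directly, and bridging this gap requires the simultaneous use of stable spectral gap, $M$-bimodularity of $e$, and the approximate conjugation-invariance of the $p_n$.
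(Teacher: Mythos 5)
Your reduction via Popa's transversality lemma is fine (it is also the final step of the paper's own argument), and your quasi-conjugation-invariance estimate in step 2 is a correct consequence of $M$-bimodularity of $e$. The genuine gap is the central claim that $\|\Xi_h^n\|_2$ "bounds $\|\zeta_h^t\|_2$ from below, uniformly in $h$": this is not only unproven, it cannot hold in the form you need. Since conjugation by the unitary $\wt{u}_h$ is a $\|\cdot\|_2$-isometry and the vectors $\zeta_g^t$, $g\in G$, are pairwise orthogonal (your own "biorthogonality"), one has $\|\Xi_h^n\|_2^2=\sum_{g}p_n(g)\|\zeta_g^t\|_2^2$, a quantity that does not depend on $h$ at all -- the twist by $\wt{u}_h$ is invisible to the norm. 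Any inequality of the form $\|\zeta_h^t\|_2\leq C\|\Xi_h^n\|_2$ would therefore have to come from a relation tying $\zeta_h^t$ to the vectors $\zeta_g^t$, and your proposal contains none: if you try to relate $\wt{u}_h\zeta_g^t\wt{u}_h^{-1}$ to $\zeta_{hgh^{-1}}^t$ via your step-2 estimate, the error is of order $\delta_t(h)$, which is exactly the quantity being bounded, so the argument is circular. Likewise, stable spectral gap has nothing to act on here: you never manufacture an almost-$\wt{\pi}_{|H}$-invariant vector in $L^2(\wt{M})\ominus L^2(M)$ out of a hypothetically large $\|\zeta_h^t\|_2$. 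In the paper, spectral gap is spent entirely inside Claim \ref{claim:pv} and plays no further role in the proof of Claim \ref{claim:2}.

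The missing ingredient is the group law $\wt{u}_{hgh^{-1}}\wt{u}_h=\wt{u}_h\wt{u}_g$ combined with the hypothesis that $\sigma_{|C_{\mathscr{M}}(H)}$ is weakly mixing -- a hypothesis your proposal never invokes, and which is listed in Theorem \ref{thm:superrigid1} precisely because it is needed at this point. Applying Claim \ref{claim:pv} to both $g$ and $hgh^{-1}$ (the latter using that each $\bm{m}\in C_{\mathscr{M}}(H)$ is invariant under conjugation by $h$) gives
\[
\int _G \| \alpha _t (\wt{u}_{hgh^{-1}})\wt{u}_h - \wt{u}_h\alpha _t (\wt{u}_g) \| _2^2 \, d\bm{m} < \epsilon /2 ;
\]
replacing $t$ by $-t$, applying $\alpha _t$, and projecting by $1-e$ (this is where $M$-bimodularity actually enters) yields the almost-intertwining relation $\int _G \| \wt{u}_{hgh^{-1}}\eta _h - \eta _h \wt{u}_g \| _2^2 \, d\bm{m} < \epsilon /2$ with $\eta _h = \zeta _h^t$, valid for every $\bm{m}\in C_{\mathscr{M}}(H)$. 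Expanding the square gives $2\| \eta _h \| _2^2 \leq \epsilon /2 + 2\int _G |\langle \wt{u}_{hgh^{-1}}\eta _h , \eta _h \wt{u}_g \rangle | \, d\bm{m}$, and the cross term has infimum $0$ over $\bm{m}\in C_{\mathscr{M}}(H)$ by weak mixing of $\sigma _{|C_{\mathscr{M}}(H)}$ (via Proposition \ref{prop:WM}, after reducing to elementary tensors orthogonal to $L^2(M)$). It is this weak-mixing step, not spectral gap, that converts the $\bm{m}$-averaged smallness of Claim \ref{claim:pv} into the pointwise bound $\| \eta _h \| _2^2 \leq \epsilon /4$ for every $h\in H$; transversality then finishes the proof as you say.
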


\begin{proof}[Proof of Claim \ref{claim:2}]
Fix $\epsilon >0$. By Claim \ref{claim:pv} there exists $t_\epsilon >0$ such that $\int _G \| \alpha _t (\wt{u}_g)- \wt{u}_g \| _2 ^2 \, d\bm{m} <\epsilon /8$ for all $\bm{m}\in C_{\mathscr{M}}(H)$ and all $t$ with $0\leq |t|\leq t_\epsilon$. Fix any $h\in H$ along with $0\leq |t|\leq t_\epsilon$, and we will show that $\| \alpha _t (\wt{u}_h)-\wt{u}_h \| _2^2 < \epsilon$. For $\bm{m}\in C_{\mathscr{M}}(H)$ we have
\begin{align*}
\int _{G} \| \alpha _t &(\wt{u}_{hgh^{-1}})\wt{u}_h - \wt{u}_h\alpha _t (\wt{u}_g) \| _2^2 \, d\bm{m} \\
&\leq \Big[ \Big( \int _{G}\| \alpha _t (\wt{u}_{hgh^{-1}})- \wt{u}_{hgh^{-1}}\| _2^2 \, d\bm{m} \Big) ^{1/2} + \Big( \int _G \| \alpha _t (\wt{u}_g )- \wt{u}_g \| _2^2 \, d\bm{m} \Big) ^{1/2}\Big]^2 < \epsilon /2 .
\end{align*}
By replacing $t$ with $-t$ and applying $\alpha _t$ we obtain $\int _G \| \wt{u}_{hgh^{-1}}\alpha _t (\wt{u}_h)-\alpha _t(\wt{u}_h)\wt{u}_g \| _2^2 \, d\bm{m} <\epsilon /2$. Let $\eta _h  = \alpha _t (\wt{u}_h)- e(\alpha _t(\wt{u}_h))$. Then by projecting onto $L^2(\wt{M})\ominus L^2(M)$ and using the last inequality we obtain
\begin{equation}\label{eqn:etah}
\int _G \| \wt{u}_{hgh^{-1}}\eta _h -\eta _h \wt{u}_g \| _2^2 \,d\bm{m} <\epsilon /2 , \ \ (\bm{m} \in C_{\mathscr{M}}(H) ) .
\end{equation}
{\bf Subclaim:} $\inf _{C_{\mathscr{M}}(H)} \sum _{i,j <n} \int _G |\langle \wt{u}_{hgh^{-1}}\eta _i , \eta _j \wt{u}_g \rangle | \, d\bm{m} = 0$ for any $\eta _0,\dots ,\eta _{n-1} \in L^2(\wt{M})\ominus L^2(M)$.

\begin{proof}[Proof of Subclaim] We may assume each $\eta _i$ is of the form $\eta _i = (a_i\otimes b_i \otimes c_i )u_{k_i}$, where $a_i \in A\otimes 1$, $b_i \in 1\otimes A$, $\tau (b_i ) = 0$, $c_i \in N$, $k_i\in G$, since such vectors span a dense subspace of $L^2(\wt{M})\ominus L^2(M)$. Since $w_l \in A \otimes 1 \otimes N$, we have
\begin{align*}
|\langle \wt{u}_{hgh^{-1}}\eta _i , \eta _j \wt{u}_g \rangle | &= |\langle w_{hgh^{-1}}u_{hgh^{-1}}(a_i\otimes b_i\otimes c_i)u_{k_i}, (a_j\otimes b_j\otimes c_j)u_{k_j}w_gu_g\rangle | \\
&\leq |\langle w_{hgh^{-1}}(\sigma _{hgh^{-1}}(a_i)\otimes c_i), (a_j\otimes c_j)(\sigma _{k_j}\otimes \mathrm{id}_N)(w_g)\rangle | \, | \langle \sigma _{hgh^{-1}}(b_i), b_j \rangle |  \\
&\leq \| a_i \otimes c_i \| _2 \| a_j \otimes c_j \| _2 | \langle \sigma _{hgh^{-1}}(b_i), b_j \rangle | .
\end{align*}
Therefore, letting $C=\max _{i<n}\| a_i\otimes c_i \| _2$, for $\bm{m}\in C_{\mathscr{M}}(H)$ we have
\begin{equation}\label{eqn:C}
\sum _{i,j<n} \int _G |\langle \wt{u}_{hgh^{-1}}\eta _i , \eta _j \wt{u}_g \rangle | \, d\bm{m}(g) \leq C^2 \sum _{i,j<n}\int _G |\langle \sigma _{g}(b_i), b_j \rangle | \, d\bm{m} (g) .
\end{equation}
By Proposition \ref{prop:WM}, since $C_{\mathscr{M}}(H)\cc ^{\sigma}A$ is weakly mixing, the infimum over $\bm{m} \in C_{\mathscr{M}}(H)$ of the right hand side of \eqref{eqn:C} is $0$.\qedhere[Subclaim]
\end{proof}

By \eqref{eqn:etah} and the subclaim, we have
\[
\epsilon /2 \geq  \sup _{C_{\mathscr{M}}(H)} \int _G \| \wt{u}_{hgh^{-1}}\eta _h -\eta _h \wt{u}_g \| _2^2 \, d\bm{m} \geq 2\| \eta _h \| _2 ^2  - 2\inf _{C_{\mathscr{M}}(H)} \int |\langle \wt{u}_{hgh^{-1}}\eta _h , \eta _h \wt{u}_g \rangle |\, d\bm{m} = 2 \| \eta _h \| _2 ^2 .
\]
Popa's Transversality Lemma now shows $\| \alpha _t (\wt{u}_h) - \wt{u}_h \| _2 ^2 \leq 4\| \eta  _h \| _2 ^2 \leq \epsilon$. \qedhere[Claim \ref{claim:2}]
\end{proof}

By \cite{Pop07}, Claim \ref{claim:2} implies that $w$ untwists on $H$.
\end{proof}

\section{The AC-center, the inner radical, and linear groups}\label{sec:radicals}

\subsection{Proof of Theorem \ref{thm:I(G)}, parts i.\ through viii.}

\begin{proof}[Proof of Theorem \ref{thm:I(G)}, parts i.\ through viii.] We begin with the statements involving $\mathscr{AC}(G)$. It is clear that $\mathscr{AC}(G)$ and $\mathscr{I}(G)$ are characteristic subgroups of $G$. If $N_0$ and $N_1$ are normal subgroups of $G$ with both $G/C_G(N_0)$ and $G/C_G(N_1)$ amenable, then $G/C_G(N_0N_1) = G/(C_G(N_0)\cap C_G(N_1))$ is amenable. Therefore, $\mathscr{AC}(G)$ may be written as an increasing union $\mathscr{AC}(G)=\bigcup _{i\in \N} N_i$, with each $N_i$ normal in $G$ and $G/C_G(N_i)$ amenable. It follows that $G/C_G(\mathscr{AC}(G))$ is residually amenable since $C_G(\mathscr{AC}(G))= \bigcap _{i\in \N}C_G(N_i)$. Each of the groups $N_i$ is amenable since $N_i/Z(N_i)$ is isomorphic to a subgroup of the amenable group $G/C_G(N_i)$. This shows that $\mathscr{AC}(G)$ is amenable. Moreover, for each $i\in \N$, the action $N_i\rtimes G\cc N_i$ is amenable since it descends to an action of the amenable group $N_i\rtimes (G/C_G(N_i))$. If $\bm{m}_i$ is a $N_i\rtimes G$-invariant mean on $N_i$, then any accumulation point of $(\bm{m}_i)_{i\in \N}$ in the space of means on $G$ will be a mean witnessing that the action $\mathscr{AC}(G)\rtimes G \cc \mathscr{AC}(G)$ is amenable. It follows that $\mathscr{AC}(G) \leq \mathscr{I}(G)$.

To prove the remaining statements involving $\mathscr{I}(G)$ we will use the following lemma.

\begin{lemma}\label{lem:extendI(G)} Let $H$ and $K$ be normal subgroups of $G$.
\begin{enumerate}
\item Assume that $H\leq K$ and that the actions $H\rtimes G\cc H$ and $K/H \rtimes G/H \cc K/H$ are both amenable, with invariant means $\bm{m}_H$ and $\bm{m}_{K/H}$ respectively. Then the action $K\rtimes G \cc K$ is amenable with invariant mean $\bm{m}_K= \int _{kH\in K/H}k\bm{m}_H\, d\bm{m}_{K/H}$.
\item Assume that the actions $H\rtimes G\cc H$ and $K\rtimes G\cc K$ are both amenable, with invariant means $\bm{m}$ and $\bm{n}$ respectively. Then the action $HK\rtimes G\cc HK$ is amenable with invariant mean $\bm{m}\ast \bm{n}$.
\end{enumerate}
\end{lemma}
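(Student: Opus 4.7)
The plan is to verify both formulas by direct computation, in each case checking that the candidate is a well-defined mean supported on the relevant subgroup and that it is invariant under the required actions.

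For part (1), the first thing to verify is that the integrand $kH \mapsto k\bm{m}_H$ is well-defined on $K/H$: if $k' = kh$ with $h \in H$, then $k' \bm{m}_H = k(h \bm{m}_H) = k\bm{m}_H$ by left $H$-invariance of $\bm{m}_H$. That $\bm{m}_K$ concentrates on $K$ is automatic since each $k\bm{m}_H$ is supported on $kH \subseteq K$. Left $K$-invariance then splits into two pieces: for $h_0 \in H$ it reduces to the well-definedness above; for a representative $k_0 \in K$ of a nontrivial coset one writes
\[
k_0 \bm{m}_K = \int_{kH} (k_0 k)\bm{m}_H \, d\bm{m}_{K/H}(kH)
\]
and applies left $K/H$-invariance of $\bm{m}_{K/H}$. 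Conjugation invariance by $g \in G$ is analogous: conjugating inside the integral gives $g \bm{m}_K g^{-1} = \int (gkg^{-1})(g \bm{m}_H g^{-1}) \, d\bm{m}_{K/H}(kH)$, after which $G$-invariance of $\bm{m}_H$ cancels the inner conjugation, and $G/H$-invariance of $\bm{m}_{K/H}$ under the induced conjugation on $K/H$ (which makes sense because $H$ is normal in $G$) finishes the job.

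For part (2), my plan is to write the convolution explicitly as
\[
(\bm{m} \ast \bm{n})(f) = \int_{H} \int_{K} f(hk) \, d\bm{n}(k) \, d\bm{m}(h), \qquad f \in \ell^\infty(G),
\]
so that $\bm{m} \ast \bm{n}$ is automatically supported on $HK$. Left invariance by $h_0 \in H$ is immediate from left $H$-invariance of $\bm{m}$. For left invariance by $k_0 \in K$, the key identity is $k_0 h = h\,(h^{-1} k_0 h)$, and since $K$ is normal in $G$ the element $h^{-1} k_0 h$ lies in $K$, so left $K$-invariance of $\bm{n}$ absorbs it in the inner integral. Conjugation invariance by $g \in G$ is handled by expanding $g(hk) g^{-1} = (ghg^{-1})(gkg^{-1})$ and applying the respective $G$-invariances of $\bm{n}$ and then $\bm{m}$ in succession; normality of both $H$ and $K$ in $G$ is exactly what keeps the conjugates in the correct subgroups so that those invariance hypotheses apply.

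The whole argument is essentially Fubini plus careful bookkeeping with the translation and conjugation invariances. I do not anticipate any deep obstacle: the only real subtlety is keeping the well-definedness on the $K/H$ quotient straight in part (1), and making sure in part (2) that the right invariance of $\bm{m}$ or $\bm{n}$ is invoked at each step after using normality to rewrite mixed products $k_0 h$ or $g(hk)g^{-1}$.
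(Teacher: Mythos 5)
Your proposal is correct and follows essentially the same route as the paper: the paper's proof of (1) consists precisely of the well-definedness observation (via left $H$-invariance of $\bm{m}_H$) followed by the invariance verification it calls straightforward, which you have written out in full, and for (2) the paper explicitly allows direct verification, which is what you do. Your computations — including the use of normality to rewrite $h_0k$, $k_0h$, and $g(hk)g^{-1}$ before invoking the appropriate invariances — are all sound.
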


\begin{proof}[Proof of Lemma \ref{lem:extendI(G)}] Since $\bm{m}_H$ is invariant under left translation by $H$, for each $g\in G$ the mean $g\bm{m}_H$ only depends on the coset $gH\in G/H$. The mean $\bm{m}_K$ is therefore well-defined and it is straightforward to verify that it is $K\rtimes G$-invariant. This shows (1), and (2) can either be deduced from (1) or verified directly. \qedhere[Lemma \ref{lem:extendI(G)}]
\end{proof}
It follows from Lemma \ref{lem:extendI(G)}.(2) that $\mathscr{I}(G)$ may be written as an increasing union $\mathscr{I}(G)=\bigcup _{i\in \N}M_i$, where each $M_i$ is normal in $G$ and the action $M_i\rtimes G \cc M_i$ is amenable. If $\bm{m}_i$ is an invariant mean for the action $M_i\rtimes G\cc M_i$, then any accumulation point $\bm{m}$ of $(\bm{m}_i)_{i\in \N}$ will be an invariant mean for the action $\mathscr{I}(G)\rtimes G \cc \mathscr{I}(G)$. In particular, $\bm{m}$ witnesses that $\mathscr{I}(G)$ is amenable, and if $\mathscr{I}(G)$ is infinite then $\bm{m}$ also witnesses that $G$ is inner amenable. The proof of i.\ through v.\ is now complete.

vi. Let $\pi : G\ra G/N$ denote the projection map. Then the image under $\pi$ of an $\mathscr{I}(G)\rtimes G$-invariant mean on $\mathscr{I}(G)$ is an $\mathscr{I}(G)/N \rtimes G/N$-invariant mean on $\mathscr{I}(G)/N$. This shows that $\mathscr{I}(G)/N \leq \mathscr{I}(G/N )$. The reverse containment then follows by applying part (1) of Lemma \ref{lem:extendI(G)} to the groups $H=\mathscr{I}(G)$ and $K= \pi ^{-1}(\mathscr{I}(G/N ) )$.

vii. Part vi.\ implies that $\mathscr{I}(G/\mathscr{I}(G))=1$, and this in turn implies that $G/\mathscr{I}(G)$ is ICC since every finite conjugacy class in $G/\mathscr{I}(G)$ is contained in $\mathscr{AC}(G/\mathscr{I}(G))\leq\mathscr{I}(G/\mathscr{I}(G))=1$.

viii. This in fact holds more generally with $\mathscr{I}(G)$ replaced by any normal subgroup $N$ of $G$ for which $N\rtimes G\cc N$ is amenable. To see this, fix an invariant mean $\bm{n}$ for the action $N\rtimes G\cc N$. As in Lemma \ref{lem:extendI(G)}, we obtain a well-defined map
\[
\bm{m}\mapsto \bm{m}\ast\bm{n} = \int _{gN\in G/N}g\bm{n}\, d\bm{m}(gN)
\]
taking means on $G/N$ to means on $G$. This map is a section for the projection map on means, and since $\bm{n}$ is invariant under conjugation by $G$, this map takes conjugation invariant means on $G/N$ to conjugation invariant means on $G$.
\end{proof}

\subsection{Proof of Theorem \ref{thm:I(G)}, parts ix.\ through xiv.}

The second half of Theorem \ref{thm:I(G)} will be deduced from the following spectacular theorem of S.G.\ Dani from \cite{Da85}, which appears to have been overlooked since its publication in 1985. In what follows, if $G\cc X$ is an action of a group $G$ then for $A\subseteq X$ let $\mathrm{stab}_G(A)$ denote the pointwise stabilizer of $A$ in $G$, and for $D\subseteq G$ let $\mathrm{fix}_X(D)$ denote the set of points in $X$ which are fixed by every element of $D$.

\begin{theorem}[Theorem 1.1 of \cite{Da85}]\label{thm:Dani}
Let $G\cc X$ be an amenable action of a group $G$ on a set $X$ and let $\bm{m}$ be a $G$-invariant mean on $X$. Suppose that the action satisfies the following two conditions:
\begin{enumerate}
\item[(1)] For every subset $A\subseteq X$ there exists a finite $A_0\subseteq A$ such that $\mathrm{stab}_G(A)=\mathrm{stab}_G(A_0)$.
\item[(2)] For every subset $D\subseteq G$ there exists a finite $D_0\subseteq D$ such that $\mathrm{fix}_X(D)=\mathrm{fix}_X(D_0)$.
\end{enumerate}
Then there exists a normal subgroup $N$ of $G$ such that $G/N$ is amenable and $\bm{m}(\mathrm{fix}_X(N)) =1$.
\end{theorem}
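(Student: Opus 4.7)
The plan is to construct the desired normal subgroup $N$ as the pointwise stabilizer of a carefully chosen $\bm{m}$-conull subset $Y \subseteq X$. Once such $Y$ and $N = \mathrm{stab}_G(Y)$ are in hand, the inclusion $Y \subseteq \mathrm{fix}_X(N)$ immediately gives $\bm{m}(\mathrm{fix}_X(N)) = 1$, so the entire task reduces to arranging that $N$ is normal in $G$ and that $G/N$ is amenable. Conditions (1) and (2) will be used respectively to parametrize the candidate stabilizers by finite data and to bound chains when invoking Zorn's lemma.

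First I would apply Zorn's lemma to the family $\mathscr{H} = \{\mathrm{stab}_G(A) : A \subseteq X \text{ is } \bm{m}\text{-conull}\}$, ordered by inclusion. Given a chain $(H_\alpha)$ with $H_\alpha = \mathrm{stab}_G(A_\alpha)$, form $H = \bigcup_\alpha H_\alpha$; condition (2) provides a finite $H_0 \subseteq H$ with $\mathrm{fix}_X(H) = \mathrm{fix}_X(H_0)$, and $H_0$ lies inside some $H_{\alpha_0}$, whence $A_{\alpha_0} \subseteq \mathrm{fix}_X(H_0) = \mathrm{fix}_X(H)$, so $\mathrm{fix}_X(H)$ is $\bm{m}$-conull and $\mathrm{stab}_G(\mathrm{fix}_X(H)) \in \mathscr{H}$ dominates every $H_\alpha$. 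Let $N \in \mathscr{H}$ be maximal and $Y$ the corresponding conull set. To verify normality, note that for each $g \in G$ the translate $gY$ is conull by $G$-invariance of $\bm{m}$ and satisfies $\mathrm{stab}_G(gY) = gNg^{-1}$; hence $Y \cap gY$ is conull with pointwise stabilizer containing both $N$ and $gNg^{-1}$, and in particular containing $N$. Maximality forces $\mathrm{stab}_G(Y \cap gY) = N$, so $gNg^{-1} \subseteq N$; applying the same argument to $g^{-1}$ shows $N = gNg^{-1}$ for every $g$, i.e., $N$ is normal in $G$.

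It remains to establish amenability of $G/N$. Since $N$ fixes $Y$ pointwise, the $G$-action on $Y$ descends to a $G/N$-action and (the rescaled restriction of) $\bm{m}$ is a $(G/N)$-invariant mean on $Y$, showing that $G/N \cc Y$ is amenable. To upgrade this to amenability of the group $G/N$, one would like to locate $x_0 \in Y$ with $G_{x_0} = N$, so that the orbit map $gN \mapsto gx_0$ realizes $G/N$ as a $G$-equivariant subset of $X$ of positive $\bm{m}$-mass, and pushing $\bm{m}$ forward produces the desired invariant mean on $G/N$. Here the maximality of $N$ is crucial: if every $x \in Y$ satisfied $G_x \supsetneq N$, then combining condition (1) with a transfinite construction (iteratively adjoining a stabilizer element outside $N$ and passing to the conull intersection) would enlarge $N$ to a strictly bigger stabilizer of a conull set, contradicting the Zorn choice. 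The main obstacle is precisely this final step --- making the maximality argument deliver amenability of the \emph{quotient group} $G/N$ rather than merely of the action $G/N \cc Y$, and in particular ensuring that one can extract an equivariant copy of $G/N$ inside $X$ of positive $\bm{m}$-measure (or, failing a single such orbit, a $G$-equivariant map $X \to G/N$ on a conull set). The interplay of (1) and (2) is indispensable here: (1) lets one convert a hypothetical lack of free orbits into an enlargement of $N$, while (2) controls the limits that arise.
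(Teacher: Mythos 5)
Your first two steps are correct: the Zorn argument (condition (2) is exactly what makes chains harmless), the identification $Y=\mathrm{fix}_X(N)$, and the proof of normality via translates of conull sets all work. (For the record, the paper does not prove this theorem; it quotes it from Dani's paper \cite{Da85}, so the comparison below is with what a complete argument needs rather than with a proof printed in the paper.) The genuine gap is the last step, and neither of your proposed repairs can close it. There need not exist any $x_0\in Y$ with $G_{x_0}=N$: let $G=\Z /2\times \Z /2$ act on the six-point set $X=G/\langle a\rangle \sqcup G/\langle b\rangle \sqcup G/\langle ab\rangle$ with the uniform mean, where $a,b,ab$ are the involutions of $G$. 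All hypotheses hold and the maximal stabilizer of a conull set is $N=\mathrm{stab}_G(X)=1$, yet every point has stabilizer of order two. So the statement ``every $x\in Y$ has $G_x\supsetneq N$'' is consistent with all the hypotheses, and the contradiction you hope to derive from it via condition (1) and a transfinite construction does not exist. The same example kills the fallback plan: a $G$-map from a conull set into $G/N$ forces almost every stabilizer to equal $N$ exactly (since $N\leq G_x$ for $x\in Y$, while every point stabilizer of $G/N$ equals $N$ by normality), which fails here; and when $G/N$ is nonamenable --- the very case you must exclude --- Lemma \ref{lem:null} shows that almost every $G_x/N$ is nonamenable, so such a map never exists in the case of interest. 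Finally, as you yourself note, even when a point with stabilizer exactly $N$ exists, its orbit can be $\bm{m}$-null. What you have actually proved is amenability of the \emph{action} $G/N\cc Y$, which is strictly weaker than the conclusion, and condition (1) has done no real work yet.

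Here is a correct way to finish, and it is where condition (1) earns its keep. By maximality of $N$, every conull $A\subseteq Y$ satisfies $\mathrm{stab}_G(A)=N$: indeed $\mathrm{stab}_G(A)\in \mathscr{H}$ and $\mathrm{stab}_G(A)\supseteq N$ because $A\subseteq \mathrm{fix}_X(N)$. Now suppose $G/N$ is nonamenable and recursively produce points $x_1,x_2,\dots \in Y$, writing $H_n=\mathrm{stab}_G(\{x_1,\dots ,x_n\})$ and $H_0=G$, so that each $H_n/N$ is nonamenable. Given such $H_n$, the group $H_n/N$ acts on $Y$ (as $N$ acts trivially there) with invariant mean $\bm{m}$, so Lemma \ref{lem:null} gives that $\{x\in Y \csuchthat (H_n\cap G_x)/N \text{ is nonamenable}\}$ is conull. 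On the other hand, $\mathrm{fix}_Y(H_n)$ is \emph{not} conull, because its pointwise stabilizer contains $H_n\supsetneq N$ while conull sets have stabilizer exactly $N$. Hence the set of $x$ with $(H_n\cap G_x)/N$ nonamenable and $H_n\not\leq G_x$ has positive $\bm{m}$-measure; choose $x_{n+1}$ in it, so that $H_{n+1}\lneq H_n$ and $H_{n+1}/N$ is nonamenable. This yields an infinite strictly decreasing chain $H_1\supsetneq H_2\supsetneq \cdots$ of stabilizers of finite subsets of $X$. Now apply condition (1) to $A=\{x_1,x_2,\dots \}$: it gives a finite $A_0\subseteq \{x_1,\dots ,x_m\}$ for some $m$ with $\mathrm{stab}_G(A)=\mathrm{stab}_G(A_0)\supseteq H_m\supsetneq H_{m+1}\supseteq \mathrm{stab}_G(A)$, a contradiction. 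Hence $G/N$ is amenable.
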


\begin{proof}[Proof of Theorem \ref{thm:I(G)}, parts ix.\ through xiv.]
Assume that $G$ is linear. We first show that any conjugation invariant mean $\bm{m}$ on $G$ must concentrate on a normal subgroup $M$ of $G$ such that $G/C_G(M)$ is amenable. Consider the conjugation action $G\cc G$. For $A, D\subseteq G$ we have $\mathrm{stab}_G(A) = C_G(A)$, and $\mathrm{fix}_G(D) = C_G(D)$. Conditions (1) and (2) of Theorem \ref{thm:Dani} are therefore satisfied since $G$ satisfies the minimal condition on centralizers (see Remark \ref{rem:minC}). We conclude that there exists a normal subgroup $N$ of $G$ such that $G/N$ is amenable, and $\bm{m}(C_G(N))=1$. Take $M=C_G(N)$. Then $N\leq C_G(M)$, so $G/C_G(M)$ is amenable, as was to be shown. This also shows that xiii.\ holds.

ix.\ and x. By part iii., we may find an invariant mean $\bm{m}$ for the action $\mathscr{I}(G)\rtimes G\cc \mathscr{I}(G)$. Since $\bm{m}$ is conjugation invariant there exists a normal subgroup $M$ of $G$ with $G/C_G(M)$ amenable and $\bm{m}(M)=1$. Then $M\leq \mathscr{AC}(G)\leq \mathscr{I}(G)$ and, since $\bm{m}$ is invariant under left translation by $\mathscr{I}(G)$, we have equality $M=\mathscr{I}(G)$.

xi. Parts x.\ and ii.\ show that $\mathscr{I}(G)\leq C_G(C_G(\mathscr{I}(G)))\leq \mathscr{AC}(G)\leq \mathscr{I}(G)$.

xii. This follows from viii.\ and xiii.

xiv. From vi., we have $C_{G/N}(\mathscr{I}(G/N )) = C_{G/N}(\mathscr{I}(G)/N ) \geq C_G(\mathscr{I}(G))N /N$, hence $(G/N )/C_{G/N}(\mathscr{I}(G/N) )$ is amenable, and $\mathscr{I}(G/N)\leq \mathscr{AC}(G/N)$. Part ii.\ gives the reverse inclusion. It follows as in xi.\ above that $\mathscr{I}(G/N )$ coincides with its double centralizer. The group $(G/N)/\mathscr{I}(G/N) =(G/N)/(\mathscr{I}(G)/N) \cong G/\mathscr{I}(G)$ is not inner amenable by xii. Now let $\bm{m}_0$ be a conjugation invariant mean on $G/N$, and let $\bm{m}_1$ denote the projection of $\bm{m}_0$ to $G/\mathscr{I}(G)$. Then $\bm{m}_1$ is a conjugation invariant mean on $G/\mathscr{I}(G)$, so by viii.,\ $\bm{m}_1$ is the projection of some conjugation invariant mean $\bm{m}$ on $G$. By xiii.,\ $\bm{m}$ concentrates on $\mathscr{I}(G)$, hence $\bm{m}_1$ is the point mass at the identity, and therefore $\bm{m}_0$ concentrates on $\mathscr{I}(G)/N = \mathscr{I}(G/N)$.
\end{proof}

\subsection{Proof of Theorems \ref{thm:linear} and \ref{thm:Schmidt}}\label{sec:linear}

\begin{proof}[Proof of Theorem \ref{thm:linear}] The implication (1)$\Ra$(2) follows from Theorem \ref{thm:I(G)}.xiii., and (2)$\Ra$(1) is Theorem \ref{thm:I(G)}.v. Assume now that (2) holds and let $N=C_G(\mathscr{I}(G))\mathscr{I}(G)$. Then $G/N$ is amenable by Theorem \ref{thm:I(G)}.x., and $Z(N)=C_G(\mathscr{I}(G))\cap \mathscr{I}(G)$ by Theorem \ref{thm:I(G)}.xi., so (3) follows. If (3) holds then in the first alternative $Z(N)$ infinite and $Z(N)\leq \mathscr{AC}(G)\leq \mathscr{I}(G)$, and in the second alternative $M$ is infinite and $M\leq \mathscr{AC}(G)\leq \mathscr{I}(G)$, so (2) holds either way.
\end{proof}

\begin{proof}[Proof of Theorem \ref{thm:Schmidt}]
Assume that $G$ is inner amenable and we will construct the desired action of $G$. This is straightforward if $G$ is stable, so we may assume that $G$ is inner amenable, but not stable. Then the group $N=C_G(\mathscr{I}(G))$ has infinite center $C$ (see Remark \ref{rem:final}), and by Theorem \ref{thm:I(G)} the group $K= G/N$ is amenable. Since $C$ is a countable abelian group, it possesses a free p.m.p.\ action $C\cc (Y,\nu )$ which is compact (for example, using a countable dense subset of $\widehat{C}$, inject $C$ as a subgroup of $\T ^\N$ and let $C$ act by translation on $\T ^\N$ equipped with Haar measure). Let $G\cc (X,\mu ) = (Y, \nu ) ^{G/C}$ be the coinduced action. This is a free weakly mixing action of $G$, and the restriction of this action to $C$ is an infinite diagonal product of compact actions of $C$, hence is itself a compact action. It follows that there exists a sequence $(c_n)_{n\in \N}$ in $C - 1$ which converges to the identity automorphism in the group $\mathrm{Aut}(X,\mu )$ equipped with the weak topology. The sequence $(c_n)_{n\in \N}$ is then asymptotically central in $[\mc{R}^N_X]$, and since $C$ acts freely, the sequence $(c_n)_{n\in \N}$ witnesses that the outer automorphism group of $[\mc{R}^N_X]$ is not Polish. Let $K\cc (Z,\eta )$ be a free ergodic action of $K$, and let $G\cc (X,\mu ) \otimes (Z ,\eta )$ be the diagonal product action where $G$ acts on $(Z,\eta )$ via the quotient map to $K$. This action of $G$ is free and ergodic, and as observed in Remark \ref{rem:overview} below, the construction in the proof of Theorem \ref{thm:extension} below yields a sequence $(T_n)_{n\in \N}$ witnessing that the outer automorphism group of $[\mc{R}^G_{X\times Z}]$ is not Polish.

For the converse, which holds even without the assumption that $G$ is linear, see \cite{JS87}.
\end{proof}

\section{Stability}\label{sec:stability}

\subsection{Kida's stability criterion}\label{sec:Kida}

In this section we employ the notation from \S\ref{sec:groupoid}. Let $(\mc{G},\mu )$ be a discrete p.m.p.\ groupoid and let $[\mc{G}]$ denote the {\bf full group} of $\mc{G}$, i.e., the collection of all local sections with domain equal to all of $\mc{G}^0$.

\begin{definition}\label{def:stabseq}
A sequence $(T_n)_{n\in \N}$ in $[\mc{G}]$ is said to be {\bf asymptotically central} if
\begin{enumerate}
\item[(i)] $\mu ( T_n ^0 A\triangle A) \ra 0$ for all measurable $A\subseteq \mc{G}^0$;
\item[(ii)] $\mu ( \{ x\in \mc{G}^0 \csuchthat (T_n\circ S)x = (S\circ T_n)x \} ) \ra 1$ for all $S\in [\mc{G}]$.
\end{enumerate}
A sequence $(T_n)_{n\in \N}$ in $[\mc{G}]$ is called a {\bf stability sequence} for $\mc{G}$ if it is asymptotically central and if furthermore there exists a sequence $(A_n)_{n\in \N}$ of measurable subsets of $\mc{G}^0$ such that
\begin{enumerate}
\item[(iii)] $(A_n)_{n\in \N}$ is asymptotically invariant for $\mc{G}$, i.e., $\mu (S^0A_n \triangle A_n ) \ra 0$ for all $S\in [\mc{G}]$;
\item[(iv)] $\mu (T_n^0A_n \triangle A_n ) \not\ra 0$.
\end{enumerate}
\end{definition}

\begin{remark}\label{rem:checkG}
Suppose that $\mc{G}=G\ltimes (X,\mu )$ is the translation groupoid associated to a p.m.p.\ action $G\cc (X,\mu )$ of a countable group $G$. We view each $T\in [\mc{G}]$ as a map from $X$ to $G$, so that $T^0 (x) = T(x)\cdot x$ for $x\in X$. We will make use of the observation \cite[\S 3.1]{Ki13a} that in this situation, a sequence $(T_n)_{n\in \N}$ in $[\mc{G}]$ is asymptotically central if and only if it satisfies (i) along with
\begin{enumerate}
\item[(ii${}'$)] $\mu ( \{ x\in X \csuchthat T_n(g\cdot x) = g T_n(x)g^{-1} \} ) \ra 1$ for all $g\in G$.
\end{enumerate}
Likewise, a sequence $(A_n)_{n\in \N}$ of measurable subsets of $X$ is asymptotically invariant for $\mc{G}$ if and only if $\mu (g\cdot A_n \triangle A_n ) \ra 0$ for all $g\in G$.
\end{remark}

The following theorem, due to Kida \cite{Ki13a}, provides an important criterion for demonstrating stability of a group.

\begin{theorem}[Theorem 1.4 of \cite{Ki13a}]\label{thm:criterion}
Let $G$ be a countable group and suppose that there exists a p.m.p.\ action $G\cc (X,\mu )$ of $G$ whose associated translation groupoid $G\ltimes (X,\mu )$ admits a stability sequence. Then $G$ is stable.
\end{theorem}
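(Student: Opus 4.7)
The plan is to deduce stability from the Jones--Schmidt characterization: a free ergodic countable p.m.p.\ equivalence relation $\mc{R}$ is stable if and only if $[\mc{R}]$ admits an asymptotically central sequence $(T_n)$ together with an asymptotically $\mc{R}$-invariant sequence of measurable sets $(A_n)$ for which $\liminf_n \mu(T_n^0 A_n \triangle A_n) > 0$. So starting from the given stability sequence for $G \ltimes (X,\mu)$, the task is to produce such data inside the orbit equivalence relation of a \emph{free ergodic} p.m.p.\ action of $G$.

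First I would upgrade $G \cc (X,\mu)$ to a free ergodic action without destroying the stability sequence. Let $G \cc (X_0,\mu_0)$ be the Bernoulli shift, which is free and weakly mixing, and let $(Y,\nu) := (X,\mu) \times (X_0,\mu_0)$ carry the diagonal $G$-action; this action is free and ergodic, and the first-coordinate projection $\pi : Y \to X$ is a $G$-equivariant factor map. Viewing local sections of the translation groupoid as measurable maps into $G$, define the lifted sequences $\widetilde{T}_n(y) := T_n(\pi(y))$ in $[G \ltimes (Y,\nu)]$ and $\widetilde{A}_n := \pi^{-1}(A_n)$. Conditions (i), (iii), and (iv) of Definition \ref{def:stabseq} transfer to $(\widetilde{T}_n, \widetilde{A}_n)$ directly from their counterparts on $(X,\mu)$ by the measure-preserving property of $\pi$.

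The core step is verifying asymptotic centrality (condition (ii)) of $(\widetilde{T}_n)$ in $[\mc{R}^G_Y]$, which equals $[G \ltimes (Y,\nu)]$ since the action on $Y$ is free. Using Remark \ref{rem:checkG}, the commutation $\widetilde{T}_n(g \cdot y) = g\widetilde{T}_n(y)g^{-1}$ for each fixed $g \in G$ lifts immediately from the same property of $T_n$ on $X$. To promote this to commutation with an arbitrary $S \in [\mc{R}^G_Y]$, I would decompose $Y = \bigsqcup_{g \in G} Y_g$ so that $S$ acts by the single group element $g$ on $Y_g$, truncate the sum to a finite set $F \subseteq G$ with $\nu(\bigsqcup_{g \notin F} Y_g) < \epsilon$, and then combine the finitely many group-by-group commutation estimates via a standard $\epsilon/3$ argument into $\nu(\{y : (\widetilde{T}_n \circ S)(y) = (S \circ \widetilde{T}_n)(y)\}) \to 1$.

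The expected main obstacle is precisely this passage from group-by-group commutation to commutation with arbitrary full-group elements, which requires uniform control on the measures of the exceptional sets across the countable partition; the difficulty is mild because each fixed exceptional set has measure tending to $0$, so truncating to finitely many $g$ suffices. Once this is achieved, $(\widetilde{T}_n, \widetilde{A}_n)$ is a stability sequence for the free ergodic equivalence relation $\mc{R}^G_Y$, so $\mc{R}^G_Y$ is stable by Jones--Schmidt, and hence $G$ is stable by definition.
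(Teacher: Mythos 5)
First, a point of orientation: this paper does not prove the statement at all --- it is quoted verbatim from Kida (Theorem 1.4 of \cite{Ki13a}) and used as a black box --- and the entire difficulty of that theorem is concentrated in the step your proposal dismisses in one sentence. The fatal gap is your claim that condition (i) of Definition \ref{def:stabseq} ``transfers directly'' to the lifted sequence $\widetilde{T}_n = T_n\circ\pi$ on $(Y,\nu)=(X,\mu)\times(X_0,\mu_0)$. It does not; it necessarily fails. Note first that in any stability sequence the maps $T_n$ must escape every finite set: if $F\subseteq G$ is finite, then up to the set $\{x : T_n(x)\notin F\}$ and its image under $T_n^0$, the set $T_n^0A_n\,\triangle\,A_n$ is contained in $\bigcup_{g\in F}(gA_n\,\triangle\,A_n)$, so if $\mu(\{x: T_n(x)\notin F\})\to 0$ then condition (iii) would force $\mu(T_n^0A_n\,\triangle\,A_n)\to 0$, contradicting (iv); hence along a subsequence realizing (iv) with constant $c>0$ one has $\liminf_n\mu(\{x : T_n(x)\notin F\})\geq c/2$ for every finite $F$. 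Now take $B=X\times A_0$ with $0<\mu_0(A_0)<1$. Since $\widetilde{T}_n^0(x,x_0)=(T_n(x)\cdot x,\,T_n(x)\cdot x_0)$, a change of variables gives $\nu(\widetilde{T}_n^0B\cap B)=\int_X\mu_0(A_0\cap T_n(x)\cdot A_0)\,d\mu(x)$; since the Bernoulli shift is mixing, $\mu_0(A_0\cap g\cdot A_0)$ is close to $\mu_0(A_0)^2$ for all $g$ outside a finite set, and combining this with the escape property above shows that $\nu(\widetilde{T}_n^0B\,\triangle\,B)$ stays bounded below by (essentially) $c\,\mu_0(A_0)(1-\mu_0(A_0))>0$ along that subsequence. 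So $(\widetilde{T}_n)$ is never asymptotically central for $G\ltimes(Y,\nu)$: the freeness gained from the Bernoulli factor and condition (i) pull in opposite directions, and reconciling them is precisely the content of Kida's theorem. (Compare the proof of Theorem \ref{thm:extension} in this paper, where a lift of this kind succeeds only because the second factor is acted on through an amenable quotient $K$, which permits conjugating the $S_n$ by section elements $\sigma(\varphi_n(z))$ attached to almost-equivariant maps $\varphi_n:Z\ra K$; no such device exists when the second factor is a Bernoulli shift of $G$ itself.)

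Two smaller points. The step you flagged as the expected main obstacle --- upgrading commutation with group elements to commutation with arbitrary $S\in[\mc{R}^G_Y]$ --- is not an obstacle at all: that equivalence is exactly Remark \ref{rem:checkG}. And your ergodicity claim is also unjustified: the hypothesis does not say that $G\cc(X,\mu)$ is ergodic, and if it is not, then neither is the diagonal action on $X\times X_0$ (a nontrivial $G$-invariant $A\subseteq X$ yields the invariant set $A\times X_0$), so the Jones--Schmidt characterization, which is a statement about ergodic relations, cannot be invoked; repairing this would require a separate ergodic-decomposition argument on top of fixing the main gap.
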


\subsection{Proof of Theorem \ref{thm:extension}}

\begin{proof}[Proof of Theorem \ref{thm:extension}]
Let $G\cc (X,\mu )$ be a p.m.p.\ action of $G$ such that $N\ltimes (X,\mu )$ admits a stability sequence. Let $K\cc (Z,\eta )$ be a free p.m.p.\ action of $K$ and let $G\cc (X,\mu ) \otimes (Z,\eta )$ be the diagonal product action, where $G$ acts on the second coordinate via the quotient map to $K$. In what follows we will often identify an element of $G$ with its image in $K$.

By Theorem \ref{thm:criterion}, it suffices to show that the translation groupoid $G\ltimes (X,\mu )\otimes (Z,\eta )$ admits a stability sequence. We will construct such a sequence $(T_n)_{n\in \N}$ which is moreover contained in $[N\ltimes (X,\mu )\otimes (Z,\eta )]$. Let $F_0\subseteq F_1\subseteq \cdots$ be an exhaustion of $G$ by finite subsets. By Theorem 3.1 of \cite{BT-D11}, since $K$ is amenable, for each $n\geq 0$ we may find a measurable function $\varphi _n : Z\ra K$ such that $\eta (C_n)>1-2^{-n}$, where $C_n = \{ z\in Z\csuchthat (\forall g\in F_n ) \ \varphi _n(g\cdot z) = g\varphi _n(z) \}$. Let $1_K\in Q_0\subseteq Q_1\subseteq \cdots$ be an exhaustion of $K$ by finite subsets such that for each $n\geq 0$ we have $\eta (D_n)>1-2^{-n}$, where $D_n = \{ z\in Z\csuchthat \varphi _n (z)\in Q_n \}$. Let $Z_n = \bigcap _{m\geq n}C_m\cap D_m$, so that $Z_0\subseteq Z_1\subseteq \cdots$, and $\eta (\bigcup _n Z_n) = 1$. After ignoring a null set we may assume that $\bigcup _n Z_n = Z$. Fix a section $\sigma : K \ra G$ for the map $G\ra K$ with $\sigma (1_K)=1_G$, and let $\rho : G\times K \ra N$ be the associated Schreier cocycle $\rho (g,k)= \sigma (gk)^{-1}g\sigma (k) \in N$.

By assumption, the groupoid $N\rtimes (X,\mu )$ admits a stability sequence $(S_i)_{i\in \N}$. After replacing $(S_i)_{i\in \N}$ by a subsequence if necessary we may assume that there exists an asymptotically invariant sequence $(B_i)_{i\in \N}$ for $N\rtimes (X,\mu )$ such that $\lim _i \mu (S_i^0B_i\triangle B_i ) >0$. Fix a sequence $\mc{B}_0\subseteq \mc{B}_1\subseteq \cdots$ of finite algebras of measurable subsets of $X$ whose union generates the measure algebra of $X$. By moving to a subsequence $(S_{i_n})_{n\in \N}$ and $(B_{i_n})_{n\in \N}$, which we will call $(S_n)_{n\in \N}$ and $(B_n)_{n\in \N}$ respectively, we may ensure that
\begin{enumerate}
\item[{\bf (C1)}] $\mu (S_n^0(\sigma (k)^{-1}\cdot A)\, \triangle \, \sigma (k)^{-1}\cdot A ) < 1/n$ for all $k\in Q_n$ and $A\in \mc{B}_n$.

\item[{\bf (C2)}] $\mu (W_n)>1-2^{-n}$, where
\[
W_n = \{ x\csuchthat S_n(\rho (g,k)\cdot (\sigma (k)^{-1}\cdot x))= \rho (g,k)S_n(\sigma (k)^{-1}\cdot x)\rho (g,k)^{-1} \text{ for all }g\in F_n, \ k \in Q_n \} .
\]

\item[{\bf (C3)}] $\mu (\rho (g,k)^{-1} B_n \triangle B_n ) < 1/n$ for all $g\in F_n$, $k\in Q_n$.
\end{enumerate}
Let $X_n = \bigcap _{m\geq n}W_m$, so that $X_0\subseteq X_1\subseteq \cdots$, and $\mu (\bigcup _n X_n) = 1$. After ignoring a null set we may assume that $\bigcup _n X_n = X$. For each $n\in \N$ and $(x,z)\in X\times Z$ define
\[
T_n(x,z) = \sigma (\varphi _n (z))S_n(\sigma (\varphi _n (z))^{-1}\cdot x ) \sigma (\varphi _n (z))^{-1}  \in N .
\]
Then $T_n^0$ is an automorphism of $(X ,\mu )\otimes (Z , \eta )$, since for each fiber $(X_z,\mu _z ) := (X , \mu )\otimes ( \{ z \} , \updelta _z )$,  the restriction of $T_n^0$ to $X_z$ is an automorphism $T_{n,z}^0 : (X_z,\mu _z)\ra (X_z,\mu _z )$. We now verify that $(T_n)_{n\in \N}$ satisfies properties (i)-(iv) of Definition \ref{def:stabseq} with respect to the groupoid $\mc{G}:=G\ltimes (X,\mu )\otimes (Z,\eta )$.

(i): It suffices to prove (i) for rectangles $D=A\times C$, with $A\in \bigcup _n \mc{B}_n$. For $z\in Z\setminus C$ we have $\mu _z(T_{n,z}^0(A\times C)_z\triangle (A\times C)_z ) =0$ for all $n$. For $z\in C$, if $n$ is large enough then $A\in \mc{B}_n$ and $z\in Z_n$, so $\varphi _n (z)\in Q_n$, hence
\begin{align*}
\mu _z (T_{n,z}^0(A\times C)_z\triangle (A\times C)_z) &= \mu (\sigma (\varphi _n (z))\cdot S^0_n(\sigma (\varphi _n (z))^{-1}\cdot A )\triangle A) \\
&= \mu (S^0_n(\sigma (\varphi _n (z))^{-1}\cdot A)\triangle \sigma (\varphi _n(z))^{-1}\cdot A ) < 1/n
\end{align*}
by {\bf (C1)}. Therefore, $(\mu \otimes \eta )(T_n^0(A\times C)\triangle (A\times C)) = \int_Z\mu _z(T_{n,z}^0(A\times C)_z \triangle (A\times C)_z ) \, d\eta \ra 0$ as $n\ra \infty$.

(ii${}'$): Fix $g\in G$. For all large enough $n\in \N$ we have $g \in F_n$, so if $(x,z)\in X_n\times Z_n$ then $\varphi _n(g\cdot z) = g\cdot \varphi _n(z)$ and $\varphi _n (z)\in Q_n$, hence by {\bf (C2)}, $\rho (g,\varphi _n (z))^{-1}S_n(\rho (g,\varphi _n (z))\cdot \sigma (\varphi _n (z))^{-1}\cdot x)\rho (g,\varphi _n (z)) = S_n(\sigma (\varphi _n (z))^{-1}\cdot x)$. Therefore, for all large enough $n$, if $(x,z)\in X_n\times Z_n$ then we have
\begin{align*}
T_n (g\cdot &(x, z)) =T_n(g\cdot x, g\cdot z ) \\
&= \sigma (\varphi _n (g\cdot z))S_n(\sigma (\varphi _n (g\cdot z))^{-1}\cdot (g\cdot x) ) \sigma (\varphi _n (g\cdot z))^{-1} \\
&= \sigma (g\cdot \varphi _n (z))S_n (\sigma (g \cdot \varphi _n (z))^{-1}\cdot (g\cdot x ))\sigma (g\cdot \varphi _n (z))^{-1} \\
&= g\sigma (\varphi _n (z))\rho (g,\varphi _n (z))^{-1}S_n(\rho (g,\varphi _n (z))\sigma (\varphi _n (z))^{-1}\cdot x )\rho (g,\varphi _n (z))\sigma (\varphi _n (z))^{-1}g^{-1} \\
&= g\sigma (\varphi _n (z))S_n(\sigma (\varphi _n (z))^{-1}\cdot x )\sigma (\varphi _n (z))^{-1}g^{-1} \\
&= gT_n (x,z)g^{-1},
\end{align*}
and since $X\times Z = \bigcup _n (X_n\times Z_n )$ the proof of (ii) is complete.

For each $n\in \N$ define the set $A_n = \{ (x,z)\in X\times Z\csuchthat x \in \sigma (\varphi _n (z)) \cdot B_n \}$. We will verify (iii) and (iv) using the sequence $(A_n)_{n\in \N}$.

(iii): Fix $g\in G$. For all large enough $n\in \N$ we have $g\in F_n$, so for $(x,z)\in X\times Z_n$ we have $(x,z)\in g^{-1}\cdot A_n \IFF x\in \sigma (\varphi _n (z))\cdot \rho (g,\varphi _n (z))^{-1}\cdot B_n$. Hence, by {\bf (C3)},
\[
(\mu \otimes \eta )(g^{-1}\cdot A_n \triangle A_n ) \leq \eta (Z\setminus Z_n) + \int _Z \mu (\rho (g,\varphi _n (z))^{-1}\cdot B_n \triangle B_n ) \, d\eta < \eta (Z\setminus Z_n) + 1/n \ra 0
\]
as $n\ra \infty$. This shows that $(A_n)_{n\in \N}$ is asymptotically invariant for $G\cc (X,\mu )\otimes (Z,\eta )$.

(iv): We have $T_n^0 A_n = \{ (x,z) \in X\times Z \csuchthat x\in \sigma (\varphi _n (z))\cdot S_n^0B_n ) \}$. It follows that $(\mu\otimes \eta )(T_n^0A_n\triangle A_n ) = \mu (S_n^0B_n\triangle B_n ) \not\ra 0$. This completes the proof.
\end{proof}

\begin{remark}\label{rem:overview}
Let $1\ra N \ra G \ra K \ra 1$ be a short exact sequence in which $K$ is amenable. Let $G\cc (X,\mu )$ be a p.m.p.\ action of $G$ and let $K\cc (Z,\eta )$ be a free p.m.p.\ action of $K$. Let $G\cc (X,\mu )\otimes (Z,\eta )$ be the diagonal product action where $G$ acts on $(Z,\eta )$ via the quotient map to $K$. The above proof constructs a map which takes asymptotically central sequences in $[N\ltimes (X,\mu )]$ to asymptotically central sequences in $[G\ltimes (X,\mu )\otimes (Z,\eta )]$, and which moreover takes stability sequences for $N\ltimes (X,\mu )$ to stability sequences for $G\ltimes (X,\mu )\otimes (Z,\eta )$. In addition, it follows from the construction that if $(S_n)_{n\in \N}$ is a sequence in $[N\ltimes (X,\mu ) ]$ witnessing that the outer automorphism group of $[\mc{R}^N_X]$ is not Polish, then the image of $(S_n)_{n\in \N}$ under this map will be a sequence witnessing that the outer automorphism group of $[\mc{R}^G_{X\times Z}]$ is not Polish (see \cite[Chapter I, \S 7]{Ke10}).
\end{remark}

\begin{remark}\label{rem:finindex}
A variation of the proof of Theorem \ref{thm:extension} shows that if a group $G$ contains a finite index subgroup $H$ which is stable, then $G$ is stable. Take a p.m.p.\ action $H\cc (X_0 ,\mu _0 )$ such that $H\ltimes (X_0,\mu _0)$ admits a stability sequence $(S_n)_{n\in \N}$, and let $(B_n)_{n\in \N}$ be a sequence of asymptotically invariant sets for the action with $\lim _n \mu _0 (S^0_nB_n\triangle B_n ) \neq 0$. Let $G\cc (X,\mu )$ be the induced action, i.e., $X=X_0\times G/H$, $\mu$ is the product of $\mu _0$ with normalized counting measure, and $g\cdot (x_0 , kH ) = (\rho (g,kH)\cdot x_0 , gkH )$ for $g\in G$, $x_0 \in X_0$, $kH \in G/H$, where $\rho (g,kH)=\sigma (gkH)^{-1}g\sigma (kH)\in H$ and $\sigma :G/H \ra G$ is a section for the projection map $G\ra G/H$ with $\sigma (1H)=1$. Define $T_n \in [G\ltimes (X,\mu )]$ by $T_n (x _0 ,kH ) = \sigma (kH)S_n (x_0) \sigma (kH)^{-1} \in G$, and define $A_n = B_n\times G/H$. Then, using the sequence $(A_n)_{n\in\N}$, an argument similar to the proof of Theorem \ref{thm:extension} shows that $(T_n)_{n\in \N}$ is a stability sequence for $G\ltimes (X,\mu )$.
\end{remark}

\subsection{Proof of Theorem \ref{thm:extension2}} Throughout this subsection we work under the assumption that $1\ra N\ra G \ra K \ra 1$ is a short exact sequence in which $K$ is amenable.

\begin{lemma}\label{lem:LM}
Suppose that $N=LM$, where $L$ and $M$ are commuting normal subgroups of $N$ such that $M$ is amenable and $[N:L]=\infty$. Let $N/M \cc (X,\mu )$ and $N/L\cc (Y,\nu )$ be free p.m.p.\ actions of $N/M$ and $N/L$ respectively. Let $N$ act on $(X,\mu )$ and $(Y,\nu )$ via the quotient maps to $N/M$ and $N/L$ respectively, and let $N\cc (X,\mu )\otimes (Y,\nu )$ be the diagonal product action. Then the translation groupoid $N\ltimes (X,\mu )\otimes (Y,\nu )$ admits a stability sequence.
\end{lemma}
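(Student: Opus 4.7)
\bigskip

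\noindent\textit{Proof plan.}
Set $M^\circ := M/(L\cap M)$; since $N=LM$ we have $M^\circ\cong N/L$, which is infinite by $[N:L]=\infty$ and amenable as a quotient of $M$. The group $M^\circ$ acts freely on $(Y,\nu)$ (this is the given $N/L$-action), while $L\subseteq N$ acts trivially on $Y$ and $M\subseteq N$ acts trivially on $X$. Because $[L,M]=1$, the subgroup $L\cap M$ lies in $Z(L)\cap Z(M)=Z(N)$, and conjugation of elements of $M$ by elements of $L$ is trivial. My plan is to construct the stability sequence with values entirely in $M\subseteq N$, depending only on the $Y$-coordinate, exploiting the amenability of $M^\circ$ and the centralization $[L,M]=1$ to control equivariance.

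First I would build an auxiliary sequence on $Y$. By Connes--Feldman--Weiss the orbit equivalence relation $\mathcal{R}^{M^\circ}_Y$ is hyperfinite, and hence stable by Jones--Schmidt. The concrete stability sequence I want comes from an Ornstein--Weiss Rokhlin tower argument: choose Følner sets $F_n\subset M^\circ$ and measurable sets $B_n\subseteq Y$ such that $\{\bar m\cdot B_n\,:\,\bar m\in F_n\}$ are pairwise disjoint and essentially exhaust $Y$, and define $S_n\in[\mathcal{R}^{M^\circ}_Y]$ by a level-shift on the tower (with $A_n$ a fixed ``half'' of the tower) so that the implementing map $t_n\colon Y\to M^\circ$ takes only finitely many values, $S_n^0\to\mathrm{id}$ weakly, $\nu(S_n^0 A_n\triangle A_n)\not\to 0$, and for every fixed $\bar m\in M^\circ$,
\[
\nu\bigl(\{\,y\in Y\,:\,t_n(\bar m\cdot y) = \bar m\,t_n(y)\,\bar m^{-1}\,\}\bigr)\longrightarrow 1.
\]

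Next I would lift to $M$ and extend to $X\times Y$. Fix a measurable section $\sigma\colon M^\circ\to M$ and define $\tilde t_n(y):=\sigma(t_n(y))\in M$ and $T_n(x,y):=\tilde t_n(y)\in M\subseteq N$, together with $\mathscr{A}_n:=X\times A_n$. Conditions (i), (iii), and (iv) of Definition~\ref{def:stabseq} for $(T_n,\mathscr{A}_n)$ in $N\ltimes(X\times Y)$ then follow immediately from the corresponding facts about $(S_n,A_n)$ on $Y$: the automorphism $T_n^0$ equals $(x,y)\mapsto(x,S_n^0 y)$, and $\mathscr{A}_n$ is $L$-invariant exactly (since $L$ acts trivially on $Y$) and $M$-asymptotically invariant (since $A_n$ is $M^\circ$-a.i.). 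For condition (ii$'$), any $g\in N$ may be written $g=\ell m$ with $\ell\in L$, $m\in M$; because $L$ commutes with $M$ and fixes $Y$, the equivariance reduces to the case $g=m\in M$, where the required identity reads $\tilde t_n(\bar m\cdot y)=m\,\tilde t_n(y)\,m^{-1}$ in $M$. In $M^\circ$ both sides agree on a set of $\nu$-measure tending to $1$, by the choice of $t_n$.

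The main technical obstacle is precisely this last step: the two sides of the $M$-equivariance identity may differ by an element of $L\cap M$. Since $L\cap M\subseteq Z(N)$ and $L\cap M$ acts trivially on $Y$, I may correct $\tilde t_n$ by a central cochain $c_n\colon Y\to L\cap M$, setting $\tilde t_n'(y):=\tilde t_n(y)c_n(y)$, without altering the implemented automorphism $S_n^0$. The discrepancy $(m,y)\mapsto m\,\tilde t_n(y)\,m^{-1}\tilde t_n(\bar m\cdot y)^{-1}$ defines an approximate cocycle on the hyperfinite groupoid $\mathcal{R}^{M^\circ}_Y$ with values in the abelian (in fact central) group $L\cap M$, and since this cocycle is concentrated on a finite tower of Rokhlin levels at stage $n$, it can be trivialized level-by-level by a measurable choice of $c_n$, yielding a modified $\tilde t_n'$ that satisfies (ii$'$) with probability tending to $1$ for each $m\in M$. (In the special case $L\cap M=1$ the correction is unnecessary and the proof is immediate.) Applying Theorem~\ref{thm:criterion} (or directly Theorem~\ref{thm:extension}) to this stability sequence then completes the argument.
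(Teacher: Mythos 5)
Your outer reductions are all correct: the identification $M^{\circ}=M/(L\cap M)\cong N/L$, the observation that $L\cap M\leq Z(N)$, the reduction of condition (ii${}'$) to elements $m\in M$ (using $[L,M]=1$ and the fact that conjugation by $m$ depends only on $\bar{m}$ modulo the central subgroup $L\cap M$), and the transfer of (i), (iii), (iv) from $(S_n,A_n)$ on $Y$ to $(T_n, X\times A_n)$ on $X\times Y$. The gap is in the single step that carries the real content of the lemma: the claim that the discrepancy $\delta_n(\bar{m},y)=m\,\tilde t_n(y)\,m^{-1}\tilde t_n(\bar{m}\cdot y)^{-1}\in L\cap M$ ``can be trivialized level-by-level'' \emph{because} it is ``concentrated on a finite tower.'' Finiteness of the tower is not a reason. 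Writing $c_n\equiv\gamma_n(\bar{k})$ on level $\bar{k}$, what you need, for each fixed $\bar{m}$, is $\gamma_n(\bar{m}\bar{k})=\omega(\bar{m},t_n(\bar{k}))\,\gamma_n(\bar{k})$ for most levels $\bar{k}\in F_n$, where $\omega(\bar{m},\bar{k}')=\sigma(\bar{m})\sigma(\bar{k}')\sigma(\bar{m})^{-1}\sigma(\bar{m}\bar{k}'\bar{m}^{-1})^{-1}$ records the failure of your section $\sigma$ to be conjugation-equivariant. Every pair of levels is coupled by such a constraint (level $\bar{k}$ to level $\bar{k}''$ via the group element $\bar{k}''\bar{k}^{-1}$), so this is an overdetermined system whose consistency around triples of levels requires the shift pattern $\bar{k}\mapsto t_n(\bar{k})$ to satisfy $t_n(\bar{g}\bar{k})=\bar{g}\,t_n(\bar{k})\,\bar{g}^{-1}$ for group elements $\bar{g}$ ranging over all of $F_nF_n^{-1}$ --- whereas your asymptotic-centrality hypothesis on $S_n$ controls this only for each \emph{fixed} $\bar{g}$ on asymptotically most levels. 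Nothing in your write-up bridges that quantifier gap, and it cannot be waved away: the obstruction you are fighting is precisely that the central extension $1\to L\cap M\to M\to M^{\circ}\to 1$ need not split (take $N=M$ the integer Heisenberg group and $L=Z(N)$, so $M^{\circ}=\Z^2$), so for a carelessly chosen level-shift the naive lift genuinely fails and the existence of a correcting $c_n$ is exactly what must be proved.

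The good news is that the gap closes if you tie the construction to the standard Rokhlin shift in its conjugation-equivariant form: arrange $t_n(y)=\bar{k}\bar{a}_n\bar{k}^{-1}$ for $y$ in level $\bar{k}$ (pairing level $\bar{k}$ with level $\bar{k}\bar{a}_n$, as in the usual Jones--Schmidt construction), which satisfies the equivariance identity \emph{exactly}, for all $\bar{g},\bar{k}$. Then no approximate cohomology is needed at all: define $T_n(x,y)=\sigma(\bar{k})\sigma(\bar{a}_n)\sigma(\bar{k})^{-1}\in M$ on level $\bar{k}$ and $T_n=1$ off the tower. Since conjugation in $M$ by any lift of $\bar{k}$ is independent of the lift ($L\cap M\leq Z(N)$), one gets $m\,T_n(x,y)\,m^{-1}=\sigma(\bar{m}\bar{k})\sigma(\bar{a}_n)\sigma(\bar{m}\bar{k})^{-1}=T_n(\bar{m}\cdot(x,y))$ \emph{exactly} whenever $\bar{k}$ and $\bar{m}\bar{k}$ both index tower levels, a set of measure tending to $1$ by the F{\o}lner property; this implements your $c_n$ explicitly. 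Note that even after this repair your route is genuinely different from the paper's, which never touches towers or sections: there one observes that $\mc{R}^{M_0}_Y$ (with $M_0=M/(L\cap M)$) is treeable, invokes Kida's splitting theorem (Theorem 1.1 of \cite{Ki14}) to identify the groupoid $M\ltimes (Y,\nu )$ with $(L\cap M)\times M_0\ltimes (Y,\nu )$, and then simply transports the Jones--Schmidt stability sequence \cite{JS87} of the free action $M_0\cc (Y,\nu )$ through that isomorphism, stability sequences being intrinsic to the groupoid. Your (repaired) argument amounts to a hands-on proof of the special case of that splitting needed here; it is more elementary and self-contained, at the price of depending on a particular form of the tower shift, while the paper's argument buys brevity and conceptual clarity at the price of citing \cite{Ki14}.
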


\begin{proof}
Let $C= L\cap M$ and let $M_0=M/C$. The action $M\cc (Y,\nu )$ descends to a free action $M_0 \cc (Y,\nu )$. Since the group $M_0$ is amenable, the equivalence relation $\mc{R}^{M_0}_Y$ is treeable. Therefore, by Theorem 1.1 of \cite{Ki14}, the p.m.p.\ groupoids $C\times M_0\ltimes (Y,\nu )$ and $M\ltimes (Y,\nu )$ are isomorphic. Here, $C\times M_0 \ltimes (Y,\nu )$ is the translation groupoid associated to the action $C\times M_0 \cc (Y,\nu )$, where $C$ acts trivially. Since $M_0$ is amenable and acts freely on $(Y,\nu )$, the groupoid $M_0\ltimes (Y,\nu )$ admits a stability sequence $(S_n)_{n\in\N}$ \cite{JS87}. For each $n\in \N$ define $S_n' \in [C\times M_0\ltimes (Y,\nu )]$ by $S_n'(y)= (1_C ,S_n(y))\in C\times M_0$. Then $(S_n')_{n\in \N}$ is a stability sequence for $C\times M_0\ltimes (Y,\nu )$. The image of this sequence under the above isomorphism is then a stability sequence $(T_n)_{n\in \N}$ for $M\ltimes (Y,\nu )$. Define $\wh{T}_n \in [M\ltimes (X,\mu )\otimes (Y,\nu )]$ by $\wh{T}_n(x,y) = T_n(y)$. Then $(\wh{T}_n)_{n\in \N}$ is a stability sequence for $N\ltimes (X,\mu )\otimes (Y,\nu )$.
\end{proof}

\begin{proof}[Proof of stability from hypothesis {\bf (H1)}]
Let $G/M \cc (X , \mu )$ and $G/L \cc (Y,\nu )$ be free p.m.p.\ actions of $G/M$ and $G/L$ respectively. Then $G$ acts on $(X,\mu )$ and $(Y,\nu )$ via the quotient maps to $G/M$ and $G/L$ respectively. Let $G\cc (X,\mu )\otimes (Y,\nu )$ be the diagonal product of these action.  By Lemma \ref{lem:LM}, the groupoid $N\ltimes (X,\mu )\otimes (Y,\nu )$ admits a stability sequence, hence $G$ is stable by Theorem \ref{thm:extension}.
\end{proof}

\begin{proof}[Proof of stability from hypothesis {\bf (H6)}]
Let $N^*= N - \{ 1 \}$. Let $G$ act on $N^*$ by conjugation and consider the corresponding generalized Bernoulli action $G\cc (X,\mu )= ([0,1]^{N^*}, \lambda ^{N^*})$ given by $(g\cdot x )(h)= x(g^{-1}hg)$. Let $(c_n)_{n\in \N}$ and $(d_n)_{n\in \N}$ be sequences witnessing that $N$ is doubly asymptotically commutative. The proof of Proposition 9.8 of \cite{Ke10} shows that the sequence $(c_n)_{n\in \N}$, viewed as a sequence in $[N\ltimes (X,\mu )]$, is a stability sequence for $N\ltimes (X,\mu )$. Theorem \ref{thm:extension} then implies that $G$ is stable.
\end{proof}

We now show that {\bf (H3)} follows from each of the hypotheses {\bf (H2)}, {\bf (H4)}, and {\bf (H5)} (in fact, it can also be shown that {\bf (H3)} follows from {\bf (H6)}, but we will not need this). This is obvious for {\bf (H2)}. Assume now that {\bf (H4)} holds, so that $A\cap C_G(g)$ has finite index in $A$ for all $g\in N$. Then we can find a decreasing sequence $A=A_0\geq A_1 \geq \cdots$ of finite index subgroups of $A$ such that $N = \bigcup _m C_N(A_m)$. Then for all $m\in \N$ the pair $(C_N(A_m), A_m )$ does not have property (T), since $A_m$ is finite index in $A$ and $(N,A)$ does not have property (T). This shows that {\bf (H3)} holds. Finally, assume that {\bf (H5)} holds. After moving to a subsequence of $(c_n)_{n\in \N}$, we may assume that each of the subgroups $A_i = \langle \{ c_n\csuchthat n\geq i \} \rangle$, $i\in \N$, is abelian, so that $N = \bigcup _iC_N(A_i)$, where the union is increasing. Then each of the groups $(C_N(A_i), A_i )$ does not have property (T), since $A_i$ is infinite and $N$ has the Haagerup property. This verifies {\bf (H3)}. It remains to deduce stability of $G$ from {\bf (H3)}.

\begin{proof}[Proof of stability from hypothesis {\bf (H3)}]
We may assume that $N$ is not doubly asymptotically commutative, since otherwise we are done by the proof of stability from {\bf (H6)}. Let $(L_m)_{m\in \N}$ and $(D_m)_{m\in \N}$ be given by {\bf (H3)}. Let $F_0\subseteq F_1\subseteq \cdots$ be an increasing sequence of finite sets which exhaust $G$.

\begin{claim}\label{claim:sequence}
There exists an increasing sequence $H_0\leq H_1\leq \cdots$, of finitely generated subgroups of $N$ with $N=\bigcup _{m\in \N}H_m$, along with sequences $C_0\geq C_1\geq \cdots$, and $(N_m)_{m \in \N}$ of subgroups of $N$ such that, for all $m\in \N$,
\begin{itemize}
\item[1.] $C_m, H_m \leq N_m$, and $C_m = C_N(H_m) = Z(N_m)$,
\item[2.] The pair $(N_m, C_m )$ does not have relative property \emph{(T)},
\item[3.] $gH_mg^{-1} \leq H_{m+1}$ for all $g\in F_{m+1}$,
\item[4.] $C_m\geq g^{-1}C_{m+1}g$ for all $g\in F_{m+1}$.
\end{itemize}
\end{claim}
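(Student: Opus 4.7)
The plan is to build $(H_m)$, $(N_m)$, $(C_m)$ by simultaneous induction on $m$, fixing an enumeration $N = \{n_0, n_1, \ldots\}$ and setting $H_{-1} = \{1\}$. At stage $m$, I would first form a finitely generated subgroup $K_m := \langle H_{m-1}, \, n_m, \, F_m H_{m-1} F_m^{-1} \rangle \leq N$ (which lies in $N$ since $N$ is normal in $G$). Then I would choose $k(m)$ large enough that $K_m \leq L_{k(m)}$, enlarge $K_m$ to a finitely generated $H_m \leq L_{k(m)}$ as described below, and set $C_m := C_N(H_m)$, with $N_m$ to be specified afterward. The choice of $K_m$ already handles condition~3 and the exhaustion $N = \bigcup_m H_m$.

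Two of the remaining conditions are then essentially automatic. Condition~4 follows from condition~3: for $g \in F_{m+1}$ the inclusion $g H_m g^{-1} \leq H_{m+1}$ yields $H_m \leq g^{-1} H_{m+1} g$, so
\[
g^{-1} C_{m+1} g \;=\; C_N(g^{-1} H_{m+1} g) \;\leq\; C_N(H_m) \;=\; C_m .
\]
For condition~2, the key observation is that $H_m \leq L_{k(m)}$ forces $D_{k(m)} \leq Z(L_{k(m)}) \leq C_{L_{k(m)}}(H_m) \leq C_m$. Since $(L_{k(m)}, D_{k(m)})$ fails relative property~(T), any unitary representation of $L_{k(m)}$ realising this failure has no $D_{k(m)}$-invariant vectors, and \emph{a fortiori} no $C_m$-invariant vectors; hence provided the final $N_m$ satisfies $L_{k(m)} \leq N_m$, the pair $(N_m, C_m)$ will also fail relative~(T).

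The delicate point is the equality $C_m = Z(N_m)$ in condition~1. With the natural choice $N_m := C_N(C_m) = C_N(C_N(H_m))$ one always has $H_m \leq N_m$ and $Z(N_m) = C_m$ \emph{provided} $C_m$ is abelian, since only then does $C_m \leq N_m$. To arrange that $C_m$ is abelian, I would enlarge $H_m$ inside $L_{k(m)}$ progressively: whenever $a, b \in C_N(H_m)$ fail to commute, passing from $H_m$ to $\langle H_m, a \rangle$ strictly shrinks the centralizer by removing $b$. The main obstacle is to make this shrinking procedure terminate in finitely many steps (so that $H_m$ remains finitely generated) while simultaneously preserving the inclusion $L_{k(m)} \leq N_m$ required for condition~2. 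I expect to handle this by treating $k(m)$ as part of the inductive data, incrementing it each time $H_m$ is enlarged so that a fresh central piece $D_{k(m)}$ is captured inside $C_m$, and by leveraging the structural input of hypothesis~\textbf{(H3)} — in particular, that every finitely generated subgroup of $N$ is absorbed by some $L_k$ whose center supplies the non-(T) data needed for condition~2.
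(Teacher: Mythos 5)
There are two genuine gaps, one in each of the steps you treat as routine or deferred. First, your argument for condition~2 runs relative property (T) in the wrong direction. Relative (T) \emph{ascends}: if $D\leq M\leq L$ and $(M,D)$ has relative (T), then so does $(L,D)$; consequently its failure \emph{descends} to intermediate subgroups, but does not pass to overgroups. Concretely, $(\Z ^2,\Z ^2)$ fails relative (T), yet $(\Z ^2\rtimes \mathrm{SL}_2(\Z ),\Z ^2)$ has it. So from ``$(L_{k(m)},D_{k(m)})$ fails relative (T)'' and ``$L_{k(m)}\leq N_m$'' you cannot conclude that $(N_m,C_m)$ fails relative (T): the representation of $L_{k(m)}$ witnessing the failure is not a representation of $N_m$, and nothing ensures it extends, or induces, to one with almost invariant vectors. (Note also that $C_m=C_N(H_m)$ need not be contained in $L_{k(m)}$, so ``no $C_m$-invariant vectors'' is not even meaningful for that representation.) The paper's argument goes the other way: since $D_m\leq H_mD_m\leq L_m$, the failure descends to $(H_mD_m,D_m)$; it then climbs up only through a co-amenable extension, namely $H_mD_m$ is normal in $H_mC_m$ with abelian quotient, so by Jolissaint \cite{Jo05} the pair $(H_mC_m,D_m)$, and hence $(H_mC_m,C_m)$, fails relative (T). This is exactly why the paper takes $N_m=H_mC_m$ rather than your $N_m=C_N(C_N(H_m))$: for the double centralizer there is no co-amenability available, and the transfer genuinely breaks down.

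Second, the ``delicate point'' you defer --- making $C_m=C_N(H_m)$ abelian while keeping $H_m$ finitely generated --- cannot be repaired by the bookkeeping you sketch, because its solvability is exactly equivalent to an assumption you never invoke. Your shrinking procedure terminates at some finitely generated overgroup if and only if $N$ has a finitely generated subgroup with abelian centralizer, and this is precisely the negation of ``$N$ is doubly asymptotically commutative.'' The claim sits inside the proof of stability from {\bf (H3)}, after the reduction ``we may assume $N$ is not doubly asymptotically commutative, since otherwise {\bf (H6)} applies''; that standing assumption is what the paper uses: it yields a single finitely generated $H_0\leq N$ with $C_N(H_0)$ abelian, and once one arranges $H_0\leq H_m$ (and $H_m\leq L_m$, which gives $D_m\leq C_m$), every $C_m\leq C_N(H_0)$ is abelian for free. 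If $N$ were doubly asymptotically commutative, every finitely generated subgroup of $N$ would have nonabelian centralizer, while your condition~1 forces $C_m=Z(N_m)$ to be abelian --- so the claim is actually false without that assumption, and no inductive scheme of the kind you propose can close this gap.
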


\begin{proof}[Proof of Claim \ref{claim:sequence}]
Since $N$ is not doubly asymptotically commutative there exists a finitely generated subgroup $H_0\leq N$ such that $C_N(H_0)$ is abelian. After moving to a subsequence of $(L_m)_{m\in \N}$ if necessary we may assume that $H_0\leq L_0$. We may then extend $H_0$ to a sequence $H_0\leq H_1\leq H_2\leq \cdots$, of finitely generated subgroups of $N$ with $N= \bigcup _{m\in\N} H_m$, and $H_m\leq L_m$ for all $m\in \N$. After moving to a further subsequence if necessary we may assume that property 3.\ is satisfied for all $m\in \N$. Let $C_m = C_N(H_m)$, so that $C_m$ is an abelian group containing $D_m$. Then the pair $(H_mC_m , D_m )$  does not have property (T), since $(H_mD_m,D_m )$ does not have property (T) and $(H_mC_m)/(H_mD_m)$ is amenable \cite{Jo05}. It follows that $(H_mC_m, C_m )$ does not have property (T). Let $N_m = H_mC_m$. Then $C_m\leq Z(N_m)\leq C_H(N_m)\leq C_H(H_m)= C_m$, so that both 1.\ and 2.\ are satisfied, and 4.\ follows from 1.\ and 3. \qedhere[Claim \ref{claim:sequence}]
\end{proof}

Fix an increasing sequence $S_0\subseteq S_1\subseteq \cdots$ of finite sets such that $S_m$ generates $H_m$ for all $m\geq 0$. Note that if $\pi _m$ is an irreducible unitary representation of $N_m$, then Schur's Lemma implies that $\pi _m (c)$ is a scalar multiple of the identity for all $c\in C_m$, since $C_m=Z(N_m)$. Property 4.\ of the claim then shows that $\pi _m (g^{-1}cg)$ is also a scalar for all $c\in C_{n}$, $g\in F_{n}$, $n>m$. The following is based on Lemma 4.1 of \cite{Ki13a}.

\begin{lemma}\label{lem:avg}
There exist sequences $(\pi _m )_{m\in \N}$, $(\xi _m )_{m\in \N}$, and $(c_m)_{m\in \N}$, where for each $m\in \N$, $\pi _m$ is an irreducible unitary representation of $N_{m}$, $\xi _m \in \Es{H}_{\pi _m}$ is a unit vector, and $c_m$ is an element of $C_{m}$, such that
\begin{enumerate}
\item[(i)] $\| \pi _m (s)\xi _m - \xi _m \| < 2^{-m}$ for all $s\in S_m$,
\item[(ii)] $| \pi _m (c_m) - 1 | > 1$,
\item[(iii)] $| \pi _j (g^{-1}c_mg) - 1 | < 2^{-m}$ for all $g\in F_m$ and $j<m$.
\end{enumerate}
\end{lemma}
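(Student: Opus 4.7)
The plan is to construct the triples $(\pi_m, \xi_m, c_m)_{m \in \N}$ by induction on $m$, with the failure of relative property (T) for $(N_m, C_m)$ producing the representation at each step, and a Kronecker/Pontryagin duality argument selecting the central character so that $c_m$ can be chosen to satisfy the orthogonal constraints (ii) and (iii) simultaneously. For $j < m$ and $g \in F_m$, iterating property 4 of Claim~\ref{claim:sequence} with the monotonicity $C_0 \geq C_1 \geq \cdots$ yields $g^{-1}C_m g \leq C_{m-1} \leq C_j$; since $\pi_j$ is irreducible and $C_j = Z(N_j)$, Schur's lemma defines a central character $\chi_j \in \widehat{C_j}$ with $\pi_j(c) = \chi_j(c)\, I$ for $c \in C_j$. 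We may therefore form the characters $\phi_{j,g} \in \widehat{C_m}$ by $\phi_{j,g}(c) = \chi_j(g^{-1}cg)$ and let $\Omega_m \leq \widehat{C_m}$ denote the finitely generated (hence countable) subgroup they generate.

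For the base case $m = 0$, condition (iii) is vacuous. Failure of relative property (T) for $(N_0, C_0)$ yields a unitary representation $\sigma$ of $N_0$ with a unit vector $\eta$ satisfying $\|\sigma(s)\eta - \eta\| < 1$ for $s \in S_0$ and no nonzero $C_0$-invariant vector. Decomposing $\sigma$ as a direct integral over $\widehat{C_0}$ and noting that its spectral measure avoids the trivial character, we extract an irreducible subrepresentation $\pi_0$ with central character $\chi_0 \neq 1$ and a unit vector $\xi_0$ fulfilling (i). Choosing $c \in C_0$ with $\chi_0(c) \neq 1$, the closed subgroup $\overline{\langle \chi_0(c)\rangle} \leq \T$ contains an element at distance strictly greater than $1$ from $1$ (this is a short case analysis on whether $\chi_0(c)$ has finite or infinite order), so some integer power $c_0 = c^n$ realizes (ii).

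For the inductive step, suppose $(\pi_j, \xi_j, c_j)$ have been constructed for $j < m$. Using failure of relative property (T) of $(N_m, C_m)$, produce a representation $\sigma_m$ of $N_m$ with a unit vector $\eta$ satisfying $\|\sigma_m(s)\eta - \eta\| < 2^{-(m+1)}$ for $s \in S_m$ and no nonzero $C_m$-invariant vector. Since $C_m$ is central, we may decompose $\sigma_m = \int^{\oplus}_{\widehat{C_m}} \pi^\chi\, d\mu(\chi)$, where the spectral measure $\mu$ places no mass at the trivial character. Select $\chi_m$ in the support of $\mu$ with the property that $\chi_m^k \notin \Omega_m$ for every odd integer $k$; by Pontryagin duality, this forces the point $(-1, 1, \ldots, 1)$ to lie in the closure of the image of $C_m$ in $\T \times \T^{N}$ under $c \mapsto (\chi_m(c), (\phi_{j,g}(c))_{j,g})$. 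Let $\pi_m$ be an irreducible component of $\pi^{\chi_m}$ and $\xi_m$ a unit vector witnessing (i), both obtained from $\eta$ via the spectral decomposition; by the density just established, Kronecker's theorem furnishes $c_m \in C_m$ with $|\chi_m(c_m) + 1| < 2^{-m}$ (yielding (ii)) and $|\phi_{j,g}(c_m) - 1| < 2^{-m}$ for all $j < m$ and $g \in F_m$ (yielding (iii)).

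The main obstacle is the selection of $\chi_m$ outside the countable "thin" set $A_m = \{\chi \in \widehat{C_m} : \chi^k \in \Omega_m \text{ for some odd } k\}$ while preserving almost-invariance on $S_m$. A priori $\mu$ could concentrate on $A_m$, in which case one must perturb $\sigma_m$ (e.g.\ by tensoring with an auxiliary representation of $N_m$ arranged to shift its $C_m$-spectrum off $A_m$) and verify that almost-invariance on $S_m$ is retained with slightly worse but still sub-$2^{-m}$ tolerance. Managing this uniformly so that the inductive choices $\chi_0, \chi_1, \ldots$ continue to permit the Kronecker argument indefinitely is the technical heart of the proof; we expect to follow Kida's Lemma~4.1 in \cite{Ki13a} closely here.
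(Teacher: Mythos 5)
Your proposal has a genuine gap, and you have in fact located it yourself: everything hinges on choosing the central character $\chi_m$ from the spectral decomposition of $\sigma_m$ so that $\chi_m^k\notin\Omega_m$ for every odd $k$, and nothing in the hypotheses guarantees this is possible. Failure of relative property (T) for $(N_m,C_m)$ only provides, for each finite $Q\subseteq N_m$ and $\epsilon>0$, \emph{some} representation with a $(Q,\epsilon)$-invariant unit vector and no nonzero $C_m$-invariant vector; it gives no control over where the resulting central characters sit in $\widehat{C_m}$. A priori the spectral measure of every such representation could charge only your thin set $A_m$ (which, note, need not even be countable: if $C_m$ has infinite rank, the solution set of $\chi^k=\omega$ is a coset of the possibly uncountable group $\{\chi : \chi^k=1\}$). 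Moreover, even granting a suitable $\chi_m$ in the topological support of $\mu$, an almost-invariant vector passes only to a positive-measure set of fibers, not to a prescribed single fiber, so ``obtained from $\eta$ via the spectral decomposition'' hides a second gap. Your proposed repair --- tensoring with an auxiliary representation to shift the $C_m$-spectrum --- founders on the same problem: tensoring multiplies central characters, and you have no supply of representations of $N_m$ with almost-invariant vectors whose central characters you can prescribe.

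The paper's proof shows that no Kronecker/independence statement is needed; the idea you are missing is a compactness-plus-translation trick inside $C_m$. The map $c\mapsto (\pi_j(g^{-1}cg))_{j<m,\, g\in F_m}$ (your tuple of characters $\phi_{j,g}$) sends $C_m$ into a finite-dimensional torus, so by total boundedness there is a \emph{finite} set $P\subseteq C_m$ which is $2^{-m}$-dense in the image. Now invoke the failure of relative property (T) with the finite set $S_m\cup P$: one obtains an irreducible $\pi_m$ with no nonzero $C_m$-invariant vector and a unit vector $\xi_m$ which is $2^{-m}$-invariant under $S_m$ (giving (i)) and $1/3$-invariant under $P$; since elements of $P$ are central and $\pi_m$ is irreducible, the latter says exactly $|\pi_m(d)-1|<1/3$ for all $d\in P$. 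Nontriviality of the central character of $\pi_m$ yields some $d_m\in C_m$ with $|\pi_m(d_m)-1|>4/3$ (any nontrivial closed subgroup of the circle contains a point at distance at least $\sqrt{3}$ from $1$). Choosing $d\in P$ with $\sup_{j<m}\sup_{g\in F_m}|\pi_j(g^{-1}d_mg)-\pi_j(g^{-1}dg)|<2^{-m}$ and setting $c_m=d^{-1}d_m$, the triangle inequality gives (ii), namely $|\pi_m(c_m)-1|\geq |\pi_m(d_m)-1|-|\pi_m(d)-1|>4/3-1/3=1$, and (iii) simultaneously, because the values of the old characters at $c_m$ differ from $1$ by exactly $|\pi_j(g^{-1}d_mg)-\pi_j(g^{-1}dg)|$. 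The translation by $d^{-1}$ is what replaces your independence requirement: it corrects the values at the previously constructed representations to within $2^{-m}$ of $1$, while moving $\pi_m(d_m)$ by less than $1/3$.
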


\begin{proof}
Let $m\geq 0$ and assume inductively that we have already found $(\pi _j )_{j<m}$, $(\xi _j )_{j<m}$, and $(c_j)_{j<m}$. By compactness there exists a nonempty finite set $P\subseteq C_m$ such that for each $c\in C_m$ there exists some $d\in P$ with $\sup _{j<m}\sup _{g\in F_m} |\pi _j (g^{-1}cg)-\pi _j (g^{-1}dg) | < 2^{-m}$. Since $(N_m,C_m )$ does not have property (T), we may find an irreducible unitary representation $\pi _m$ of $N_m$ which has no nonzero $C_m$-invariant vector, satisfying $\sup _{d\in P} |\pi _m (d)-1| < 1/3$, along with a unit vector $\xi _m \in \Es{H}_{\pi _m}$, such that $\| \pi _m (s)\xi _m - \xi _m \| < 2^{-m}$ for all $s\in S_m$. Since $\pi _m (C_m)$ is nontrivial, there exists some $d_m \in C_m$ satisfying $|\pi _m (d_m) - 1 | > 4/3$. By our choice of $P$ there exists $d\in P$ such that $\sup _{j<m}\sup _{g\in F_m} | \pi _j (g^{-1}d_mg)-\pi _j(g^{-1}dg) | < 2^{-m}$. Let $c_m=d^{-1}d_m$. Then $\sup _{j<m}\sup _{g\in F_m}|\pi _j (g^{-1}c_mg) - 1 |< 2^{-m}$, and
\begin{align*}
|\pi _m (c_m) - 1 | &= |\pi _m (d_m) - \pi _m (d) | \geq |\pi _m (d_m) -1 | - | \pi _m (d) - 1| > 4/3 - 1/3 = 1 .\qedhere
\end{align*}
\end{proof}

For each $m\in \N$ let $\sigma _m: G/N_m \ra G$ be a section for the projection map $G\ra G/N_m$ with $\sigma _m (1N_m)=1$, and let $\rho _m : G\times G/N_m\ra N_m$ be the corresponding Schreier cocycle, $\rho _m (g,hN_m) = \sigma _m(ghN_m)^{-1}g\sigma _m(hN_m)$. Let $\wt{\pi}_m = \mathrm{Ind}_{N_m}^G (\pi _m )$ be the induced representation, i.e., $\wt{\pi}_m$ is the representation of $G$ on $\Es{H}_m = \Es{H}_{\pi _m}\otimes \ell ^2(G/N_m)$ given by $\wt{\pi}_m (g)(\xi \otimes \delta _{hN_m} ) = \pi _m(\rho _m(g,k))(\xi )\otimes \delta _{ghN_m}$. Let $\wt{\xi} _m= \xi _m \otimes \delta _{1N_m}$, so that $\wt{\xi} _m\in \Es{H}_m$ is a unit vector satisfying $\| \wt{\pi} _m (s)\wt{\xi} _m - \wt{\xi} _m \| = \| \pi _m (s)\xi _m - \xi _m \| < 2^{-m}$ for all $s\in S_m$ by property (i). By property (ii), for each $m\in \N$ we have $\| \wt{\pi}_m(c_m)\wt{\xi}_m - \wt{\xi}_m \|  = \| (\pi _m (c_m) -1 )\xi _m \|  =  |\pi _m (c_m)-1 | >1$. Moreover, it follows from property (iii) that $\lim _{m\ra\infty} \| \wt{\pi }_j (c_m)\eta - \eta \| = 0$ for all $j\in \N$ and $\eta \in \Es{H}_j$. Let $G\cc (\Omega , \nu )$ denote the Gaussian action associated to the representation $\bigoplus _m \wt{\pi}_m$. As in the proof of Theorem 1.1.(i) of \cite{Ki13a}, we conclude that the sequence $(c_m)_{m\in \N}$, viewed as a sequence in the full group $[N\ltimes (\Omega , \nu )]$, is a stability sequence for $N\ltimes (\Omega ,\nu )$. We can now apply Theorem \ref{thm:extension} to conclude that $G$ is stable.
\end{proof}

The following proposition shows that, aside from the relative property (T) condition, the hypothesis {\bf (H4)} has a natural expression in terms of a conjugation invariant mean on $G$ which concentrates on $A$, as long as we assume that $A$ is finitely generated.

\begin{proposition}
Let $G$ be a countable group. Then the following are equivalent:
\begin{enumerate}
\item There exists an atomless conjugation invariant mean $\bm{m}$ on $G$ and an infinite finitely generated abelian group $A\leq G$ with $\bm{m}(A)>0$.
\item There exists an infinite finitely generated abelian group $A\leq G$ with $[G: \mathrm{comm}_G(A)]<\infty$ such that $\mathrm{comm}_G(A)/N$ is amenable, where $N$ is the kernel of the modular homomorphism from $\mathrm{comm}_G(A)$ into the abstract commensurator of $A$.
\end{enumerate}
Furthermore, the statement $(1')$, obtained from $(1)$ by replacing \emph{"$\bm{m}(A)>0$"} with \emph{"$\bm{m}(A)=1$"}, is equivalent to the statement $(2')$, obtained from $(2)$ by replacing \emph{"$[G:\mathrm{comm}_G(A)]<\infty$"} with \emph{"$G=\mathrm{comm}_G(A)$"}.
\end{proposition}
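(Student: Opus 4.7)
The plan is to prove the two directions separately: for $(2) \Rightarrow (1)$ I will construct $\bm{m}$ by combining a F{\o}lner sequence in the amenable quotient $H/N$ with the commensuration action on $A$, while for $(1) \Rightarrow (2)$ I will extract the commensurator structure from $\bm{m}$ by showing that $\bm{m}$ concentrates on the ``virtual centralizer'' of $A$.

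For $(2) \Rightarrow (1)$, let $H = \mathrm{comm}_G(A)$, and note $A \leq N$ since $A$ is abelian. Since $[G:H] < \infty$, it suffices to build an $H$-conjugation-invariant atomless mean $\bm{n}$ on $H$ with $\bm{n}(A) = 1$, for then $\bm{m} := \frac{1}{[G:H]}\sum_i g_i \bm{n} g_i^{-1}$ averaged over a transversal $(g_i)$ of $H$ in $G$ will be $G$-conjugation-invariant with $\bm{m}(A) \geq 1/[G:H] > 0$. To produce $\bm{n}$, pick a F{\o}lner sequence $(\bar{C}_n)$ in the amenable quotient $\bar{H} := H/N$ and lift to a transversal $C_n \subseteq H$. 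For each $n$, let $A_n \leq \bigcap_{c \in C_n}(A \cap c^{-1}Ac)$ be a finite-index subgroup of $A$, and choose $a_n \in A_n$ with trivial stabilizer in $C_n$; the orbit $F_n := \{c a_n c^{-1} : c \in C_n\} \subseteq A$ has $|F_n| = |C_n|$, and the F{\o}lner property of $(\bar{C}_n)$ gives $|F_n \triangle hF_nh^{-1}|/|F_n| \to 0$ for every $h \in H$. The ultrafilter limit $\bm{n}(B) := \lim_{\mathcal{U}} |B \cap F_n|/|F_n|$, extended to $H$ by $\bm{n}(B) := \bm{n}(B \cap A)$, is atomless, has $\bm{n}(A) = 1$, and is $H$-conjugation-invariant: elements of $N$ fix $A_n \supseteq F_n$ pointwise for large $n$, while lifts of nontrivial cosets of $\bar{H}$ act by F{\o}lner-invariant commensurations.

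For $(1) \Rightarrow (2)$, the crux is to show that the set
\[
V := \{g \in G \csuchthat [A : A \cap C_G(g)] < \infty\}
\]
satisfies $\bm{m}(V) = 1$. For $g \in G \setminus V$, the $A$-conjugation orbit $\{aga^{-1} : a \in A\}$ is infinite (its $A$-stabilizer $A \cap C_G(g)$ has infinite index in $A$), and a Poincar\'{e}-recurrence argument balancing atomlessness and $\bm{m}(A) > 0$ against the $A$-conjugation invariance of $\bm{m}$ will force $\bm{m}(G \setminus V) = 0$. Granted the claim, each $g \in V$ centralizes some finite-index $A_0 \leq A$, so $gA_0g^{-1} = A_0$ witnesses $g \in \mathrm{comm}_G(A) = H$ and the modular image is represented by the identity on $A_0$, giving $g \in N$. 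Hence $\bm{m}(N) = \bm{m}(H) = 1$. Conjugation invariance then yields $\bm{m}(gHg^{-1}) = 1$ for every $g \in G$; a finite-additivity argument comparing intersections $\bigcap_i g_iHg_i^{-1}$ across distinct cosets, combined with the subgroup structure of $H$ as the commensurator of $A$, forces $[G:H] < \infty$. Finally, descending $\bm{m}|_H$ to $H/N$ and using the two-sided invariance properties produced by conjugation invariance of $\bm{m}$ will yield an invariant mean on $H/N$, so $H/N$ is amenable.

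The hardest step is the claim $\bm{m}(G \setminus V) = 0$: this Poincar\'{e}-recurrence argument must delicately balance the infinite $A$-orbits of elements outside $V$ against the atomlessness and positive mass of $\bm{m}$ on $A$. The furthermore statement $(1') \iff (2')$ is then immediate: in $(2') \Rightarrow (1')$ the construction with $H = G$ requires no averaging and gives $\bm{m}(A) = \bm{n}(A) = 1$; and in $(1') \Rightarrow (2')$, the condition $\bm{m}(A) = 1$ combined with conjugation invariance gives $\bm{m}(A \cap gAg^{-1}) = 1$ for every $g$, and the analysis of the main direction then forces $A \cap gAg^{-1}$ to have finite index in both, i.e., $g \in \mathrm{comm}_G(A)$, so $G = \mathrm{comm}_G(A)$.
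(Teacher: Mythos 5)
Your proof of $(1)\Rightarrow(2)$ fails at its central claim: it is simply not true that $\bm{m}$ must concentrate on $V=\{ g\in G : [A:A\cap C_G(g)]<\infty \}$, so no Poincar\'{e}-recurrence argument can establish it. Concretely, let $G=D_\infty=\Z\rtimes (\Z /2\Z )$ and $A=\Z$. Symmetrizing a translation-invariant mean on $\Z$ under $n\mapsto -n$ gives an atomless mean $\bm{m}_A$ concentrated on $A$ which is conjugation invariant (conjugation acts on $A$ by $\pm\mathrm{id}$). Separately, the conjugation action of the amenable group $G$ on its set $R$ of reflections admits an invariant mean $\bm{w}$, which is atomless because the conjugation orbits in $R$ are infinite. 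Then $\bm{m}=\tfrac12 \bm{m}_A+\tfrac12 \bm{w}$ satisfies (1), yet every reflection inverts $\Z$, so $R\cap V=\emptyset$ and $\bm{m}(G\setminus V)\geq \tfrac12$. (The proposition is not contradicted: (2) holds here with $\mathrm{comm}_G(A)=G$ and $N=\Z$.) The two steps you stack on top of this claim are also individually unsound: knowing $\bm{m}(gHg^{-1})=1$ for all $g$ cannot force $[G:H]<\infty$, since intersections of conull sets are conull and a conjugation-invariant mean can perfectly well concentrate on an infinite-index subgroup (e.g.\ $\Z\times\{ e\}\leq \Z\times \F _2$); and pushing $\bm{m}|_H$ forward to $H/N$ yields only a conjugation-invariant mean there (indeed the point mass at the identity, if $\bm{m}(N)=1$ as you claim), which says nothing about amenability of $H/N$. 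The paper's argument uses a device absent from your proposal: replace $A$ by an $\bm{m}$-non-null finitely generated abelian subgroup of minimal rank. Minimality gives the dichotomy that $\bm{m}(gAg^{-1}\cap A)>0$ forces $gAg^{-1}\cap A$ to have finite index in both $A$ and $gAg^{-1}$; hence conjugates of $A$ over distinct cosets of $\mathrm{comm}_G(A)$ intersect in $\bm{m}$-null sets, and since each conjugate has mass $\bm{m}(A)>0$, finite additivity bounds the number of cosets by $1/\bm{m}(A)$. Amenability of $\mathrm{comm}_G(A)/N$ is then proved by a genuinely separate argument (the maximal condition on subgroups of $A$, the subgroups $N(B)$, and amenability of auxiliary actions), not by transporting the mean. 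The same minimal-rank device is what makes $(1')\Rightarrow(2')$ work; your version assumes the unproved dichotomy for the \emph{given} $A$, which fails, e.g., for $G=\F _2\times \Z$, $A=\langle a\rangle \times \Z$, and $\bm{m}$ an atomless mean on the central subgroup $\{ e\}\times \Z$.

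Your $(2)\Rightarrow(1)$ is closer to being correct, but the invariance verification has a hole. Writing $hc=c'm$ with $c'\in C_n$ and $m=c'^{-1}hc\in N$, you need $m$ to centralize $a_n$ in order to conclude $hca_n(hc)^{-1}\in F_n$, and nothing in your construction arranges this: the constraint $A_n\leq \bigcap _{c\in C_n}(A\cap c^{-1}Ac)$ only places $a_n$ in the domains of the commensurations; your assertion that elements of $N$ fix $A_n$ pointwise for large $n$ is unjustified (a given $m\in N$ centralizes some finite-index subgroup of $A$ depending on $m$, not your $A_n$) and in any case insufficient, because the correction elements $m$ vary with $n$ and escape every finite subset of $N$; and moreover $F_n\not\subseteq A_n$. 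The repair is to choose $a_n$, \emph{after} $C_n$ and a finite set $S_n$ from an exhaustion of $H$, inside the finite-index subgroup of $A$ centralized by the finitely many elements of $\{ c'^{-1}hc : h\in S_n ,\ c,c'\in C_n \}\cap N$. The paper sidesteps this bookkeeping entirely: it picks $a_n$ in the intersection of the first $n$ terms of an enumeration of \emph{all} finite-index subgroups of $A$, so that any weak${}^*$ accumulation point $\bm{m}_0$ of $(\delta _{a_n})$ is automatically atomless, concentrated on $A$, and $N$-conjugation invariant, and then forms $\int _{gN\in H/N}g\bm{m}_0g^{-1}\, d\bm{m}_{H/N}$ against an invariant mean on the amenable group $H/N$. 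Your final averaging over a transversal of $H$ in $G$ does agree with the paper's last step.
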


\begin{proof}
$(2)\Ra (1)$: Let $(A_n)_{n\in \N}$ enumerate the finite index subgroups of $A$, and for each $n\in \N$ choose a nonidentity element $a_n \in \bigcap _{i\leq n}A_i$. Let $\bm{m}_0$ be an accumulation point of $(\delta _{a_n})_{n\in \N}$ in the space of means on $G$. Then $\bm{m}_0$ concentrates on $A$, and is invariant under conjugation by $N$ since each $g\in N$ commutes with $a_n$ for cofinitely many $n\in \N$. Let $G_0 = \mathrm{comm}_G(A)$ and let $\bm{m}_{G_0/N}$ be a translation-invariant mean on $G_0/N$. Then the mean $\bm{m}_1 = \int _{gN\in G_0/N}g\bm{m}_0g^{-1}\, d\bm{m}_{G_0/N}$ is invariant under conjugation by $G_0$, and $\bm{m}_1$ concentrates on $A$ since each of the means $g\bm{m}_0g^{-1}$, $g\in G_0$, concentrates on $A$. Finally, the mean $\bm{m} = \frac{1}{[G:G_0]}\sum _{gG_0\in G/G_0} g\bm{m}_1g^{-1}$ is invariant under conjugation by $G$ and satisfies $\bm{m}(A)\geq \frac{1}{[G:G_0]}>0$. This also shows the implication $(2')\Ra (1')$.

$(1)\Ra (2)$: We may assume that $A$ has minimal rank among all $\bm{m}$-non-null finitely generated abelian subgroups of $G$. If $g\in G$ is such that $\bm{m}(gAg^{-1} \cap A ) >0$, then $gAg^{-1}\cap A$ has the same rank as $A$, so $gAg^{-1}\cap A$ has finite index in $A$. Let $G_0=\mathrm{comm}_G(A)$ and suppose toward a contradiction that $[G:G_0]=\infty$. Let $(g_n)_{n\in \N}$ be a sequence with $g_iG_0\neq g_jG_0$ for all $i\neq j$. Then, for all $i\neq j$, the group $g_j^{-1}g_iAg_i^{-1}g_j \cap A$ does not have finite index in $A$, hence $0 = \bm{m}(g_j^{-1}g_iAg_i^{-1}g_j \cap A )=\bm{m} (g_iAg_i^{-1}\cap g_jAg_j^{-1} )$. Therefore for all $n >0$ we have $1\geq \bm{m}(\bigcup _{i< n} g_i A g_i^{-1} )  =\sum _{i<n}\bm{m} (g_iAg_i^{-1})=n\cdot \bm{m}(A)$, a contradiction, since $\bm{m}(A)>0$. This shows that $[G:G_0]<\infty$.

Let $N = \{ g\in G_0 \csuchthat [A:A\cap C_G(g)]<\infty \}$ be the kernel of the modular homomorphism $G_0\ra \mathrm{comm}(A)$, and let $\varphi : G_0 \ra G_0/N$ be the projection to the group $G_0/N$. Suppose toward a contradiction that $\varphi (G_0)$ is nonamenable. For a subgroup $B\leq A$ let $N(B) = \{ g \in G_0\csuchthat [B:B\cap C_G(g)] <\infty \}$. Then $N(B)$ is a subgroup of $G_0$ with $N\leq N(B)$ for all $B\leq A$. Since $A$ is a finitely generated abelian group it satisfies the maximal condition on subgroups. Let $B_0\leq A$ be a maximal subgroup from the collection $\{ B\leq A\csuchthat \varphi (N(B)) \text{ is nonamenable}\}$; this collection is nonempty since it contains the trivial group by hypothesis. Since $N(A) = N$, the group $B_0$ has infinite index in $A$, and hence $\bm{m}(B_0)=0$. Observe that for any $a\in A - B_0$, if we let $B_1 = \langle a , B_0 \rangle$, then the group $\varphi (N(B_1))$ is amenable by maximality of $B_0$, and we have $C_{N(B_0)}(a) \leq N(B_1)$, so the group $\varphi (C_{N(B_0)}(a))$ is amenable. Let $Y$ be the saturation of $A - B_0$ under conjugation by $N(B_0)$. Then $\varphi (C_{N(B_0)}(y))$ is amenable for all $y\in Y$. It follows that for all $y\in Y$, the translation action $C_{N(B_0)}(y)\cc G_0/N$ is amenable. In addition, the conjugation action $N(B_0)\cc Y$ is amenable since $\bm{m}(Y)>0$. It follows (using Theorem \ref{thm:isoperim} for example) that the translation action $N(B_0)\cc G_0/N$ is amenable, and hence the group $\varphi (N(B_0))$ is amenable, a contradiction.

For $(1')\Ra (2')$, we take $A$ to have minimal rank among all $\bm{m}$-conull finitely generated abelian subgroups of $G$. Then for all $g\in G$ the group $gAg^{-1}\cap A$ is $\bm{m}$-conull, so $gAg^{-1}\cap A$ has finite index in $A$, and therefore $G = \mathrm{comm}_G(A)$. The rest proceeds as above.
\end{proof}

\subsection{Proof of Theorem \ref{thm:linearstable}}
\begin{proof}[Proof of Theorem \ref{thm:linearstable}]
(1)$\Ra$(2): Suppose first that $G$ is stable as witnessed by the free ergodic action $G\cc (X, \mu )$. Then $G\ltimes (X,\mu )$ admits a stability sequence $(T_n)_{n\in \N}$ \cite{JS87}. Let $\mc{U}$ be a non-principal ultrafilter on $\N$ and for $D\subseteq G$ let $\bm{m} (D) = \lim _{n\ra\mc{U} } \mu ( \{ x\in X\csuchthat T_n(x)\in D \} )$. Then $\bm{m}$ is a conjugation invariant mean on $G$, so $\bm{m}(\mathscr{I}(G))=1$ by Theorem \ref{thm:I(G)}.xiii. We may therefore assume without loss of generality that $(T_n)_{n\in \N}$ is contained in $[\mathscr{I}(G)\ltimes (X,\mu )]$. It follows that every subgroup of $G$ containing $\mathscr{I}(G)$ is stable. Let $N=C_G(\mathscr{I}(G))\mathscr{I}(G)$ and note that $\mathscr{I}(N) = \mathscr{I}(G)$ since $G/N$ is amenable. If $[N: C_G(\mathscr{I}(G))] = \infty$ then the image of $\mathscr{I}(G)$ in the amenable group $G/C_G(\mathscr{I}(G))$ is infinite, so the pair $(G/C_G(\mathscr{I}(G)) , N/C_G(\mathscr{I}(G)))$ does not have property (T), and hence $(G,\mathscr{I}(G))$ does not have property (T). We may therefore assume that $[N: C_G(\mathscr{I}(G))]<\infty$. This implies that the group $Z(N) = C_G(\mathscr{I}(G))\cap \mathscr{I}(G)$ has finite index in $\mathscr{I}(G)= \mathscr{I}(N)$. Suppose toward a contradiction that $(G,\mathscr{I}(G))$ has property (T). Since the group $G/N$ is amenable, the pair $(N, \mathscr{I}(G))$ has property (T), hence $(N,Z(N))$ has property (T) \cite{Jo05}. The group $N$ is stable since $\mathscr{I}(G)\leq N$, so Theorem 1.1.(2) of \cite{Ki13a} implies that $N/Z(N)$ is stable, and in particular $N/Z(N)$ is inner amenable \cite{JS87}. This is a contradiction since, by Theorem \ref{thm:I(G)}, every conjugation invariant mean on $N/Z(N)$ concentrates on the group $\mathscr{I}(N/Z(N)) = \mathscr{I}(N)/Z(N)$, which is finite. We conclude that the pair $(G , \mathscr{I}(G) )$ does not have property (T).

(2)$\Ra$(1): Assume that $(G,\mathscr{I}(G))$ does not have property (T). It follows that $\mathscr{I}(G)$ is infinite. Let $L= C_G(\mathscr{I}(G))$, let $M=\mathscr{I}(G)$, and let $N=LM$ so that $Z(N) = L\cap M$. The group $G/N$ is amenable by Theorem \ref{thm:I(G)}. If $[N:L]=\infty$ then hypothesis {\bf (H1)} holds, so $G$ is stable by Theorem \ref{thm:extension2}. If $[N:L]<\infty$ then $(N,Z(N))$ does not have property (T), since $(N,\mathscr{I}(G))$ does not have property (T) and $[\mathscr{I}(G):Z(N) ] =[N:L]$. This shows that hypothesis {\bf (H2)} holds, so $G$ is stable by Theorem \ref{thm:extension2}.
\end{proof}

\begin{remark}\label{rem:final}
It follows from Theorems \ref{thm:I(G)}, \ref{thm:linear}, and \ref{thm:linearstable}, that a linear group $G$ is inner amenable but not stable if and only if the group $\mathscr{I}(G)$ is infinite and is finite index over its center $C = C_G(\mathscr{I}(G))\cap \mathscr{I}(G)$, and the the pair $( C_G(\mathscr{I}(G)), C)$ has property (T).
\end{remark}

\subsection{Groups of piecewise projective homeomorphisms} Justin Moore has observed that an adaptation of the arguments of Brin-Squier \cite{BS85} and Monod \cite{Mo13} shows the following

\begin{lemma}\label{lem:Justin}
Let $G$ be a countable subgroup of $H(\R )$. Then the second derived subgroup $G''$ is either abelian or doubly asymptotically commutative.
\end{lemma}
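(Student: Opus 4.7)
The plan is to follow the Brin--Squier / Monod strategy of exhibiting non-commuting pairs in $G''$ whose supports can be pushed into disjoint regions of $\R$. First I would establish that every $h \in G''$ has compact support in $\R$: the germ map at $\infty$ is a homomorphism $H(\R) \to \mathrm{Stab}_{\mathrm{PSL}_2(\R)}(\infty) \cong \mathrm{Aff}^+(\R)$, whose target is metabelian, so $H(\R)''$ is contained in the kernel, which is the subgroup of compactly supported elements of $H(\R)$. Consequently every element of $G''$ is the identity outside some bounded interval of $\R$, and I would record the trivial observation that two elements of $H(\R)$ with disjoint supports commute.

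Next, assume $G''$ is non-abelian and pick $c, d \in G''$ with $[c,d] \neq 1$; let $I \subset \R$ be a compact interval containing $\mathrm{supp}(c) \cup \mathrm{supp}(d)$. The crux is to produce a sequence $(\gamma_n) \subset G$ so that $\gamma_n \cdot I$ escapes every compact subset of $\R$. Granting this, set $c_n := \gamma_n c \gamma_n^{-1}$, $d_n := \gamma_n d \gamma_n^{-1}$; normality of $G''$ places them in $G''$, $[c_n,d_n] = \gamma_n[c,d]\gamma_n^{-1} \neq 1$, and for any fixed $h \in G''$ the compact set $\mathrm{supp}(h)$ is eventually disjoint from $\mathrm{supp}(c_n) = \gamma_n \cdot \mathrm{supp}(c)$ and $\mathrm{supp}(d_n) = \gamma_n \cdot \mathrm{supp}(d)$, so $c_n$ and $d_n$ commute with $h$ for cofinitely many $n$. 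This would establish double asymptotic commutativity.

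The main obstacle is constructing the pushing sequence $(\gamma_n)$, and here the argument splits according to the image of $G$ under the germ map. If some $\gamma \in G$ has germ at $\infty$ equal to a hyperbolic affine map (translation with nonzero shift or dilation with factor $\neq 1$), then after first translating $I$ into the attracting basin of $\infty$ using any auxiliary element that acts nontrivially there, the iterates $\gamma^n$ drive $I$ off to $\infty$. If instead every element of $G$ has trivial germ at $\infty$ (so $G$ itself consists of compactly supported homeomorphisms), one falls back on the genuine Brin--Squier ``room to spare'' argument, producing within $G''$ non-commuting pairs with supports in arbitrarily small subintervals; the piecewise projective structure supplies the required non-trivial dynamics on every open interval where $G$ acts non-trivially, analogously to how the argument proceeds in Thompson's $F$. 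Formally, the statement one extracts is the subclaim: for every finite $F \subseteq G''$ there exist non-commuting $c, d \in G''$ with $\mathrm{supp}(c) \cup \mathrm{supp}(d)$ disjoint from $\bigcup_{h \in F}\mathrm{supp}(h)$.

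Finally, once the subclaim is in hand, a diagonal enumeration closes the argument: list $G'' = \{h_1, h_2, \ldots\}$, and apply the subclaim to $F_n = \{h_1,\ldots,h_n\}$ to choose $(c_n, d_n)$. By construction $[c_n,d_n] \neq 1$ for all $n$, and each fixed $h_k$ commutes with $c_n, d_n$ for all $n \geq k$, which is precisely the definition of double asymptotic commutativity. The hardest step is the Brin--Squier-type subclaim in the trivial-germ case, which requires carefully exploiting the piecewise $\mathrm{PSL}_2(\R)$ structure to ensure that shrinking the supports of $c$ and $d$ by conjugation does not force them to commute.
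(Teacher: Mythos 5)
Your outer skeleton is exactly right and matches the paper's: elements of $G''$ have controlled support, disjointly supported homeomorphisms commute, $G''$ is normal so conjugates of a non-commuting pair stay in $G''$, and a subclaim of the form \emph{for every finite $F\subseteq G''$ there is a non-commuting pair in $G''$ whose supports avoid $\bigcup_{h\in F}\mathrm{supp}(h)$} yields double asymptotic commutativity by diagonalization. The genuine gap is in the only step that carries mathematical content: the construction of the pushing elements. Your dichotomy on germs at $\infty$ is the wrong one. Supports of elements of $G''$ are contained in $U=\bm{\mathrm{P}}^1\setminus\mathrm{fix}(G)$, and no element of $G$ can move a set out of its connected component of $U$. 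So if the supports of $c,d$ lie in a \emph{bounded} component $V$ of $U$ (which is perfectly compatible with some other $\gamma\in G$ having a nontrivial germ at $\infty$ -- e.g.\ $G=\langle a,b\rangle\times\langle t\rangle$ with $a,b$ supported in $(0,1)$ and $t$ a translation near $\infty$), then no sequence $\gamma_n\cdot I$ escapes compact sets, and the ``auxiliary element translating $I$ into the attracting basin of $\infty$'' cannot exist: points of $\mathrm{fix}(G)$ separate $I$ from $\infty$. The correct statement, which the paper isolates and proves, is purely internal to $U$: for finitely generated $G$, every compact $K\subseteq U$ satisfies $g(K)\cap K=\emptyset$ for some $g\in G$. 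This is proved by induction on the (finitely many) components of $U$, the base case being the argument that $\sup_{g\in G}g(p)=\sup U$ for $p\in U$; the inductive step combines a pushing element for one component with one for the complementary components.

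Two further points. First, in your ``trivial germ at $\infty$'' case you simply invoke ``the genuine Brin--Squier room-to-spare argument'' -- but that argument \emph{is} the hardest step (it is precisely the claim above), so deferring to it leaves the core of the lemma unproved; note also that it needs finite generation of $G$ to get finitely many components of $U$, after which the countable case follows because $G''=\bigcup_n G_n''$ for an exhaustion by finitely generated $G_n$ and double asymptotic commutativity passes to directed unions -- a reduction your write-up never addresses. Second, your germ argument at $\infty$ only gives that elements of $G''$ have bounded support in $\R$; the pushing argument needs the stronger fact (Monod's Lemma 14, used in the paper) that the \emph{closure} of the support of each element of $G''$ is a compact subset of $U$, i.e.\ bounded away from every point of $\mathrm{fix}(G)$. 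This follows by running the same metabelian-germ argument at each fixed point of $G$, not just at $\infty$; without it, the conjugated supports could accumulate at a boundary point of a component and the disjointness conclusion fails.
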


\begin{proof} Assume first that $G$ is finitely generated. Then the set $U = \bm{\mathrm{P}}^1\setminus \mathrm{fix}(G)$ has finitely many connected components, each of which is an open interval. If $V\subseteq U$ is a union of a subset of these connected components then let $\varphi _{V}:G \ra H(\R )$ denote the homomorphism which sends $g\in G$ to the map $\varphi _V(g)$ which coincides with $g$ on $V$ and which is the identity elsewhere. In what follows, we fix an orientation of $\bm{\mathrm{P}}^1\setminus \{ \infty \}$.

\begin{claim}\label{claim:push}
For any compact subset $K\subseteq U$ there exists an element $g\in G$ such that $g(K)\cap K =\emptyset$.
\end{claim}

\begin{proof}[Proof of Claim \ref{claim:push}]
By induction on the number $n$ of connected components of $U$. If $n=1$ then it suffices to show that for any $p\in U$ we have $\sup _{g\in G}g(p) = \sup U$. Suppose otherwise and let $q= \sup _{g\in G}g(p) < \sup U$. Then $q\in U$, so we may find some $g\in G$ with $g(q)\neq q$, and after replacing $g$ by $g^{-1}$ if necessary we may assume that $g(q)>q$. If $(g_n)_{n\in \N}$ is any sequence in $G$ with $g_n(p)\ra q$ then $q\geq g(g_n(p))\ra g(q)>q$, a contradiction. Assume now that $U$ has $n+1$ connected components and fix one such component $V$. After making $K$ larger if necessary we may assume that $K\cap V$ is a closed interval. Apply the base of the induction to the group $\varphi _{V} (G)$ to obtain a group element $h\in G$ with $h(K\cap V)$ disjoint from $K\cap V$. Since $K\cap V$ is an interval, after replacing $h$ by $h^{-1}$ if necessary, this means that $\inf h(K\cap V) > \sup (K\cap V)$. Let $L = (K\setminus V) \cup h(K\setminus V)$. Then $L$ is a compact subset of $U\setminus V$, so we may apply the induction hypothesis to the group $\varphi _{U\setminus V}(G)$ to obtain a group element $f\in G$ satisfying $f(L)\cap L =\emptyset$. After replacing $f$ by $f^{-1}$ if necessary we may assume that $f(p)\geq p$, where $p= \inf h(K\cap V)$. Take $g=fh$. Then $\inf g(K\cap V) = f(p)\geq p >\sup (K\cap V)$, so $g(K\cap V)$ is disjoint from $K\cap V$. In addition, $g(K\setminus V)\cap (K\setminus V) =f(h(K\setminus V))\cap (K\setminus V) \subseteq f(L)\cap L = \emptyset$. Since $V$ is $G$-invariant, this shows that $g(K)\cap K=\emptyset$.
\qedhere[Claim \ref{claim:push}]
\end{proof}

Assume that $G''$ is nonabelian and fix two non-commuting elements $c_0,d_0\in G''$. As shown in Lemma 14 of \cite{Mo13}, the closure of the support of any element of $G''$ is a compact subset of $U$. Fix a finite subset $Q\subseteq G''$ and let $K$ be the union of the closures of the supports of all elements of $Q\cup \{ c_0,d_0 \}$. Apply the claim to find an element $g\in G$ with $g(K)\cap K=\emptyset$. Let $c=gc_0g^{-1}$ and let $d=gd_0g^{-1}$ so that $c,d\in G''$ and $cd\neq dc$. Then $c$ and $d$ both commute with each element of $Q$ since the support of $c$ and of $d$ are disjoint from the support of each element of $Q$. This shows that $G''$ is doubly asymptotically commutative.

When $G$ is not finitely generated we may write $G$ as an increasing union $G=\bigcup _n G_n$ with each $G_n$ finitely generated. Then $G''=\bigcup _n G_n''$, so if $G''$ is nonabelian then $G_n''$ is nonabelian for all large enough $n$. Now note that double asymptotic commutativity is preserved by directed unions.
\qedhere
\end{proof}

\begin{proof}[Proof of Theorem \ref{thm:HR}]
By \cite{Mo13}, $H(\R )$ is torsionfree, so any nontrivial amenable subgroup of $H(\R )$ is infinite, hence stable. If $G$ is a nonamenable subgroup of $H(\R )$ then $G''$ is nonabelian, hence doubly asymptotically commutative by Lemma \ref{lem:Justin}. Hypothesis {\bf (H6)} holds, using the short exact sequence $1\ra G''\ra G\ra G/G'' \ra 1$, hence Theorem \ref{thm:extension2} shows that $G$ is stable.
\end{proof}

\noindent{\bf Acknowledgements:} The author warmly thanks Justin Moore for explaining Lemma \ref{lem:Justin}, Adrian Ioana for helpful conversations and for allowing the inclusion of the joint Theorem \ref{thm:superrigid1}, and Yoshikata Kida for pointing out {\bf (H4)} from Theorem \ref{thm:extension2} and Corollary \ref{cor:gBS} and for a number of insightful comments on an earlier draft of this paper which helped improve the statement of Theorem \ref{thm:extension2}. The author was supported by NSF grant DMS 1303921.

\appendix
\appendixpage

\section{Pseudocost} \label{sec:pseudocost}

Pseudocost is a modification of cost which was defined and studied by the author in \cite{T-D12c} in order to extend several properties of cost for finitely generated groups to the non-finitely generated setting. The pseudocost of a p.m.p.\ equivalence relation $\mc{R}$ on $(X,\mu )$ is defined as $\mathscr{PC}(\mc{R})=\inf _{(\mc{R}_n)}\liminf _n \mathscr{C}(\mc{R}_n)$, where the infimum is taken over all increasing exhaustive sequences $(\mc{R}_n)_{n\in \N}$ of subequivalence relations of $\mc{R}$. For a group $G$, the values $\mathscr{PC}(G)$ and $\mathscr{PC}^*(G)$ are defined to be the infimum and supremum respectively, of the pseudocosts of orbit equivalence relations generated by free p.m.p.\ actions of $G$. The inequality $\mathscr{PC}(\mc{R})\leq \mathscr{C}(\mc{R})$ always holds, and it is shown in \cite{T-D12c} that equality holds whenever $\mathscr{C}(\mc{R})$ is finite (hence $\mathscr{PC}(G)=\mathscr{C}(G)$ and $\mathscr{PC}^*(G)=\mathscr{C}^*(G)$ whenever $G$ is finitely generated), and that $\mathscr{PC}(\mc{R})=1$ if and only if $\mathscr{C}(\mc{R})=1$. Using Gaboriau's inequality $\beta ^{(2)}_1(\mc{R}) \leq \mathscr{C}(\mc{R}) -1$ from \cite{Ga02}, it is easy to see that in fact $\beta ^{(2)}_1(\mc{R})\leq \mathscr{PC}(\mc{R}) -1$.

\begin{proposition}\label{prop:PC}
Let $G$ be a countable group and suppose that $\mathscr{PC}(G)>r$. Then there exists a finitely generated subgroup $G_0\leq G$ such that $\mathscr{PC}(H)>r$ for all intermediate subgroups $G_0\leq H \leq G$. In particular, if $(H_n)_{n\in \N}$ is a sequence of subgroups of $G$ with $G=\liminf _n H_n$ then $\mathscr{PC}(G)\leq \liminf _n \mathscr{PC}(H_n)$.
\end{proposition}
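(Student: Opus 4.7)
The plan is to prove the main statement by contradiction, and then derive the \emph{``in particular''} statement as a short formal consequence. Suppose toward contradiction that $\mathscr{PC}(G)>r$ yet no such $G_0$ exists; that is, for every finitely generated subgroup $G_0\leq G$ there exists an intermediate subgroup $G_0\leq H\leq G$ with $\mathscr{PC}(H)\leq r$. The goal is to derive $\mathscr{PC}(G)\leq r$. Enumerate $G=\{g_0,g_1,\ldots\}$, set $F_n=\langle g_0,\ldots,g_{n-1}\rangle$, and apply the hypothesis to each $F_n$ to obtain subgroups $F_n\leq H_n\leq G$ with $\mathscr{PC}(H_n)\leq r$; in particular $G=\liminf_n H_n$.

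Fix $\varepsilon>0$. For each $n$, unpack $\mathscr{PC}(H_n)\leq r$ by selecting a free p.m.p.\ action $H_n\cc (Y_n,\nu_n)$ and an increasing exhaustive sequence $(\mc{S}^n_k)_k$ of sub-equivalence relations of $\mc{R}^{H_n}_{Y_n}$ with $\liminf_k \mathscr{C}(\mc{S}^n_k)\leq r+\varepsilon/2$. Since $F_n$ is finite and $\bigcup_k[\mc{S}^n_k]$ contains $H_n\supseteq F_n$, I can pick an index $k_n$ with $F_n\subseteq [\mc{S}^n_{k_n}]$ and $\mathscr{C}(\mc{S}^n_{k_n})\leq r+\varepsilon$. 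Coinduce each action to obtain a free p.m.p.\ $G$-action $\tilde Y_n:=\mathrm{CoInd}_{H_n}^G(Y_n)$, equipped with the canonical $H_n$-equivariant evaluation map $\pi_n:\tilde Y_n\to Y_n$ at the identity coset. Lift $\mc{S}^n_{k_n}$ to a sub-equivalence relation $\tilde{\mc{S}}_n$ of $\mc{R}^{H_n}_{\tilde Y_n}$ by letting the same finitely many $H_n$-elements from the graphing of $\mc{S}^n_{k_n}$ act on $\tilde Y_n$; this preserves the cost bound $\mathscr{C}(\tilde{\mc{S}}_n)\leq r+\varepsilon$ as well as the property $F_n\subseteq[\tilde{\mc{S}}_n]$.

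The core technical step, and the main obstacle, is to assemble the family $\{\tilde{\mc{S}}_n\}$ into a single free p.m.p.\ $G$-action $(X,\mu)$ together with an honest increasing exhaustive sequence $(\mc{R}_n)$ of sub-equivalence relations of $\mc{R}^G_X$ whose liminf cost is bounded by $r+\varepsilon$. My plan is to take the diagonal product $(X,\mu)=\prod_n(\tilde Y_n,\tilde\nu_n)$ and lift each $\tilde{\mc{S}}_n$ to a sub-equivalence relation of $\mc{R}^{H_n}_X$ whose graphing acts trivially on all factors $m\neq n$, so that the cost bound survives. Then apply the standard diagonalization technique from the theory of pseudocost developed in \cite{T-D12c} -- in particular the equivalence-relation inequality $\mathscr{PC}\bigl(\bigcup_n\mc{R}_n\bigr)\leq\liminf_n\mathscr{PC}(\mc{R}_n)$ for increasing exhaustive sequences -- to successively enlarge the (\emph{a priori} non-increasing) sequence $(\tilde{\mc{S}}_n)$ into an increasing chain $(\mc{R}_n)$ in $\mc{R}^G_X$ with $\mc{R}_n\supseteq\tilde{\mc{S}}_n$ and with only a small $\varepsilon$-order additional cost introduced at each step. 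The bookkeeping here is the most delicate part: one must simultaneously ensure monotonicity of $(\mc{R}_n)$, retention of $F_n\subseteq[\mc{R}_n]$ (so that $\bigcup_n\mc{R}_n=\mc{R}^G_X$, since $\bigcup_n F_n=G$), and control of the cost bound. Once accomplished, $\mathscr{PC}(\mc{R}^G_X)\leq r+2\varepsilon$; letting $\varepsilon\to 0$ produces a free p.m.p.\ $G$-action with pseudocost at most $r$, contradicting $\mathscr{PC}(G)>r$.

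Finally, the \emph{``in particular''} statement is immediate. Given $G=\liminf_n H_n$, fix any $r<\mathscr{PC}(G)$ and apply the main statement to obtain a finitely generated subgroup $G_0\leq G$ such that $\mathscr{PC}(H)>r$ for every intermediate $H$. Because each of the finitely many generators of $G_0$ lies in cofinitely many $H_n$, we have $G_0\leq H_n$ for all sufficiently large $n$; hence $\mathscr{PC}(H_n)>r$ for such $n$, so $\liminf_n\mathscr{PC}(H_n)\geq r$. Letting $r\nearrow\mathscr{PC}(G)$ yields $\mathscr{PC}(G)\leq\liminf_n\mathscr{PC}(H_n)$.
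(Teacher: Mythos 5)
Your proposal follows the paper's opening moves (contradiction, an exhaustion by finitely generated subgroups, intermediate subgroups of pseudocost at most $r$, coinduction and a diagonal product to land in a single free $G$-action), but it breaks down at two points, and the second one is precisely where the real content of the proposition lies. First, the step ``since $\bigcup_k[\mc{S}^n_k]$ contains $H_n\supseteq F_n$, I can pick $k_n$ with $F_n\subseteq[\mc{S}^n_{k_n}]$'' is false as stated: exhaustiveness of an increasing sequence $(\mc{S}^n_k)_k$ means only that almost every \emph{pair} of $\mc{R}^{H_n}_{Y_n}$ lies in some $\mc{S}^n_k$, with the index depending on the pair. The full group of the union is in general strictly larger than the union of the full groups; for a fixed $g\in F_n$ the graph of $g$ need not be contained (mod null) in any single $\mc{S}^n_k$. (Example: $\mc{S}_k=\Delta\cup\{(x,y)\in\mc{R}\,:\,x,y\in A_k\}$ with $A_k\uparrow X$, $\mu(A_k)<1$.) This particular gap is repairable by adjoining to $\mc{S}^n_{k_n}$ the small leftover pieces of the graphs of the finitely many generators, at an arbitrarily small additional cost, but you did not do this, and the claim as written fails.

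The more serious gap is the assembly step, which you explicitly defer to ``bookkeeping.'' Forcing monotonicity by replacing $\mc{R}_n$ with $\mc{R}_{n-1}\vee\wh{\mc{S}}_n$ makes the cost accumulate linearly in $n$ (each join can add up to the full cost of the previous relation), and the inequality $\mathscr{PC}(\bigcup_n\mc{R}_n)\leq\liminf_n\mathscr{PC}(\mc{R}_n)$ you invoke presupposes that you already have an increasing exhaustive sequence --- it does not produce one. The paper's proof resolves exactly this difficulty with a sandwiching device (Lemma 6.14.(4) of the pseudocost paper): inductively one chooses a \emph{finitely generated} group $H_n$ with $\langle G_n,H_{n-1}\rangle\leq H_n\leq G_n'$ and a relation $\mc{R}_n$ with $\mc{R}^{\langle G_n,H_{n-1}\rangle}_{X_n}\subseteq\mc{R}_n\subseteq\mc{R}^{H_n}_{X_n}$ and $\mathscr{C}(\mc{R}_n)<r+1/n$. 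The upper sandwich bound $\mc{R}^{H_n}$ at stage $n$ feeds into the lower sandwich bound at stage $n+1$, so after pulling back through the coordinate projections of the diagonal product the sequence $(\wh{\mc{R}}_n)$ is automatically increasing and exhaustive with no cost accumulation whatsoever. This interleaving of finitely generated ``anchor'' groups with sandwiched relations is the key idea missing from your argument; without it, or some substitute for it, your proof does not close. (Your derivation of the ``in particular'' statement from the main statement is correct, and is the same short argument the paper intends.)
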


\begin{proof}
Suppose there is no such subgroup $G_0$. Then we may find an increasing exhaustive sequence $(G_n)_{n\geq 1}$ of finitely generated subgroups of $G$ and for each $n\geq 1$ an intermediate subgroup $G_n\leq G_n'\leq G$ with $\mathscr{PC}(G_n')\leq r$. For each $n\geq 1$ let $G_n' \cc (X_n, \mu _n )$ be a free p.m.p.\ action with $\mathscr{PC}(\mc{R}^{G_n'}_{X_n} )\leq r$. Then, setting $H_{0}=1$, by Lemma 6.14.(4) of \cite{T-D12c}, for each $n\geq 1$ we may find a finitely generated group $H_n$ with $\langle G_n, H_{n-1}\rangle \leq H_n\leq G_n'$, along with an equivalence relation $\mc{R}_n$ with $\mc{R}_{X_n}^{\langle G_n,H_{n-1}\rangle}\subseteq \mc{R}_n\subseteq \mc{R}_{X_n}^{H_n}$ and $\mathscr{C}(\mc{R}_n)<r+ 1/n$. Let $G\cc (X,\mu ) = \prod _n (X_n,\mu _n )^{G/G_n'}$ be the diagonal product of the coinduced actions. For each $n\geq 1$ the action $G_n' \cc (X,\mu )$ factors onto $G_n'\cc (X_n, \mu _n )$ via the projection $p_n : (X,\mu ) \ra (X_n , \mu _n )$, so the equivalence relation $\wh{\mc{R}}_n = \mc{R}_X^{G_n'}\cap (p_n\times p_n )^{-1}(\mc{R}_n)$ satisfies $\mc{R}_X^{\langle G_n,H_{n-1} \rangle }\subseteq \wh{\mc{R}}_n \subseteq \mc{R}_X^{H_n}$ and $\mathscr{C}(\wh{\mc{R}}_n)\leq \mathscr{C}(\mc{R}_n)<r+1/n$. It follows that $\mathscr{PC}(G)\leq \mathscr{PC}(\mc{R}^G_X)\leq \liminf _n \mathscr{C}(\wh{\mc{R}}_n) \leq r$, a contradiction.
\end{proof}

The next proposition is a minor modification of arguments due to Furman, Gaboriau, and Kechris (see \cite[VI.24.(3)]{Ga00} and \cite[Lemma 24.7, Proposition 35.4]{KM04}).

\begin{proposition}\label{prop:Furman1} Let $\mc{S}$ be a $wq$-normal (see \S\ref{sec:groupoid}) subequivalence relation of the p.m.p.\ equivalence relation $\mc{R}$ on $(X,\mu )$. Then $\mathscr{PC}(\mc{R})\leq \mathscr{PC}(\mc{S})$.
\end{proposition}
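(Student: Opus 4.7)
The plan is to adapt the classical Gaboriau--Furman inequality $\mathscr{C}(\mc{R}) \leq \mathscr{C}(\mc{S})$ for $q$-normal inclusions to the pseudocost setting, being careful about the finite approximations $\mc{S}_m$ appearing in the definition of $\mathscr{PC}$. First I would reduce to the case where $\mc{S}$ is $q$-normal in $\mc{R}$, via transfinite induction along a $wq$-normal chain $\mc{S} = \mc{S}_0 \subseteq \mc{S}_1 \subseteq \cdots \subseteq \mc{S}_\lambda = \mc{R}$ (taking a countable cofinal subchain by separability if $\lambda$ is uncountable). At countable limit ordinals I would use that pseudocost is lower semicontinuous under increasing unions: if $\mc{T} = \bigcup _n \mc{T}_n$ with $(\mc{T}_n)$ increasing, then $\mathscr{PC}(\mc{T}) \leq \liminf _n \mathscr{PC}(\mc{T}_n)$, which follows by diagonalizing the exhausting sequences witnessing each $\mathscr{PC}(\mc{T}_n)$.

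For the successor step, the key lemma is: if $\mc{S}$ is $q$-normal in $\mc{R}$, then $\mathscr{PC}(\mc{R}) \leq \mathscr{PC}(\mc{S})$. Fix $c > \mathscr{PC}(\mc{S})$, enumerate a generating family $\{\phi _i\}_{i \in \N} \subseteq Q_{\mc{R}}(\mc{S})$ so that $\mc{R} = \langle \mc{S}, \{\phi _i\}_{i \in \N}\rangle$, and fix an exhausting sequence $\mc{S}_0 \subseteq \mc{S}_1 \subseteq \cdots$ of subequivalence relations of $\mc{S}$ with $\liminf _m \mathscr{C}(\mc{S}_m) < c$. I would construct an exhausting sequence $\mc{R}_n \nearrow \mc{R}$ with $\liminf _n \mathscr{C}(\mc{R}_n) < c$, which gives $\mathscr{PC}(\mc{R}) \leq c$.

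The construction uses the Gaboriau absorption trick. For each $n$, choose $m = m(n)$ with $\mathscr{C}(\mc{S}_m) < c$ and an integer $N = N(n)$ tending to infinity. The $q$-normalizer condition $\phi _i \in Q_{\mc{R}}(\mc{S})$ says that $(\mc{S}_A)^{\phi _i} \cap \mc{S}_{(\phi _i ^0)^{-1}A}$ has infinite measure for every non-null $A \subseteq \ran (\phi _i ^0)$, and since $\mc{S} = \bigcup _m \mc{S}_m$ this recurrence is already visible in $\mc{S}_m$ for $m$ sufficiently large. This lets me partition $\dom (\phi _i ^0)$, up to a null set, into $N$ pieces $B_i^1 , \dots , B_i^N$ of equal measure so that for each $k \geq 2$ there exist $\sigma _i ^k , \tau _i ^k \in [[\mc{S}_m]]$ with $\phi _i |_{B_i^k} = \sigma _i ^k \circ \phi _i |_{B_i^1} \circ \tau _i ^k$. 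Setting $\mc{R}_n = \langle \mc{S}_m, \phi _1 |_{B_1^1}, \dots , \phi _n |_{B_n^1}\rangle$ then yields
\[
\mathscr{C}(\mc{R}_n) \leq \mathscr{C}(\mc{S}_m) + \sum _{i \leq n} \frac{\mu (\dom (\phi _i ^0))}{N} < c
\]
once $N$ is chosen large, and each $\phi _i$ lies in $\mc{R}_n$ for $n \geq i$ by the decomposition, so $\bigcup _n \mc{R}_n = \mc{R}$.

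The main obstacle is the measurable partitioning step: extracting, at a finite stage $m$, the $N$ pairwise disjoint local sections in $[[\mc{S}_m]]$ needed to realize each $\phi _i |_{B_i^k}$ as a twisted composition through $\phi _i |_{B_i^1}$. This is a measurable-selection / exhaustion argument in the style of \cite{KM04} and \cite{Ga00}, built greedily from the infinite-measure clause in the definition of $Q_{\mc{R}}(\mc{S})$ combined with $\mc{S}_m \nearrow \mc{S}$ and countable additivity of $\mu$. Once this decomposition is in place, the cost estimate above is routine and the pseudocost bound follows.
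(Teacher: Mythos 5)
Your overall strategy (reduce to the $q$-normal case, then absorb the normalizing partial isomorphisms at negligible cost) is the right one, but the key step of your $q$-normal argument rests on a claim that is false as stated. You assert that, because $\mc{S}=\bigcup_m\mc{S}_m$, the recurrence expressed by $\phi_i\in Q_{\mc{R}}(\mc{S})$ ``is already visible in $\mc{S}_m$ for $m$ sufficiently large.'' The relation that must be aperiodic for your $N$-fold decomposition is $\mc{E}_{i,m}=(\mc{S}_m)_{A_i}\cap \big( (\mc{S}_m)_{B_i}\big) ^{\phi_i}$, i.e.\ pairs $(x,y)$ with $(x,y)\in\mc{S}_m$ \emph{and} $(\phi_i x,\phi_i y)\in\mc{S}_m$, where $A_i , B_i$ are the domain and range of $\phi _i$. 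The $q$-normalizer condition only guarantees that the increasing union $\bigcup_m \mc{E}_{i,m}$ is aperiodic: each individual $\mc{E}_{i,m}$ may have finite --- even trivial --- classes for every $m$, with infinite classes appearing only in the limit. Consequently no finite stage supports your partition of $\dom(\phi_i^0)$ into $N$ equal-measure pieces $B_i^k$ with $\phi_i|_{B_i^k}=\sigma_i^k\circ\phi_i|_{B_i^1}\circ\tau_i^k$ and $\sigma_i^k,\tau_i^k\in[[\mc{S}_m]]$; and even on a set where $\mc{E}_{i,m}$-classes are large but finite, an exact $N$-fold equidecomposition fails for divisibility reasons and the leftover set need not be small. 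The source of the trouble is your insistence that each term $\mc{R}_n$ of the new exhaustion contain the \emph{full} graph of $\phi_i$ for $i\leq n$: that is precisely what forces the recurrence down to a finite stage. (A secondary issue: your one-line justification of lower semicontinuity of $\mathscr{PC}$ under increasing unions by ``diagonalizing'' is too quick, since the exhaustions of the various $\mc{T}_n$ are not nested in one another and the diagonal sequence need not be increasing; the property is true, but it needs the intermediate-subrelation machinery of the cited pseudocost paper --- the same background the paper's terse reduction to the $q$-normal case relies on.)

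The paper's proof shows that your problematic requirement is unnecessary. There, the absorption is performed once, inside the full relation $\mc{S}$: since $\mc{E}_i=\mc{S}_{A_i}\cap\mc{S}^{\phi_i}_{B_i}$ \emph{is} aperiodic, it admits a complete section $C_i\subseteq A_i$ with $\mu(C_i)<\epsilon/2^i$, and the path $x\to y\to\phi_i(y)\to\phi_i(x)$, with $y\in C_i$ and $(x,y)\in\mc{E}_i$, shows that $\mc{R}=\langle\mc{S}\cup\{\phi_i|C_i \csuchthat i\in\N\}\rangle$. One then adds these \emph{fixed} small partial isomorphisms to every term of an arbitrary exhaustion $(\mc{S}_j)_{j\in\N}$ of $\mc{S}$: the relations $\mc{R}_j=\langle\mc{S}_j\cup\{\phi_i|C_i\csuchthat i\in \N \}\rangle$ increase, satisfy $\mathscr{C}(\mc{R}_j)\leq\mathscr{C}(\mc{S}_j)+\sum_i\mu(C_i)<\mathscr{C}(\mc{S}_j)+\epsilon$, and exhaust $\mc{R}$ because exhaustiveness only requires the union $\bigcup_j\mc{R}_j$ --- which contains $\mc{S}$ and every $\phi_i|C_i$ --- to contain each $\phi_i$, not any individual term. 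Letting $\epsilon\ra 0$ gives $\mathscr{PC}(\mc{R})\leq\mathscr{PC}(\mc{S})$. Your route could in principle be patched by working on the sets where $\mc{E}_{i,m}$-classes have at least $M\gg N$ elements and controlling leftovers as $m,M\ra\infty$, but the clean finite-stage statement you invoke is simply not available, and the paper's device makes it unnecessary.
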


\begin{proof}
It suffices to deal with the case where $\mc{S}$ is $q$-normal in $\mc{R}$. Then we may find a countable subset $(\phi _n)_{n\geq 1}$ of $Q_{\mc{R}}(\mc{S})$ which generates $\mc{R}$. Let $A_n$ and $B_n$ denote the domain and range respectively of $\phi _n$. Given $\epsilon >0$, for each $n\geq 1$ the equivalence relation $\mc{S}_{B_n}^{\phi}\cap \mc{S}_{A_n}$ on $A_n$ is aperiodic, so it has a measurable complete section $C_n\subseteq A_n$ with $\mu (C_n)<\epsilon /2^n$. Then for each $n\geq 1$ and $x\in A_n$ there exists a path from $x$ to $\phi _n(x)$ in $\mc{S} \cup \{ (y, \phi _n (y) ) \csuchthat y\in C_n \}$, namely there is some $y\in C_n$ with $(x,y)\in \mc{S}_{B_n}^{\phi}\cap \mc{S}_{A_n}$, so that $(x,y), (\phi _n (x),\phi _n (y))\in \mc{S}$, and hence the path from $x$ to $y$ to $\phi _n (y)$ to $\phi _n (x)$ works. It follows that $\mc{R}$ is generated by $\mc{S}$ along with $( \phi _n |C_n )_{n\in \N}$. Therefore, any increasing exhaustion $(\mc{S}_i)_{i\in \N}$ of $\mc{S}$ gives rise to a corresponding exhaustion $(\mc{R}_i)_{i\in \N}$ of $\mc{R}$ with $\mathscr{C}(\mc{R}_i) \leq \mathscr{C}(\mc{S}_i)+ \sum _{n\in \N} \mu (C_n) < \mathscr{C}(\mc{S}_i)+\epsilon$, and hence $\mathscr{PC}(\mc{R}) \leq \mathscr{PC}(\mc{S}) + \epsilon$. Letting $\epsilon \ra 0$ completes the proof.
\end{proof}

\begin{proposition}\label{prop:Furman2} Let $H$ be a $wq$-normal subgroup of $G$. Then for any free p.m.p.\ action $G\cc (X,\mu )$, we have $\mathscr{PC}(\mc{R}^G_X)\leq \mathscr{PC}(\mc{R}^H_X)$. In addition, we have $\mathscr{PC}^*(G)\leq \mathscr{PC}^*(H)$ and $\mathscr{PC}(G)\leq \mathscr{PC}(H)$.
\end{proposition}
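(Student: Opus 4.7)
The plan is to deduce all three inequalities from Proposition~\ref{prop:Furman1} together with coinduction and the factor-map monotonicity of pseudocost. The central observation, which occupies the bulk of the work, is that group-level $wq$-normality passes to groupoid-level $wq$-normality of the associated orbit equivalence relations.

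First I would establish the following key lemma: \emph{if $H\leq_{wq}G$ and $G\cc(X,\mu)$ is any free p.m.p.\ action, then $\mc{R}^H_X\leq_{wq}\mc{R}^G_X$ as discrete p.m.p.\ groupoids.} By transfinite induction along the chain witnessing $wq$-normality at the group level, it suffices to prove: \emph{if $K\leq_q L$ in $G$, then $\mc{R}^K_X\leq_q \mc{R}^L_X$.} Given $g\in L$ with $M:=gKg^{-1}\cap K$ infinite, the full local section $\phi_g$ determined by $\phi_g(x)=(gx,x)$ transforms $\gamma=(y,x)\in\mc{R}^G_X$ into $\gamma^{\phi_g}=(g^{-1}y,g^{-1}x)$. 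Taking $\gamma=(mx,x)$ with $m=gkg^{-1}\in M$ gives $\gamma\in\mc{R}^K_X$ and $\gamma^{\phi_g}=(kg^{-1}x,g^{-1}x)\in\mc{R}^K_X$. For any non-null $A\subseteq X$, a standard recurrence argument (the translates $\{m^{-1}A\}_{m\in M}$ cannot all be pairwise essentially disjoint in a probability space) yields infinitely many $m\in M$ with $\mu(A\cap m^{-1}A)>0$. Summing the corresponding contributions shows that $(\mc{R}^K_X)_A^{\phi_g}\cap(\mc{R}^K_X)_{g^{-1}A}$ has infinite measure, so $\phi_g\in Q_{\mc{R}^L_X}(\mc{R}^K_X)$. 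Since such $g$ generate $L$, the local sections $\phi_g$ together with $[[\mc{R}^K_X]]$ generate $\mc{R}^L_X$, yielding $\mc{R}^K_X\leq_q\mc{R}^L_X$. This verification is the main technical step of the proof, though it amounts to a direct calculation once the freeness of the action is exploited via recurrence.

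Proposition~\ref{prop:Furman1} now immediately delivers the first inequality: $\mathscr{PC}(\mc{R}^G_X)\leq \mathscr{PC}(\mc{R}^H_X)$ for every free p.m.p.\ action $G\cc(X,\mu)$. The bound $\mathscr{PC}^*(G)\leq \mathscr{PC}^*(H)$ then follows by restriction: for any free $G$-action $G\cc(X,\mu)$, the restricted action $H\cc(X,\mu)$ is itself free, so $\mathscr{PC}(\mc{R}^G_X)\leq \mathscr{PC}(\mc{R}^H_X)\leq \mathscr{PC}^*(H)$, and taking the supremum over $G\cc(X,\mu)$ completes this case.

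For the remaining bound $\mathscr{PC}(G)\leq \mathscr{PC}(H)$, I would begin with an arbitrary free p.m.p.\ action $H\cc(Y,\nu)$ and pass to the coinduced action $G\cc(X,\mu)$, which is free. The canonical $H$-equivariant projection $\pi\colon X\to Y$ exhibits $H\cc(Y,\nu)$ as a factor of the free $H$-action on $X$. A standard pullback-of-graphings construction---lifting each $H$-translation local section on $Y$ to $X$ via $\pi$ and invoking freeness of the $H$-action on $Y$ to verify that the lifted graphing still generates $\mc{R}^H_X$---shows that pseudocost is monotone under such factor maps, yielding $\mathscr{PC}(\mc{R}^H_X)\leq \mathscr{PC}(\mc{R}^H_Y)$ (this monotonicity is already used in the proof of Theorem~\ref{thm:coamen}). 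Combining with the first inequality gives $\mathscr{PC}(\mc{R}^G_X)\leq\mathscr{PC}(\mc{R}^H_Y)$, and taking the infimum over $H\cc(Y,\nu)$ yields $\mathscr{PC}(G)\leq\mathscr{PC}(H)$.
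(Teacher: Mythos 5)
Your proof follows the same route as the paper's: Proposition \ref{prop:Furman1} applied to the orbit equivalence relations, restriction of free $G$-actions to $H$ for $\mathscr{PC}^*(G)\leq\mathscr{PC}^*(H)$, and coinduction combined with factor-monotonicity of pseudocost for $\mathscr{PC}(G)\leq\mathscr{PC}(H)$. The transfer lemma you identify as the main technical step --- group-level $q$-normality passes to groupoid-level $q$-normality of orbit relations of free actions --- is precisely the paper's Lemma \ref{lem:thick1}.(i), specialized to a subgroup $P=K$ and to $A=X$ (the paper's one-line proof of the proposition leaves this transfer implicit). Your computation of $\gamma ^{\phi _g}$ and your identification of the relevant intersection are correct, and the restriction and coinduction steps are carried out at the same level of detail the paper itself uses (compare the proofs of Theorem \ref{thm:coamen} and Proposition \ref{prop:PC}).

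There is, however, one genuine gap, located at the crux of the transfer lemma. The set $(\mc{R}^K_X)_A^{\phi _g}\cap (\mc{R}^K_X)_{g^{-1}A}$ equals $\{ (g^{-1}mx,g^{-1}x)\csuchthat m\in M,\ x\in A\cap m^{-1}A \}$, and by freeness its measure is exactly $\sum _{m\in M}\mu (A\cap m^{-1}A)$. Your justification produces infinitely many $m\in M$ with $\mu (A\cap m^{-1}A)>0$ and then asserts that ``summing the corresponding contributions'' yields infinite measure; this inference is invalid, since an infinite sum of strictly positive terms can converge. What is needed is the genuine Poincar\'{e} recurrence theorem for infinite groups: for any p.m.p.\ action of the infinite group $M$ and any non-null $A$, almost every $x\in A$ satisfies $|\{ m\in M\csuchthat mx\in A \}|=\infty$, equivalently $\sum _{m\in M}\mu (A\cap m^{-1}A)=\infty$. (This stronger statement is exactly what the paper invokes, in a quantitative multi-intersection form, in its proof of Lemma \ref{lem:thick1}.) The repair is short because $M$ is a group: for each finite $F\subseteq M$, the set $E_F=\{ x\in A\csuchthat mx\notin A\text{ for all }m\in M\setminus F \}$ admits infinitely many pairwise disjoint translates $m_0E_F,m_1E_F,\dots$ (choose inductively $m_{i+1}\notin \bigcup _{j\leq i}m_jF$), hence $\mu (E_F)=0$; since the points of $A$ with only finitely many returns form the countable union $\bigcup _F E_F$, almost every $x\in A$ returns to $A$ under infinitely many $m\in M$, and the sum above diverges. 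With this single correction your argument is complete and agrees with the paper's.
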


\begin{proof}
The first two inequalities follow from Proposition \ref{prop:Furman1}, and the inequality $\mathscr{PC}(G)\leq \mathscr{PC}(H)$ is then obtained by coinduction.
\end{proof}

\section{A strongly almost free amenable action of the free group}

In \cite{vD90}, van Douwen constructs an amenable faithful action of a free group which is {\bf almost free}, i.e., every nonidentity element fixes at most finitely many points. The property that we need for Example \ref{ex:faction} is somewhat stronger however; let us say that an action of a group $G$ on a set $X$ is {\bf strongly almost free} if the associated action of $G$ on the collection $\mathscr{P}_{\mathrm{f}}(X)$, of all finite subsets of $X$, is almost free. Equivalently, this means that each nonidentity element of $G$, when viewed as a permutation on $X$, contains only finitely many finite cycles in its cycle decomposition.

\begin{theorem}\label{thm:safree}
Let $G$ be a free group of rank $r\in \{ 2,3,\dots , \infty \}$. There exists a transitive amenable action of $G$ on a countable set $X$ which is strongly almost free.
\end{theorem}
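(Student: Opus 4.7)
The plan is to construct the action as a direct limit of finite partial Schreier graphs, chosen inductively to enforce F{\o}lner-type conditions (for amenability) while keeping tight control on closed paths whose labels are powers of prescribed words (for strong almost freeness). I carry out the construction for $G = F_2 = \langle a, b \rangle$. For $2 < r < \infty$, the statement descends from the $F_2$ case by restricting an $F_2$-action to a finite-index subgroup isomorphic to $F_r$ and passing to a single infinite orbit, since both amenability and strong almost freeness are inherited in this way. The case $r = \infty$ is handled by a parallel direct argument.

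Enumerate $F_2 \setminus \{e\} = \{w_1, w_2, \ldots\}$ in increasing reduced length, and build a nested sequence $\Gamma_1 \subseteq \Gamma_2 \subseteq \cdots$ of finite connected partial Schreier graphs over the alphabet $\{a^{\pm 1}, b^{\pm 1}\}$. The union $\Gamma = \bigcup_n \Gamma_n$ is to be a complete Schreier graph of a transitive action $F_2 \cc X := V(\Gamma)$. At stage $n$ the following conditions will be enforced: (a) every vertex of $\Gamma_{n-1}$ becomes interior (full-valence) in $\Gamma_n$, so $\Gamma$ is a genuine Schreier graph in the limit; (b) a subset $P_n \subseteq V(\Gamma_n)$ of interior vertices satisfies $\sum_{s \in \{a^{\pm 1}, b^{\pm 1}\}} |sP_n \triangle P_n|/|P_n| < 1/n$, so $(P_n)_{n \in \N}$ is a F{\o}lner sequence witnessing amenability of the limit; and (c) for each $i \leq n$ and each $k \geq 1$, every closed path in $\Gamma_n$ labeled by $w_i^k$ is entirely contained in $V(\Gamma_{i-1})$, which forces every periodic point of $w_i$ in $X$ to lie in the finite set $V(\Gamma_{i-1})$, giving strong almost freeness. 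Maintaining connectedness of each $\Gamma_n$ yields transitivity in the limit.

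The main obstacle lies in the combinatorial extension step: given $\Gamma_{n-1}$, words $w_1, \ldots, w_n$, and tolerance $1/n$, produce $\Gamma_n$ satisfying (a)--(c) simultaneously. My plan is a two-layer construction. The outer layer attaches long labeled stalks to each boundary half-edge of $\Gamma_{n-1}$; since $F_2$ is free, the labels along these stalks may be chosen freely, and a pigeonhole argument over the finite family of labels $w_i^k$ (for $i \leq n$ and $k$ bounded by the current diameter of $\Gamma_{n-1}$) allows us to avoid creating any closed path of a forbidden type through the stalks. The inner layer caps the far ends of the stalks with a finite \emph{partial} Schreier patch furnishing the F{\o}lner set $P_n$, drawn from a fragment of a known amenable $F_2$-action --- for instance a window of van Douwen's Schreier graph \cite{vD90}. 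Crucially, this patch cannot be a complete finite Schreier graph (which would, by the pigeonhole on $w_i$-orbits, immediately violate (c)), but must be a partial graph with substantial boundary: the trajectory of each $w_i^k$ with $i \leq n$ originating at a vertex of $P_n$ must escape the patch through a boundary vertex in bounded time rather than closing into a short cycle, and the boundary vertices of the patch are then identified with the stalk termini in a way that preserves (a)--(c). The heart of the argument is the existence of such a patch --- simultaneously F{\o}lner with tolerance $1/n$ and having bounded $w_i^k$-escape times for all $i \leq n$ --- which amounts to showing that, in a fixed amenable $F_2$-Schreier graph, one can find large finite subsets whose F{\o}lner defect is small while the density of short $w_i^k$-closed subgraphs is even smaller.
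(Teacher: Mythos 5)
Your overall architecture --- nested finite folded partial Schreier graphs, F{\o}lner patches attached by long stalks, and the requirement that every closed path labeled $w_i^k$ stay inside an earlier finite stage --- is essentially the architecture of the paper's proof, and your reduction of the finite-rank case $r>2$ to $F_2$ via a finite-index subgroup is correct (the invariant mean restricts to any subgroup, there are finitely many orbits so one has positive mean, and no orbit can be finite since a finite $F_r$-orbit would force a finite $F_2$-orbit). But you have not proved the one step on which everything rests: the existence, for each $n$, of a finite patch that is simultaneously $(S,1/n)$-invariant and carries \emph{no} closed path labeled $w_i^k$ for any $i\le n$ and any $k\ge 1$. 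You flag this yourself as ``the heart of the argument'' and recast it as finding, in a fixed amenable $F_2$-Schreier graph, large F{\o}lner sets in which the ``density'' of short forbidden cycles is small --- but that recasting is both unproved and too weak. Your condition (c) needs forbidden cycles to be entirely absent from the new material, not sparse: a single finite $w_i$-orbit inside the patch produces a closed path labeled $w_i^k$ with $k$ equal to the orbit length, which can be arbitrarily large. This is also where your pigeonhole step breaks: you bound $k$ by the diameter of $\Gamma_{n-1}$, but nothing justifies that bound, since a forbidden cycle may wind many times through the newly attached stalks and patch. A window of van Douwen's graph \cite{vD90} gives no control at all on periodic points of the $w_i$ inside it.

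The paper closes this gap with an algebraic rather than a counting argument, and this is the idea your proposal is missing. Given $n$, choose a finite-index normal subgroup $N$ of $G$ avoiding every nontrivial cyclically reduced word of length at most $n$ (residual finiteness of free groups), and take the patch to be a large ball $B_k$ in the Schreier graph of the action of $G$ on $G/N'$, where $N'$ is the derived subgroup of $N$. The group $G/N'$ is abelian-by-finite, hence amenable, so large balls contain $(S,1/n)$-invariant sets; and it is torsion-free (Theorem 2 of \cite{Hi55}), so for every cyclically reduced $w$ of length at most $n$ and every $k\ge 1$ the element $w^k$ has nontrivial image in $G/N'$ and therefore fixes no vertex of the patch at all --- zero forbidden cycles, uniformly in $k$. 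Torsion-freeness also supplies the vertex with a missing incoming $s$-edge needed to attach the length-$2n$ bridge labeled $s^{2n}$, across which only powers of $s$ could close up. Two further simplifications in the paper are worth adopting: it never makes old vertices interior, working instead with the canonical $2|S|$-regular completion $\Gamma^*$, so cyclically reduced cycles automatically lie in the finite core and your condition (a) --- the source of much of the tension you describe --- is unnecessary; and it only needs to control cyclically reduced words, since every element of $G$ is conjugate to one. Without the torsion-freeness input, or an equivalent device producing patches with no forbidden cycles whatsoever, your construction does not close.
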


For the proof, we will assume that $G$ is finitely generated; the proof of the infinitely generated case is similar. Before presenting the proof we establish some notation. Fix a free generating set $S$ for $G$. The construction of the action will use the formalism of {\bf $S$-digraphs}; the reader is referred to \cite{KM02} for background.

Given a connected folded $S$-digraph $\Gamma$, let $\Gamma ^*$ denote the unique connected folded $S$-digraph extending $\Gamma$ which is $2|S|$-regular and satisfies $\mathrm{\emph{Core}}(\Gamma ^*,v ) = \mathrm{\emph{Core}}(\Gamma , v )$ for all $v\in V(\Gamma )$. We let $G$ act on $V(\Gamma ^*)$ in the natural way, i.e., $g\cdot v$ is the terminus of the unique path in $\Gamma ^*$ with origin $v$ and label $g$. If we fix a vertex $v_0\in V(\Gamma )$, then $\Gamma ^*$ is isomorphic to the Schreier graph of the action $G \cc G /G_{v_0}$ with respect to the generating set $S$. By a {\bf cycle} in $\Gamma$ we mean a path in $\Gamma$ whose origin and terminus coincide. A cycle $c$ in $\Gamma$ is {\bf cyclically reduced} if its label is a cyclically reduced word in $S\cup S^{-1}$. If a cycle $c$ in $\Gamma ^*$ is cyclically reduced then it is contained in $\mathrm{\emph{Core}}(\Gamma ,v)$ for any $v\in V(\Gamma ^*)$, so taking $v\in V(\Gamma )$ shows that $c$ is contained in $\Gamma$. We make two observations:
\begin{enumerate}
\item[{\bf (a)}] If $p$ is a reduced path in $\Gamma ^*$ with origin and terminus in $\Gamma$, then $p$ is contained in $\Gamma$.

\begin{proof} Otherwise, let $p_0$ be a reduced path of minimal length starting at $u\in V(\Gamma )$ and ending at $v\in V(\Gamma )$, and which is not contained in $\Gamma$. Since $\Gamma$ is connected there is a reduced path $p_1$ from $v$ to $u$ in $\Gamma$. Then the concatenation of $p_0$ followed by $p_1$ is a cycle at $u\in V(\Gamma )$ which is cyclically reduced by minimality of $p_0$, and hence is contained in $\Gamma$, a contradiction.\end{proof}

\item[{\bf (b)}] If $w$ is a cyclically reduced word in $S\cup S^{-1}$, and if the orbit of $x\in V(\Gamma ^*)$ under $w$ is finite, then $x\in V(\Gamma )$.

\begin{proof} If $w^k\cdot x = x$ for some $x\in V(\Gamma ^*)$, $k\geq 1$, then the cycle rooted at $x$ with label $w^k$ is cyclically reduced, hence is contained in $\Gamma$.\end{proof}
\end{enumerate}

\begin{proof}[Proof of Theorem \ref{thm:safree}]
For $n\in \N$, let $C_n$ denote the collection of all nonempty cyclically reduced words on $S\cup S^{-1}$ of length at most $n$. We will construct a sequence $\Gamma _n$, $n\in \N$, of finite, folded $S$-digraphs such that for all $n$:
\begin{itemize}
\item[(1)] $\Gamma _n$ contains a vertex of degree strictly less than $2|S|$;
\item[(2)] $V(\Gamma _n)$ contains a set which is $(S,1/n )$-invariant for the action $G\cc V(\Gamma _n ^*)$;
\item[(3)] If $n\geq 1$ then $\Gamma _{n-1}\subseteq \Gamma _n$, and for each $w\in C_{n-1}$, every finite orbit of $w$ in $V(\Gamma _n ^*)$ is contained in $V(\Gamma _{n-1})$.
\end{itemize}
Assume first that such a sequence has been constructed and we will complete the proof. Take $\Gamma _{\infty} = \bigcup _n \Gamma _n$. Then $V(\Gamma _{\infty}^*)$ is infinite by (1), and the action $G \cc V(\Gamma _{\infty}^*)$ is amenable by property (2). Property (3) ensures that if $w\in C_m$ for some $m\in \N$, then every finite cycle in the cycle decomposition of $w$ in $V(\Gamma _{\infty}^*)$ is contained in the finite set $V(\Gamma _m)$, so $w$ fixes only finitely many finite subsets of $V(\Gamma _{\infty}^*)$. The theorem then follows, since every $g\in G$ is conjugate to a cyclically reduced word.

To define the sequence $(\Gamma _n)_{n\in \N}$ we start by taking $\Gamma _0$ to consist of a single vertex with no edges. Assume inductively that $\Gamma _{n-1}$ has been defined satisfying (1), (2), and (3). Let $N$ be a finite index normal subgroup of $G$ with $C_n \cap N = \emptyset$. Then the group $G/N'$ is abelian-by-finite and it is torsionfree by Theorem 2 of \cite{Hi55}. Let $\Delta _n$ be the Schreier graph for the action $G\cc G/N'$ with respect to the generating set $S$, with root vertex $1N'\in G/N'$. Since $G/N'$ is amenable, there exists a natural number $k>0$ such that the $(k-1)$-ball in $\Delta _n$ contains a set which is $(S, 1/n )$-invariant. Let $B_k$ denote the induced subgraph on the $k$-ball in $\Delta _n$. Fix $u\in V(\Gamma _{n-1})$ having degree strictly less than $2|S|$; by symmetry we may assume that there there exists $s\in S$ such that $u$ has no outgoing edge in $\Gamma _{n-1}$ with label $s$. Since $G/N'$ is torsionfree and $s\not\in N'$, there exists a vertex $v\in V(B_k)$ which has no incoming edge in $B_{k}$ with label $s$. Let $\Gamma _n$ be the graph obtained from the disjoint union of $\Gamma _{n-1}$ and $B_k$ by attaching a directed path $p$ from $u$ to $v$ of length $2n$, with each edge in $p$ having label $s$ (so $|V(\Gamma _n )| = |V(\Gamma _{n-1})| + |V(B_k)| + 2n-1$). Properties (1) and (2) are immediate. Fix now $w\in C_{n-1}$ and we will verify (3). Each orbit of $w$ in $V(\Gamma _n ^*)$ which meets $V(\Gamma _n ^*)\setminus V(\Gamma _n )$ is infinite by {\bf (b)}. Let $O$ be an orbit of $w$ which is contained in $V(\Gamma _n)$. By {\bf (a)}, for each $x\in O$, the path $p_x$ in $\Gamma _n ^*$ having origin $x$ and label $w$ is contained in $\Gamma _n$. Therefore, since $w$ is cyclically reduced and has length less than $n$, if $O$ contains a vertex $x\in V(p)\setminus \{ u, v \}$ then either $p_x$ or $p_{w^{-1}\cdot x}$ is contained in $p$, and hence $w=s^i$ for some $1\leq |i|<n$. But then $w^k\cdot x = s^{ik}\cdot x\neq x$ for all $k\geq 1$, contradicting that $O$ is finite. It follows that $O$ is contained either in $V(\Gamma _{n-1})$ or in $V(B_k)$. But $O$ cannot be contained in $V(B_k)$ since $w\not\in N'$ and $G/N'$ is torsionfree. So $O$ is contained in $V(\Gamma _{n-1})$. This completes the proof of (3).
\end{proof}

\bibliographystyle{plain}
\bibliography{biblio}
\end{document}